\documentclass[11pt,reqno]{amsart}
\usepackage[margin=1.10in]{geometry}
\usepackage{amsmath,amssymb,amsthm,mathtools}
\numberwithin{equation}{section}
\usepackage{microtype}
\usepackage{xcolor}
\usepackage[hidelinks]{hyperref}

\newtheorem{theorem}{Theorem}[section]
\newtheorem{proposition}[theorem]{Proposition}
\newtheorem{lemma}[theorem]{Lemma}
\newtheorem{corollary}[theorem]{Corollary}
\theoremstyle{remark}
\newtheorem{remark}[theorem]{Remark}

\newcommand{\Irr}{\operatorname{Irr}}
\newcommand{\cod}{\operatorname{cod}}
\newcommand{\GL}{\operatorname{GL}}

\newcommand{\PSL}{\operatorname{PSL}}
\newcommand{\GU}{\operatorname{GU}}

\newcommand{\SO}{\operatorname{SO}}
\newcommand{\PSO}{\operatorname{PSO}}
\newcommand{\Spin}{\operatorname{Spin}}
\newcommand{\OmegaG}{\operatorname{\Omega}}
\newcommand{\Sp}{\operatorname{Sp}}
\newcommand{\PSp}{\operatorname{PSp}}
\newcommand{\POmega}{\operatorname{P\Omega}}
\newcommand{\PSU}{\operatorname{PSU}}
\newcommand{\SU}{\operatorname{SU}}
\newcommand{\F}{\mathbb F}
\newcommand{\C}{\mathbb C}

\newcommand{\Ker}{\operatorname{Ker}}

\title[The codegree isomorphism conjecture]{The Codegree Isomorphism Conjecture for Finite Simple Groups}
\author{YONG YANG}
\address{Department of Mathematics, Texas State University, San Marcos, TX 78666, USA.}
\email{yang@txstate.edu}
\date{}

\subjclass[2020]{20C15, 20C20, 20D05, 20D06}
\keywords{character codegree, finite simple group, orthogonal group, unitary group, symplectic group, Clifford theory, Schur multiplier}

\begin{document}
\begin{abstract}
We prove the codegree isomorphism conjecture for finite simple groups. Thus every nonabelian finite simple group is determined, among finite groups, by its set of irreducible character codegrees.
\end{abstract}
\maketitle

\section{Introduction}
For a finite group $G$ and $\chi\in\Irr(G)$, we define
\[
  \cod(\chi)=\frac{|G:\Ker(\chi)|}{\chi(1)},
\qquad
  \cod(G)=\{\cod(\chi):\chi\in\Irr(G)\}.
\]
We also write
\[
  \operatorname{cd}(G)=\{\chi(1):\chi\in\Irr(G)\}
\]
for the set of irreducible character degrees of $G$.  If $m$ is a positive integer and $p$ is a prime, then $m_p$ and $m_{p'}$ denote the $p$-part and the $p'$-part of $m$, respectively.  We write $M(G)$ for the Schur multiplier of $G$.

The codegree isomorphism conjecture was proposed by Qian and appears as Problem~20.79 in the Kourovka Notebook \cite{Kourovka}. It asks whether
\[
   \cod(G)=\cod(H)
\]
forces $G\cong H$ whenever $H$ is a nonabelian finite simple group and $G$ is a finite group.  We prove the conjecture in full.

\begin{theorem}\label{thm:global}
Let $H$ be a nonabelian finite simple group and let $G$ be a finite group.  If
\[
  \cod(G)=\cod(H),
\]
then $G\cong H$.
\end{theorem}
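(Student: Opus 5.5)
The argument follows the established reduction for the codegree isomorphism problem: show first that $G$ has a quotient isomorphic to $H$, then show that the corresponding kernel is trivial.

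\emph{Step 1 (reduction to a quotient).} Let $N$ be a maximal normal subgroup of $G$, so $G/N$ is simple. Since $\cod(G/N)\subseteq\cod(G)=\cod(H)$, I would use known properties of codegree sets of simple groups to force $G/N\cong H$:
\begin{itemize}
\item $G/N$ is nonabelian, because the prime codegrees of $H$ are controlled.
\item The prime graph and orders of $G/N$ are bounded via $\pi(G)=\pi(\cod(G))$.
\item $|G/N|$ is recovered as $\max\cod$ or via $\operatorname{lcm}$ arguments.
\end{itemize}
This is the part that earlier literature (Bahri--Akhlaghi--Khosravi, Hung--Moretó, and others) establishes case by case. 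I would cite or re-derive the result that $\cod(S)\subseteq\cod(H)$ with $S$ simple forces $S\cong H$, by comparing the largest codegree $|S|/\min\chi(1)$ and the primes involved. So we may assume $G/N\cong H$ with $N\neq 1$, and take $N$ minimal with this property. Passing to a quotient by a chief factor, we may then assume $N$ is a minimal normal subgroup of $G$.

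\emph{Step 2 (the kernel is trivial).} This splits into two cases.
\begin{itemize}
\item \emph{$N$ nonabelian.} Then $N\cong S^k$, and an irreducible character of $G$ lying over an extendible product of Steinberg-type characters of $N$ gives a codegree divisible by $|N|\cdot|H|$ divided by a degree. That codegree is too large or has the wrong prime content, which is a contradiction.
\item \emph{$N$ elementary abelian $p$-group.}
  \begin{itemize}
  \item \emph{$N\le Z(G)$.} Then $G$ is a quotient of a covering group, so $N$ is a quotient of $M(H)$. I would exhibit a faithful character $\chi$ of $G$ and show $\cod(\chi)=p|H|/\chi(1)\notin\cod(H)$. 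This needs, group by group, knowledge of the faithful character degrees of the covers versus the degrees of $H$. The relevant families are $\SL_n(q)$, $\SU_n(q)$, $\Sp_{2n}(q)$, $\Spin$, the exceptional covers, and the sporadic groups from the ATLAS.
  \item \emph{$N$ noncentral.} Then $C_G(N)=N$, or $H$ acts faithfully on $\Irr(N)$. Pick a nontrivial $\lambda\in\Irr(N)$ with stabilizer $I$. Any $\chi$ over $\lambda$ has kernel not containing $N$, so $\cod(\chi)$ has $p$-part at least $p\cdot|H|_p/\chi(1)_p$. Using $|G:I|\le|N|-1$ and the bound $\chi(1)\le|G:I|\sqrt{|I/N|}$, we get a codegree that is either divisible by $|H|_p p$ or not of the right shape. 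This contradicts the fact that every codegree of $H$ divides $|H|$ with bounded $p$-part.
  \end{itemize}
\end{itemize}

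\emph{Main obstacle.} I expect the central-extension case of Step 2 to be hardest, namely showing that no faithful character of a quasisimple cover $\hat H$ has $p|H|/\chi(1)\in\cod(H)$. This is equivalent to showing that the degree $\chi(1)/p$ is not in $\operatorname{cd}(H)$ for suitable faithful $\chi$, uniformly over groups of Lie type. My first attempt would be to use Lusztig series: a faithful character lies in a series $\mathcal E(G,s)$ with $s$ outside the derived subgroup of the dual. One should then be able to choose $s$ semisimple so that the degree $|G^*:C(s)|_{p'}$ has $\ell$-part, for the relevant prime $\ell$, incompatible with every unipotent-times-semisimple degree of $H$. The exceptional multipliers and the sporadic groups would be checked directly from known character tables.
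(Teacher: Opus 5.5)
Your outline is the Hung--Moret\'o minimal-counterexample reduction, which the paper also uses through \cite[Theorem~3.1]{HMII} and Proposition~\ref{prop:assemblyreduction}. Note that it must be run for the containment $\cod(G)\subseteq\cod(H)$, because passing from $G$ to $G/M$ destroys equality.

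The genuine gap is the self-centralizing elementary abelian case. In the minimal counterexample, $N$ is the unique minimal normal subgroup. So every $\chi\in\Irr(G)$ lying over a nonprincipal $\lambda\in\Irr(N)$ is faithful, and $\cod(\chi)=|N|\,|H|/\chi(1)$. Since every codegree of $H$ divides $|H|$, the contradiction you need is a faithful $\chi$ with $\chi(1)_p<|N|$. That is precisely Conjecture~B, and it does not follow from the counting bound $\chi(1)\le[G:I]\sqrt{|I/N|}$.

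A bound of this kind can at best dispose of large modules; that is the role of Lemma~\ref{lem:HM}(1), under $|N|>\sqrt{|H|_p}$. In the residual small-module case it gives nothing. Take the natural module of $\POmega^{\pm}_{2n}(q)$ with $q$ odd and $n\ge5$. Then $|N|=q^{2n}$, while for $\lambda=\lambda_v$ with $v$ nonsingular one has $I/N\cong\OmegaG_{2n-1}(q)$. Hence $[G:I]\sqrt{|I/N|}\ge q^{n-1}\cdot q^{(n-1)^2/2}>q^{2n}$, and the bound says nothing.

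Your intermediate claim $\cod(\chi)_p\ge p\,|H|_p/\chi(1)_p$ also understates the $p$-part. Used as stated, it gives a contradiction only when $p\nmid\chi(1)$, which is essentially the split case.

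What closes this case in the paper is an identification of $N$ followed by a specific stabilizer computation:
\begin{enumerate}
\item Reduce to defining characteristic and to $|N|\le\sqrt{|H|_p}$ (Lemma~\ref{lem:HM}).
\item Use \cite[Lemma~4.2]{GT} and the small-module list \cite[Proposition~5.4.11]{KL} to show that $N$ is the natural module up to a twist.
\item Treat a split extension by a faithful $p'$-degree character (Lemma~\ref{lem:HM}(3)).
\item In the nonsplit case, choose $v$ so that the stabilizer is $\Sp_{2n-2}(q)$ or $\SU_{n-1}(q)$, whose multiplier is trivial, so $\lambda_v$ extends. Otherwise the stabilizer is $\OmegaG_{2n-1}(q)$ or $Q\rtimes\OmegaG_{2n-1}(q)$, and the factor set is absorbed by a faithful $p'$-degree character of $\Spin_{2n-1}(q)$ together with projective induction.
\item Clifford induction then gives $\chi(1)_p\le q^{2n-1}<|N|$, and faithfulness comes from nonsplitting.
\end{enumerate}
For $\PSp_{2n}(q)$ the paper instead works inside Tong-Viet's residual reduction. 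There the central element $-I$ rules out odd $q$. For even $q$, projective induction from $\Sp_{2n-2}(q)$, plus the character table of $2^8{\cdot}\Sp_8(2)$, gives the required character.

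Your Lusztig-series plan for the central case is the right kind of idea, and it is where the paper's hardest work lies. However, it is only needed for $\POmega^{\pm}_{2n}(q)$ with $q$ odd; elsewhere \cite[Theorem~D]{HMII} supplies it. Even there it has to be carried out family by family. The paper either compares with Nguyen's exhaustive low-degree list, or derives $|C_{G^*}(t^*)|_{p'}=2\eta(1)|C_{G^*}(s^*)|_{p'}$ and excludes it. The exclusion uses primitive prime divisors, $\eta(1)\in\{1,2,4\}$, and the index-two bound for projective centralizers, with ranks $4$--$6$ handled separately. As written, that step of your proposal is a programme rather than a proof.
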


The proof of Theorem~\ref{thm:global} is completed in Section~\ref{sec:completion}, after the simple-group families have all been treated.

This theorem completes a sequence of earlier results and reductions.  Dolorfino, Martin, Slonim, Sun and Yang first established the sporadic case \cite{DMSSYsporadic} and then the alternating case \cite{DMSSYalternating}.  Hung and Moret\'o proved the order-containment and simple-quotient results \cite[Theorem~C and Theorem~8.3]{HMI}; in the sequel \cite[Theorem~3.1]{HMII}, they reduced the remaining containment problem to the conditions used below.  Tong-Viet proved the cases in \cite[Theorem~1.1]{TV} and reduced the remaining projective symplectic groups to a natural-module extension \cite[Theorem~5.1]{TV}.  The Tits group ${}^2F_4(2)'$ is covered by Wang, Zhang, Zhang and Chen \cite{WZZC}.  It therefore remains to consider the following classical groups.

For clarity, the table below shows how the simple-group families are covered.  Results proved in this paper are identified by theorem or corollary number, and their proofs are located in the sections indicated immediately after their statements below; the other entries refer to earlier work:
\[
\begin{array}{c|l}
\text{simple group family} & \text{}\\ \hline
A_n,\ n\ge5 & \text{\cite{DMSSYalternating}}\\
\text{sporadic groups} & \text{\cite{DMSSYsporadic}}\\
\PSL_2(q) & \text{\cite{BAK}}\\
\PSL_3(q) & \text{\cite{LY}}\\
\text{exceptional Lie type covered by \cite[Theorem~1.1]{TV} and }\PSL_n(q) & \text{\cite[Theorem~1.1]{TV}}\\
{}^2F_4(2)' & \text{\cite{WZZC}}\\
\PSU_3(q) & \text{\cite{LY}}\\
\PSU_n(q),\ n\ge4 & \text{Corollary~\ref{cor:Umain}}\\
\PSp_4(q),\ \PSp_6(q) & \text{\cite[Theorem~5.1]{TV}}\\
\PSp_{2n}(q),\ n\ge4 & \text{Theorem~\ref{thm:sympall}}\\
\OmegaG_{2n+1}(q),\ q\text{ odd},\ n\ge3 & \text{Corollary~\ref{cor:Bmain}}\\
\POmega_{2n}^{\pm}(q),\ q\text{ even},\ n\ge4 & \text{Corollary~\ref{cor:mainA}}\\
\POmega_{2n}^{\pm}(q),\ q\text{ odd} & \text{Theorem~\ref{thm:Doddallcodegree}}
\end{array}
\]

For the unitary groups, odd-dimensional orthogonal groups, and even-dimensional orthogonal groups in even characteristic, we establish the character-theoretic condition required in \cite{HMII} and obtain the corresponding stronger containment theorem.  For even-dimensional orthogonal groups in odd characteristic we prove the two remaining cases directly.  The prime-central case is the delicate one: we use semisimple characters of spin groups, with separate arguments in ranks four, five, and six and with the low-degree classification of H.~N.~Nguyen \cite[Theorem~1.4]{Ng} in the exceptional small-rank cases.  Together with the large-rank arguments this completes both signs of type $D_n$ for every odd $q$.  For symplectic groups, the reduction in \cite[Theorem~5.1]{TV} is finished by projective induction in even characteristic (including the isolated nonsplit extension over $\Sp_8(2)$) and by a kernel obstruction in odd characteristic.

We will use the following two statements from the reduction in \cite{HMII}.  We say that Conjecture A holds for a nonabelian finite simple group $H$ if, for every nontrivial finite group $G$,
\[
  \cod(G)\subseteq\cod(H) \quad\Longrightarrow\quad G\cong H.
\]

\medskip
\noindent\textbf{Conjecture B.}
Let $H$ be a nonabelian finite simple group. Suppose that $G$ is perfect, $N\lhd G$ is an elementary abelian $p$-group, $G/N\cong H$, and $N$ is a faithful irreducible $H$-module. Then there exists a faithful $\chi\in\Irr(G)$ such that
\[
  \chi(1)_p<|N|.
\]
For all families in which we use this condition, it serves only as an intermediate step toward the codegree theorem.

\begin{theorem}\label{thm:mainB}
Let $q=2^e$, let $n\ge4$, and let
\[
 H=\POmega_{2n}^{\varepsilon}(q),\qquad \varepsilon\in\{+,-\},
\]
be simple. Then Conjecture B holds for $H$.
\end{theorem}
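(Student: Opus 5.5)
We must show: if $G$ is perfect, $N\lhd G$ is elementary abelian of exponent $p$, $G/N\cong H=\POmega_{2n}^{\varepsilon}(2^e)$, and $N$ is a faithful irreducible $H$-module, then some faithful $\chi\in\Irr(G)$ has $\chi(1)_p<|N|$. We split into $p\ne 2$ and $p=2$.

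\emph{Cross characteristic, $p$ odd.} We use only a bound on $N$ from below and on the characters of $G$ from above. Every faithful irreducible $\F_pH$-module has dimension at least the Landazuri--Seitz--Zalesskii bound for $\POmega^{\varepsilon}_{2n}(q)$, roughly $q^{2n-2}$. Hence
\[
|N|\ge p^{\,(q^{n-1}-\varepsilon 1)(q^{n-2}+\varepsilon 1)}.
\]
Choose any $\lambda\in\Irr(N)$ with $\lambda\neq1$; then $\Ker(\lambda)$ is a maximal subgroup of $N$. The orbit sums give a faithful $G$-orbit, since the intersection of conjugate kernels is a $G$-invariant proper subgroup of the irreducible module $N$, hence trivial. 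Any $\chi\in\Irr(G\mid\lambda)$ therefore has $N\not\le\Ker\chi$. Moreover $\Ker\chi\cap N=1$, because $\Ker\chi\cap N$ is $G$-invariant and proper. Since $G$ is perfect and $G/N$ is simple, $\Ker\chi$ is either trivial or a complement meeting $N$ trivially and normal in $G$. The latter would make $G=N\times K$, contradicting perfectness together with $N$ being a nontrivial module. So $\chi$ is faithful.

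Its degree satisfies
\[
\chi(1)\le|G:I_G(\lambda)|\,\max\operatorname{cd}(I_G(\lambda)/N)\le|H|,
\]
where we use a projective representation bound. Thus $\chi(1)_p\le|H|_p$. Since $|H|<q^{2n^2}$ and $|H|_p\le |H|$, we need $p^{\dim N}>|H|_p$. This is easily checked for $n\ge4$, because $\dim N\ge q^{2n-2}-q^{n-1}$ grows exponentially in $n$ while $\log_p|H|_p\le 2n^2\log_p q$. Small cases with $q=2$, $n=4,5$ are checked against the tables directly.

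\emph{Defining characteristic, $p=2$.} This is the main case. Now $|H|_2=q^{n(n-1)}$ and $N$ can be the natural module of size $q^{2n}$, so the crude bound fails. Here I will use the Steinberg-type character. Take $\lambda\in\Irr(N)$ nontrivial with stabilizer $I=I_G(\lambda)$. For the natural module, the $H$-orbits on $\Irr(N)\cong N^*$ are nonzero singular and nonsingular vectors. Choose $\lambda$ corresponding to a nonsingular vector. Its stabilizer in $H$ is $\Sp_{2n-2}(q)\times\ldots$, which for $q$ even is essentially $\Sp_{2n-2}(q)$ (the point stabilizer of type $O_{2n-1}$). The orbit length $|H:I/N|$ is then odd up to a small factor: it equals $q^{n-1}(q^n-\varepsilon)$, with $2$-part $q^{n-1}$.

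Since $\lambda$ is linear and $I/N$ is perfect of order prime to the obstruction, $\lambda$ extends to $I$: the Schur multiplier of $\Sp_{2n-2}(q)$ has trivial $2$-part except for the known exceptions $\Sp_6(2)$ and $\Sp_4(2)'$-type. By Gallagher, I then take $\theta\in\Irr(I\mid\lambda)$ of the form $\hat\lambda\beta$, with $\beta$ a character of $I/N$ of odd degree (e.g. trivial). Then $\chi=\theta^G$ is faithful by the argument above, and
\[
\chi(1)_2=q^{n-1}<q^{2n}\le|N|.
\]
For a general faithful irreducible $\F_2H$-module $N$ of larger dimension, I would argue uniformly instead. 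Pick $\lambda$ whose stabilizer contains a Sylow-type structure: by a counting argument, $|N|>|H|_2$ already holds once $\dim_{\F_2}N> e\,n(n-1)$. Then the crude bound $\chi(1)_2\le|H|_2$ suffices. This bound holds for all modules except the natural one and, for small $n$, the spin modules of dimension $2^{n-1}e$ (for $n=4,5,6$) and possibly the adjoint module.

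\emph{Main obstacle.} The hardest step is the spin modules for $n\le 6$ and the exceptional extension issues with $q=2$, where $\lambda$ may fail to extend. For spin modules with $n=4$, triality reduces this to the natural module. For $n=5,6$ I would explicitly pick an orbit of pure spinors or nonpure spinors with a known stabilizer, such as a parabolic or $G_2$-type subgroup. I would then bound the orbit's $2$-part, using an $I$-character of $2$-defect zero relative to $I/N$ if extension fails, via projective representations of the stabilizer. For $q=2$, remaining nonsplit cases would be checked with the known cohomology $H^2(H,N)$.
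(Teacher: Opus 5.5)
Your construction for the natural module is the paper's: a nonsingular vector, $H_v\cong\Sp_{2n-2}(q)$, orbit length $q^{n-1}(q^n-\varepsilon)$, and extension of $\lambda$ because the multiplier is trivial. Your faithfulness argument is correct and slightly stronger than the paper's. A nontrivial kernel would be a normal complement $K$ with $G/K\cong N$ abelian, contradicting perfectness, so no split/nonsplit distinction is needed.

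The gap is in the reduction to the natural module. You only use the crude estimate $\chi(1)_2\le|H|_2$, so every module with $|N|\le q^{n(n-1)}$ must be handled by hand, and two such cases are left open:
\begin{itemize}
\item the half-spin modules of $\OmegaG^+_{10}(q)$, with $|N|=q^{16}\le q^{20}$, where ``an orbit of pure spinors'' is only a suggestion;
\item $n=4$, $q=2$, where $H_v\cong\Sp_6(2)$ has Schur multiplier of order $2$. Here $\lambda$ need not extend to $I_G(\lambda)$, so Gallagher does not apply as written. Knowing $H^2(H,N)$ tells you whether $G$ splits, not whether the factor set in $H^2(I_G(\lambda)/N,\C^\times)$ vanishes.
\end{itemize}
In addition, you assert without proof that only the natural, half-spin and ``possibly adjoint'' modules fall below your threshold. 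Proving this needs the Guralnick--Tiep alternatives (twisted tensor products $W\otimes W^{(e/2)}$ over $\F_{q^{1/2}}$, $\F_{q^2}$-modules for ${}^2D_n$, products of two restricted factors). It also needs the field-of-definition count $|N|=p^{a\dim U}$ and the Kleidman--Liebeck small-module list.

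The missing idea is the sharper threshold $\sqrt{|H|_p}$ of Lemma~\ref{lem:HM}(1), which costs one line. Suppose every $\chi\in\Irr(G)$ with $N\not\le\Ker\chi$ had $\chi(1)_p\ge|N|$. Then $|N|^2$ divides $\sum\chi(1)^2=|G|-|H|=|H|(|N|-1)$, which forces $|N|^2\le|H|_p$. Combined with your observation that all such $\chi$ are faithful, this disposes of every module with $|N|>q^{n(n-1)/2}$ at once:
\begin{itemize}
\item all of $n=4$, since the smallest faithful modules already have $|N|=q^8>q^6$, so the $\Sp_6(2)$ problem never arises;
\item the $D_5$ half-spin modules, since $q^{16}>q^{10}$;
\item the adjoint-type modules.
\end{itemize}
For $n\ge5$, the small-module analysis below $q^{n(n-1)/2}$ then leaves only the natural module. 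There $M(\Sp_{2n-2}(q))=1$, and your Clifford argument finishes exactly as in the paper.
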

The proof of Theorem~\ref{thm:mainB} is given in Section~\ref{sec:evenorthproof}.

\begin{corollary}\label{cor:mainA}
Let $H=\POmega_{2n}^{\varepsilon}(q)$ be simple, where $q$ is even and $n\ge4$. If $G$ is a nontrivial finite group and $\cod(G)\subseteq\cod(H)$, then $G\cong H$.
\end{corollary}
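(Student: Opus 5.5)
The corollary should follow formally from Theorem~\ref{thm:mainB} together with the reduction of Hung and Moret\'o in \cite[Theorem~3.1]{HMII}. As recorded in the introduction, that reduction says roughly this. For a nonabelian simple group $H$, Conjecture~A holds for $H$ as soon as Conjecture~B holds for $H$, possibly together with some auxiliary conditions on $H$ such as Schur multiplier or covering-group data. The route to those conditions goes through the order-containment and simple-quotient results \cite[Theorem~C and Theorem~8.3]{HMI}. So the plan is to check the hypotheses of \cite[Theorem~3.1]{HMII} for $H=\POmega_{2n}^{\varepsilon}(q)$ with $q$ even and $n\ge4$, and then invoke it.

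First, I would recall the mechanism. Suppose $G$ is nontrivial with $\cod(G)\subseteq\cod(H)$. By \cite[Theorem~C]{HMI}, $|G|$ is bounded in terms of $|H|$. By \cite[Theorem~8.3]{HMI}, $G$ has a quotient isomorphic to $H$. One then takes a minimal counterexample and a minimal normal subgroup $N$ with $G/N\cong H$, where $G$ is perfect. The case where $N$ is central is handled by the Schur multiplier. For $q$ even and $n\ge4$, $M(H)$ is trivial except for the well-known exceptional multipliers, such as that of $\POmega_8^+(2)$. Those exceptional cases are covered by \cite{HMII} or are checked directly from the known character degrees of the covering groups, showing that some codegree of the cover lies outside $\cod(H)$.

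The remaining case is $N$ elementary abelian and a faithful irreducible $H$-module. Here Conjecture~B supplies a faithful $\chi\in\Irr(G)$ with $\chi(1)_p<|N|$. Then $\cod(\chi)=|G|/\chi(1)$ has $p$-part strictly larger than $|H|_p$, because
\[
\cod(\chi)_p=\frac{|N|\,|H|_p}{\chi(1)_p}>|H|_p .
\]
Every element of $\cod(H)$ divides $|H|$, so $\cod(\chi)\notin\cod(H)$, which is a contradiction. This is exactly the content of \cite[Theorem~3.1]{HMII}, so in the write-up I would simply cite it.

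The only real obstacle is making sure every side hypothesis of \cite[Theorem~3.1]{HMII} is genuinely met for this family, beyond Conjecture~B itself, which Theorem~\ref{thm:mainB} provides. In particular, the exceptional Schur multiplier of $\POmega_8^+(2)$ needs care, and I would treat it by appealing to its known covering-group character table, or to wherever \cite{HMII} settles it.
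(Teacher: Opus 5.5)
Your proposal is correct and is the paper's proof: Theorem~\ref{thm:mainB} gives Conjecture~B, and \cite[Theorem~3.1]{HMII} then gives Conjecture~A. That theorem has only one side hypothesis beyond Conjecture~B, namely that $H$ is not an even-dimensional orthogonal group in odd characteristic, which holds here because $q$ is even. The central-cover case, including the exceptional multiplier of $\POmega_8^+(2)$, is already dealt with inside \cite{HMII} through its Theorem~D, so you do not need a separate covering-group check.
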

The proof of Corollary~\ref{cor:mainA} is given in Section~\ref{sec:evenorthproof}.

\begin{theorem}\label{thm:Bmain}
Let $q$ be odd and let $H=\OmegaG_{2n+1}(q)$ be simple, with $n\ge3$. Then Conjecture B holds for $H$.
\end{theorem}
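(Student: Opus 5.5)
The plan is to construct the faithful character by Clifford theory over $N$, choosing the orbit of a linear character of $N$ so that the $p$-part of the degree stays small. The argument has two regimes: either a crude degree bound already works, or $N$ is one of a short list of small defining-characteristic modules that we treat by hand.

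\emph{Reduction to characters lying over a nontrivial $\lambda\in\Irr(N)$.} Let $\chi\in\Irr(G)$ with $N\not\le\Ker\chi$, and put $K=\Ker\chi$.
\begin{itemize}
\item Since $N$ is a minimal normal subgroup, $K\cap N=1$.
\item Hence $KN/N\lhd H$. If $KN=G$, then $K\cong H$ and $[K,N]\le K\cap N=1$, so $K$ centralizes $N$. This contradicts the faithfulness of the $H$-module $N$.
\item So $K\le N$, and therefore $K=1$.
\end{itemize}
Thus every irreducible character lying over a nontrivial $\lambda\in\Irr(N)$ is faithful. Since $N$ is abelian, Ito's theorem gives $\chi(1)\mid |G:N|=|H|$. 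The first step is therefore to check $|H|_p<|N|$, in which case any such $\chi$ works.

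\emph{Cross characteristic, $p\nmid q$.} The Landazuri--Seitz--Zalesskii bounds (Tiep--Zalesskii for $B_n$) give $\dim N\ge (q^{2n}-1)/(q^2-1)-c$ for a small constant $c$. So $|N|\ge p^{(q^{2n}-1)/(q^2-1)-c}$. On the other hand, $|H|_p$ is at most a product of $n$ factors $(q^{2i}-1)_p$ times $n!_p$. For $p$ odd this is bounded by roughly $q^{2n}\cdot n!_p$. For $p=2$ it is bounded by roughly $(q^2-1)_2^{\,n}\, n!_2$. In either case it is far below $|N|$ for $n\ge3$. I expect only the smallest fields (for instance $q=3$, $n=3$) to need a direct check against known small-degree representations.

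\emph{Defining characteristic, $p\mid q$.} Here $|H|_p=q^{n^2}$. By Premet/L\"ubeck low-dimensional module tables, every irreducible $\F_pH$-module other than a few exceptions has $|N|>q^{n^2}$: for instance the adjoint module has dimension $n(2n+1)$ over $\F_q$, and tensor products of twists are larger still. The exceptions are:
\begin{itemize}
\item the natural module, of order $q^{2n+1}$ (or its analogues realized over $\F_q$, with Galois twists);
\item the spin module for $n=3,4$, of order $q^{2^n}$.
\end{itemize}

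For the natural module $V$ we identify $\Irr(N)$ with $V$ via the invariant form. Take $\lambda$ corresponding to a nonsingular vector. Its stabilizer $I$ satisfies $I/N\cong \Omega^{\pm}_{2n}(q).2$ or similar, so $|G:I|_p=q^{n^2-n(n-1)}=q^n$. It remains to find $\psi\in\Irr(I\mid\lambda)$ with $\psi(1)_p<q^{n+1}$; I will aim for $p\nmid\psi(1)$.
\begin{itemize}
\item Such a $\psi$ corresponds to an irreducible projective representation of $I/N$ whose cocycle class lies in the Schur multiplier.
\item For $\Omega^\pm_{2n}(q)$ with $q$ odd, the multiplier is a $p'$-group apart from a finite list of exceptional small cases.
\item Hence the cocycle class is realized by a central character of the spin cover $\Spin^\pm_{2n}(q)$ (or a quotient of it).
\item Each central character occurs in a semisimple character, which has $p'$-degree by Lusztig's degree formula $|G^*:C(s)|_{p'}$.
\end{itemize}
Gluing over the extension by $2$ at most doubles the degree. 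This gives $\chi(1)_p=q^n<q^{2n+1}=|N|$. The spin modules for $n=3,4$ are handled the same way: take $\lambda$ in the open or a large orbit of $\Spin_{2n+1}(q)$ on the (dual) spin module, with stabilizer of type $G_2(q)$ for $n=3$ and $\Spin_7(q)$ for $n=4$, and again pick a $p'$-degree projective character.

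\emph{Main obstacle.} I expect the hardest step to be the existence of a $p'$-degree $\psi$ over $\lambda$, that is, controlling the cohomological obstruction to extending $\lambda$ to $I$. My first attempt is the extendibility criterion through Sylow subgroups: for primes $r\ne p$ the relevant Sylow subgroups are coprime to $N$, so extension there is automatic, and only the Sylow $p$-subgroup of $I$ needs checking, for instance via the splitting of $V$ over the stabilizer of a nondegenerate hyperplane. If that fails, I fall back on the semisimple-character argument above, together with a separate check of the exceptional multipliers (for example $\Omega_7(3)$ and $\Omega^+_8(3)$-type stabilizers, which have $3$-parts in their multipliers) using Atlas data.
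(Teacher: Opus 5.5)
Your proof is correct, but it takes a genuinely different route from the paper's.

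\emph{The paper's route.} It quotes $n=3$ from \cite{HMII}. It uses the stronger threshold $|N|\le\sqrt{|H|_p}=q^{n^2/2}$ of Lemma~\ref{lem:HM}(1), so nothing survives for $n=4$ and only the natural module survives for $n\ge5$. The split case is handled by Lemma~\ref{lem:HM}(3). In the nonsplit case it takes a \emph{singular} vector. The orbit length $q^{2n}-1$ is then prime to $p$, but the stabilizer is the parabolic $q^{2n-1}{:}\Omega_{2n-1}(q)$. So it must projectively induce from the Levi factor and use a $p$-adic valuation count to find a constituent with $p$-part at most $q^{2n-1}$. Faithfulness is deduced from nonsplitting.

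\emph{Your route.} You use only Ito's theorem, so you must classify modules up to $q^{n^2}$ instead of $q^{n^2/2}$. Still only Galois twists of the natural module survive, now for every $n\ge3$. You then take a \emph{nonsingular} vector. You pay $q^n$ in the orbit length, but the stabilizer is reductive: it is exactly $\Omega^{\pm}_{2n}(q)$ with no extra $.2$, because an isometry fixing $v$ has the same determinant and spinor norm as its restriction to $v^\perp$. One $p'$-degree projective character then finishes the job, giving $\chi(1)_p=q^n$. Your faithfulness argument ($[K,N]\le K\cap N=1$, so $G=KN$ would centralize $N$) never uses nonsplitting, so it absorbs the split case as well.

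\emph{Trade-off.} Your route is more self-contained and uniform in $n$. The paper's leans on \cite{HMII} for cross characteristic, the $\sqrt{|H|_p}$ threshold, the split case and $n=3$. In exchange, its Levi factor $\Omega_{2n-1}(q)$ with $n-1\ge4$ never has an exceptional multiplier.

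A few details should be corrected; none is fatal.
\begin{itemize}
\item The spin modules for $n=3,4$ are not $H$-modules at all. The central involution of $\Spin_{2n+1}(q)$ acts on them as $-1$ because $q$ is odd, so that part of your plan is vacuous.
\item You have the wrong exceptional multipliers. $\Omega_7(3)$ is never a stabilizer in your setup. $\Omega^+_8(3)$ does occur (for $H=\Omega_9(3)$), but $\POmega^+_8(3)$ has multiplier $2^2$, so it is harmless. The case you can actually meet is $\Omega^-_6(3)\cong 2.\PSU_4(3)$, whose multiplier has a $3$-part; it arises for $H=\Omega_7(3)$ when $v^\perp$ has minus type. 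Avoid it by choosing $v$ with $v^\perp$ of plus type, where $\Omega^+_6(3)\cong\PSL_4(3)$ has multiplier $C_2$.
\item The spin-cover step can be shortened. The usual determinant argument shows that the factor-set class of $(I,N,\lambda)$ has order dividing $o(\lambda)\lambda(1)=p$. So whenever the multiplier of $I/N$ is a $p'$-group, $\lambda$ simply extends to $I$ and you can take $\psi(1)=1$.
\item For $q=3$ the cross-characteristic minimal degree is $(q^n-1)(q^n-q)/(q^2-1)$. This falls short of $(q^{2n}-1)/(q^2-1)$ by $(q^n-1)/(q-1)$, not by a constant. It does not affect your comparison with $|H|_p$.
\end{itemize}
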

The proof of Theorem~\ref{thm:Bmain} is given in Section~\ref{sec:B}.

\begin{corollary}\label{cor:Bmain}
Let $q$ be odd, let $H=\OmegaG_{2n+1}(q)$ be simple with $n\ge3$, and let $G$ be a nontrivial finite group. If $\cod(G)\subseteq\cod(H)$, then $G\cong H$.
\end{corollary}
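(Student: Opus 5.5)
The plan is to derive Corollary~\ref{cor:Bmain} from Theorem~\ref{thm:Bmain} via the reduction of Hung and Moret\'o. By \cite[Theorem~3.1]{HMII}, Conjecture~A holds for a nonabelian simple group $H$ once Conjecture~B holds for $H$, possibly together with some auxiliary conditions on $H$. These are the conditions this paper refers to as ``the conditions used below.'' So the proof is mostly a matter of checking the hypotheses of that reduction for $H=\OmegaG_{2n+1}(q)$ with $q$ odd and $n\ge3$.

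The argument runs as follows. Let $G$ be nontrivial with $\cod(G)\subseteq\cod(H)$.

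\begin{enumerate}
\item First recall what \cite[Theorem~C and Theorem~8.3]{HMI} already give. They show that $|G|$ divides $|H|$, or at least that some quotient is controlled. They also show that a minimal counterexample $G$ has a normal subgroup $N$ with $G/N\cong H$.
\item Minimality and the codegree inclusion let one pass to the case where $N$ is a minimal normal subgroup. Then $G$ is perfect, and $N$ is either central, so that $G$ is a quotient of the Schur cover of $H$, or an elementary abelian $p$-group that is a faithful irreducible $H$-module. This is the content of \cite[Theorem~3.1]{HMII}.
\item The module case is ruled out by Theorem~\ref{thm:Bmain}. It gives a faithful $\chi\in\Irr(G)$ with $\chi(1)_p<|N|$, so $\cod(\chi)=|G|/\chi(1)$ has $p$-part exceeding $|H|_p$. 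Hence $\cod(\chi)\notin\cod(H)$, which is a contradiction.
\item The central case concerns a nontrivial central extension of $H$. For $q$ odd the Schur multiplier $M(H)$ has order $2$, except for the exceptional multiplier of $\OmegaG_7(3)$, which has order $6$. Here one needs a faithful character of the cover $\Spin_{2n+1}(q)$ (respectively the exceptional $3$-fold or $6$-fold cover) whose codegree is not in $\cod(H)$. The prime-central condition of the reduction is exactly this, and I would check whether \cite{HMII} already covers it for type $B_n$.
\end{enumerate}

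If the central case is not covered there, I would exhibit a faithful character of $\Spin_{2n+1}(q)$ whose degree has small $2$-part. Since $|G|_2=2|H|_2$, its codegree then has $2$-part larger than $|H|_2$, so it cannot lie in $\cod(H)$. A faithful Weil-type or semisimple character of small degree should do this, and its degree is computable from Lusztig's Jordan decomposition. For $\OmegaG_7(3)$ I would check the cover directly using the ATLAS character tables.

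I expect the main obstacle to be precisely this central case: ensuring that the reduction theorem's extra hypotheses, beyond Conjecture~B, hold for $B_n$ in odd characteristic, including the exceptional multiplier at $\OmegaG_7(3)$. The module case is routine once Theorem~\ref{thm:Bmain} is available.
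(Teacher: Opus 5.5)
Your route is the paper's: Theorem~\ref{thm:Bmain} supplies Conjecture~B, and \cite[Theorem~3.1]{HMII} turns it into the containment statement. The central case you flag as the main obstacle is already settled by that citation. The only family excluded from \cite[Theorem~3.1]{HMII} is $\POmega_{2n}^{\pm}(q)$ with $q$ odd, because that is the only family where their prime-central-cover input is missing. That input asks for a quasisimple $X$ with $|Z(X)|=r$ prime to have a faithful $\chi$ with $\chi(1)/r\notin\operatorname{cd}(H)$. For $\OmegaG_{2n+1}(q)$, exceptional covers of $\OmegaG_7(3)$ included, the implication from Conjecture~B to Conjecture~A is therefore unconditional. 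This is why the paper's proof is just the citation, noting that the orthogonal exception does not apply. Since $N$ is minimal normal, only prime-order centers occur, so the $6$-fold cover never enters.

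Your fallback for the central case would not work, though. For $q$ odd, no faithful irreducible character of $\Spin_{2n+1}(q)$ has odd degree. The central involution is the central involution of a subgroup $\Spin_3(q)\cong\mathrm{SL}_2(q)$, namely the spin group of a nondegenerate $3$-subspace. Every faithful irreducible character of $\mathrm{SL}_2(q)$ has even degree ($q\pm1$ or $(q-\epsilon)/2$ with $q\equiv\epsilon\pmod 4$). Restricting a faithful $\chi$ to this subgroup therefore forces $\chi(1)$ to be even, and then $\cod(\chi)_2\le|H|_2$. Since also $|X|_\ell=|H|_\ell$ for every odd prime $\ell$, no prime-part comparison can push a faithful codegree of the cover out of $\cod(H)$. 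One genuinely needs the degree condition $\chi(1)/2\notin\operatorname{cd}(H)$, which is exactly what the cited reduction provides here.
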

The proof of Corollary~\ref{cor:Bmain} is given in Section~\ref{sec:B}.

\begin{theorem}\label{thm:Umain}
Let $n\ge3$ and let $H=\PSU_n(q)$ be nonabelian simple. Then Conjecture B holds for $H$.
\end{theorem}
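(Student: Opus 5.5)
Let $G$ be perfect with $N\lhd G$ elementary abelian of order $p^a$, $G/N\cong H=\PSU_n(q)$, and $N$ a faithful irreducible $\F_pH$-module. The plan is to produce a faithful $\chi\in\Irr(G)$ with $\chi(1)_p<|N|$. Clifford theory supplies candidates. Take $1\ne\lambda\in\Irr(N)=\widehat N$, with stabilizer $I_G(\lambda)=I$, so $I/N$ is the stabilizer in $H$ of $\lambda$ under the dual action. Any $\chi\in\Irr(G\mid\lambda)$ is faithful: $\Ker\chi\cap N$ is a proper $H$-submodule of the irreducible module $N$, hence trivial; and a normal subgroup of $G$ meeting $N$ trivially centralizes $N$ and maps to a normal subgroup of the simple group $H$, so it is trivial because $H$ acts faithfully. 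Further, $\chi(1)=|G:I|\,\theta(1)$ with $\theta\in\Irr(I\mid\lambda)$, and $\theta(1)$ divides $|I/N|$ up to the projective-representation issue, which is handled by taking $\theta$ to lie over a suitable projective character of $I/N$. So it is enough to find an orbit and a constituent with
\[
|H:I_H(\lambda)|_p\cdot\theta(1)_p<p^a .
\]

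I will split by whether $p$ equals the defining characteristic $r$ of $q$.

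\emph{Case $p\ne r$ (cross characteristic).} Here the Landazuri--Seitz--Zalesskii bound gives $\dim N\ge(q^n-q)/(q+1)$ roughly (with the Weil-type exceptions), so $|N|\ge p^{(q^n-q)/(q+1)}$. On the other hand, $\chi(1)_p\le|G:N|_p=|H|_p$, and $|H|_p$ is at most $\prod_{i=2}^n(q^i-(-1)^i)_p$. Since $|N|$ is exponential in $q^{n-1}$ and $|H|_p$ is only polynomial in $q$, the crude inequality $|H|_p<|N|$ holds apart from finitely many small $(n,q,p)$. For those leftovers, including $\PSU_4(2)$, $\PSU_4(3)$, $\PSU_6(2)$ and $\PSU_3(q)$ for small $q$, I will use the known minimal module dimensions and check directly. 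If necessary I will also pick $\lambda$ with a large stabilizer.

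\emph{Case $p=r$ (defining characteristic).} Here $|H|_p=q^{n(n-1)/2}$, which can exceed $|N|$ for the natural module, since $|N|=q^{2n}$ when $N$ is the natural module viewed over $\F_p$. So choosing $\chi$ is essential. I would take $\lambda$ corresponding to an isotropic vector, or an anisotropic one, in the dual module, so that $I_H(\lambda)$ contains a unipotent radical or a $\SU_{n-1}(q)$, and then bound $|H:I_H(\lambda)|_p$ by a small power of $q$. For the anisotropic choice, $I/N$ involves $\SU_{n-1}(q)$, and I would take $\theta$ extending a $p'$-degree character, for example a semisimple character lying over the trivial character of the Levi, or over $\lambda$ extended when $N$ splits. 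The resulting $\chi(1)_p$ is about $q^{2n-3}$, which is below $q^{2n}$. For modules of larger dimension, Premet/Seitz-type lower bounds $\dim N\ge$ (a quadratic in $n$) for nonnatural modules again make the crude bound $|H|_p<|N|$ hold, except for a short list of modules: the adjoint module, $\wedge^2$, and small twisted tensor products. Each of these I will treat by the same stabilizer argument.

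I expect the main obstacle to be the non-split extensions in defining characteristic with $N$ the natural module. These can exist for small $q$, where $H^2(H,N)\ne0$. In that situation $\lambda$ need not extend to $I$, and the projective character $\theta$ is constrained by a cocycle on $I/N$. The first thing I will try is to locate $\theta$ of $p'$-degree in a Sylow-controlled way: take a $p'$-Hall-type subgroup, or use Gallagher's theorem over a complement in $I$ that avoids the cocycle, so that $\theta(1)_p=1$. If that fails, I will settle the few small groups directly.
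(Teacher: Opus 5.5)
There is a genuine gap, and it sits at the step you flag as the main obstacle: the nonsplit extension by the natural module in defining characteristic.

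With $\lambda$ attached to a nonsingular vector you have $I/N\cong\SU_{n-1}(q)$ and $|G:I|_p=q^{n-1}$. Your only control on $\theta$ is $\theta(1)\mid|I/N|$. That gives only $\chi(1)_p\le q^{n-1}q^{(n-1)(n-2)/2}=|H|_p$, which is $\ge q^{2n}=|N|$ once $n\ge5$. Everything therefore hinges on finding $\theta\in\Irr(I\mid\lambda)$ with small $p$-part, and the devices you propose do not supply it:
\begin{itemize}
\item $\SU_{n-1}(q)$ has no Hall $p'$-subgroup in general.
\item Gallagher's theorem presupposes an extension of $\lambda$.
\item A complement to $N$ in $I$ need not exist when $G$ is nonsplit.
\end{itemize}

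The missing idea is that the obstruction is not $H^2(H,N)$ but the Schur multiplier of the stabilizer. The factor set of the triple $(I,N,\lambda)$ can be chosen with values in $\lambda(N)$, so its class lies in the $p$-torsion of $H^2(\SU_{n-1}(q),\C^\times)$. For $n-1\ge8$, $\SU_{n-1}(q)$ is the full covering group of $\PSU_{n-1}(q)$, hence centrally closed. So $\lambda$ extends to $\widetilde\lambda\in\Irr(I)$ whether or not $G$ splits. Then $\widetilde\lambda^{\,G}$ is irreducible of degree $q^{n-1}(q^n-(-1)^n)$, with $p$-part $q^{n-1}<q^{2n}$. This is exactly the paper's argument (Proposition~\ref{prop:Unonsplit}).

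Your isotropic-vector variant has the same problem. That is where your estimate $q^{2n-3}$ really comes from: it is the order of the unipotent radical. Projective induction from the Levi $\SU_{n-2}(q)$ there again needs a $p'$-degree projective character for the restricted class, which is again a multiplier statement.

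The multiplier argument genuinely fails in small rank. The exceptional multipliers of $\PSU_4(2)$, $\PSU_4(3)$, $\PSU_6(2)$ have nontrivial $p$-part in defining characteristic. The stabilizers $\SU_4(2)$, $\SU_4(3)$, $\SU_6(2)$ occur for $n=5,7$ with $(n,q+1)=1$. Your fallback of ``settling the few small groups directly'' is not an argument. Nothing in your plan shows that only finitely many $(n,q)$ are involved. The paper instead quotes \cite[Proposition~4.3]{HMII} for $3\le n\le7$.

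The remaining parts are sketches along workable lines.
\begin{itemize}
\item \emph{Cross characteristic.} The paper simply uses Lemma~\ref{lem:HM}(2). Your Landazuri--Seitz plus Ito comparison is a legitimate alternative, but the leftover cases are not checked.
\item \emph{Defining characteristic, choice of threshold.} You use the crude threshold $|H|_p<|N|$ rather than $\sqrt{|H|_p}<|N|$ from Lemma~\ref{lem:HM}(1). This leaves extra survivors, e.g.\ $\wedge^2V$ for $n=4$: the $\Omega_6^-(q)$-module with $|N|=q^6=|H|_p$. At $q=2$ its vector stabilizer $\Sp_4(2)\cong S_6$ again has a $2$-multiplier. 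With the square-root threshold, Guralnick--Tiep's Lemma~4.2 and the Kleidman--Liebeck list force $N$ to be natural. They also force $(n,q+1)=1$, since the centre must act trivially, and exclude $n=8$ outright.
\end{itemize}

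On the positive side, your faithfulness argument is correct and slightly better than the paper's. A normal $K$ with $K\cap N=1$ centralizes $N$, so $KN/N$ acts trivially on the faithful module $N$, forcing $K=1$. This works in the split and nonsplit cases alike.
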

The proof of Theorem~\ref{thm:Umain} is given in Section~\ref{sec:unitary}.

\begin{corollary}\label{cor:Umain}
Let $n\ge3$, let $H=\PSU_n(q)$ be nonabelian simple, and let $G$ be a nontrivial finite group. If $\cod(G)\subseteq\cod(H)$, then $G\cong H$.
\end{corollary}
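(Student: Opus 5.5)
The plan is to derive Corollary~\ref{cor:Umain} from Theorem~\ref{thm:Umain} using the reduction of Hung and Moret\'o \cite[Theorem~3.1]{HMII}, splitting off the case $n=3$, which is already settled in \cite{LY}. As recalled in the introduction, that reduction says: Conjecture~A holds for a nonabelian simple group $H$ provided Conjecture~B holds for $H$, given the order-containment and simple-quotient results \cite[Theorem~C and Theorem~8.3]{HMI}, which are available for all nonabelian simple groups. So for $n\ge4$ (and uniformly for $n\ge3$ if the reduction is applied that way), the corollary should follow immediately once Theorem~\ref{thm:Umain} is in hand. For $n=3$ one can also simply quote \cite{LY}.

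Concretely, let $G$ be nontrivial with $\cod(G)\subseteq\cod(H)$, and take $N$ maximal normal in $G$. By \cite[Theorem~8.3]{HMI}, $G/N\cong H$. The argument of \cite{HMII} then passes to a minimal counterexample. This reduces to the situation where $G$ is perfect, $N$ is a minimal normal subgroup, and $N$ is an elementary abelian $p$-group forming a faithful irreducible $H$-module. The nonabelian and central cases are handled there by Schur multiplier and order arguments. In that situation every faithful $\chi\in\Irr(G)$ has $\cod(\chi)=|G|/\chi(1)$. Since $|G|_p=|H|_p|N|$, the condition $\chi(1)_p<|N|$ gives $\cod(\chi)_p>|H|_p$. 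This is impossible, because every element of $\cod(H)$ divides $|H|$. Theorem~\ref{thm:Umain} supplies exactly such a $\chi$, which gives the contradiction and hence $G\cong H$.

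The main point to verify is that the hypotheses of \cite[Theorem~3.1]{HMII} are met for $\PSU_n(q)$ exactly as stated. In particular, the reduction's treatment of the case where $N$ is central, with $N$ a quotient of $M(H)$, must apply to $\PSU_n(q)$. This includes the exceptional multipliers of $\PSU_4(2)$, $\PSU_4(3)$ and $\PSU_6(2)$. If \cite{HMII} did not cover these, I would treat them directly. The approach is to note that a perfect central extension $G$ has a faithful character $\chi$ with $\chi(1)$ small relative to $|N|\cdot|H|$, so that $\cod(\chi)=|G|/\chi(1)$ does not divide $|H|$. One could check this from the ATLAS.
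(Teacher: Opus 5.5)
Your main line is the same as the paper's proof: Theorem~\ref{thm:Umain} gives Conjecture~B for $\PSU_n(q)$ for every $n\ge3$, and \cite[Theorem~3.1]{HMII}, whose only excluded family is $\POmega_{2n}^{\pm}(q)$ with $q$ odd, turns this into the containment statement. So apply it uniformly, including $n=3$, rather than quoting \cite{LY}, which the paper cites only for the equality form of the conjecture. Your contingency for central extensions is unnecessary, since \cite[Theorem~3.1]{HMII} already handles that case through its Theorem~D. As phrased it would also fail: for $|Z(G)|=r$ prime you need a faithful $\chi$ with $\chi(1)/r\notin\operatorname{cd}(H)$, and making $\chi(1)$ small does not help, because $\chi(1)\mid|H|$ and $\cod(\chi)=r|H|/\chi(1)$ fails to divide $|H|$ exactly when $r\nmid\chi(1)$.
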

The proof of Corollary~\ref{cor:Umain} is given in Section~\ref{sec:unitary}.

\section{Reductions for nonabelian finite simple groups}
We recall two consequences that will be used without further comment.

\begin{lemma}\label{lem:HM}
Let $H$ be a finite nonabelian simple group and let $G,N,p$ satisfy the hypotheses of Conjecture B.
\begin{enumerate}
\item If
\[
 |N|>\sqrt{|H|_p},
\]
then there is a faithful $\chi\in\Irr(G)$ with $\chi(1)_p<|N|$.
\item If $H$ is a simple group of Lie type of defining characteristic $\ell\ne p$, then the conclusion of Conjecture B holds.
\item If the extension $1\to N\to G\to H\to1$ splits, then $G$ has a faithful irreducible character of $p'$-degree.
\end{enumerate}
\end{lemma}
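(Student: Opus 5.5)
These three statements are consequences of the reduction in \cite{HMII}, and in the paper I would essentially cite them. To make the argument self-contained, I would first isolate a common setup and then treat the parts in the order (3), (1), (2).

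\emph{Setup.} Every $\chi\in\Irr(G)$ lying over a nontrivial $\lambda\in\Irr(N)$ is faithful. To see this, let $K=\Ker(\chi)$.
\begin{itemize}
\item $K\cap N$ is an $H$-submodule of $N$. It is proper, because $\lambda\neq1_N$ is a constituent of $\chi_N$. By irreducibility of $N$, $K\cap N=1$.
\item Then $[K,N]\le K\cap N=1$, so $K\le C_G(N)$.
\item Faithfulness of the module gives $C_G(N)=N$. Hence $K\le N$, so $K=K\cap N=1$.
\end{itemize}
So in each part it suffices to produce a suitable $\chi$ over some nontrivial $\lambda$. Write $T=I_G(\lambda)$. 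By Clifford correspondence, $\chi=\psi^G$ with $\psi\in\Irr(T\mid\lambda)$, so $\chi(1)=|G:T|\,\psi(1)$.

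\emph{Part (3).} Suppose $G=N\rtimes H$, and let $P\in{\rm Syl}_p(H)$. The $p$-group $P$ acts on the nontrivial elementary abelian $p$-group $\Irr(N)$ fixing $1_N$. Since $|\Irr(N)|\equiv 1\pmod p$ and $P$-orbits have $p$-power size, $P$ fixes some $\lambda\ne1_N$. Then $P\le T$, so $|G:T|$ is prime to $p$. Because the extension splits, $T=N\rtimes T_0$ with $T_0=T\cap H$. The character $\lambda$ extends to $T$ by $\hat\lambda(nt)=\lambda(n)$ for $n\in N$, $t\in T_0$. Take $\psi=\hat\lambda$, which is linear. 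Then $\chi=\hat\lambda^G$ has $p'$-degree $|G:T|$ and is faithful by the setup.

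\emph{Part (1).} Every irreducible $\chi$ over a nontrivial $\lambda$ has $|G:T|$ equal to the orbit length of $\lambda$, so $|G:T|<|N|$. I would then choose $\lambda$ and $\psi\in\Irr(T\mid\lambda)$ so that $\chi(1)_p^2$ is controlled by $|H|_p$. The first thing to try is a counting argument: compare $\sum_{\chi\ \text{faithful}}\chi(1)^2=|H|(|N|-1)$ with the number of faithful characters, organized over the orbits on $\Irr(N)\setminus\{1_N\}$. The aim is a faithful $\chi$ with $\chi(1)_p\le\sqrt{|H|_p}$, which is $<|N|$ by hypothesis. This $p$-part bound is the step I expect to be the real obstacle. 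The naive estimate $\psi(1)^2\le|T:N|$ does not control $p$-parts, so I would take the precise argument from \cite{HMII} rather than reprove it.

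\emph{Part (2).} This follows from part (1) together with known lower bounds. The group $H$ embeds in $\GL_d(p)$ with $|N|=p^d$, and since $p\ne\ell$ the module $N$ is a cross-characteristic representation. The Landazuri--Seitz--Zalesskii lower bounds for $d$, compared with the order of a Sylow $p$-subgroup of $H$ ($p$ not the defining characteristic), give $p^d>\sqrt{|H|_p}$. The finitely many small exceptions would be checked individually, as in \cite{HMII}.
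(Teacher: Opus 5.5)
Your plan is the paper's own: its proof of this lemma is just the citation of \cite[Propositions 4.2, 4.3 and 5.1]{HMII}, and that is also your fallback. Most of the self-contained material you add is sound. The faithfulness setup uses only $C_G(N)=N$, which is cleaner than the nonsplitting arguments used later in the paper. The construction in part (3) is also correct, with one exception: the congruence you state is wrong. In fact $|\Irr(N)|=|N|\equiv0\pmod p$, not $1$. With your congruence, the count allows $1_N$ to be the only $P$-fixed character, so the argument as written does not produce $\lambda$. With the correct congruence, the number of $P$-fixed characters is a positive multiple of $p$, hence at least $p\ge2$, and a nontrivial fixed $\lambda$ exists.

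The step you flag as the real obstacle in part (1) has a short proof. Apply the same fixed-point trick to a Sylow $p$-subgroup $P$ of $G$ rather than of $H$; no counting over faithful characters is needed.
\begin{itemize}
\item Since $N\lhd G$ is a $p$-group, $N\le P$ and $|P:N|=|H|_p$.
\item Choose a $P$-invariant $\lambda\in\Irr(N)$ with $\lambda\ne1_N$.
\item For $\theta\in\Irr(P\mid\lambda)$ one has $\theta_N=\theta(1)\lambda$. Hence $\lambda^P=\sum_\theta\theta(1)\theta$ and $\sum_{\theta\in\Irr(P\mid\lambda)}\theta(1)^2=|P:N|=|H|_p$, so $\theta(1)\le\sqrt{|H|_p}<|N|$.
\item The degree $\theta^G(1)=|G:P|\,\theta(1)$ has $p$-part exactly $\theta(1)$. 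So not every irreducible constituent of $\theta^G$ can have degree divisible by $p\,\theta(1)$.
\item A constituent $\chi$ with $\chi(1)_p\le\theta(1)$ lies over $\lambda$ by Frobenius reciprocity, so it is faithful by your setup.
\end{itemize}
This is the same valuation argument the paper uses in the proof of Proposition~\ref{prop:Bnonsplit}, and it makes your version of (1) complete.

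With (1) in hand, reducing (2) to (1) via the Landazuri--Seitz--Zalesskii bounds is the natural route, but as written it is only an outline. You would still have to verify $p^{2m}>|H|_p$ for the minimal cross-characteristic degree $m$. You would also have to check the small cases, including exceptional isomorphisms such as $\PSL_2(7)\cong\PSL_3(2)$ or $\PSL_2(9)\cong\Sp_4(2)'$. Deferring these to \cite{HMII}, as the paper does, is legitimate.
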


\begin{proof}
These are \cite[Propositions 4.2, 4.3 and 5.1]{HMII}, respectively.
\end{proof}

We shall also use the following elementary finite-field descent observation.

\begin{lemma}\label{lem:descent}
Let $p$ be a prime, let $X$ be a finite group, and let $V$ be an irreducible $\F_pX$-module.  Let $U$ be an absolutely irreducible constituent of
$V\otimes_{\F_p}\overline{\F}_p$.  If the smallest field of definition of $U$ is $\F_{p^a}$, then
\[
 |V|=p^{a\dim U}.
\]
\end{lemma}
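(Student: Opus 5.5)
The plan is to use standard Galois descent for modules over finite fields. Put $k=\operatorname{End}_{\F_pX}(V)$. Because $V$ is irreducible, Schur's lemma makes $k$ a finite division ring, and Wedderburn's little theorem then makes it a finite field, say $k\cong\F_{p^b}$. I will show $b=a$, and separately show $\dim_k V=\dim U$. Together these give $|V|=|k|^{\dim_k V}=p^{a\dim U}$.

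The argument proceeds in four steps.

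\emph{Step 1.} $V$ is absolutely irreducible as a $kX$-module. This is standard: $V$ is irreducible over $kX$ with $\operatorname{End}_{kX}(V)=k$. By the density theorem the image of $kX$ is $\operatorname{End}_k(V)$, and that property persists after any field extension.

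\emph{Step 2.} Decompose after extending scalars. We have $k\otimes_{\F_p}\overline{\F}_p\cong\prod_{\sigma}\overline{\F}_p$, with $\sigma$ running over the $b$ embeddings $k\hookrightarrow\overline{\F}_p$. Hence
\[
V\otimes_{\F_p}\overline{\F}_p\cong\bigoplus_{\sigma}V\otimes_{k,\sigma}\overline{\F}_p .
\]
Each summand $V^\sigma$ is absolutely irreducible by Step 1, and the summands are Galois conjugates $U^{\phi^i}$ of one another, where $\phi$ is the Frobenius. So every absolutely irreducible constituent $U$ has $\dim U=\dim_k V$.

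\emph{Step 3.} Identify the field of definition. The character of $V^\sigma$ takes values in $\sigma(k)$. I claim these values generate all of $\sigma(k)$: the $V^\sigma$ are pairwise non-isomorphic, since otherwise $\operatorname{End}_{\F_pX}(V)\otimes\overline{\F}_p$ would have dimension less than $b$. Non-isomorphic absolutely irreducible modules have distinct Brauer characters, so the character field has degree exactly $b$. For finite fields, the field of character values of an absolutely irreducible module equals its smallest field of definition (Schur indices are trivial over finite fields). Thus $a=b$.

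\emph{Step 4.} Combine: $|V|=|k|^{\dim_kV}=p^{a\dim U}$.

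The main obstacle is Step 3, specifically the claim that the character values generate the whole of $k$ and not a proper subfield. I will settle this by the counting argument above. Compare $\dim_{\overline{\F}_p}\operatorname{End}_{\overline{\F}_pX}(V\otimes\overline{\F}_p)=b$ with the count of isomorphic summands. If $V^{\sigma}\cong V^{\sigma'}$ for some $\sigma\ne\sigma'$, this endomorphism dimension would exceed $b$, which is a contradiction.
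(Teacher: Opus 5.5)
Your proposal is correct and follows essentially the same route as the paper: both set $E=\operatorname{End}_{\F_pX}(V)\cong\F_{p^b}$, observe that $V$ is absolutely irreducible over $EX$, split $V\otimes_{\F_p}\overline{\F}_p$ into $b$ pairwise non-isomorphic Frobenius conjugates of $U$, and conclude $a=b$ and $\dim_{\F_p}V=b\dim U$. There are two small points. In Step~3 the inequality is stated backwards: repeated summands would make $\operatorname{End}_{\overline{\F}_pX}(V\otimes_{\F_p}\overline{\F}_p)$ have dimension greater than $b$, as you correctly say in your last paragraph. Also, the appeal to trivial Schur indices is not needed: $U$ is visibly defined over $\F_{p^b}$, while an $\F_{p^a}$-form would give $U^{\phi^a}\cong U$, hence $b\mid a$ because the Frobenius orbit of $U$ has length exactly $b$.
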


\begin{proof}
Put $E=\operatorname{End}_{\F_pX}(V)$.  By Schur's lemma and Wedderburn's theorem, $E=\F_{p^e}$ for some $e$.  With the commuting $E$-structure, $V$ is an absolutely irreducible $EX$-module.  Hence $V\otimes_{\F_p}\overline{\F}_p$ is the direct sum of the $e$ distinct Frobenius conjugates of one absolutely irreducible constituent.  Their Frobenius orbit has length $e$, so their smallest field of definition is $\F_{p^e}$.  Since $U$ has smallest field of definition $\F_{p^a}$, we have $e=a$.  Therefore
\[
 \dim_{\F_p}V=e\dim U=a\dim U,
\]
and the assertion follows.
\end{proof}

\section{Small modules for even-dimensional orthogonal groups in characteristic $2$}\label{sec:smallmodule}
Throughout this section let
\[
 q=2^e,\qquad H=\POmega_{2n}^{\varepsilon}(q),\qquad n\geq5.
\]
For even $q$ the natural projective module has dimension $2n$.  Moreover, in this range the simple group $H$ has trivial Schur multiplier: the multiplier formula of Kleidman and Liebeck \cite[Theorem~5.1.4 and Tables~5.1.A, 5.1.D]{KL} gives generic multiplier order
$\gcd(4,q^n-\varepsilon)=1$, and its exceptional multiplier table contains no group of type $D_n$ or ${}^2D_n$ with $n\geq5$ and $q$ even.  Thus the finite group of simply connected type to which \cite[Lemma 4.2]{GT} applies may be identified with $H$.

The $2$-part of the orthogonal group order is
\[
 |H|_2=q^{n(n-1)}.
\]
Put
\[
 t=\frac{n(n-1)}2.
\]

\begin{proposition}\label{prop:natural}
Let $N$ be a faithful irreducible $\F_2H$-module satisfying
\[
 |N|\leq\sqrt{|H|_2}=q^t.
\]
Then, after twisting the action by an automorphism of $H$, $N$ is the natural module viewed as an $\F_2$-module.  In particular,
\[
 |N|=q^{2n}.
\]
\end{proposition}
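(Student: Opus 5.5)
We would pass to absolutely irreducible constituents and apply Lemma~\ref{lem:descent}. Let $U$ be an absolutely irreducible constituent of $N\otimes_{\F_2}\overline{\F}_2$, and let $\F_{2^a}$ be its smallest field of definition. Then $|N|=2^{a\dim U}$. Since $N$ is faithful and $H$ is simple, $U$ is a nontrivial $\overline{\F}_2H$-module, and $\dim U\ge 2n$. Moreover, $U$ is an irreducible module in the defining characteristic $2$. The hypothesis $|N|\le q^t$ becomes
\[
 a\dim U\le e\,\frac{n(n-1)}2 .
\]

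\emph{Step 1: $a\ge e$ when $U$ is a twist of a module tensor-indecomposable.} Since the Schur multiplier is trivial for $n\ge5$ and $q$ even, $H$ is the simply connected finite group. We may therefore use Steinberg's restricted tensor product theorem: $U\cong\bigotimes_{i=0}^{e-1}L(\lambda_i)^{(2^i)}$ with each $\lambda_i$ a $2$-restricted weight. If $k$ of the $\lambda_i$ are nonzero, then $\dim U\ge(2n)^k$ by the lower bound for nontrivial restricted modules. The field of definition of $U$ is determined by the Frobenius orbit of the tuple $(\lambda_i)$, which gives $a\ge e/\gcd$-type bounds. In particular, if $a<e$, then the tuple is invariant under a nontrivial Frobenius shift, so $k\ge e/a$. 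Hence $a\dim U\ge a(2n)^{e/a}\ge e\cdot 2n$, with equality behaviour controlled. In every case we get $a\dim U\ge e\cdot\dim L(\lambda)$ for some nonzero restricted $\lambda$ appearing, and more precisely $a\dim U\ge e\cdot(2n)^{k'}$, where $k'$ counts nonzero restricted factors per orbit.

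\emph{Step 2: use small-dimension classification.} We invoke the low-dimensional module lists (Lübeck's tables, or \cite[Lemma~4.2]{GT} as the paper signals) for type $D_n$ in characteristic $2$. Every nontrivial restricted irreducible module other than the natural module and its graph/Frobenius twists has dimension at least roughly $\dim$ of the adjoint-type module, about $2n^2-n-\gcd$-correction, which exceeds $n(n-1)/2$. The half-spin modules have dimension $2^{n-1}$, which exceeds $n(n-1)/2$ for $n\ge5$. Combining this with Step 1, the inequality $a\dim U\le e\,n(n-1)/2$ forces two conclusions. First, $U$ has exactly one nonzero restricted factor, so it is a Frobenius twist of $L(\lambda)$ with $\dim L(\lambda)\le n(n-1)/2$. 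Second, $\lambda=\omega_1$, up to the graph automorphism in the ${}^2D_n$ and $D_n$ cases; the graph automorphism fixes $\omega_1$ for $n\ge5$ in any case. Tensor products of two factors already have dimension at least $4n^2>e\,n(n-1)/2\cdot$(per field), and we check this against the factor $a$ in Step 1.

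\emph{Step 3: conclude.} Thus $U$ is the natural module twisted by a field automorphism, and after twisting the $H$-action by that automorphism, $U$ is the natural $2n$-dimensional module. Its field of definition is $\F_q$, since the traces of the natural representation generate $\F_q$; for ${}^2D_n$ the natural module is still realized over $\F_q$ as the quadratic space. So $a=e$ and $|N|=q^{2n}$, and $N$ is the natural module viewed over $\F_2$. The main obstacle is Step 1/2 bookkeeping: we must show rigorously that the field of definition $a$ compensates for tensor-decomposable $U$. The cleanest route is to note that $N$ realized over $\F_{2^a}$ has $\dim_{\F_{2^a}}$ equal to the product of the factor dimensions over an orbit, and to bound it as $(2n)^{e/a}\cdot a\ge 2n\cdot e$ via $x^{m}\ge m x$ for $x\ge2$.
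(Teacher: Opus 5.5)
\textbf{Verdict: correct route, with a genuine gap at the step you flagged.} Your plan is a legitimate alternative to the paper's. The paper quotes the alternatives (i)--(v) of \cite[Lemma~4.2]{GT} and disposes of them one at a time: size bounds for (ii) and (v), \cite[Proposition~5.4.11]{KL} for (i), and graph-invariance of the natural highest weight for (iii). You instead rederive what is needed from Steinberg's tensor product theorem and the Galois orbit of the tuple $(\lambda_0,\dots,\lambda_{e-1})$. That is more self-contained.

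\textbf{The gap.} The inequality you propose, $a\dim U\ge a(2n)^{e/a}\ge 2ne$, only gives $|N|\ge q^{2n}$. The same holds for your reformulation $a\dim U\ge e\dim L(\lambda)$. This is compatible with $|N|\le q^t$, because $t=n(n-1)/2\ge 2n$ for $n\ge5$. So the first conclusion of your Step~2, ``exactly one nonzero restricted factor'', does not follow.

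For a concrete failure, take $q=4$ and $U=L(\omega_1)\otimes L(\omega_1)^{(2)}$. This module is Frobenius-stable, hence defined over $\F_2$, so $a=1$ and $e=2$. Your bound reads $4n^2\ge 4n$, which is true, and Step~3 would then wrongly assert $a=e$. This is exactly alternative (ii) of \cite[Lemma~4.2]{GT}. The paper excludes it separately via $|N|=(q^{1/2})^{\dim U}\ge q^{2n^2}$.

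\textbf{The fix: keep the exponent instead of linearizing it.}
\begin{itemize}
\item Suppose $k\ge2$ factors are nonzero. Then $\dim U\ge(2n)^k$.
\item Your orbit argument gives $ak\ge e$. This is trivial if $a\ge e$. If $a<e$, the support of the tuple is invariant under translation by $a$ modulo $e$, so $k\ge e/\gcd(a,e)\ge e/a$.
\item We have $(2n)^k>kt$ for $k\ge2$. Indeed $4n^2>n(n-1)=2t$, and $2nk\ge k+1$ carries the inequality inductively.
\item Combining these,
\[
 a\dim U\ \ge\ a(2n)^k\ >\ ak\,t\ \ge\ e\,t ,
\]
so $|N|>q^t$, a contradiction.
\end{itemize}
Hence $k=1$. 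Then $a\ge e$, since $a<e$ would force $k\ge2$. So $\dim L(\lambda)\le et/a\le t$, and the small-degree list yields $\lambda=\omega_1$.

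\textbf{The twisted case must be made explicit.} For ${}^2D_n(q)$, the squaring Frobenius acts on tuples by a cyclic shift in which the wrapped entry is replaced by its graph image. So orbit lengths divide $2e$, not $e$, and $a=2e$ genuinely occurs for non-$\tau$-stable weights; this is the paper's alternative (iii). Your Step~1 is phrased only for the untwisted shift. The argument above still goes through unchanged, and because $\omega_1$ is $\tau$-fixed one gets $a=e$ at the end.

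With these two points supplied, your proof is complete. It also has the advantage of not depending on the precise form of the alternatives in \cite{GT}, such as the exponent issue the paper has to address in its remark.
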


\begin{proof}
Let $D$ be the smallest dimension of a nontrivial absolutely irreducible representation of the simply connected group of type $D_n$ in characteristic $2$.  Here
\[
 D=2n.
\]
Indeed the natural module has dimension $2n$, and the small-module list \cite[Proposition~5.4.11 and Table~5.4.A]{KL} shows that no smaller nontrivial projective module occurs in our range.

Apply Guralnick and Tiep \cite[Lemma~4.2]{GT} to the irreducible $\F_2H$-module $N$.  We examine alternatives (i)--(v) there.  In the terminology of that lemma, a ``restricted module'' includes a Frobenius twist of a restricted module.

First suppose alternative (v) holds.  Then
\[
 |N|\geq q^{2D^2}=q^{8n^2},
\]
which contradicts $|N|\le q^t$.

Alternative (iv) occurs only for ${}^3D_4(q)$ and is irrelevant here.

Suppose alternative (ii) holds.  Then an absolutely irreducible constituent has the form
\[
 U\cong W\otimes W^{(e/2)},
\]
where $W$ is a nontrivial restricted module, and the smallest field of definition of $U$ is $\F_{q^{1/2}}$.  Hence $\dim W\ge D=2n$ and
\[
 \dim U\ge D^2=4n^2.
\]
By Lemma~\ref{lem:descent},
\[
 |N|=(q^{1/2})^{\dim U}\ge q^{2n^2}>q^t,
\]
a contradiction.

Now assume alternative (i).  Then an absolutely irreducible constituent $U$ is a tensor product of at most two nontrivial restricted modules, and its smallest field of definition is $\F_q$.  If two nontrivial factors occur, then
\[
 \dim U\ge D^2=4n^2,
\]
so Lemma~\ref{lem:descent} gives $|N|\ge q^{4n^2}>q^t$, again impossible.  Thus $U$ itself is restricted and
\[
 |N|=q^{\dim U},\qquad \dim U\le t.
\]

We now apply \cite[Proposition~5.4.11]{KL}.  For an orthogonal group of natural dimension $d=2n$, that proposition applies to every nontrivial irreducible projective module of dimension less than $d^2/2-1=2n^2-1$, and our bound $t<n^2<2n^2-1$ lies well inside this range.  In type $D_n$ the possible modules in Table 5.4.A, apart from the natural module, are a section of $\Lambda^2$ of dimension
\[
 \frac{d(d-1)}2-\gcd(2,n)=n(2n-1)-\gcd(2,n),
\]
and, only when $n<7$ and $q$ is even, a half-spin module of dimension $2^{n-1}$.  The first number is greater than $t$ for every $n\ge5$, while
\[
 2^{4}=16>10=t\quad(n=5),\qquad
 2^{5}=32>15=t\quad(n=6).
\]
Consequently $U$ is quasiequivalent to the natural module, where quasiequivalent representations are those obtained from one another by an automorphism of the group together with an equivalence of the underlying modules.  Twisting the chosen identification $G/N\cong H$ by the corresponding automorphism of $H$, we may assume that $U$ is natural.  Therefore
\[
 |N|=q^{2n}.
\]

It remains to exclude alternative (iii), which can occur only for the twisted group ${}^2D_n(q)$ in the present situation.  Here $U$ is restricted and its smallest field of definition is $\F_{q^2}$.  By Lemma~\ref{lem:descent},
\[
 |N|=q^{2\dim U},
\]
so the hypothesis $|N|\le q^t$ gives
\[
 \dim U\le \frac t2.
\]
In particular $\dim U<t<2n^2-1$, and the same application of \cite[Proposition 5.4.11]{KL} shows that $U$ must be quasiequivalent to the natural module (the other entries of Table 5.4.A are already larger than $t$, hence a fortiori larger than $t/2$).  This is impossible in alternative (iii).  Indeed, in the twisted case the proof of \cite[Lemma~4.2]{GT} places alternative (iii) in Case~2, where the restricted module $W$ satisfies
\[
 W\not\cong W^{\tau_0},
\]
with $\tau_0$ the graph automorphism.  The natural $D_n$ highest weight is fixed by this graph automorphism, and the same is true of its Frobenius twists.  Thus a module quasiequivalent to the natural module cannot yield alternative (iii).

All alternatives except (i) with one natural factor have been eliminated, and the proposition follows.
\end{proof}

\begin{remark}\label{rem:GTexp}
The exponent in alternative (v) of \cite[Lemma 4.2]{GT} is $2D^2$, not $2D$.  This is also clear from the proof of that lemma, where the estimates are repeatedly compared with $q^{2D^2}$.
\end{remark}

\section{Natural-module extensions for even-dimensional orthogonal groups in characteristic $2$}\label{sec:naturalextension}
We next handle all extensions by the natural module.  The argument does not require the extension class itself to split.

\begin{lemma}\label{lem:stabilizer}
Let $V$ be the natural $2n$-dimensional orthogonal module for
$H=\POmega_{2n}^{\varepsilon}(q)$, where $q$ is even and $n\ge5$.  If $v\in V$ is nonsingular, then
\[
 H_v\cong \Sp_{2n-2}(q)
\]
and
\[
 [H:H_v]=q^{n-1}(q^n-\varepsilon).
\]
Moreover the Schur multiplier of $H_v$ is trivial.
\end{lemma}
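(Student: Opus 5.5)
Since $q$ is even and the multiplier of $H$ is trivial (as recalled at the start of Section~\ref{sec:smallmodule}), we identify $H$ with $\OmegaG_{2n}^{\varepsilon}(q)$ acting on $V$ with quadratic form $Q$ and associated alternating form $B$. The approach is to compute the stabilizer of a nonsingular vector directly and then read off its index and Schur multiplier.

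First, let $v$ be nonsingular, so $Q(v)\neq0$. Because $B(v,v)=0$ in characteristic $2$, we have $v\in v^\perp$. The radical of $B|_{v^\perp}$ is $\langle v\rangle$. Hence $\bar W=v^\perp/\langle v\rangle$ is a nondegenerate symplectic space of dimension $2n-2$. The stabilizer $O(V)_v$ preserves $v^\perp$ and fixes $v$, so it acts on $\bar W$ preserving the induced alternating form. This gives a homomorphism $O(V)_v\to \Sp_{2n-2}(q)$. This is the standard identification $O_{2n}^{\varepsilon}(q)_v\cong \Sp_{2n-2}(q)\cong O_{2n-1}(q)$; see Kleidman--Liebeck, where the stabilizer of a nonsingular $1$-space in type $D$ with $q$ even is the $\mathcal C_1$ subgroup $\Sp_{2n-2}(q)\times\langle r_v\rangle$, with $r_v$ the reflection in $v$. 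Concretely, we prove the map is injective and surjective.

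\textbf{Injectivity.} Suppose $g$ fixes $v$ and acts trivially on $\bar W$. Then $g$ induces the identity on both $v^\perp/\langle v\rangle$ and $V/v^\perp$. Using $Q$ together with $Q(v)\neq0$, one checks that $g=1$.

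\textbf{Surjectivity.} An element of $\Sp(\bar W)$ lifts uniquely to an isometry of the $(2n-1)$-dimensional space $v^\perp$ with form $Q|_{v^\perp}$. This uses the classical isomorphism $O_{2n-1}(q)\cong\Sp_{2n-2}(q)$ for $q$ even, whose inverse is the above map. The isometry then extends to $V$ by Witt's theorem, and fixing $v$ pins the extension down.

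The reflection $r_v$ lies outside $\OmegaG$, since it has quasideterminant (Dickson invariant) $1$. Therefore
\[
 O(V)_v=\Sp_{2n-2}(q)\times\langle r_v\rangle,
\]
and intersecting with $\OmegaG=\ker(\text{Dickson})$ gives $H_v\cong\Sp_{2n-2}(q)$. The step I expect to be the main obstacle is pinning down this intersection cleanly. I would argue that the Dickson invariant restricted to $O(V)_v$ is nontrivial, because it takes value $1$ on $r_v$. Since $\Sp_{2n-2}(q)$ is perfect for $n\ge5$, the Dickson invariant kills the $\Sp$ factor. Hence $H_v$ has index $2$ in $O(V)_v$ and equals the $\Sp$ factor.

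Second, for the index we use Witt's theorem: $O(V)$ is transitive on vectors with a given value $Q(v)=c\neq0$. The $O(V)$-orbit is contained in $H$-orbits of at most two pieces, but $r_v\in O(V)_v$ shows that $O(V)=H\,O(V)_v$. So $H$ is still transitive on $\{u:Q(u)=c\}$, and
\[
 [H:H_v]=\frac{|\OmegaG_{2n}^{\varepsilon}(q)|}{|\Sp_{2n-2}(q)|}.
\]
We evaluate this with $|\OmegaG_{2n}^\varepsilon(q)|=q^{n(n-1)}(q^n-\varepsilon)\prod_{i=1}^{n-1}(q^{2i}-1)$ and $|\Sp_{2n-2}(q)|=q^{(n-1)^2}\prod_{i=1}^{n-1}(q^{2i}-1)$. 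The quotient is $q^{n-1}(q^n-\varepsilon)$. As a sanity check, this matches the count of vectors with $Q(u)=c$.

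Finally, the multiplier of $\Sp_{2n-2}(q)$ with $q$ even and $2n-2\ge8$ is trivial. The generic part is $\gcd(2,q-1)=1$, and the exceptional multipliers occur only for $\Sp_4(2)'$ and $\Sp_6(2)$, which are excluded since $n\ge5$. This follows from \cite[Theorem~5.1.4, Table~5.1.D]{KL}.
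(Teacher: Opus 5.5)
\textbf{Verdict: there is a genuine error in the stabilizer step, though your final conclusion is correct and the repair is short.} Your overall route is the paper's: identify $v^\perp/\langle v\rangle$ as a symplectic space, compute the index from the order formulas, and read the multiplier off \cite[Theorem~5.1.4]{KL}. The false step is the injectivity claim. Take $g\in O(V)_v$ acting trivially on $\bar W$. The computation you allude to gives $g|_{v^\perp}=1$, and for $x\notin v^\perp$ it gives $gx=x+bv$, where $Q(gx)=Q(x)$ forces $b^2Q(v)+bB(x,v)=0$. Besides $b=0$, this has the solution $b=B(x,v)/Q(v)\neq0$. That solution is exactly the reflection $r_v\colon y\mapsto y+B(y,v)Q(v)^{-1}v$, which fixes $v$ (since $B(v,v)=0$) and is trivial on $v^\perp$. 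Hence
\[
\ker\bigl(O(V)_v\to\Sp(\bar W)\bigr)=\langle r_v\rangle .
\]
For the same reason, ``fixing $v$ pins the extension down'' in your surjectivity step is wrong: the Witt extension is determined only up to $r_v$. As written, the proposal contradicts itself. Injectivity plus surjectivity would give $O(V)_v\cong\Sp_{2n-2}(q)$, as in your sentence ``$O^\varepsilon_{2n}(q)_v\cong\Sp_{2n-2}(q)$''. That contradicts the decomposition $O(V)_v=\Sp_{2n-2}(q)\times\langle r_v\rangle$ you then use, and your ``Therefore'' does not derive that decomposition from anything you proved.

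\textbf{The fix.} With the kernel correctly identified as $\langle r_v\rangle$, and $r_v\notin\OmegaG$ because its Dickson invariant is $1$, you get $O(V)_v=H_v\langle r_v\rangle$ with $H_v\cap\langle r_v\rangle=1$. So the restriction $H_v\to\Sp(\bar W)$ is injective. It is surjective because $O(V)_v$ maps onto $\Sp(\bar W)$ while $r_v$ maps to $1$. This is the content of \cite[Proposition~4.1.7]{KL}, which the paper cites directly for the $\OmegaG$-stabilizer of the nonsingular $1$-space. The paper adds only that this equals the vector stabilizer, since $a^2=1$ forces $a=1$ in even characteristic. Once the kernel is corrected, your Dickson-invariant/perfectness argument also goes through, but it is no longer needed. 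Your index computation and multiplier argument then agree with the paper's.

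\textbf{Minor point.} $H=\OmegaG^\varepsilon_{2n}(q)$ holds because $Z(\OmegaG^\varepsilon_{2n}(q))=1$ for $q$ even. The triviality of $M(H)$ plays no role in that identification.
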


\begin{proof}
The stabilizer result \cite[Proposition~4.1.7]{KL} gives the stabilizer of a nonsingular $1$-space in the even-characteristic orthogonal group as $\Sp_{2n-2}(q)$; the proof there identifies the induced symplectic space with $v^\perp/\langle v\rangle$, proves surjectivity onto the symplectic group, and proves that the kernel is trivial \cite[Proposition 4.1.7]{KL}.  Since $q$ is even, an isometry stabilizing the $1$-space $\langle v\rangle$ must fix $v$: if $v$ is sent to $av$, preservation of the nonzero value $Q(v)$ gives $a^2=1$, hence $a=1$ in $\F_q$.  Thus this is also the vector stabilizer.

The index follows from the standard orders
\[
 |\OmegaG_{2n}^{\varepsilon}(q)|
 =q^{n(n-1)}(q^n-\varepsilon)\prod_{i=1}^{n-1}(q^{2i}-1)
\]
and
\[
 |\Sp_{2n-2}(q)|
 =q^{(n-1)^2}\prod_{i=1}^{n-1}(q^{2i}-1).
\]
Finally, for $m=n-1\ge4$ and even $q$, the multiplier theorem and table \cite[Theorem~5.1.4 and Table~5.1.A]{KL} give
\[
 |M(P\Sp_{2m}(q))|=(2,q-1)=1,
\]
and no group in this range occurs in the exceptional multiplier table \cite[Theorem 5.1.4 and Tables 5.1.A, 5.1.D]{KL}.  Since $q$ is even, $\Sp_{2m}(q)=P\Sp_{2m}(q)$, completing the proof.
\end{proof}

\begin{lemma}\label{lem:extend}
Let
\[
 1\longrightarrow V\longrightarrow G\longrightarrow H\longrightarrow1
\]
be an extension, where $V$ is the natural module from Lemma~\ref{lem:stabilizer}.  Let $v\in V$ be nonsingular.  Then there is a nonprincipal $\lambda\in\Irr(V)$ such that
\[
 I_G(\lambda)/V\cong H_v,
\]
and $\lambda$ extends to $I_G(\lambda)$.
\end{lemma}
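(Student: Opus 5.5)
Identify $\Irr(V)$ with the dual $\widehat V=\operatorname{Hom}(V,\C^\times)$ and make it an $H$-module. Since $V$ is abelian and normal in $G$, conjugation by $G$ factors through $H$, and $\widehat V$ is the contragredient module. Choose a nontrivial additive character $\psi\colon\F_q\to\C^\times$ and let $B$ be the polar form of the quadratic form $Q$. Because $B$ is nondegenerate and $H$-invariant, the map
\[
 w\longmapsto \lambda_w,\qquad \lambda_w(x)=\psi\bigl(B(w,x)\bigr),
\]
is an $H$-equivariant bijection $V\to\Irr(V)$. Take $\lambda=\lambda_v$ for the given nonsingular $v$; it is nonprincipal since $v\neq0$. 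Equivariance gives $I_G(\lambda)/V=\operatorname{Stab}_H(\lambda)=H_v$, and Lemma~\ref{lem:stabilizer} gives $H_v\cong\Sp_{2n-2}(q)$.

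For the extension, put $I=I_G(\lambda)$ and use the standard criterion. A $G$-invariant (here $I$-invariant) linear character $\lambda$ of the normal subgroup $V$ extends to $I$ exactly when a certain cohomology class in $H^2(I/V,\C^\times)$ vanishes. That class is the obstruction cocycle from Clifford theory, which is projective-representation theoretic. Since $H^2(I/V,\C^\times)\cong M(I/V)=M(H_v)$, and this multiplier is trivial by Lemma~\ref{lem:stabilizer}, the obstruction vanishes and $\lambda$ extends to $I$. This is the content of \cite[Theorem 11.7]{Isaacs}-type results (Isaacs, Character Theory, Theorem 11.7). Alternatively one can argue directly: the projective representation of $I/V$ attached to $\lambda$ lifts to an ordinary one, because $M(I/V)=1$.

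The key point to check carefully is that the extension of $G$ over $V$ may be nonsplit, and this does not matter. The criterion uses only that $I/V$ has trivial Schur multiplier, not any splitting, so the hypothesis $n\ge5$ (which gives $n-1\ge4$ and avoids the exceptional multipliers of $\Sp_6(2)$ and $\Sp_4(2)'$) is what is really used.

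A second point needing care is the identification of $\operatorname{Stab}_H(\lambda_v)$ with the vector stabilizer $H_v$ rather than the stabilizer of the line $\langle v\rangle$. This follows from the bijectivity of $w\mapsto\lambda_w$ together with equivariance, $\lambda_w^h=\lambda_{wh}$, which holds because $B$ is $H$-invariant. The action of $H$ on $V$ is well defined even though $H$ is a quotient of $\OmegaG$, since for $q$ even $\OmegaG_{2n}^{\varepsilon}(q)$ has trivial center, so $H=\OmegaG$ acts on $V$.
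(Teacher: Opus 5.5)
Your proposal is correct and follows the paper's proof essentially step by step: the $H$-equivariant bijection $w\mapsto\lambda_w$ built from the polar form identifies $I_G(\lambda_v)/V$ with the vector stabilizer $H_v$. The triviality of the Schur multiplier of $H_v$ from Lemma~\ref{lem:stabilizer} then kills the Clifford obstruction class in $H^2(H_v,\C^\times)$, and no splitting of the extension is needed. The only difference is that the paper writes out the cocycle argument explicitly instead of citing a general extendibility criterion, which is the same argument.
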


\begin{proof}
Let $B$ be the nondegenerate alternating bilinear form polarizing the quadratic form on $V$, and fix a nonprincipal additive character
$\nu:\F_q^+\to\C^\times$.  Define
\[
 \lambda_v(x)=\nu(B(v,x))\qquad(x\in V).
\]
The trace pairing is nondegenerate, so $v\mapsto\lambda_v$ is an injective $H$-equivariant map from $V$ to $\Irr(V)$; since both sets have size $|V|$, it is a bijection.  Hence
\[
 I_H(\lambda_v)=H_v.
\]
Put $T=I_G(\lambda_v)$ and $S=T/V\cong H_v$.

For completeness we give the standard extension argument.  Choose a section $s:S\to T$.  Since $\lambda_v$ is $T$-invariant, the function
\[
 \alpha(x,y)=\lambda_v\bigl(s(x)s(y)s(xy)^{-1}\bigr)
\]
is a $2$-cocycle in $Z^2(S,\C^\times)$; its cohomology class is precisely the obstruction to extending $\lambda_v$ to $T$.  By Lemma~\ref{lem:stabilizer}, the Schur multiplier of $S$ is trivial, and therefore
\[
 H^2(S,\C^\times)=1.
\]
Thus $\alpha$ is a coboundary.  Modifying the values on the chosen section by a suitable $1$-cochain gives a linear character $\widetilde\lambda_v$ of $T$ extending $\lambda_v$.
\end{proof}

\begin{proposition}\label{prop:nonsplit}
Let $q$ be even, $n\ge5$, and let
\[
 1\longrightarrow N\longrightarrow G\longrightarrow
 \POmega_{2n}^{\varepsilon}(q)\longrightarrow1
\]
be a nonsplit extension in which $N$ is the natural module.  If $G$ is perfect, then $G$ has a faithful $\chi\in\Irr(G)$ satisfying
\[
 \chi(1)_2<|N|.
\]
In fact one may choose
\[
 \chi(1)=q^{n-1}(q^n-\varepsilon).
\]
\end{proposition}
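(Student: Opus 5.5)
Our plan is to apply Clifford theory to the character $\lambda=\lambda_v$ produced by Lemma~\ref{lem:extend}, with $v$ a nonsingular vector, and then to induce. First we fix the notation. Put $T=I_G(\lambda)$. By Lemma~\ref{lem:extend}, $T/V\cong H_v\cong\Sp_{2n-2}(q)$, and $\lambda$ has an extension $\widetilde\lambda\in\Irr(T)$, which is linear. By the Clifford correspondence, $\chi=\widetilde\lambda^{G}$ is irreducible, and
\[
 \chi(1)=[G:T]=[H:H_v]=q^{n-1}(q^n-\varepsilon),
\]
where the index was computed in Lemma~\ref{lem:stabilizer}. This already gives the degree asserted in the statement. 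Since $|N|=q^{2n}$ and $\chi(1)_2=q^{n-1}<q^{2n}$, the $2$-part inequality is immediate.

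It remains to show that $\chi$ is faithful. Put $K=\Ker(\chi)$, a normal subgroup of $G$. We first check $K\cap N=1$. By Clifford's theorem, $\chi_N$ is a multiple of the sum of the $H$-orbit of $\lambda$, so $K\cap N=\bigcap_{h}\Ker(\lambda^h)$. This is a proper $H$-invariant subspace of $N$, because $\lambda$ is nonprincipal. Since $N$ is irreducible, it is $0$. We then use $G/N\cong H$ simple: either $KN=N$, which gives $K\le N$ and hence $K=1$, or $KN=G$. In the second case $K\cap N=1$ forces $K\cong G/N\cong H$, and $K$ is a complement to $N$. This contradicts the assumption that the extension is nonsplit. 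So $\chi$ is faithful. (Perfectness of $G$ is not needed beyond the hypotheses; alternatively one could argue that $[K,N]\le K\cap N=1$, so $K$ centralizes $N$, but the complement argument is cleaner.)

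The only genuinely delicate input is the extendibility of $\lambda$ to its inertia group. This is where $H^2(\Sp_{2n-2}(q),\C^\times)=1$ is used, and it has been dispatched in Lemmas~\ref{lem:stabilizer} and~\ref{lem:extend}. That is precisely why the hypothesis $n\ge5$ is in force: it gives $m=n-1\ge4$, avoiding the exceptional multiplier of $\Sp_6(2)$. Beyond that, the main point to verify carefully is that the inertia group in $G$ of $\lambda_v$ is exactly the full preimage of $H_v$. This holds because the $G$-action on $\Irr(N)$ factors through $H$, and the identification $v\mapsto\lambda_v$ is $H$-equivariant. The rest is routine.
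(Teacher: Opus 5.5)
Your proposal is correct and follows essentially the same route as the paper: extend $\lambda_v$ (for nonsingular $v$) to its inertia group via Lemma~\ref{lem:extend}, Clifford-induce to get $\chi(1)=[H:H_v]=q^{n-1}(q^n-\varepsilon)$, whose $2$-part is $q^{n-1}$ because $q^n-\varepsilon$ is odd, and deduce faithfulness from $\Ker(\chi)\cap N=1$, since $\Ker(\chi)N=G$ would make $\Ker(\chi)$ a complement to $N$. As you note, the paper's argument does not use the perfectness of $G$ either.
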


\begin{proof}
Choose a nonsingular $v$ and let $\lambda=\lambda_v$ and $T=I_G(\lambda)$ as in Lemma~\ref{lem:extend}.  Extend $\lambda$ to a linear character $\widetilde\lambda\in\Irr(T)$.  By Clifford correspondence,
\[
 \chi=\widetilde\lambda^{\,G}
\]
is irreducible.  Lemma~\ref{lem:stabilizer} gives
\[
 \chi(1)=[G:T]=[H:H_v]=q^{n-1}(q^n-\varepsilon).
\]
Because $q$ is even, $q^n-\varepsilon$ is odd, and hence
\[
 \chi(1)_2=q^{n-1}<q^{2n}=|N|.
\]

It remains to prove faithfulness.  Put $K=\Ker(\chi)$.  Since $\chi_N$ is a sum of the $H$-conjugates of the nonprincipal character $\lambda$, the subgroup $K\cap N$ is the intersection of their kernels.  It is $H$-invariant.  As $N$ is an irreducible $H$-module and $\lambda$ is nonprincipal, this intersection is proper, and therefore
\[
 K\cap N=1.
\]
Now $KN/N\lhd G/N=H$.  Simplicity of $H$ gives either $K\le N$, in which case $K=1$, or $KN=G$.  In the latter case $K\cap N=1$, so $K$ is a complement to $N$ in $G$, contrary to the assumption that the extension is nonsplit.  Hence $K=1$.
\end{proof}

\section{Even-dimensional orthogonal groups in characteristic $2$}\label{sec:evenorthproof}
\begin{proof}[Proof of Theorem~\ref{thm:mainB}]
For $n=4$ the result is \cite[Proposition 4.3]{HMII}, so assume $n\ge5$.  Let $G,N,p$ satisfy the hypotheses of Conjecture B for
\[
 H=\POmega_{2n}^{\varepsilon}(q),\qquad q=2^e.
\]
By Lemma~\ref{lem:HM}(2), we may assume that $p=2$.  By Lemma~\ref{lem:HM}(1), we may further assume
\[
 |N|\le\sqrt{|H|_2}=q^{n(n-1)/2}.
\]
Proposition~\ref{prop:natural} now shows that, after twisting the quotient identification by an automorphism of $H$, $N$ is the natural module and
\[
 |N|=q^{2n}.
\]
If the extension splits, Lemma~\ref{lem:HM}(3) gives a faithful irreducible character of odd degree.  If it does not split, Proposition~\ref{prop:nonsplit} gives a faithful $\chi\in\Irr(G)$ with
\[
 \chi(1)_2=q^{n-1}<q^{2n}=|N|.
\]
Thus Conjecture B holds.
\end{proof}

\begin{proof}[Proof of Corollary~\ref{cor:mainA}]
Theorem~\ref{thm:mainB} proves Conjecture B for $H$.  Since $H$ is not an even-dimensional orthogonal group in odd characteristic, Theorem~3.1 of \cite{HMII} implies Conjecture A for $H$.
\end{proof}

\section{Odd-dimensional orthogonal groups in odd characteristic}\label{sec:B}
Let
\[
 H=\OmegaG_{2n+1}(q),\qquad q=p^f\ \text{odd}.
\]
Conjecture B is already known for $\OmegaG_7(q)$ \cite[Proposition 4.3]{HMII}.  We therefore concentrate on $n\ge4$.

The defining-characteristic part of the group order is
\[
 |H|_p=q^{n^2}.
\]
Thus Lemma~\ref{lem:HM}(1) allows us to assume throughout that
\begin{equation}\label{eq:Bthreshold}
 |N|\le q^{n^2/2}.
\end{equation}

\begin{proposition}\label{prop:Bnatural}
Let $n\ge4$, and let $N$ be a faithful irreducible $\F_pH$-module satisfying \eqref{eq:Bthreshold}. Then, after twisting the quotient identification by an automorphism of $H$, $N$ is the natural $(2n+1)$-dimensional orthogonal module over $\F_q$, viewed as an $\F_p$-module. In particular,
\[
 |N|=q^{2n+1}.
\]
For $n=4$ no such $N$ satisfying \eqref{eq:Bthreshold} exists.
\end{proposition}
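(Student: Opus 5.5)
The plan follows the proof of Proposition~\ref{prop:natural}. Let $D$ be the smallest dimension of a nontrivial absolutely irreducible module for the simply connected group $\Spin_{2n+1}$ in characteristic $p$ (odd). For $n\ge4$ we have $D=2n+1$, realized by the natural module; the spin module has dimension $2^n\ge 16>9$. The Schur multiplier of $H$ is $\mathbb{Z}/2$ for these $n$, so Guralnick--Tiep \cite[Lemma~4.2]{GT} applies to $N$ viewed as a module for $\Spin_{2n+1}(q)$. Since $N$ is faithful for $H$, the centre acts trivially, so only constituents trivial on the centre can occur.

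We then run through the alternatives (i)--(v) of \cite[Lemma~4.2]{GT}, using Lemma~\ref{lem:descent} and the threshold \eqref{eq:Bthreshold}. Alternative (v) gives $|N|\ge q^{2D^2}$, which is far too large. Alternatives (iii) and (iv) concern twisted groups and do not arise for type $B_n$ with $p$ odd. Alternative (ii) needs $q$ to be a square and gives $\dim U\ge D^2$, so
\[
|N|\ge q^{D^2/2}=q^{(2n+1)^2/2}>q^{n^2/2}.
\]
In alternative (i), a tensor product of two nontrivial restricted modules gives $|N|\ge q^{(2n+1)^2}$, which is too big. We are therefore left with $U$ restricted, defined over $\F_q$, and satisfying $\dim U\le n^2/2$.

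Next we apply \cite[Proposition~5.4.11 and Table~5.4.A]{KL} with $d=2n+1$. This is legitimate because $n^2/2<d^2/2-1$. Apart from the natural module, the table lists the following modules for type $B_n$ with $q$ odd:
\begin{itemize}
\item a section of $\Lambda^2$, of dimension $n(2n+1)>n^2/2$;
\item a section of $S^2$, of dimension at least $(2n+1)(n+1)-2$, which is larger still;
\item the spin module of dimension $2^n$, listed only for small $n$ (in \cite{KL}, $n\le 6$).
\end{itemize}
The spin module must be excluded separately: in any case it is not trivial on the centre of $\Spin_{2n+1}$, so it is not an $H$-module at all. This disposes of it independently of size. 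For safety we also check sizes: $2^4=16>8$, $2^5=32>12.5$, and $2^6=64>18$. Hence $U$ is quasiequivalent to the natural module. After twisting by an automorphism, $U$ is natural and $|N|=q^{2n+1}$.

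For $n=4$ we compare with the threshold: $q^{2n+1}=q^9>q^8=q^{n^2/2}$. So even the natural module violates \eqref{eq:Bthreshold}, and no $N$ exists. For $n\ge5$ we have $2n+1\le n^2/2$, so the natural module survives.

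The main obstacle is making sure the small-module list is applied in the correct range and to the correct group. Two points need care. First, restricted modules of $\Spin_{2n+1}$ with nontrivial central action, such as the spin module, must be ruled out by faithfulness on $H$ rather than by size. Second, the Guralnick--Tiep alternatives have to be read correctly when $H$ has a nontrivial multiplier. I would first verify that \cite[Lemma~4.2]{GT} is stated for the simply connected cover and that $N$ inflates to it.
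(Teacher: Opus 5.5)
Your proposal is correct and follows the paper's proof essentially step for step. You inflate to the simply connected group, eliminate alternatives (v), (ii) and the two-factor case of (i) in \cite[Lemma~4.2]{GT} using the threshold, and then exclude the $\Lambda^2$ and spin entries of the Kleidman--Liebeck small-module list by dimension.

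The differences are minor. The paper disposes of $n=4$ at the outset via $|N|\ge q^{D}=q^9>q^8$, whereas you reach the same contradiction at the end, after identifying $N$ as natural. You also note, correctly, that the spin module is already excluded because the centre of $\Spin_{2n+1}(q)$ acts nontrivially on it.
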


\begin{proof}
Inflate $N$ to the simply connected group of type $B_n$.  The smallest dimension of a nontrivial absolutely irreducible defining-characteristic representation is
\[
 D=2n+1.
\]
This follows from the natural module and the small-module list in \cite[Proposition 5.4.11 and Table 5.4.A]{KL}.  Lemma~4.2 of \cite{GT} therefore gives
\[
 |N|\ge q^D.
\]
When $n=4$, this yields $|N|\ge q^9>q^8=q^{n^2/2}$, proving the final assertion.

Assume $n\ge5$.  Since type $B_n$ is untwisted, only alternatives (i), (ii), and (v) of \cite[Lemma 4.2]{GT} can occur.  Alternative (v) gives
\[
 |N|\ge q^{2D^2}>q^{n^2/2},
\]
contrary to \eqref{eq:Bthreshold}.  In alternative (ii), an absolutely irreducible constituent has the form
\[
 U\cong W\otimes W^{(f/2)},
\]
with $\dim W\ge D$, and its smallest field of definition is $\F_{q^{1/2}}$.  Hence
\[
 |N|\ge q^{D^2/2}>q^{n^2/2},
\]
again a contradiction.

We are therefore in alternative (i).  If two nontrivial restricted tensor factors occur, then $\dim U\ge D^2$ and hence $|N|\ge q^{D^2}$, impossible.  Thus $U$ is a single restricted module, its smallest field of definition is $\F_q$, and
\[
 |N|=q^{\dim U},\qquad \dim U\le \frac{n^2}{2}.
\]
The small-module result \cite[Proposition~5.4.11 and Table~5.4.A]{KL} now applies.  For type $B_n$ with $q$ odd, every module in this range is quasiequivalent to the natural module or to one of the listed small exceptions.  The module $\Lambda^2V$ listed there has dimension
\[
 \frac12(2n+1)(2n)=n(2n+1)>\frac{n^2}{2}.
\]
The spin module occurs in the table only for $n\le6$ and has degree $2^n$; for $n=5,6$ these degrees are respectively $32$ and $64$, both larger than $n^2/2$.  Hence $U$ is quasiequivalent to the natural module.  Twisting the quotient identification by the corresponding automorphism, we may take $U$ to be natural.  Therefore
\[
 |N|=q^{2n+1}.
\]
\end{proof}

\begin{lemma}\label{lem:Bstab}
Let $V$ be the natural module for $H=\OmegaG_{2n+1}(q)$ with $q$ odd and $n\ge4$, and let $0\ne v\in V$ be singular. Then
\[
 H_v=Q\rtimes L,
 \qquad |Q|=q^{2n-1},
 \qquad L\cong\OmegaG_{2n-1}(q),
\]
and
\[
 [H:H_v]=q^{2n}-1.
\]
In particular, $[H:H_v]$ is prime to $p$.
\end{lemma}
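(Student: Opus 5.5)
The plan is to count the orbit of $v$ and then identify the stabilizer through the parabolic subgroup attached to the singular line $\langle v\rangle$.

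\emph{Step 1: the orbit.} Witt's theorem shows that $\SO_{2n+1}(q)$ acts transitively on the nonzero singular vectors of $V$. The number of such vectors in a nondegenerate $(2n+1)$-dimensional quadratic space over $\F_q$ is the standard count $q^{2n}-1$. I then check that $\OmegaG_{2n+1}(q)$ is still transitive. For this it suffices that $\SO_{2n+1}(q)_v$ meets every coset of $\OmegaG_{2n+1}(q)$, i.e.\ that the spinor norm is surjective on $\SO_{2n+1}(q)_v$. Write $V=\langle v,w\rangle\perp W$ with $\langle v,w\rangle$ a hyperbolic plane. Since $\dim W=2n-1\ge7$, $W$ contains nonsingular vectors of both square classes of $Q$-value. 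A product of two reflections in such vectors (one value a square, one a nonsquare) has nontrivial spinor norm and fixes $v$. Hence $H$ is transitive and $[H:H_v]=q^{2n}-1$. Since $p\nmid q^{2n}-1$, the index is prime to $p$.

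\emph{Step 2: the structure of $H_v$.} The stabilizer of the singular line $\langle v\rangle$ in $H$ is a maximal parabolic $P=Q\rtimes M$, by \cite[Proposition~4.1.20]{KL} or the standard description.
\begin{itemize}
\item The unipotent radical $Q$ consists of the elements acting trivially on $\langle v\rangle$, on $v^\perp/\langle v\rangle$ and on $V/v^\perp$. Via $x\mapsto$ the induced map $V/v^\perp\to v^\perp/\langle v\rangle$, it is isomorphic to the natural module $W$ of the Levi factor, so $|Q|=q^{2n-1}$.
\item The Levi factor is $M\cong(\GL_1(q)\times\SO_{2n-1}(q))\cap\OmegaG_{2n+1}(q)$, acting on $\langle v\rangle$ by a scalar $a$ and on $W$.
\end{itemize}
The vector stabilizer $H_v$ is the kernel of the action of $P$ on $\langle v\rangle$, so $H_v=Q\rtimes M_v$ with $M_v=\{(1,g)\}\cap\OmegaG_{2n+1}(q)$.

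The key check is that $(1,g)$ with $g\in\SO(W)$ lies in $\OmegaG_{2n+1}(q)$ exactly when $g\in\OmegaG(W)$. This holds because the spinor norm of $V$ restricts to that of $W$ on isometries fixing the hyperbolic plane pointwise: a reflection decomposition of $g$ inside $W$ is also one in $V$. Hence $L:=M_v\cong\OmegaG_{2n-1}(q)$. As a consistency check, $|P|/|H_v|$ should equal $(q-1)/2$, matching the number of scalars $a$ on the line modulo the spinor-norm constraint, and this agrees with the index count from Step 1.

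\emph{Main obstacle.} The delicate part is the spinor-norm bookkeeping in the identification $L\cong\OmegaG_{2n-1}(q)$ rather than $\SO_{2n-1}(q)$ or an index-two variant. I would handle it with the explicit reflection decomposition above and confirm it by the order comparison
\[
|H|=(q^{2n}-1)\,q^{2n-1}\,|\OmegaG_{2n-1}(q)|,
\]
using $|\OmegaG_{2m+1}(q)|=\tfrac12 q^{m^2}\prod_{i=1}^m(q^{2i}-1)$ with $m=n$ and $m=n-1$. The powers of $q$ match because $n^2=(n-1)^2+(2n-1)$, which confirms the result.
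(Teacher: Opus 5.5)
Your proof is correct and follows essentially the paper's route: identify the stabilizer of $\langle v\rangle$ with the $P_1$ parabolic of \cite[Proposition~4.1.20]{KL}, pass to the vector stabilizer, and get the index from the count of nonzero singular vectors (equivalently, from the order formulas). You also supply more explicit spinor-norm bookkeeping than the paper does, both for $\OmegaG_{2n+1}(q)$-transitivity and for $L\cong\OmegaG_{2n-1}(q)$.

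The one slip is in your side ``consistency check'': in fact $|P:H_v|=q-1$, not $(q-1)/2$. The reason is that the element acting as $a$ on $\langle v\rangle$ has spinor norm $a$ on the hyperbolic plane, and this can always be compensated by some $g\in\SO(W)$ of spinor norm $a$. This agrees with $H$ being transitive on the $(q^{2n}-1)/(q-1)$ singular points. That remark is not used anywhere, so the proof stands once it is corrected or dropped.
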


\begin{proof}
The stabilizer of the singular $1$-space $\langle v\rangle$ is the standard maximal parabolic of type $P_1$.  Specializing \cite[Proposition 4.1.20]{KL} to a singular $1$-space in odd dimension gives a unipotent radical of order $q^{2n-1}$ and a Levi component with an $\F_q^\times$ factor acting on $\langle v\rangle$ together with the orthogonal factor $\OmegaG_{2n-1}(q)$.  Passing from the $1$-space stabilizer to the vector stabilizer sets this scalar equal to $1$, giving
\[
 H_v=Q\rtimes\OmegaG_{2n-1}(q).
\]
The standard order formulas then give
\[
 \frac{|\OmegaG_{2n+1}(q)|}{q^{2n-1}|\OmegaG_{2n-1}(q)|}=q^{2n}-1.
\]
Equivalently, this is the number of nonzero singular vectors, in accordance with Witt transitivity.
\end{proof}

\begin{proposition}\label{prop:Bnonsplit}
Let $q=p^f$ be odd, $n\ge5$, and let
\[
 1\longrightarrow N\longrightarrow G\longrightarrow H=\OmegaG_{2n+1}(q)\longrightarrow1
\]
be a nonsplit extension in which $N$ is the natural module. Then there exists a faithful $\chi\in\Irr(G)$ such that
\[
 \chi(1)_p<|N|.
\]
\end{proposition}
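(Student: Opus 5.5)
We follow the pattern of Proposition~\ref{prop:nonsplit}, using the singular-vector stabilizer of Lemma~\ref{lem:Bstab} in place of the nonsingular one. Let $B$ be the nondegenerate symmetric bilinear form on $V=N$, fix a nonprincipal additive character $\nu$ of $\F_q$ (composed with the trace to $\F_p$), and put $\lambda_v(x)=\nu(B(v,x))$. Because $B$ is nondegenerate and $\operatorname{Tr}$ is nondegenerate, $v\mapsto\lambda_v$ is an $H$-equivariant bijection $V\to\Irr(V)$. Hence, for a nonzero singular $v$, we get $I_H(\lambda_v)=H_v=Q\rtimes L$ with $L\cong\OmegaG_{2n-1}(q)$ and $[H:H_v]=q^{2n}-1$, which is prime to $p$. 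Put $T=I_G(\lambda_v)$.

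\textbf{Step 1: extend $\lambda=\lambda_v$ to $T$.} The obstruction is a class in $H^2(T/N,\C^\times)=H^2(H_v,\C^\times)$. Since $\lambda$ is linear, we proceed Sylow by Sylow. For primes $r\ne p$, a Sylow $r$-subgroup $R$ of $H_v$ lies, up to conjugacy, in $L$. If the $r$-part of $M(L)$ is trivial, the restriction of the class to $R$ is trivial, and the restriction map to a Sylow subgroup is injective on the $r$-primary part. Since $n\ge5$, $M(\OmegaG_{2n-1}(q))$ has order $2$ generically and there are no exceptional multipliers in this range, so the only possible obstruction lives at $r=2$.

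For $r=p$ this argument does not work, because $p$-parts can arise from $Q$. There we use Gallagher-type reasoning instead. It suffices that $\lambda$ extends to $NP$ for $P$ a Sylow $p$-subgroup of $T$. Alternatively, and more cleanly, we avoid extension altogether. Take any irreducible constituent $\theta$ of $\lambda^T$. Then $\chi=\theta^G$ is irreducible by Clifford correspondence, and
\[
\chi(1)=(q^{2n}-1)\,\theta(1),\qquad \theta(1)\ \text{divides}\ |T:N|=|H_v|.
\]
So we only need $\theta(1)_p<q^{2n+1}$. Since $\theta(1)^2\le |T:N|$, this gives $\theta(1)_p\le |H_v|_p^{1/2}=q^{(2n-1+(n-1)^2)/2}$, which is too large. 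We therefore need a better choice of $\theta$.

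\textbf{Step 2 (the main obstacle): choose $\theta$ with small $p$-part.} The character triple $(T,N,\lambda)$ is governed by a projective representation class $\beta\in H^2(H_v,\C^\times)$. The $p'$-part of $\beta$ has order dividing $2$ and is inflated from $L$. First we argue that the $p$-part of $\beta$ is trivial. The class restricted to $Q$ comes from the extension of $Q$ inside $G$. Here the fact that $\lambda$ is trivial on $v^\perp\cap N$ should show that $\lambda$ factors through a quotient of order $p$ on which $Q$ acts trivially. From this we would build an extension over $NQ$ explicitly: the subgroup $\Ker\lambda$ is normal in $T$, and $NQ/\Ker\lambda$ is a central extension of the $p$-group $Q$ by $C_p$ acted on by $L$. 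We would then need to show that this extension can be chosen to be $L$-invariant.

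Granting this, $\beta$ is inflated from $L$ through the $2$-cover $\Spin_{2n-1}(q)$. Then $\theta$ corresponds to a projective character $\psi$ of $H_v$, and we may take $\psi$ inflated from a faithful irreducible character of $\Spin_{2n-1}(q)$ (or from the trivial character if $\beta=1$) of $p'$-degree, for instance a semisimple character. Then $\theta(1)$ is prime to $p$, so $\chi(1)_p=1<|N|$.

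\textbf{Step 3: faithfulness.} This is argued verbatim as in Proposition~\ref{prop:nonsplit}. We have $\Ker\chi\cap N=1$ by irreducibility of $N$ and because $\lambda\ne1$. If $\Ker\chi\not\le N$, simplicity of $H$ makes $\Ker\chi$ a complement to $N$, contradicting nonsplitness.

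The expected difficulty is the $p$-part of the obstruction in Step~2, namely controlling the extension over the unipotent radical $Q$. If the direct argument fails, a fallback is to use any $\theta$ and only bound $\theta(1)_p\le |Q|\cdot|L|_p^{1/2}$-type quantities; this is likely still too big, so the cohomological argument is the route to pursue.
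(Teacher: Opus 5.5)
Your Steps 1 and 3 are fine. In particular, the $p'$-part of the factor set is indeed controlled by $L$ and has order dividing $2$. Step 2, however, has a genuine gap: the claim that the $p$-primary part of $\beta\in H^2(H_v,\C^\times)$ vanishes is only granted, and your sketch does not prove it. That $\lambda$ factors through a quotient $N/\Ker\lambda$ of order $p$ that is central in $T/\Ker\lambda$ is automatic. It says nothing about whether $\lambda$ extends to the preimage $\widetilde Q$ of $Q$ in $T$, let alone $L$-invariantly. Since $|Q|$ and $|L|$ are not coprime, no Glauberman/Gallagher-type coprimality argument is available. To complete your route you would need real cohomological input, for instance:
\begin{itemize}
\item that $Q$ carries no nonzero $L$-invariant alternating $\F_p$-form, so that the commutator pairing $Q\times Q\to N/\Ker\lambda$ vanishes and $\lambda$ extends to $\widetilde Q$;
\item that $H^1(L,\operatorname{Hom}(Q,\C^\times))=0$, so that an $L$-invariant extension exists.
\end{itemize}
Neither fact appears in the proposal.

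The missing idea, which is the paper's argument, is that you never need the $p$-part of the obstruction to vanish. The target $\chi(1)_p<q^{2n+1}$ has enough room to absorb the whole factor $|Q|=q^{2n-1}$. Restrict the factor set only to the Levi subgroup $L$, where it is either trivial or the $\Spin_{2n-1}(q)$-class. Choose an irreducible $\alpha|_L$-projective character $\theta$ of $L$ with $p\nmid\theta(1)$, using the trivial character or a faithful $p'$-degree character of $\Spin_{2n-1}(q)$.

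Projectively induce $\theta$ to $S=Q\rtimes L$. The induced character has degree $q^{2n-1}\theta(1)$, whose $p$-adic valuation is exactly $f(2n-1)$. Hence some irreducible $\alpha$-projective constituent $\tau$ satisfies $\tau(1)_p\le q^{2n-1}$. Character-triple correspondence then gives $\varphi\in\Irr(T\mid\lambda)$ with $\varphi(1)=\tau(1)$. Since $[G:T]=q^{2n}-1$ is prime to $p$, $\chi=\varphi^G$ satisfies $\chi(1)_p\le q^{2n-1}<q^{2n+1}=|N|$.

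This is essentially the ``fallback'' you discarded, and it is not too big. The contribution of $L$ can be made $p'$, so the only loss is $|Q|$, not $|Q|\cdot|L|_p^{1/2}$.
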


\begin{proof}
Choose a nonzero singular vector $v\in N$.  Fix a nonprincipal additive character of $\F_q$ and use the nondegenerate symmetric bilinear form on $N$ to obtain the corresponding linear character $\lambda=\lambda_v\in\Irr(N)$.  This identification is $H$-equivariant, so
\[
 I_H(\lambda)=H_v.
\]
Put $T=I_G(\lambda)$ and $S=T/N$.  By Lemma~\ref{lem:Bstab},
\[
 S=Q\rtimes L,\qquad |Q|=q^{2n-1},\qquad L\cong\OmegaG_{2n-1}(q),
\]
and $[G:T]=q^{2n}-1$ is prime to $p$.

The character triple $(T,N,\lambda)$ determines a scalar factor set
\[
 \alpha\in H^2(S,\C^\times).
\]
Restrict it to $L$.  Since $n-1\ge4$, the generic multiplier formula in \cite[Theorem 5.1.4 and Table 5.1.A]{KL} gives
\[
 M(L)\cong C_2.
\]
If $\alpha|_L$ is trivial, let $\theta=1_L$.  If $\alpha|_L$ is nontrivial, it is the multiplier class afforded by the double universal cover $\Spin_{2n-1}(q)$.  Proposition~2.8 of \cite{HMII} constructs a faithful irreducible character of this double cover of degree prime to $p$.  It therefore affords an irreducible $\alpha|_L$-projective character $\theta$ of $L$ with
\[
 \theta(1)_p=1.
\]
Thus such a $\theta$ exists in either case.

Projectively induce $\theta$ from $L$ to $S$.  If
\[
 \theta^S=\sum_i m_i\tau_i
\]
is its decomposition into irreducible $\alpha$-projective characters, then
\[
 q^{2n-1}\theta(1)=\sum_i m_i\tau_i(1).
\]
The left side has $p$-adic valuation exactly $f(2n-1)$.  If every $\tau_i(1)$ had $p$-part strictly larger than $q^{2n-1}$, every term on the right would be divisible by $p^{f(2n-1)+1}$, a contradiction.  Hence some irreducible constituent $\tau$ satisfies
\[
 \tau(1)_p\le q^{2n-1}.
\]
By character-triple correspondence there is $\varphi\in\Irr(T\mid\lambda)$ with
\[
 \varphi(1)_p=\tau(1)_p.
\]
Clifford correspondence gives $\chi=\varphi^G\in\Irr(G)$, and Lemma~\ref{lem:Bstab} yields
\[
 \chi(1)_p=\varphi(1)_p\le q^{2n-1}<q^{2n+1}=|N|.
\]

It remains to prove faithfulness.  Put $K=\Ker(\chi)$.  Since $\chi$ lies over a nonprincipal character of the irreducible module $N$, the subgroup $K\cap N$ is a proper $H$-invariant subgroup of $N$, hence $K\cap N=1$.  Now $KN/N\lhd G/N=H$.  If $K\le N$, then $K=1$.  Otherwise simplicity gives $KN=G$, and then $K\cap N=1$ makes $K$ a complement to $N$, contradicting nonsplitting.  Thus $K=1$.
\end{proof}

\begin{proof}[Proof of Theorem~\ref{thm:Bmain}]
For $n=3$ the result is \cite[Proposition 4.3]{HMII}, so assume $n\ge4$.  Let $G,N,r$ satisfy the hypotheses of Conjecture B.  By Lemma~\ref{lem:HM}(2), we may assume $r=p$, the defining characteristic.  If $|N|>q^{n^2/2}$, Lemma~\ref{lem:HM}(1) finishes the proof.  Thus \eqref{eq:Bthreshold} holds.

For $n=4$, Proposition~\ref{prop:Bnatural} says that no such residual module exists.  Let $n\ge5$.  Proposition~\ref{prop:Bnatural} gives $|N|=q^{2n+1}$ and identifies $N$ with the natural module, up to twisting the quotient identification.  If the extension splits, Lemma~\ref{lem:HM}(3) gives a faithful character of $p'$-degree.  If it is nonsplit, Proposition~\ref{prop:Bnonsplit} gives the required faithful character.  Hence Conjecture B holds.
\end{proof}

\begin{proof}[Proof of Corollary~\ref{cor:Bmain}]
Apply Theorem~\ref{thm:Bmain} and \cite[Theorem~3.1]{HMII}.  Their only orthogonal exception is the even-dimensional family in odd characteristic, so it does not apply here.
\end{proof}

\section{Projective special unitary groups}\label{sec:unitary}
Let $H=\PSU_n(q)$, where $q=p^f$.  The cases $3\le n\le7$ are covered by \cite[Proposition~4.3]{HMII}, so only $n\ge8$ remains.

\begin{proposition}\label{prop:Unatural}
Let $n\ge8$, let $H=\PSU_n(q)$ be simple, and let $N$ be a faithful irreducible $\F_pH$-module satisfying
\[
 |N|\le \sqrt{|H|_p}=q^{n(n-1)/4}.
\]
Then $(n,q+1)=1$ and, after twisting the quotient identification by an automorphism of $H$, $N$ is the natural $n$-dimensional unitary module over $\F_{q^2}$, viewed over $\F_p$.  In particular $|N|=q^{2n}$.  For $n=8$ no such $N$ exists.
\end{proposition}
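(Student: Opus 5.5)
I would follow the same pattern as Propositions~\ref{prop:natural} and~\ref{prop:Bnatural}. First inflate $N$ to the simply connected group $\SU_n(q)$; this is a group of twisted type ${}^2A_{n-1}$. Its absolutely irreducible constituents are defined over subfields of $\overline{\F}_p$ via Frobenius twists of restricted modules, where the Steinberg endomorphism involves $q^2$. The smallest nontrivial dimension is $D=n$, given by the natural module and by \cite[Proposition~5.4.11 and Table~5.4.A]{KL}.

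I would then run through alternatives (i)--(v) of \cite[Lemma~4.2]{GT}. Alternatives (v), (ii), and any tensor product of two nontrivial restricted factors in (i) or (iii) all force $\dim U\ge D^2=n^2$. Even over the smallest possible field of definition $\F_{q^{1/2}}$, Lemma~\ref{lem:descent} then gives $|N|\ge q^{n^2/2}>q^{n(n-1)/4}$, a contradiction. Alternative (iv) is irrelevant.

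The surviving case is a single restricted module $U$ defined over $\F_q$ or $\F_{q^2}$. Here I must be careful about which field actually occurs. For the unitary group, a restricted module $W$ with $W^{(q)}\cong W^{*}$-type twist (such as the adjoint section) is realized over $\F_q$. The natural module $W$ satisfies $W\not\cong W^{\tau}$, where $\tau$ is the twisting graph-field map, and so it is defined only over $\F_{q^2}$. Thus Lemma~\ref{lem:descent} gives $|N|=q^{\dim U}$ or $q^{2\dim U}$, and in both cases
\[
 \dim U\le \frac{n(n-1)}{4}<\frac{n^2}{2}-1.
\]
So \cite[Proposition~5.4.11]{KL} applies and $U$ is, up to quasiequivalence, one of the following:
\begin{itemize}
\item the natural module, of dimension $n$;
\item $\Lambda^2$ or $S^2$ of the natural module, of dimension at least $n(n-1)/2$, which is larger than $n(n-1)/4$;
\item the adjoint section, of dimension $n^2-1-\delta$, which is too big;
\item $\Lambda^3$ for $n\le 9$ or so, of dimension $\binom{n}{3}\ge 56>14=8\cdot7/4$ for $n=8$, and similarly for $n=9$.
\end{itemize}
These checks of Table~5.4.A entries against $n(n-1)/4$ for $n\ge8$ are routine.

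Hence $U$ is natural, possibly after twisting the quotient identification by an automorphism. Its field of definition is $\F_{q^2}$, so $|N|=q^{2n}$. The inequality $2n\le n(n-1)/4$ is equivalent to $n\ge9$, so for $n=8$ no such $N$ exists: $q^{16}>q^{14}$. Finally, $N$ is a faithful module for the simple group $H$. The centre of $\SU_n(q)$, of order $(n,q+1)$, acts on the natural module by nontrivial scalars, so the module factors through $H=\SU_n(q)/Z$ only if $(n,q+1)=1$.

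I expect the main obstacle to be the field-of-definition bookkeeping in the twisted setting. One must make sure that the natural module genuinely requires $\F_{q^2}$ rather than $\F_q$. One must also check that no small module (in particular self-dual ones like the adjoint) realized over $\F_q$, or even over $\F_{q^{1/2}}$ via alternative (ii), could sneak under the bound. For this I would follow the Case~2 analysis in the proof of \cite[Lemma~4.2]{GT}, as was done for ${}^2D_n$ in Proposition~\ref{prop:natural}.
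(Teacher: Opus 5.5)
Your proposal is correct and follows essentially the same route as the paper. You inflate to $\SU_n(q)$ and eliminate the alternatives of \cite[Lemma~4.2]{GT} by dimension and field counting via Lemma~\ref{lem:descent}. You then use the small-module list to force the natural (or dual) module, whose smallest field is $\F_{q^2}$, so that $|N|=q^{2n}$ and $n=8$ is excluded. Finally, the nontrivial central scalars $\zeta^{\pm p^j}$ give $(n,q+1)=1$.

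The only difference is bookkeeping. The paper separates alternatives (i) and (iii): it shows the natural module cannot occur in (i) because that module is not $\F_q$-defined, and it uses the sharper bound $\dim W\le n(n-1)/8$ in (iii). You apply the weaker common bound $\dim U\le n(n-1)/4$ to both fields, which still suffices.
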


\begin{proof}
Inflate $N$ to $L=\SU_n(q)$, so $Z(L)$ acts trivially.  The smallest nontrivial absolutely irreducible defining-characteristic degree is $D=n$ \cite[Proposition 5.4.13 and Table 5.4.C]{KL}.  Apply \cite[Lemma~4.2]{GT}.

Alternative (v) gives $|N|\ge q^{2n^2}$, impossible.  In alternative (ii), an absolutely irreducible constituent has dimension at least $n^2$ and smallest field $\F_{q^{1/2}}$, so $|N|\ge q^{n^2/2}>q^{n(n-1)/4}$.  Alternative (iv) is only for ${}^3D_4$.

We now separate the two field-of-definition alternatives exactly as in the proof of \cite[Lemma~4.2]{GT}.  In alternative (i) the absolutely irreducible constituent is defined over the parameter field $\F_q$.  Two nontrivial restricted tensor factors would give dimension at least $n^2$ and hence $|N|\ge q^{n^2}$, impossible.  Thus there is one restricted constituent $W$, defined over $\F_q$, with $\dim W\le n(n-1)/4$.  The small-module list \cite[Proposition 5.4.11 and Table 5.4.A]{KL} leaves only the natural or dual highest weight in this range.  For the twisted group ${}^2A_{n-1}(q)$, however, these modules are not fixed by the defining graph--field Frobenius and their smallest absolute field of definition is $\F_{q^2}$; this is precisely the distinction recorded in \cite[Proposition 5.4.4(ii), Proposition 5.4.6(ii), and Corollary 2.10.10]{KL}.  Hence they cannot occur in alternative (i), whose constituent is $\F_q$-defined.

We are therefore in alternative (iii), the twisted-field case in Case~2 of \cite[Lemma~4.2]{GT}.  Here the constituent has smallest absolute field $\F_{q^2}$, and Lemma~\ref{lem:descent} gives
\[
 |N|=q^{2\dim W},\qquad \dim W\le \frac{n(n-1)}8.
\]
The same small-module list forces $W$ to be natural or dual up to graph and field twist.  Notice that this conclusion now uses the field convention in \cite[Lemma~4.2]{GT} explicitly: $q$ is the unitary parameter, while the natural module is defined over $\F_{q^2}$.  If $z=\zeta I\in Z(\SU_n(q))$, then on such a twist $z$ acts as $\zeta^{\pm p^j}$ for some $j$.  Since $|Z(\SU_n(q))|=(n,q+1)$ is prime to $p$, every nonidentity central element acts nontrivially.  But $Z(L)$ acts trivially on the inflated module $N$.  Thus $Z(L)=1$, equivalently $(n,q+1)=1$.  Therefore $H=L$ and $N$ is the natural additive module, so $|N|=q^{2n}$.  If $n=8$, this contradicts $q^{16}\le q^{14}$.
\end{proof}

\begin{lemma}\label{lem:Ustab}
Assume $n\ge9$ and $(n,q+1)=1$.  Let $V$ be the natural Hermitian space for $H=\SU_n(q)$, with Hermitian form $h$, and let $v$ be nonsingular.  Fix a nontrivial additive character $\nu$ of $\F_p$ and define
\[
 \lambda_v(x)=\nu\!\left(\operatorname{Tr}_{\F_{q^2}/\F_p}(h(v,x))\right)\qquad(x\in V).
\]
Then
\[
 I_H(\lambda_v)=H_v\cong\SU_{n-1}(q),
\]
and
\[
 [H:H_v]=q^{n-1}\bigl(q^n-(-1)^n\bigr).
\]
\end{lemma}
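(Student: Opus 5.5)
The argument has three parts, each parallel to the proof of Lemma~\ref{lem:extend} and Lemma~\ref{lem:stabilizer} in the orthogonal case: identify $\Irr(V)$ with $V$ equivariantly, compute the stabilizer of a nonsingular vector, and count its orbit.

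\textbf{Step 1: the map $v\mapsto\lambda_v$ is an $H$-equivariant bijection $V\to\Irr(V)$.} Each $\lambda_v$ is a linear character of the additive group $V$, because $x\mapsto h(v,x)$ is additive. Equivariance is the identity
\[
 \lambda_{gv}(gx)=\nu\bigl(\operatorname{Tr}(h(gv,gx))\bigr)=\lambda_v(x),
\]
since $g$ is an isometry of $h$. Injectivity needs two facts. First, $h$ is nondegenerate, so $x\mapsto h(v,x)$ is $\F_{q^2}$-semilinear and nonzero when $v\neq 0$, hence surjective onto $\F_{q^2}$. Second, $\operatorname{Tr}_{\F_{q^2}/\F_p}$ is surjective and $\nu$ is nontrivial, so $\lambda_v$ is nonprincipal whenever $v\ne0$. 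Since $|V|=|\Irr(V)|$, the map is a bijection. Consequently $I_H(\lambda_v)=H_v$, the stabilizer of the vector $v$.

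\textbf{Step 2: $H_v\cong\SU_{n-1}(q)$.} Since $v$ is nonsingular, $V=\langle v\rangle\perp v^\perp$ with $v^\perp$ a nondegenerate Hermitian space of dimension $n-1$. An element fixing $v$ preserves $v^\perp$, so restriction gives an injective map $H_v\to\GU(v^\perp)$. Its image lies in $\SU(v^\perp)$, because the determinant is computed on $v^\perp$ (the eigenvalue on $v$ is $1$). Conversely, every element of $\SU(v^\perp)$ extends by the identity on $\langle v\rangle$ to an element of $\SU_n(q)$ fixing $v$. This gives $H_v\cong\SU_{n-1}(q)$. The hypothesis $(n,q+1)=1$ makes $H=\SU_n(q)$ equal to $\PSU_n(q)$, so no quotient by scalars interferes with this identification.

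\textbf{Step 3: the index.} The cleanest route is the order formula
\[
 |\SU_m(q)|=q^{m(m-1)/2}\prod_{i=2}^m\bigl(q^i-(-1)^i\bigr).
\]
Dividing the formula for $m=n$ by the formula for $m=n-1$ leaves
\[
 q^{n-1}\bigl(q^n-(-1)^n\bigr).
\]
Alternatively, one could combine Witt's theorem with a count of vectors of fixed nonzero norm. That requires checking that $\SU$, and not just $\GU$, is transitive on vectors of a given norm, which holds because a determinant correction can be made on $v^\perp$ when $n\ge2$.

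\textbf{Main obstacle.} The steps above are all standard; the only place needing care is Step~2, namely that the determinant condition restricts correctly to $v^\perp$ and that the vector stabilizer, rather than the stabilizer of the line $\langle v\rangle$, is the right object. Both are settled directly by the orthogonal decomposition $V=\langle v\rangle\perp v^\perp$.
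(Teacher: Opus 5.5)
Your proposal is correct and follows the same route as the paper: the trace of the Hermitian form gives an $H$-equivariant bijection $V\to\Irr(V)$, so $I_H(\lambda_v)=H_v$; the decomposition $V=\langle v\rangle\perp v^\perp$ identifies $H_v$ with $\SU(v^\perp)\cong\SU_{n-1}(q)$ by restriction; and the index comes from the order formula. You supply details the paper leaves implicit; for injectivity, also note that $v\mapsto\lambda_v$ is a homomorphism, $\lambda_{v+w}=\lambda_v\lambda_w$, so showing that $\lambda_v$ is nonprincipal for every $v\neq 0$ (trivial kernel) suffices.
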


\begin{proof}
Nondegeneracy makes $v\mapsto\lambda_v$ injective and $H$-equivariant.  Since $v$ is nonsingular, $V=\langle v\rangle\perp v^\perp$, and restriction identifies its stabilizer with $\SU(v^\perp)\cong\SU_{n-1}(q)$.  The index follows from the standard order formula.
\end{proof}

\begin{proposition}\label{prop:Unonsplit}
Let $n\ge9$, $(n,q+1)=1$, and let
\[
 1\to N\to G\to\SU_n(q)\to1
\]
be a nonsplit perfect extension with $N$ the natural additive module.  Then there is a faithful $\chi\in\Irr(G)$ with $\chi(1)_p<|N|$.
\end{proposition}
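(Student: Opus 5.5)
We follow the pattern of Proposition~\ref{prop:nonsplit}. Choose a nonsingular $v\in N$ and set $\lambda=\lambda_v$ as in Lemma~\ref{lem:Ustab}. The map $v\mapsto\lambda_v$ is an $H$-equivariant injection from $N$ into $\Irr(N)$. Since $|N|=|\Irr(N)|$, it is a bijection. Hence $I_H(\lambda)=H_v\cong\SU_{n-1}(q)$. Put $T=I_G(\lambda)$ and $S=T/N\cong\SU_{n-1}(q)$.

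We first show that $\lambda$ extends to $T$. The obstruction is the class of the character triple $(T,N,\lambda)$ in $H^2(S,\C^\times)$. The group $\SU_{n-1}(q)$ with $n-1\ge8$ is the universal cover of $\PSU_{n-1}(q)$. By \cite[Theorem~5.1.4 and Tables~5.1.A, 5.1.D]{KL}, the exceptional multipliers of unitary groups occur only in dimensions at most $6$, so $M(\SU_{n-1}(q))=1$. Hence the obstruction vanishes, and $\lambda$ extends to a linear character $\widetilde\lambda$ of $T$. If one prefers to avoid multiplier tables, one can instead argue exactly as in Proposition~\ref{prop:Bnonsplit:} take $\theta=1_S$ projectively and induce. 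Since $S$ is then the full group, we can use any $\alpha$-projective irreducible character of $S$ of $p'$-degree. Such a character exists because the projective characters of the universal cover include ones of $p'$-degree, for instance semisimple characters.

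Next comes the degree bound. By Clifford correspondence, $\chi=\widetilde\lambda^{\,G}$ is irreducible, and by Lemma~\ref{lem:Ustab}
\[
 \chi(1)=[H:H_v]=q^{n-1}\bigl(q^n-(-1)^n\bigr).
\]
Since $q^n-(-1)^n$ is prime to $p$, we get $\chi(1)_p=q^{n-1}<q^{2n}=|N|$.

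Faithfulness is handled verbatim as in Proposition~\ref{prop:nonsplit}. Put $K=\Ker(\chi)$. The restriction $\chi_N$ is a sum of $H$-conjugates of the nonprincipal $\lambda$, so $K\cap N$ is a proper $H$-invariant subgroup of the irreducible module $N$, and therefore $K\cap N=1$. Because $H$ is simple, either $K\le N$, which gives $K=1$, or $KN=G$. In the latter case $K$ would be a complement to $N$, which contradicts nonsplitting.

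The main point needing care is the extendibility step. This means confirming that the stabilizer really is $\SU_{n-1}(q)$ rather than a subgroup of $\GU$ involving scalars on $\langle v\rangle$. It also means confirming that its multiplier is trivial in the range $n-1\ge8$. Here the determinant-one condition together with fixing $v$ forces the action on $v^\perp$ to have determinant $1$, which settles the first point. The multiplier triviality follows from the cited tables.
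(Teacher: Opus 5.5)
Your proposal is correct and follows the paper's proof essentially verbatim: a nonsingular $v$ with $T/N\cong\SU_{n-1}(q)$ having trivial multiplier for $n-1\ge8$, so $\lambda_v$ extends to $T$; Clifford induction then gives $\chi(1)=q^{n-1}(q^n-(-1)^n)$, and faithfulness comes from the complement argument. The only superfluous part is your aside about avoiding multiplier tables, which is not needed and would not actually avoid them, since producing a $p'$-degree projective character for an arbitrary factor set still requires knowing the Schur cover.
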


\begin{proof}
Choose nonsingular $v$, put $\lambda=\lambda_v$ and $T=I_G(\lambda)$.  Then $T/N\cong\SU_{n-1}(q)$.  Since $n-1\ge8$, the group $\SU_{n-1}(q)$ is the full covering group of $\PSU_{n-1}(q)$ by \cite[Theorem 5.1.4 and Tables 5.1.A, 5.1.D]{KL}; hence it is centrally closed and
\[
 H^2(\SU_{n-1}(q),\C^\times)=1.
\]
Thus the factor set of the character triple $(T,N,\lambda)$ is trivial and $\lambda$ extends to $\widetilde\lambda\in\Irr(T)$.  Clifford correspondence gives
\[
 \chi=\widetilde\lambda^{\,G}\in\Irr(G),\qquad
 \chi(1)=q^{n-1}\bigl(q^n-(-1)^n\bigr),
\]
so $\chi(1)_p=q^{n-1}<q^{2n}=|N|$.

If $K=\Ker(\chi)$, then $K\cap N=1$ because $\chi$ lies nontrivially over the irreducible module $N$.  If $K\ne1$, simplicity of $G/N$ forces $KN=G$, making $K$ a complement to $N$, contrary to nonsplitting.  Hence $K=1$.
\end{proof}

\begin{proof}[Proof of Theorem~\ref{thm:Umain}]
The cases $3\le n\le7$ are in \cite[Proposition 4.3]{HMII}.  Let $n\ge8$ and let $G,N,r$ satisfy Conjecture B.  By Lemma~\ref{lem:HM}(2), assume $r=p$.  If $|N|>q^{n(n-1)/4}$, use Lemma~\ref{lem:HM}(1).  Otherwise Proposition~\ref{prop:Unatural} gives $(n,q+1)=1$ and $|N|=q^{2n}$, and excludes $n=8$.  Thus $n\ge9$.  The split case is Lemma~\ref{lem:HM}(3), while the nonsplit case is Proposition~\ref{prop:Unonsplit}.
\end{proof}

\begin{proof}[Proof of Corollary~\ref{cor:Umain}]
Apply Theorem~\ref{thm:Umain} and \cite[Theorem 3.1]{HMII}; the exceptional family in that implication is the even-dimensional orthogonal family in odd characteristic, not the unitary family.
\end{proof}

For a nondegenerate quadratic space $V$ of even dimension $2n$ over a field of odd order, let $CO(V)$ denote its conformal orthogonal group and let $\tau:CO(V)\to\F_q^\times$ be the conformal multiplier.  We write
\[
 CO(V)^0=\{g\in CO(V):\det(g)=\tau(g)^n\},
 \qquad P(C)=C/Z(C)\quad(C=CO(V)^0).
\]

\begin{lemma}\label{lem:projcent2}
Let $V$ be a nondegenerate quadratic space over $\F_q$ with $q$ odd and
$\dim V\ge6$, let $C=CO(V)^0$, and let $\bar t$ be the image of $t\in C$ in
$P(C)=C/Z(C)$.  Then $Z(C)=\F_q^\times I$, and
\[
 [C_{P(C)}(\bar t):\overline{C_C(t)}]\le 2.
\]
More precisely, if $gZ(C)$ centralizes $\bar t$, then
\[
 gtg^{-1}=ct\qquad(c\in\F_q^\times),
\]
and comparison of conformal multipliers gives $c^2=1$.  Consequently there are at most two cosets of $\overline{C_C(t)}$ in the
projective centralizer; the nontrivial coset, when it exists, is represented
precisely by an element conjugating $t$ to $-t$.
\end{lemma}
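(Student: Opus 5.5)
Two facts carry the argument: $CO(V)$ acts absolutely irreducibly on $V$, and $\tau$ is a homomorphism. First we compute $Z(C)$, then we analyze projective centralizers.

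\emph{The centre.} Every scalar $aI$ lies in $CO(V)$ with $\tau(aI)=a^2$. Since $\det(aI)=a^{2n}=\tau(aI)^n$, we get $\F_q^\times I\le C$. Conversely, $C$ contains $\OmegaG(V)$, and because $\dim V\ge6$ the group $\OmegaG(V)$ acts absolutely irreducibly on $V$. By Schur's lemma, any element of $Z(C)$ commutes with $\OmegaG(V)$ and is therefore a scalar in $\F_q^\times I$. Hence $Z(C)=\F_q^\times I$.

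\emph{The coset description.} Suppose $gZ(C)$ centralizes $\bar t$. Then $gtg^{-1}t^{-1}\in Z(C)$, so $gtg^{-1}=ct$ for some $c\in\F_q^\times$. Apply the homomorphism $\tau$. The left side gives $\tau(g)\tau(t)\tau(g)^{-1}=\tau(t)$, and the right side gives $\tau(cI)\tau(t)=c^2\tau(t)$. Therefore $c^2=1$, that is, $c=\pm1$.

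\emph{The index bound.} Define $\phi:C_{P(C)}(\bar t)\to\{\pm1\}$ by $gZ(C)\mapsto c$. This is well defined, because replacing $g$ by $ag$ with $a$ scalar leaves $gtg^{-1}$ unchanged. It is a homomorphism, because conjugation by $g$ is linear: if $gtg^{-1}=ct$ and $hth^{-1}=dt$, then $(gh)t(gh)^{-1}=g(dt)g^{-1}=dct$. The kernel of $\phi$ consists of the cosets $gZ(C)$ with $gtg^{-1}=t$, which is exactly $\overline{C_C(t)}$. Hence
\[
 [C_{P(C)}(\bar t):\overline{C_C(t)}]\le|\{\pm1\}|=2.
\]
The nontrivial coset is the preimage of $-1$ under $\phi$. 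So it exists precisely when some $g\in C$ satisfies $gtg^{-1}=-t$, and any such $g$ represents it.

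The only substantive input is the irreducibility of $\OmegaG(V)$ used in the centre computation. This is where the hypothesis $\dim V\ge6$ enters: it guarantees that $\OmegaG(V)$ is quasisimple and acts absolutely irreducibly on the natural module. The rest is the formal $\tau$ computation above.
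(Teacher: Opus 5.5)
Your proposal is correct and follows essentially the same route as the paper: Schur's lemma on the absolutely irreducible natural module gives the centre (the paper uses $\SO(V)$ where you use $\OmegaG(V)$), and comparing conformal multipliers gives $c^2=1$. The only difference is cosmetic: you package the index bound as the homomorphism $gZ(C)\mapsto c\in\{\pm1\}$ with kernel $\overline{C_C(t)}$, whereas the paper shows directly that all elements conjugating $t$ to $-t$ form a single coset $g_0C_C(t)$ and treats scalar $t$ separately, a case your homomorphism handles automatically.
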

\begin{proof}
The scalar subgroup $\F_q^\times I$ is contained in $Z(C)$.  Conversely, an
element of $Z(C)$ centralizes $\SO(V)$; since $\dim V\ge6$, the natural
$\F_qSO(V)$-module is absolutely irreducible, so its centralizer in
$\GL(V)$ consists of scalars.  Hence $Z(C)=\F_q^\times I$.  Therefore
projective centralization means $gtg^{-1}=zt$ with $z=cI$ for some
$c\in\F_q^\times$.  The conformal multiplier is invariant under conjugacy, while
$\tau(ct)=c^2\tau(t)$.  Hence $c^2=1$.  If $t$ is scalar, then $C_{P(C)}(\bar t)=P(C)$ and
$\overline{C_C(t)}=P(C)$, so the index is $1$.  Assume that $t$ is not
scalar.  Every projective-centralizer element therefore has a representative
conjugating $t$ either to $t$ or to $-t$.  The first set is exactly
$\overline{C_C(t)}$.  If no representative conjugates $t$ to $-t$, we are
done.  Otherwise fix $g_0\in C$ with $g_0tg_0^{-1}=-t$.  If also
$gtg^{-1}=-t$, then
\[
 (g_0^{-1}g)t(g_0^{-1}g)^{-1}=t,
\]
so $g_0^{-1}g\in C_C(t)$.  Thus all representatives sending $t$ to $-t$
form a single coset of $C_C(t)$, and hence all their projective images form a
single coset of $\overline{C_C(t)}$.  Therefore the required index is at
most $2$.
\end{proof}

\begin{lemma}\label{lem:extfieldcoset}
Let $E=\F_{q^2}$, let $W$ be an even-dimensional nondegenerate quadratic
space over $E$ of type $\epsilon\in\{+,-\}$, and consider one of the
extension-field factors $O(W)$ occurring in the orthogonal
centralizer model.  Let $F$ denote the nontrivial $E/\F_q$ field
automorphism.  After an $E$-linear correction, $F$ induces an
$\F_q$-isometry normalizing the extension-field factor.  Its determinant
on the underlying $\F_q$-space is $+1$ for $\epsilon=+$ and $-1$ for
$\epsilon=-$.
\end{lemma}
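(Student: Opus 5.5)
Let $Q_W$ be the $E$-valued quadratic form on $W$, of $E$-dimension $2m$. The underlying $\F_q$-space $W_{\F_q}$, of dimension $4m$, carries the $\F_q$-quadratic form $Q=\operatorname{Tr}_{E/\F_q}\circ Q_W$; this is how the extension-field factor $O(W)$ embeds in $O(W_{\F_q})$. The plan has three steps: build a Frobenius-semilinear isometry, show it normalizes $O(W)$, and compute its $\F_q$-determinant from an explicit model.

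\emph{Construction of the isometry.} Choose a standard $E$-basis of $W$. For $\epsilon=+$ take a hyperbolic basis $e_1,\dots,e_m,f_1,\dots,f_m$ with $Q_W(\sum x_ie_i+y_if_i)=\sum x_iy_i$. For $\epsilon=-$ take $m-1$ hyperbolic pairs plus a $2$-dimensional anisotropic plane $\langle u,w\rangle$ with $Q_W(au+bw)=a^2-\delta b^2$, where $\delta$ is a nonsquare in $E$. Let $\sigma$ be the semilinear map applying $F$ to the coordinates. Then $Q_W(\sigma x)=F(Q_W(x))\cdot c$, where $c=1$ in the split case. In the nonsplit case the anisotropic coefficient $\delta$ becomes $F(\delta)$. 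Since $F(\delta)/\delta=\delta^{q-1}$ is a square in $E$ ($q-1$ is even), an $E$-linear diagonal correction on $\langle u,w\rangle$ rescales $w$ by a square root of $\delta^{q-1}$ and restores the form exactly. Call the corrected map $\phi$. It satisfies $Q_W(\phi x)=F(Q_W(x))$, so $Q\circ\phi=\operatorname{Tr}\circ F\circ Q_W=Q$, and $\phi$ is an $\F_q$-isometry. Conjugation by $\phi$ sends an $E$-isometry $g$ to $\phi g\phi^{-1}$, which is again $E$-linear and preserves $Q_W$. Hence $\phi$ normalizes $O(W)$.

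\emph{Determinant.} $\phi$ decomposes over the $\F_q$-subspaces $Ee_i$, $Ef_i$, and $\langle u,w\rangle_E$. On each line $Ex$ with $x$ fixed by $\sigma$, $\phi$ acts as $F$ on $E\cong\F_q^2$. That map has eigenvalues $1$ (on $\F_q$) and $-1$ (on the trace-zero part), so its determinant is $-1$. The split case has $2m$ such lines, giving total determinant $(-1)^{2m}=+1$. In the nonsplit case the $2m-2$ hyperbolic lines contribute $+1$. On $Eu$, $\phi$ is $F$, contributing $-1$. On $Ew$, $\phi$ is $a\mapsto \mu F(a)$ with $\mu^2=\delta^{q-1}$, so $\mu^{q+1}=\pm\delta^{(q^2-1)/2}=\mp1$ for the appropriate choice of sign of $\mu$. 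The determinant of $a\mapsto\mu F(a)$ over $\F_q$ is $N_{E/\F_q}(\mu)\cdot(-1)=-\mu^{q+1}$. Choosing $\mu=\delta^{(q-1)/2}$ gives $\mu^{q+1}=\delta^{(q^2-1)/2}=-1$, since $\delta$ is a nonsquare, so the $Ew$ contribution is $+1$. The total determinant is therefore $(-1)\cdot(+1)=-1$.

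\emph{Main obstacle.} The hard part is making the nonsplit correction and the norm computation precise, and checking that the answer is independent of the choices. Any two corrected normalizing isometries differ by an element of $O(W)$ composed with possible extra $E$-linear elements. $E$-linear maps have $\F_q$-determinant equal to $N_{E/\F_q}(\det_E)$. For elements of $O(W)$, $\det_E=\pm1$, and $N_{E/\F_q}(\pm1)=1$. So the determinant is well defined modulo the corrections allowed by the statement, and the sign computed in the explicit model is the answer.
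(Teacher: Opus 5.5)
Your proof is correct. It uses the same mechanism as the paper: write the isometry as $hF$ with $h$ $E$-linear, so that $\det_{\F_q}(hF)=N_{E/\F_q}(\det_E h)\cdot\det_{\F_q}F$ with $\det_{\F_q}F=(-1)^{\dim_E W}=1$. The difference is where you evaluate it.

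The paper works with an arbitrary Gram matrix $A$, $d=\det A$. From $h^TAh=A^{(q)}$ it gets $\det(h)^2=d^{q-1}$, hence $N_{E/\F_q}(\det h)=d^{(q^2-1)/2}$. It then converts the square class of $d$ into the type, using that $-1$ is a square in $E$. This yields an invariant formula that is visibly independent of $h$. However, it relies on two facts the paper uses without comment: that a correcting $h$ exists (i.e.\ $A$ and $A^{(q)}$ are congruent), and the discriminant-to-type dictionary.

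You evaluate in a standard normal form, so $h$ is explicit: $h=1$ in plus type, and $h=\operatorname{diag}(1,\dots,1,\mu)$ with $\mu^2=\delta^{q-1}$ in minus type. Neither of those inputs is needed. Your value $N_{E/\F_q}(\mu)=\delta^{(q^2-1)/2}=-1$ is exactly the paper's $d^{(q^2-1)/2}$ for $d$ in the square class of $\delta$.

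The price is that independence of the choices must be argued separately. Your argument does this once stated cleanly. Two $F$-semilinear maps $\phi,\phi'$ with $Q_W\circ\phi=F\circ Q_W=Q_W\circ\phi'$ differ by $\phi'\phi^{-1}\in O(W)$, whose $\F_q$-determinant is $N_{E/\F_q}(\pm1)=1$. So the phrase about ``possible extra $E$-linear elements'' should simply be dropped.

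Two small points. First, there is no sign choice for $\mu$. Since $q+1$ is even, $(-\mu)^{q+1}=\mu^{q+1}=\delta^{(q^2-1)/2}=-1$ for either square root. The sentence with $\pm$ and $\mp$ is therefore misleading, although your subsequent computation with $\mu=\delta^{(q-1)/2}$ is right.

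Second, the ambient form in the centralizer model might be $\operatorname{Tr}_{E/\F_q}(c\,Q_W)$ for some $c\in E^\times$, rather than $\operatorname{Tr}_{E/\F_q}\circ Q_W$. In that case just run your argument for $cQ_W$. It has the same isometry group $O(W)$ and, since $\dim_E W$ is even, the same type.
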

\begin{proof}
Write $A$ for a Gram matrix of the $E$-quadratic form and $d=\det A$.
Choose $h\in \GL(W)$ with
\[
 h^T A h=A^{(q)}
\]
and put $\sigma=hF$.  In the extension-field centralizer model of \cite[Lemma~2.5]{Ng}
\cite[Lemma~2.5(i)]{Ng}, this corrected semilinear map is precisely an
$\F_q$-isometry of the ambient restriction-of-scalars orthogonal space
normalizing the factor $O(W)$.  Thus $\sigma$ is the required
$\F_q$-isometry.  Taking determinants gives
\[
 \det(h)^2=d^{q-1}.
\]
Since $q+1$ is even, raising this identity to the $(q+1)/2$ power gives
\[
 N_{E/\F_q}(\det h)=(\det h)^{q+1}
   =d^{(q^2-1)/2}.
\]
Thus the sign is independent of the choice of the correcting map $h$: it is
$+1$ precisely when the square class of $d$ is trivial and $-1$ precisely
when it is nontrivial.  To relate this to orthogonal type without suppressing
the usual discriminant convention, write $\dim_EW=2r$.  The type is
determined by the square class of $(-1)^r d$.  Since $E=\F_{q^2}$ and
$q$ is odd, $-1$ is a square in $E$; hence $(-1)^r d$ and $d$ have the
same square class.  Thus the two signs above are exactly the plus and minus
types, respectively.  If $h$ is replaced by
another correction, their quotient lies in $O(W)$ and has $E$-determinant
$\pm1$, whose norm to $\F_q$ is $1$, giving the same $\F_q$-determinant.
For completeness, the two determinant identities used here are intrinsic.
For an $E$-linear map $u$, restriction of scalars gives
\[
 \det_{\F_q}u=N_{E/\F_q}(\det_Eu).
\]
On one copy of $E$, choose $\alpha\in E$ with $\alpha^q=-\alpha$;
relative to the $\F_q$-basis $1,\alpha$, Frobenius has matrix
$\operatorname{diag}(1,-1)$ and hence determinant $-1$.  Therefore
$\det_{\F_q}F=(-1)^{\dim_EW}=1$, since $\dim_EW$ is even.  Consequently
\[
 \det_{\F_q}\sigma
 =N_{E/\F_q}(\det_Eh)\det_{\F_q}F
 =d^{(q^2-1)/2},
\]
which is $+1$ in plus type and $-1$ in minus type.
\end{proof}

We will also use the following order convention repeatedly.  For a semisimple element $t\in C=CO_{2n}^{\varepsilon}(q)^0$, in either the square- or nonsquare-multiplier cases of \cite[Lemma~2.5]{Ng}, the exact conformal centralizer satisfies
\[
 |C_C(t)|_{p'}=(q-1)|C_{\SO(V)}(t)|_{p'}.
\]
Since $Z(C)$ has order $q-1$ and is contained in $C_C(t)$, the exact projective image therefore has
\begin{equation}\label{eq:projectiveimageorder}
 \bigl|\overline{C_C(t)}\bigr|_{p'}
 =|C_{\SO(V)}(t)|_{p'}.
\end{equation}
The full projective centralizer has $p'$-part equal to the quantity in \eqref{eq:projectiveimageorder} or twice that quantity, according as a projective-centralizing coset outside the exact centralizer is absent or present.  Whenever we claim an exact value below, we indicate separately whether that second component occurs.

\begin{lemma}\label{lem:two-ppd}
Let $q$ be odd and let $n\ge8$ be even.  Let $t$ be a square-multiplier
semisimple element in $CO^+_{2n}(q)^0$.  Suppose that the exact centralizer
of $t$ has, in the decomposition \cite[Lemma~2.5(ii)]{Ng}, total rank
$n$, and that its order is divisible by primitive prime divisors $r_n$ and
$r_{n-1}$ of $q^n-1$ and $q^{n-1}-1$, respectively, while every odd prime
$r\ne p$ dividing the $p'$-part of the centralizer has multiplicative order at most $n$ modulo $q$, where $p$ is the defining characteristic.
Then the exact centralizer has full $\GL_n(q)$ type.
\end{lemma}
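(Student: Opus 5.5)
The plan is to read off the possible shapes of the exact centralizer from the decomposition in \cite[Lemma~2.5(ii)]{Ng}, and then use the two primitive prime divisors to exclude every shape except the one with a single $\GL_n(q)$ factor. In the square-multiplier case that decomposition writes the exact centralizer, up to the central torus accounted for in \eqref{eq:projectiveimageorder}, as a product of an orthogonal or conformal-orthogonal part on the eigenspaces for the self-dual eigenvalues and of factors $\GL_{a_i}(q^{d_i})$ and $\GU_{b_j}(q^{e_j})$ attached to the non-self-dual eigenvalue pairs. The total rank is $\sum a_id_i+\sum b_je_j+(\text{orthogonal ranks})=n$, and the $p'$-part of the order is the product of the corresponding cyclotomic-type factors.

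First I locate $r_n$. A primitive prime divisor of $q^n-1$ can only divide a factor $q^k-1$ or $q^k+1$ whose $k$ satisfies $n\mid k$ (respectively $n\mid 2k$ with $n\nmid k$), where $k$ is at most the rank of the block carrying it. Since the total rank is $n$, the factor containing $r_n$ must have rank at least $n/2$. The possibilities are then:
\begin{enumerate}
\item a factor $\GL_a(q^d)$ with $ad=n$;
\item a $\GU_b(q^e)$ factor with $be=n/2\cdot$(parity constraint), giving $q^{n/2}+1$;
\item an orthogonal factor of minus type and rank $n/2$ or $n$, contributing $q^{n/2}+1$ or $q^{n}-1$ via $q^{n/2}+1\cdot q^{n/2}-1$.
\end{enumerate}

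Next I use $r_{n-1}$. Because $n$ is even, $n-1$ is odd, so $r_{n-1}$ divides only $q^k-1$ with $(n-1)\mid k$. This forces some block of rank at least $n-1$, and a rank $n-1$ or $n$ block can coexist with the $r_n$ block only if a single block of rank $n$ carries both primes. Now go through the rank-$n$ candidates one at a time:
\begin{enumerate}
\item a full orthogonal factor $\SO^+_{2n}$ means $t$ is scalar-like, and its order is divisible by $q^{2(n-1)}-1$, whose primitive prime divisor $r_{2n-2}$ has order $2n-2>n$, contradicting the order hypothesis;
\item a factor $\GL_a(q^d)$ with $ad=n$ and $d>1$ has $(q^{d})^{k}-1$ only for $dk\le n$ with $d\mid dk$, so it misses $r_{n-1}$ because $d\nmid n-1$ would be needed;
\item unitary factors contribute only $q^k+1$-type factors for the rank $n$, and do not produce $r_{n-1}$ with $n-1$ odd;
\item mixed splittings of rank $n$ into an orthogonal part and a $\GL$ part fail because both primes require rank $\ge n-1$, and $n\ge8$ leaves no room.
\end{enumerate}
The only survivor is a single $\GL_n(q)$ factor, which is the claimed full $\GL_n(q)$ type.

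I expect the orthogonal-part cases to be the main obstacle. There the conformal and square-multiplier conventions of \cite[Lemma~2.5]{Ng} mix the $\pm1$ (or $\pm\sqrt{\tau}$) eigenspaces, and one must check carefully that any orthogonal block of rank $n-1$ or $n$ contributes a factor $q^{2k}-1$ with $2k>n$. That factor yields an odd primitive prime divisor of order exceeding $n$; such a divisor exists by Zsigmondy's theorem for $n\ge8$, with no exceptions since $q$ is odd and $2k\ge 2n-2\ge14$. I would handle this by listing the minimal orthogonal factor orders explicitly and invoking Zsigmondy's theorem.
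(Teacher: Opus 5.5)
Your route is sound in outline but organized differently from the paper's, and several intermediate claims are false as written.

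\textbf{Comparison with the paper.} The paper locates $r_n$ first. It runs through the rank $k$ of an orthogonal carrier ($k<n/2$, $k=n/2$, $k=n/2+1$, $n/2+1<k<n$). For $k=n/2$ and $k=n/2+1$ it needs separate rank counting, because the order hypothesis alone does not rule these out. It then shows that a $\GL_a(q^e)$ carrier uses all the rank and must be $\GL_n(q)$, and excludes unitary carriers of $r_n$ by rank counting and parity.

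Your decisive move is to start from $r_{n-1}$ instead. Since $\operatorname{ord}_{r_{n-1}}(q)=n-1$ is odd, $r_{n-1}$ divides no $q^m+1$. So in every block type it needs a factor $q^k-1$ with $(n-1)\mid k$, which forces a block of rank at least $n-1$. The leftover rank is at most $1$, with factors only $q\pm1$, so it cannot hold $r_n$; hence one block carries both primes. This makes the intermediate orthogonal ranks and the unitary $q^{n/2}+1$ configuration disappear automatically. The order hypothesis is then needed only for orthogonal blocks of rank $n-1$ or $n$. You also dispose of the full-rank orthogonal factor explicitly, which the paper's list $k<n$ leaves implicit.

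\textbf{Repairs needed.}

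First, in an orthogonal block of rank $m$ the internal factors $q^{2i}-1$ ($i<m$) have exponent up to $2m-2>m$. So ``$k$ is at most the rank of the block'' is false, and your list (1)--(3) omits $O^{\pm}_{2m}(q)$ with $n/2+1\le m\le n-1$, all of which contain $q^n-1$. The bound ``rank $\ge n/2$'' that you actually use still holds.

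Second, the block carrying both primes can have rank $n-1$, not only $n$. For example, $O^+_{2n-2}(q)$ contains $r_n$ through $q^n-1$ and $r_{n-1}$ through $q^{n-1}-1$. As in your last paragraph, it is excluded by a primitive divisor of $q^{2n-4}-1$, which has order $>n$; note the exponent is $2n-4$, not $2n-2$. Also, ``both primes require rank $\ge n-1$'' is false for $r_n$. Mixed splittings fail for a different reason: the complement of the $r_{n-1}$ block has rank at most $1$.

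Third, the unitary item is wrong. $\GU_b(q^e)$ has factors $q^{ej}-(-1)^j$, which equal $q^{ej}-1$ for even $j$; for example, $\GU_n(q)$ does contain $r_n$. The correct exclusion runs as follows. $r_{n-1}$ must divide some $q^{ej}-1$ with $j$ even and $(n-1)\mid ej\le eb\le n<2(n-1)$. Hence $ej=n-1$, which is odd, contradicting $j$ even.

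With these fixes, and reading your $\GL_a(q^d)$ step as ``$dj=n-1$ forces $d\mid\gcd(n,n-1)=1$'', your argument proves the lemma.
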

\begin{proof}
For odd $q$ and exponents at least $7$, the required primitive divisors
exist by Zsigmondy's theorem; the exceptional pair $(2,6)$ and the exponent
$2$ exception are irrelevant.

Suppose first that $r_n$ lies in an orthogonal factor of rank $k<n$.
If $k<n/2$, none of the factors $q^k\mp1$ and $q^{2i}-1$ ($i<k$) can contain
a prime of order $n$.  If $k=n/2$, only a minus factor, through
$q^{n/2}+1$, can contain $r_n$.  This same factor cannot contain $r_{n-1}$:
its factors $q^{2i}-1$ have $i<k=n/2$, so $n-1\nmid 2i$ (as $n-1$ is odd,
this would require $n-1\mid i$), while the terminal factor
$q^{n/2}+1$ cannot have a prime of order $n-1$.  The remaining rank $n/2$
is likewise too small to contain $r_{n-1}$.

If $k=n/2+1$, the orthogonal factor again cannot contain $r_{n-1}$.  Indeed,
for an internal factor $q^{2i}-1$ one has $i\le k-1=n/2<n-1$, whereas a
prime of order $n-1$ in the terminal factor $q^k\mp1$ would force
$n-1\mid k$ or $n-1\mid2k=n+2$; both are impossible for even $n\ge8$.
The complementary rank $n-k=n/2-1$ is also too small to contain a prime of
order $n-1$.  Finally, if $n/2+1<k<n$, the orthogonal factor contains a
primitive divisor of $q^{2(k-1)}-1$.  This divisor exists, since
$2(k-1)\ge n+2\ge10$, and has order $2(k-1)>n$, contrary to the hypothesis
on odd prime divisors.  Thus $r_n$ lies in an extension-field factor.

If $\GL_a(q^e)$ contains $r_n$, then $n\mid ej$ for some $j\le a$, while
$ea\le n$.  Hence $ea=n$ and $j=a$.  Since $ea=n$, this factor uses the entire available rank, so there are no
other noncentral factors in the decomposition \cite[Lemma~2.5]{Ng}.  Hence the same factor
must also contain $r_{n-1}$.  Thus $n-1\mid e\ell$ for some
$1\le \ell\le a=n/e$.  Since $e\mid n$, we have $(e,n-1)=1$, and therefore
$n-1\mid\ell$.  If $e\ge2$, however,
\[
 \ell\le n/e\le n/2<n-1,
\]
a contradiction.  Hence $e=1$ and $a=n$.

A unitary factor cannot replace this linear factor.  If $r_n$ occurs in a
minus-sign factor $q^{ej}-1$, then $n\mid ej$; together with
$ej\le ea\le n$ this forces $ej=ea=n$, so the factor already uses all
rank, and the primitive divisor of order $n-1$ has the wrong unitary
parity.  If $r_n$ occurs in a plus-sign factor $q^{ej}+1$, then
\[
 n\mid 2ej,\qquad n\nmid ej.
\]
Since $ej\le ea\le n$, this implies $ej=n/2$.  This factor has rank
cost at least $n/2$, so the complementary rank is at most $n/2$.  A
separate factor containing the primitive divisor $r_{n-1}$ would have
rank cost at least $n-1$: a linear factor needs $e'a'\ge n-1$; a
unitary plus-sign factor cannot contain a prime of odd order $n-1$, while
a unitary minus-sign factor needs $e'a'\ge n-1$; and an orthogonal
factor likewise needs rank at least $n-1$.  Since $n-1>n/2$, this is
impossible.  Hence $r_{n-1}$ must occur in the same unitary factor.
Supplying it there requires a minus-sign unitary factor $q^{ej'}-1$ with
$j'$ even.  As $e\mid n/2$ and $(e,n-1)=1$, divisibility
$n-1\mid ej'$ forces $j'\ge n-1$; the rank bound then forces $e=1$,
but $j'=n-1$ is odd, a contradiction.  Hence the only possibility is
$\GL_n(q)$.
\end{proof}

\section{Elementary-abelian extensions of even-dimensional orthogonal groups in odd characteristic}\label{sec:DallB}
The elementary-abelian case admits a uniform treatment, including ranks five and six.

\begin{proposition}\label{prop:DallB}
Let $q=p^f$ be odd, let $n\ge5$, let $\varepsilon\in\{+,-\}$, and suppose that
\[
 S=\POmega_{2n}^{\varepsilon}(q)
\]
is simple.  Then Conjecture B holds for $S$.
\end{proposition}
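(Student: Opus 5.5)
The argument will follow the template already used for types $B_n$ and ${}^2A_{n-1}$ in Sections~\ref{sec:B} and \ref{sec:unitary}. Let $G,N,r$ satisfy the hypotheses of Conjecture B for $S=\POmega_{2n}^{\varepsilon}(q)$. By Lemma~\ref{lem:HM}(2) we may take $r=p$. Since $|S|_p=q^{n(n-1)}$, Lemma~\ref{lem:HM}(1) lets us assume
\[
 |N|\le q^{n(n-1)/2}.
\]

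\textbf{Step 1: the module must be natural.} Inflate $N$ to the simply connected group $\Spin_{2n}^{\varepsilon}(q)$ and apply \cite[Lemma~4.2]{GT} with $D=2n$, exactly as in Proposition~\ref{prop:natural}.
\begin{itemize}
\item Alternatives (ii) and (v) give $|N|\ge q^{2n^2}$, which is too large.
\item In alternative (i), two tensor factors are too large. A single restricted factor of dimension at most $t=n(n-1)/2$ must be natural by \cite[Proposition~5.4.11, Table~5.4.A]{KL}. The $\Lambda^2$ section has dimension about $n(2n-1)$. The half-spin modules, which occur for $n\le6$, have dimension $16>10$ and $32>15$.
\item Alternative (iii) is excluded because the natural weight is graph-stable.
\end{itemize}
We also check that $Z(\Spin)$ acts trivially. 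The half-spin modules are the only ones on which the spin kernel acts nontrivially, and they have been excluded. On the natural module, $-I$ must act trivially for $N$ to be an $S$-module, so $-I\notin\OmegaG_{2n}^{\varepsilon}(q)$. After twisting by an automorphism, $N$ is the natural module and $|N|=q^{2n}$. The key inequality is $2n\le n(n-1)/2$ for $n\ge5$, so this case really occurs and cannot be excluded by size alone.

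\textbf{Step 2: the split and nonsplit cases.} If the extension splits, Lemma~\ref{lem:HM}(3) gives a faithful character of $p'$-degree. If it does not split, we imitate Proposition~\ref{prop:Bnonsplit}.
\begin{itemize}
\item Choose a nonzero singular vector $v$ and let $\lambda=\lambda_v$ via the symmetric form, so that $I_S(\lambda)=S_v$.
\item As in Lemma~\ref{lem:Bstab}, $S_v=Q\rtimes L$ with $|Q|=q^{2n-2}$ and $L$ of type $\OmegaG_{2n-2}^{\varepsilon}(q)$, up to a central quotient.
\item The index $[S:S_v]=(q^n-\varepsilon)(q^{n-1}+\varepsilon)$ is prime to $p$.
\end{itemize}

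\textbf{Step 3: the projective character.} The character triple $(T,N,\lambda)$ gives a class $\alpha\in H^2(S_v,\C^\times)$. Restricted to $L$, this class is afforded by some cover of $L$, a quotient of $\Spin_{2n-2}^{\varepsilon}(q)$ with multiplier of order dividing $4$. By \cite[Proposition~2.8]{HMII}, each such cover has a faithful irreducible character of $p'$-degree. In the generic case $n-1\ge4$ this yields an $\alpha|_L$-projective $\theta$ with $\theta(1)_p=1$. Projectively inducing $\theta$ to $S_v$ and comparing $p$-adic valuations gives a constituent $\tau$ with $\tau(1)_p\le q^{2n-2}$. Character-triple and Clifford correspondence then produce $\chi\in\Irr(G)$ with
\[
 \chi(1)_p\le q^{2n-2}<q^{2n}=|N|.
\]
Faithfulness follows by the usual complement argument: $K\cap N=1$, and if $K\ne1$ then $K$ would be a complement to $N$, contradicting nonsplitting.

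\textbf{Main obstacle.} The delicate part is Step 3. We must identify $L$ precisely inside $\POmega$, including the effect of the central quotient and of elements of $\SO\setminus\OmegaG$. We must also confirm that every multiplier class of $L$, including the non-cyclic $C_2\times C_2$ case for plus type and even rank, is realized by a cover with a $p'$-degree faithful character. If \cite[Proposition~2.8]{HMII} is stated only for simple quotients, I would instead take a $p'$-degree semisimple character of the relevant $\Spin_{2n-2}^{\varepsilon}(q)$ that is nontrivial on the specified central character. A fallback is to restrict $\alpha$ only to a Levi subgroup on which $\alpha$ is trivial.
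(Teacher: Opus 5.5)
Your proof is correct. Step~1, the split case and the faithfulness argument are essentially what the paper does. The genuine difference is in the nonsplit case: you stabilize a \emph{singular} vector, while the paper uses a \emph{nonsingular} one.

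For nonsingular $v$ one has $S_v=\OmegaG(v^\perp)\cong\OmegaG_{2n-1}(q)$, a reductive group, and $[S:S_v]=q^{n-1}(q^n-\varepsilon)$, so the whole $p$-part sits in the orbit length. The factor set then lives on a type-$B_{n-1}$ group with multiplier $C_2$. \cite[Proposition~2.8]{HMII} for $\Spin_{2n-1}(q)$ gives $\theta(1)_p=1$ as in Section~\ref{sec:B}, and one gets $\chi(1)_p=q^{n-1}$ directly, with no induction through a unipotent radical.

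Your choice transplants Proposition~\ref{prop:Bnonsplit} instead. The orbit length is prime to $p$, but you must pass through $Q$, and your valuation argument yields only $\chi(1)_p\le q^{2n-2}$, which still suffices. You also need $p'$-degree projective characters of the type-$D_{n-1}$ Levi factor.

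The obstacle you flag there is smaller than you fear. Your Step~1 already forces $S=\OmegaG^\varepsilon_{2n}(q)$, so $L=\OmegaG(W)$ exactly, with $W=\langle v,v'\rangle^\perp$ for a hyperbolic partner $v'$. Since $D_m(q)$ and ${}^2D_m(q)$ with $m\ge4$ and $q$ odd have no exceptional multiplier, $M(L)=\Ker(\Spin(W)\to\OmegaG(W))\cong C_2$. So there is no $C_2\times C_2$ case for $\alpha|_L$.

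What does need correcting is the claim that ``each such cover has a faithful irreducible character of $p'$-degree.'' When $Z(\Spin(W))\cong C_2\times C_2$, the group $\Spin(W)$ has no faithful irreducible character at all. This happens, for instance, for $\varepsilon=+$, $n$ odd, $q\equiv3\pmod4$, where $-I_V\notin\OmegaG(V)$ but $-I_W\in\OmegaG(W)$.

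What you actually need is a $p'$-degree irreducible character of $\Spin(W)$ that is nontrivial on the spin kernel, equivalently a faithful character of a half-spin quotient. Your semisimple-character fallback supplies this. Choose a semisimple $s$ in a coset of $(G^*)'$ that pairs nontrivially with the spin kernel, as in \cite[Lemma~2.2]{HMII}. Then take the semisimple character in $\mathcal E(\Spin(W),s)$, whose degree $[G^*:C_{G^*}(s)]_{p'}$ is prime to $p$.
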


\begin{proof}
Let $G,N,r$ satisfy the hypotheses of Conjecture B.  By Lemma~\ref{lem:HM}(2), we may assume $r=p$, and by Lemma~\ref{lem:HM}(1) we may assume
\begin{equation}\label{eq:DallBbound}
 |N|\le \sqrt{|S|_p}=q^{t},\qquad t=\frac{n(n-1)}2.
\end{equation}
Apply \cite[Lemma~4.2]{GT}.  The smallest nontrivial absolutely irreducible defining-characteristic projective degree is $D=2n$.  Alternative (v) gives $|N|\ge q^{2D^2}$, and alternative (ii) gives
\[
 |N|\ge(q^{1/2})^{D^2}=q^{2n^2};
\]
both contradict \eqref{eq:DallBbound}.  In alternative (i), two nontrivial restricted tensor factors give $|N|\ge q^{D^2}$, again impossible.  Hence there is one restricted constituent $W$, defined over $\F_q$, with
\[
 |N|=q^{\dim W},\qquad \dim W\le t.
\]
By \cite[Proposition~5.4.11 and Table~5.4.A]{KL}, the only projective module in this range is the natural $2n$-dimensional orthogonal module, up to field and automorphism twist.  Indeed, the next exterior-square entry has dimension
\[
 n(2n-1)-\gcd(2,n)>t,
\]
and every spin entry has dimension at least $2^{n-1}>t$ for $n\ge5$.  Thus $N$ is of natural-module type.

For the twisted group ${}^2D_n(q)$ one must also exclude alternative (iii) of \cite[Lemma~4.2]{GT}.  There the restricted constituent has smallest field $\F_{q^2}$, so \eqref{eq:DallBbound} gives $\dim W\le t/2$.  The same small-module list again forces $W$ to be of natural type, whereas in Case~2 of the proof of \cite[Lemma 4.2]{GT} alternative (iii) requires $W\not\cong W^{\tau_0}$ for the graph automorphism $\tau_0$.  The natural $D_n$ highest weight, and every Frobenius twist of it, is fixed by $\tau_0$.  Hence alternative (iii) is impossible.

We are therefore left only with the natural module.  If it does not factor through $S$, there is nothing to prove.  Suppose that it does.  Then $-I\notin\OmegaG_{2n}^{\varepsilon}(q)$ and $S=\OmegaG_{2n}^{\varepsilon}(q)$ in its natural representation.  After twisting the quotient identification we may take $N=V$ additively, so $|N|=q^{2n}$.

The split extension is covered by Lemma~\ref{lem:HM}(3).  Assume it is nonsplit.  Use the polar form and the trace pairing to identify $V$ with $\Irr(N)$, and choose a nonsingular vector $v$.  For the corresponding $\lambda_v\in\Irr(N)$,
\[
 L:=I_S(\lambda_v)=S_v\cong\OmegaG_{2n-1}(q),
 \qquad [S:L]_p=q^{n-1}.
\]
Put $T=I_G(\lambda_v)$.  The character triple $(T,N,\lambda_v)$ determines a factor set on $L$.  Since $n-1\ge4$, its Schur multiplier has order at most $2$.  If the factor set is trivial, use the linear projective character of $L$; if it is nontrivial, Proposition~2.8 of \cite{HMII} supplies a faithful $p'$-degree character of $\Spin_{2n-1}(q)$, hence an irreducible projective character $\theta$ of $L$ with $\theta(1)_p=1$.  Character-triple correspondence followed by Clifford induction gives a character $\chi\in\Irr(G)$ with
\[
 \chi(1)_p=q^{n-1}<q^{2n}=|N|.
\]
As usual, $\Ker(\chi)\cap N=1$ by irreducibility of $N$; if $\Ker(\chi)\ne1$, its image in the simple quotient is all of $S$, producing a complement to $N$ and contradicting nonsplitting.  Thus $\chi$ is faithful.
\end{proof}

\begin{remark}
For $n=4$, Conjecture B is already proved in \cite[Proposition~4.3]{HMII}.  Hence Conjecture B holds for every simple $\POmega_{2n}^{\varepsilon}(q)$ with $q$ odd and $n\ge4$.
\end{remark}

\section{Plus-type even-dimensional orthogonal groups of odd rank in odd characteristic}\label{sec:Doddcover}
The general implication from Conjecture B to the codegree isomorphism conjecture excludes even-dimensional orthogonal groups in odd characteristic because the quasisimple central-cover argument is missing in this family \cite[Theorem~3.1 and the discussion after Proposition~2.9]{HMII}. For the subfamily below we prove both the central-cover case and Conjecture B, and hence obtain the full codegree isomorphism statement.

\begin{theorem}\label{thm:Doddcover}
Let $n\ge7$ be odd, let $q=p^f\equiv3\pmod4$, and set
\[
 S=\POmega^+_{2n}(q),\qquad X=\Spin^+_{2n}(q).
\]
Then $Z(X)$ has order $2$, $X/Z(X)\cong S$, and there exists a faithful character $\chi\in\Irr(X)$ such that
\[
 \chi(1)=\prod_{i=1}^{n-1}(q^i+1)
 \qquad\text{and}\qquad
 \frac{\chi(1)}{2}\notin\operatorname{cd}(S).
\]
In particular, the conclusion of \cite[Theorem D]{HMII} holds for $S$.
\end{theorem}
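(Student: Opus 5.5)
The approach is to realize $\chi$ as a semisimple character of $X$ through Lusztig's Jordan decomposition. The dual group is $X^*=P(C)$ with $C=CO^+_{2n}(q)^0$, and the semisimple element is chosen to have exact centralizer of $\GL_n(q)$ type and conformal multiplier a nonsquare. Then $\chi(1)/2\notin\operatorname{cd}(S)$ will follow by the same Lusztig-series analysis inside $\SO^+_{2n}(q)$, using the primitive-prime-divisor bookkeeping of Lemma~\ref{lem:two-ppd}.

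The first step is the centre. The simply connected group of type $D_n$ with $n$ odd has cyclic centre of order $\gcd(4,q^n-1)$. Since $n$ is odd and $q\equiv3\pmod4$, we have $q^n\equiv3\pmod4$, so this order is $2$. Because $n\ge7$, the exceptional multiplier table of \cite[Theorem~5.1.4, Table~5.1.D]{KL} contributes nothing, so $X/Z(X)\cong S$ and $X$ is the full cover.

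Next, fix a decomposition $V=U\oplus U'$ into complementary maximal totally singular subspaces. Let $t\in C$ act as $1$ on $U$ and as $c$ on $U'$, where $c\in\F_q^\times$ is a nonsquare. Then $\tau(t)=c$ and $\det t=c^n=\tau(t)^n$, so $t\in C$, and $C_C(t)$ is the Levi subgroup $\GL(U)\times Z(C)$. By \eqref{eq:projectiveimageorder},
\[
 \bigl|\overline{C_C(t)}\bigr|_{p'}=|\GL_n(q)|_{p'}=(q^n-1)\prod_{i=1}^{n-1}(q^i-1).
\]
Dividing $|\SO^+_{2n}(q)|_{p'}=(q^n-1)\prod_{i=1}^{n-1}(q^{2i}-1)$ by this gives exactly $\prod_{i=1}^{n-1}(q^i+1)$.

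By Lemma~\ref{lem:projcent2}, a second projective-centralizing coset exists only if some $g$ conjugates $t$ to $-t$. This would require $\{1,c\}=\{-1,-c\}$ as eigenvalue sets, which forces $c=-1$. When $q\ge7$ we choose a nonsquare $c\ne-1$, so $C_{X^*}(\bar t)$ equals its exact image and is connected of $\GL_n$ type. The semisimple character $\chi\in\mathcal E(X,\bar t)$ then has degree $\prod_{i=1}^{n-1}(q^i+1)$.

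Faithfulness comes from the standard duality $Z(X)^\wedge\cong X^*/O^{p'}(X^*)$. Under it, the central character of $\mathcal E(X,\bar t)$ is read off from the square class of $\tau(t)$. A nonsquare multiplier gives the nontrivial character of $Z(X)$, so $\chi$ is faithful.

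For the second claim, suppose $\psi\in\Irr(S)$ has $\psi(1)=\tfrac12\prod_{i=1}^{n-1}(q^i+1)$. Then $\psi$ lies in a series $\mathcal E(S,\bar s)$ with $s$ of square multiplier, and $\psi(1)_{p'}=[X^*:C_{X^*}(\bar s)]_{p'}\cdot u(1)/e$ with $e\le2$ by Lemma~\ref{lem:projcent2}. Comparing primitive prime divisors, $\psi(1)$ contains $r_{2i}$ for all $i\le n-1$ but neither $r_n$ nor $r_{n-1}$. Hence $|C_{\SO(V)}(s)|$ is divisible by $r_n$ and $r_{n-1}$, and its odd prime divisors other than $p$ have order at most $n$.

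The rank-counting argument of Lemma~\ref{lem:two-ppd} should then force $C$ to be of $\GL_n(q)$ type. That lemma is stated for even $n$, and its proof must be rerun for odd $n$; the bookkeeping is analogous, with the roles of the terminal factors $q^{n/2}\pm1$ disappearing. So $s$ has eigenvalues $a$ on $U$ and $b$ on $U'$ with $ab$ a square. A class of $\GL_n$ type with $e=2$ needs $b=-a$, which gives $ab=-a^2$. That is a nonsquare because $q\equiv3\pmod4$, so this is impossible. Therefore $\psi(1)_{p'}\ge\prod_{i=1}^{n-1}(q^i+1)$, a contradiction.

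The main obstacle is $q=3$, where the only nonsquare is $-1$. There the swap of $U$ and $U'$ conjugates $t$ to $-t$, the centralizer becomes disconnected, and the semisimple characters have only half the desired degree. For $q=3$ I would first try a different element $t$: eigenvalue $1$ on $U$ and a nonsquare element of a torus $\GL_1(q^k)$ on part of $U'$. Failing that, I would use a non-semisimple character in $\mathcal E(X,\bar t)$ whose unipotent part is not fixed by the component group, so that it does not split. The degree comparison in the second claim would then be redone for that choice.
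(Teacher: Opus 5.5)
\textbf{Genuine gap at $q=3$.} Your route is the paper's: the element $\operatorname{diag}(I_U,cI_{U'})$ of nonsquare multiplier, Jordan decomposition, primitive prime divisors forcing a $\GL_n(q)$-type centralizer, and a count of the extra projective coset. However, the proposal has a genuine gap at $q=3$, and it rests on a false premise. You assume that for $q=3$ the element interchanging $U$ and $U'$ lies in $C=CO^+_{2n}(q)^0$. For odd $n$, no element of $C$ interchanges $U$ and $U'$.

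To see this, write such a conformal map as $g=\begin{pmatrix}0&A\\ B&0\end{pmatrix}$ with respect to $U\oplus U'$ and dual bases. Conformality gives $\det(A)\det(B)=\tau(g)^n$, whereas $\det g=(-1)^n\det(A)\det(B)=-\tau(g)^n$. This violates the defining condition $\det g=\tau(g)^n$ of $C$. Equivalently, $U$ and $U'$ lie in opposite families of maximal totally singular subspaces, since $\dim(U\cap U')=0\not\equiv n\pmod 2$.

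Any $g$ with $gtg^{-1}=-t$ must interchange the eigenspaces $U$ and $U'$. So Lemma~\ref{lem:projcent2} gives $C_{X^*}(\bar t)=\overline{C_C(t)}$ for every nonsquare $c$, including $c=-1$ when $q=3$. This is exactly how the paper gets $C_{G^*}(s^*)\cong\GL_n(q)$ uniformly, taking $a=1$, $b=-1$ for $q=3$.

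Without this observation your argument does not cover $q=3$. Neither fallback you sketch produces the character the statement demands. A different semisimple element changes the index away from $\prod_{i=1}^{n-1}(q^i+1)$. A non-stable unipotent part cannot exist, since unipotent characters of $\GL_n(q)$ are stable under all automorphisms. Nor can any unipotent factor supply a missing factor $2$, since nontrivial ones have degree divisible by $q$.

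\textbf{Second part.} The same determinant identity disposes of the extra coset directly; your square-class argument ($b=-a$ makes $ab=-a^2$ a nonsquare) is also valid. Three points need repair.

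First, your claim that $r_{n-1}\nmid\psi(1)$ is false. $r_{n-1}$ divides $q^{(n-1)/2}+1$, which is a factor of $\prod_{i=1}^{n-1}(q^i+1)$; indeed it is your own $r_{2i}$ with $i=(n-1)/2$. The correct deduction is to divide $|X^*|_{p'}$ by $\psi(1)$, which gives $|C_{X^*}(\bar s)|_{p'}=2\eta(1)|\GL_n(q)|_{p'}$. From this the odd primes $r_n$ and $r_{n-1}$ land in the exact centralizer immediately.

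Second, your assertion that every odd prime $\neq p$ of the centralizer has order at most $n$ is unjustified without controlling $\eta(1)$. For odd $n$ it is also unnecessary. A prime of odd order $n$ lies in no orthogonal factor of rank $<n$ and in no unitary factor. Hence it lies in some $\GL_a(q^e)$ with $ea=n$, and $r_{n-1}$ then forces $e=1$. This is the paper's direct argument, which replaces rerunning Lemma~\ref{lem:two-ppd}.

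Third, $C_C(t)=\{\operatorname{diag}(A,\mu A^{-T})\}\cong\GL_n(q)\times\F_q^\times$ contains $\GL(U)Z(C)$ with index $2$, so your identification of $C_C(t)$ is off. Your use of \eqref{eq:projectiveimageorder} nevertheless gives the correct $p'$-order.
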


\begin{proof}
Since $n$ is odd and $q\equiv3\pmod4$, the standard multiplier formula gives
\[
 |Z(X)|=(4,q^n-1)=2.
\]
The dual group of $X$ is
\[
 G^*=P(CO^+_{2n}(q)^0).
\]
For odd $q$, Tiep and Zalesskii \cite[Remark~7.3 and Lemma~7.4]{TZ} describe
\[
 CO^+_{2n}(q)^0=\{g\in CO^+_{2n}(q):\det(g)=\tau(g)^n\},
\]
where $\tau$ is the conformal multiplier.  Write the natural plus-type space as a hyperbolic sum
\[
 V=U\oplus U^*,\qquad \dim U=n.
\]
Choose $a,b\in\F_q^\times$ with $ab$ a nonsquare and $a\ne b$, and put
\[
 s=\operatorname{diag}(aI_U,bI_{U^*}).
\]
(For $q=3$ take $a=1$, $b=-1$.) Then $\tau(s)=ab$ and $\det(s)=(ab)^n$, so $s\in CO^+_{2n}(q)^0$. Let $s^*$ denote its image in $G^*$. The exact conformal centralizer gives, modulo scalars, a $\GL_n(q)$-centralizer. There is no additional projective element interchanging $U$ and $U^*$ inside $CO^+_{2n}(q)^0$: if
\[
 g=\begin{pmatrix}0&A\\ B&0\end{pmatrix}
\]
is a conformal isometry interchanging the two totally singular summands, then the conformal relation gives $\det(AB)=\tau(g)^n$, and hence
\[
 \det(g)=(-1)^n\det(AB)=-\tau(g)^n
\]
because $n$ is odd. This contradicts the defining relation $\det(g)=\tau(g)^n$ for $CO^+_{2n}(q)^0$. Therefore
\[
 C_{G^*}(s^*)\cong \GL_n(q).
\]
Let $\mathcal E(X,s^*)$ denote the Lusztig series of $X$ associated with $s^*$.  By Lusztig--Jordan decomposition there is a semisimple character $\chi\in\mathcal E(X,s^*)$ with
\[
 \chi(1)=|G^*:C_{G^*}(s^*)|_{p'}
 =\frac{(q^n-1)\prod_{i=1}^{n-1}(q^{2i}-1)}{\prod_{i=1}^{n}(q^i-1)}
 =\prod_{i=1}^{n-1}(q^i+1).
\]
Lemma~2.2 of \cite{HMII} gives the needed central-character criterion: since
\[
 |Z(X)|=|G^*/(G^*)'|=2,
\]
all characters in $\mathcal E(X,t^*)$ are trivial on $Z(X)$ if and only if $t^*\in(G^*)'$; if $t^*\notin(G^*)'$, they all lie over the unique nontrivial character of $Z(X)$ \cite[Lemma 2.2]{HMII}. In the present case the square class of the conformal multiplier induces the quotient map
\[
 G^*\longrightarrow \F_q^\times/(\F_q^\times)^2,
 \qquad \bar g\longmapsto \tau(g)(\F_q^\times)^2.
\]
Its kernel is $(G^*)'=\POmega^+_{2n}(q)$ in our parity. Since $\tau(s)$ is nonsquare, $s^*\notin(G^*)'$. Hence $\chi$ is nontrivial on the unique central involution of $X$, and therefore is faithful.

It remains to prove that $\chi(1)/2$ is not a character degree of $S$.  Suppose otherwise, and choose $\psi\in\Irr(S)$ with
\[
 \psi(1)=\frac12\prod_{i=1}^{n-1}(q^i+1).
\]
Inflate $\psi$ to $X$. Its degree is prime to $p$, so let $((t^*),\eta)$ be its Jordan decomposition. Then $t^*\in(G^*)'$ by \cite[Lemma 2.2]{HMII}, and the degree formula in \cite[Theorem~1.4]{Ng} gives
\[
 \psi(1)=|G^*:C_{G^*}(t^*)|_{p'}\,\eta(1),
\]
where $\eta$ is a unipotent character of $C_{G^*}(t^*)$ and $p\nmid\eta(1)$; see \cite[Section 2.2, formula (2.1)]{Ng}. Consequently
\begin{equation}\label{eq:Dodd-centralizer}
 |C_{G^*}(t^*)|_{p'}=2\eta(1)\,|\GL_n(q)|_{p'}.
\end{equation}

We now use the description of semisimple centralizers in $CO^+_{2n}(q)^0$ from \cite[Lemma~2.5]{Ng}. Since $t^*\in(G^*)'$, its multiplier is a square.  After choosing a lift $t\in CO^+_{2n}(q)^0$, the exact centralizer in the orthogonal group has the form
\[
 O_{2k}^{\delta_1}(q)\times O_{2(m-k)}^{\delta_2}(q)
 \times\prod_j \GL_{a_j}^{\alpha_j}(q^{e_j}),
 \qquad
 m+\sum_j e_ja_j=n,
\]
where $\GL^+=\GL$ and $\GL^-=\GU$.

Let $r_n$ be a primitive prime divisor of $q^n-1$.  Such a prime exists by Zsigmondy's theorem because $q$ is odd and $n\ge7$.  Equation \eqref{eq:Dodd-centralizer} forces $r_n$ to divide the projective centralizer. Since $r_n$ is odd and Lemma~\ref{lem:projcent2} shows that a projective centralizer can exceed the image of the exact conformal centralizer by at most a factor $2$, $r_n$ already divides the exact centralizer described in \cite[Lemma~2.5]{Ng}.  A proper orthogonal factor cannot contain $r_n$.  Indeed, if $r_n\mid q^{2i}-1$ with $i<n$, then $n\mid2i$; since $n$ is odd this gives $n\mid i$, impossible.  If $r_n\mid q^k-1$, then $n\mid k$, again impossible for $k<n$.  Finally, if $r_n\mid q^k+1$, then $q^{2k}\equiv1\pmod{r_n}$, so $n\mid2k$. Since $n$ is odd, $n\mid k$, whence $q^k\equiv1\pmod{r_n}$, contradicting $q^k\equiv-1\pmod{r_n}$. Thus the plus terminal factor cannot contain $r_n$ at all.  If an orthogonal factor has rank $n$, the whole exact centralizer is the full orthogonal group, which would give $C_{G^*}(t^*)=G^*$ and contradict \eqref{eq:Dodd-centralizer}.  Hence $r_n$ must occur in one extension-field factor.

A unitary factor $\GU_a(q^e)$ cannot contain $r_n$.  Indeed, if $r_n\mid q^{ej}-(-1)^j$, then for odd $j$ we would have $q^{ej}\equiv-1\pmod{r_n}$, impossible because $\operatorname{ord}_{r_n}(q)=n$ is odd; for even $j$ we obtain $n\mid ej$.  The rank inequality $ea\le n$ then forces $ej=ea=n$, so $j=a$, contradicting the parity of $a$ because $ea=n$ is odd.  Thus the relevant factor is $\GL_a(q^e)$, and
\[
 n\mid ej\quad\text{for some }j\le a,
 \qquad ea\le n.
\]
It follows that $ea=n$ and $j=a$; in particular this one factor consumes the entire rank.

Now let $r_{n-1}$ be a primitive prime divisor of $q^{n-1}-1$.  Again $r_{n-1}$ is odd, so the same exact/projective observation and \eqref{eq:Dodd-centralizer} force $r_{n-1}$ into $\GL_a(q^e)$, so
\[
 n-1\mid ej'
\]
for some $j'\le a$.  Since $e\mid n$, we have $(e,n-1)=1$, and hence $n-1\mid j'$.  If $e>1$, then $e\ge3$ and $a=n/e<n-1$, a contradiction.  Therefore
\[
 e=1,\qquad a=n.
\]
Thus, after quotienting the scalar conformal factor, the image of the exact centralizer has $p'$-order $|\GL_n(q)|_{p'}$. As above, an element that projectively centralizes this full $\GL_n$-type element but is not in the exact centralizer would have to interchange the two $n$-dimensional totally singular summands. The block-matrix determinant calculation gives
\[
 \det(g)=(-1)^n\tau(g)^n=-\tau(g)^n,
\]
so no such element lies in $CO^+_{2n}(q)^0$. Hence in fact
\[
 |C_{G^*}(t^*)|_{p'}=|\GL_n(q)|_{p'}.
\]
This contradicts \eqref{eq:Dodd-centralizer}, since $2\eta(1)\ge2$. Therefore $\chi(1)/2\notin\operatorname{cd}(S)$.
\end{proof}

\begin{proposition}\label{prop:DoddB}
Let $n\ge7$ be odd, let $q=p^f\equiv3\pmod4$, and let
\[
 H=\POmega^+_{2n}(q).
\]
Then Conjecture B holds for $H$.
\end{proposition}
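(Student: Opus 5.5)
This is the special case $\varepsilon=+$, $n\ge7$ odd, of Proposition~\ref{prop:DallB}. That proposition already establishes Conjecture B for every simple $\POmega_{2n}^{\varepsilon}(q)$ with $q$ odd and $n\ge5$. Since $q\equiv3\pmod4$ is odd and $n\ge7>5$, the hypotheses are met and the conclusion follows at once. For the reader's convenience, we recall how that argument specializes here; no new input is needed.

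Let $G,N,r$ satisfy the hypotheses of Conjecture B for $H$.

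\emph{Reduction to the natural module.} By Lemma~\ref{lem:HM}(2) we may take $r=p$. By Lemma~\ref{lem:HM}(1) we may assume $|N|\le q^{n(n-1)/2}$. We then run the Guralnick--Tiep alternatives of \cite[Lemma~4.2]{GT}, exactly as in the proof of Proposition~\ref{prop:DallB}, with $D=2n$. Alternatives (ii) and (v), and tensor products of two restricted modules, all exceed the size bound. Since $H$ is untwisted, alternative (iii) cannot occur. The small-module table \cite[Table~5.4.A]{KL} then leaves only the natural module. This is because $n(2n-1)-2$ and $2^{n-1}$ both exceed $n(n-1)/2$ for $n\ge7$.

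\emph{The natural-module case.} For $N$ to be a faithful $H$-module we need $-I\notin\OmegaG^+_{2n}(q)$. In that case $H=\OmegaG^+_{2n}(q)$ acts on $N=V$. This is in fact automatic here: $q^n\equiv3\pmod4$, so the discriminant of the plus-type space makes $-I$ non-spinorial.

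\emph{The split and nonsplit cases.} If the extension splits, Lemma~\ref{lem:HM}(3) applies. Otherwise, pick a nonsingular $v$ and take $\lambda_v\in\Irr(N)$ via the polar form, with stabilizer $L\cong\OmegaG_{2n-1}(q)$ and $[H:L]_p=q^{n-1}$. Next take an irreducible projective character $\theta$ of $L$ for the character-triple factor set with $\theta(1)_p=1$. If the factor set is trivial, $\theta=1_L$. If it is nontrivial, $\theta$ comes from the faithful $p'$-degree character of $\Spin_{2n-1}(q)$ supplied by \cite[Proposition~2.8]{HMII}; this is possible because $M(L)$ has order $2$. Character-triple correspondence and Clifford induction then give $\chi\in\Irr(G)$ with $\chi(1)_p=q^{n-1}<q^{2n}=|N|$.

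\emph{Faithfulness.} Since $N$ is irreducible, $\Ker(\chi)\cap N=1$. If $\Ker(\chi)\ne1$, it would be a complement to $N$, contradicting nonsplitting. Hence $\chi$ is faithful.

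Nothing in this argument is a genuine obstacle. The only point needing care is to check that the parameters satisfy the hypotheses of Proposition~\ref{prop:DallB}, which they do.
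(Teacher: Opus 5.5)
Your proposal is correct and is essentially the paper's own argument. The paper writes the proof out in full rather than citing Proposition~\ref{prop:DallB}, which precedes it and already covers this case. The written-out proof has the same steps as yours: reduce to $r=p$ and the size bound, use the Guralnick--Tiep alternatives to leave only the natural module, note $-I\notin\OmegaG^+_{2n}(q)$, take the nonsingular-vector stabilizer $\OmegaG_{2n-1}(q)$ with a $p'$-degree projective character, and obtain faithfulness from the complement argument.
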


\begin{proof}
Let $G$ be perfect and let $N\lhd G$ be an elementary abelian $r$-group such that $G/N\cong H$ and $N$ is a faithful irreducible $H$-module. By Lemma~\ref{lem:HM}(2), we may assume that $r=p$, the defining characteristic. By Lemma~\ref{lem:HM}(1), we may also assume
\begin{equation}\label{eq:DoddNbound}
 |N|\le \sqrt{|H|_p}=q^{n(n-1)/2}.
\end{equation}

We first determine $N$. Since $n$ is odd and $q\equiv3\pmod4$, Propositions~2.5.10 and~2.5.13 of \cite{KL} show that $-I\notin\Omega^+_{2n}(q)$: indeed, for a plus-type $2n$-space the discriminant is nonsquare because $n(q-1)/2$ is odd, and $-I$ lies in $\Omega$ exactly when this discriminant is square. Hence
\[
 H=\POmega^+_{2n}(q)=\Omega^+_{2n}(q),
\]
and the natural $2n$-dimensional orthogonal module is an honest faithful $H$-module; see \cite[Propositions 2.5.10 and 2.5.13]{KL}.

Apply \cite[Lemma~4.2]{GT} to the irreducible $\F_pH$-module $N$. The smallest nontrivial absolutely irreducible defining-characteristic degree is $D=2n$. Since type $D_n$ is untwisted, only alternatives (i), (ii), and (v) of that lemma can occur. Alternative (v) gives $|N|\ge q^{2D^2}$, contrary to \eqref{eq:DoddNbound}. In alternative (ii), an absolute constituent is a tensor product of two nontrivial restricted modules and its smallest field is $\F_{q^{1/2}}$; hence
\[
 |N|\ge (q^{1/2})^{D^2}=q^{2n^2}>q^{n(n-1)/2},
\]
again impossible. In alternative (i), two nontrivial restricted tensor factors similarly give $|N|\ge q^{D^2}$, so there is a single restricted constituent $W$, defined over $\F_q$, and
\[
 |N|=q^{\dim W},\qquad \dim W\le \frac{n(n-1)}2.
\]
The small-module result \cite[Proposition~5.4.11 and Table~5.4.A]{KL} now forces $W$ to be quasiequivalent to the natural module. Indeed, the exterior-square entry has dimension at least $n(2n-1)-2$, and every spin entry has dimension at least $2^{n-1}$; both exceed $n(n-1)/2$ for $n\ge7$. After twisting the quotient identification by the corresponding automorphism of $H$, we may therefore assume
\begin{equation}\label{eq:Doddnatural}
 N=V\quad\text{additively},\qquad |N|=q^{2n},
\end{equation}
where $V$ is the natural orthogonal space.

If the extension splits, Lemma~\ref{lem:HM}(3) finishes the proof. Assume that it is nonsplit. Let $B$ be the polar form of the quadratic form on $V$, fix a nonprincipal additive character $\nu$ of $\F_p$, and for $v\in V$ put
\[
 \lambda_v(x)=\nu\!\left(\operatorname{Tr}_{\F_q/\F_p} B(v,x)\right).
\]
The nondegeneracy of $B$ and of the trace pairing gives an $H$-equivariant isomorphism $V\cong\Irr(N)$. Choose a nonsingular $v$. Then
\[
 I_H(\lambda_v)=H_v.
\]
Since $V=\langle v\rangle\perp v^\perp$, restriction to $v^\perp$ identifies
\[
 L:=H_v\cong\OmegaG_{2n-1}(q).
\]
The spinor norm of an isometry fixing $v$ is the spinor norm of its restriction to $v^\perp$, so this is an equality with $\OmegaG_{2n-1}(q)$, not merely with $\SO_{2n-1}(q)$. From the standard order formulas,
\begin{equation}\label{eq:Doddindex}
 [H:L]_p=q^{n-1}.
\end{equation}

Put $T=I_G(\lambda_v)$, so $T/N\cong L$. The character triple $(T,N,\lambda_v)$ determines a factor set $\alpha\in H^2(L,\C^\times)$. In our range the Schur multiplier of $L=\OmegaG_{2n-1}(q)$ has order $2$. If $\alpha$ is trivial, choose the degree-one $\alpha$-projective character. If $\alpha$ is nontrivial, Proposition~2.8 of \cite{HMII} constructs a faithful irreducible character of the double cover $\Spin_{2n-1}(q)$ whose degree is prime to $p$; it affords an irreducible $\alpha$-projective character $\theta$ of $L$ with
\[
 \theta(1)_p=1.
\]
Thus in either case character-triple correspondence gives $\varphi\in\Irr(T\mid\lambda_v)$ with $\varphi(1)_p=1$. Clifford correspondence yields
\[
 \chi=\varphi^G\in\Irr(G),
\]
and by \eqref{eq:Doddindex},
\[
 \chi(1)_p=[G:T]_p\varphi(1)_p=q^{n-1}<q^{2n}=|N|.
\]

Finally $\chi$ is faithful. Since $\lambda_v\ne1_N$, the subgroup $\Ker(\chi)\cap N$ is a proper $H$-invariant subgroup of the irreducible module $N$, hence is trivial. If $\Ker(\chi)\ne1$, its image in the simple quotient $G/N$ is all of $H$, so $G=N\Ker(\chi)$ with $N\cap\Ker(\chi)=1$, producing a complement to $N$ and contradicting nonsplitting. Therefore $\Ker(\chi)=1$.
\end{proof}

\begin{theorem}\label{thm:Doddcodegree}
Let $n\ge7$ be odd and $q\equiv3\pmod4$ an odd prime power. Set
\[
 H=\POmega^+_{2n}(q).
\]
If $G$ is a nontrivial finite group and $\operatorname{cod}(G)\subseteq\operatorname{cod}(H)$, then $G\cong H$.
\end{theorem}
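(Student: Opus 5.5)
The plan is to feed the two ingredients just established into the reduction machinery of \cite{HMII}. Theorem~3.1 of \cite{HMII} excludes even-dimensional orthogonal groups in odd characteristic, because only there is the quasisimple central-cover step missing. Everything else in that reduction is available, and we now have both the cover step and Conjecture~B for the present subfamily.

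Concretely, let $G$ be nontrivial with $\cod(G)\subseteq\cod(H)$. First, the order-containment and simple-quotient results \cite[Theorem~C and Theorem~8.3]{HMI} show that $|G|$ divides $|H|$ and that $G$ has a quotient isomorphic to $H$. Hence $G$ has a normal subgroup $M$ with $G/M\cong H$, and we must show $M=1$. Second, we take $M\ne1$ minimal counterexample-style. Passing to a suitable quotient of $G$ (codegree containment passes to quotients, since $\cod(G/K)\subseteq\cod(G)$), we may assume $M$ is a minimal normal subgroup of $G$. Following the case split in the proof of \cite[Theorem~3.1]{HMII}, three cases arise.

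\begin{enumerate}
\item If $M$ is nonabelian, the argument there applies verbatim, since it does not depend on the family.
\item If $M$ is abelian and noncentral, one reduces to the perfect, faithful irreducible module situation of Conjecture~B. Proposition~\ref{prop:DoddB} then provides a faithful $\chi\in\Irr(G)$ with $\chi(1)_p<|M|$. Its codegree $|G|/\chi(1)$ has $p$-part exceeding $|H|_p$, which is impossible for an element of $\cod(H)$.
\item If $M$ is central, then $G$ is perfect (after reducing to the last term of the derived series) and a quasisimple cover of $H$. By Theorem~\ref{thm:Doddcover} together with $|Z(X)|=2$, we get $G\cong X=\Spin^+_{2n}(q)$, and $|M|=2$. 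This is exactly the case in which \cite[Theorem~D]{HMII} is used: the faithful $\chi$ of Theorem~\ref{thm:Doddcover} has $\cod(\chi)=|X|/\chi(1)=|S|/(\chi(1)/2)$. Since $\chi(1)/2\notin\operatorname{cd}(S)$, this codegree is not a codegree of a faithful character of $S$. Nor is it a codegree of a nonfaithful character, since the only quotients of the simple group $S$ are $S$ and $1$, and $\cod(\chi)>1$. Hence $\cod(\chi)\notin\cod(H)$, a contradiction.
\end{enumerate}

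Thus $M=1$ and $G\cong H$.

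The main obstacle is bookkeeping rather than new mathematics. We must confirm that the proof of \cite[Theorem~3.1]{HMII} uses the even-dimensional odd-characteristic hypothesis only at the central-cover step, and that ``the conclusion of \cite[Theorem~D]{HMII}'' is precisely the input that step requires. I would check this first by tracing that proof and citing it in the form: Conjecture~B plus the Theorem~D conclusion implies Conjecture~A. If so, the proof is just the combination of Theorem~\ref{thm:Doddcover}, Proposition~\ref{prop:DoddB}, and \cite[Theorem~3.1]{HMII}.
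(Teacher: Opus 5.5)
Your proposal is correct and is essentially the paper's proof. The paper applies Proposition~\ref{prop:assemblyreduction}, which is the minimal-counterexample argument of \cite[Theorem~3.1]{HMII}. In it, Proposition~\ref{prop:DoddB} disposes of the self-centralizing module case and Theorem~\ref{thm:Doddcover} (the Theorem~D conclusion) disposes of the prime-central case, and your explicit codegree contradictions in those two cases are exactly what that reduction uses.

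One caution: as you state them, the two facts you attribute to \cite{HMI} ($|G|\mid|H|$ and $G/M\cong H$) would force $|M|=1$ at once and make the case analysis vacuous. So the divisibility claim is an overstatement and should be dropped, but nothing later in your argument depends on it.
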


\begin{proof}
Apply Proposition~\ref{prop:assemblyreduction}.  Its elementary-abelian hypothesis is Proposition~\ref{prop:DoddB}, and its prime-central-cover hypothesis is Theorem~\ref{thm:Doddcover}.  Hence no counterexample exists.
\end{proof}

\subsection{Even rank in the same congruence class}\label{subsec:Deven}
We next take $n$ even.  The elementary-abelian case becomes shorter, whereas the central case has three double covers.

\begin{proposition}\label{prop:DevenB}
Let $n\ge8$ be even, let $q=p^f\equiv3\pmod4$, and put
\[
 H=\POmega^+_{2n}(q).
\]
Then Conjecture B holds for $H$.
\end{proposition}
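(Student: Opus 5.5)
The quickest route is to observe that Proposition~\ref{prop:DallB} already covers every simple $\POmega_{2n}^{\varepsilon}(q)$ with $q$ odd and $n\ge5$, so the present statement is a special case of it. I would nonetheless give the short self-contained argument, since for even $n$ in this congruence class the nonsplit natural-module case disappears entirely.

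The setup follows the proof of Proposition~\ref{prop:DoddB}. Let $G,N,r$ satisfy the hypotheses of Conjecture~B. By Lemma~\ref{lem:HM}(2) we reduce to $r=p$. By Lemma~\ref{lem:HM}(1) we may assume
\[
 |N|\le q^{n(n-1)/2}.
\]
We then apply \cite[Lemma~4.2]{GT} with $D=2n$, exactly as in Proposition~\ref{prop:DoddB}.
\begin{itemize}
\item Alternatives (ii) and (v), and alternative (i) with two nontrivial tensor factors, all give $|N|\ge q^{2n^2}$ or more, so they are excluded.
\item Alternatives (iii) and (iv) do not occur, because the type is untwisted.
\item What remains is a single restricted constituent $W$ with $\dim W\le n(n-1)/2$. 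By \cite[Proposition~5.4.11 and Table~5.4.A]{KL}, $W$ is quasiequivalent to the natural $2n$-dimensional module. Here $n(2n-1)-2>n(n-1)/2$, and $2^{n-1}>n(n-1)/2$ for $n\ge8$.
\end{itemize}

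The key new step is to show that the natural module is not a module for $H$ at all. Since $N$ is a faithful $H$-module, the natural representation of $\OmegaG^+_{2n}(q)$ would have to factor through $\POmega^+_{2n}(q)$. That requires $-I\notin\OmegaG^+_{2n}(q)$, i.e.\ $Z(\OmegaG^+_{2n}(q))=1$. By \cite[Propositions~2.5.10 and~2.5.13]{KL}, the discriminant of a plus-type $2n$-space is a square exactly when $n(q-1)/2$ is even. This holds here because $n$ is even, so $-I\in\OmegaG^+_{2n}(q)$. Since $-I$ acts as $-1\neq 1$ on the natural module (and on all of its Frobenius and automorphism twists, as $p$ is odd), no faithful irreducible $\F_pH$-module of natural type exists. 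Hence no $N$ satisfies the reduced bound, and Lemma~\ref{lem:HM}(1) has already handled every other case.

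I expect the only point needing care to be this discriminant computation together with the verification that twisting by automorphisms (including triality-free graph and field twists for $n\ge8$) keeps $-I$ acting as $-1$. As a fallback, one can simply cite Proposition~\ref{prop:DallB}, whose last step ("if it does not factor through $S$, there is nothing to prove") is exactly this case.
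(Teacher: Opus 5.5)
Your proposal is correct. It agrees with the paper, which proves this statement simply as the specialization of Proposition~\ref{prop:DallB}. Your self-contained version is that same reduction with one additional, correct observation: for even $n$ the discriminant $(-1)^n$ is a square, so $-I\in\OmegaG^+_{2n}(q)$. Hence no natural-type module factors through $H$, and the nonsplit vector-stabilizer step of Proposition~\ref{prop:DallB} is vacuous here.
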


\begin{proof}
This is the corresponding specialization of Proposition~\ref{prop:DallB}, proved uniformly for both signs, every odd $q$, and every $n\ge5$.  We state it separately here so that the prime-cover argument below can be read without referring back to the uniform formulation.
\end{proof}

We first need the following elementary Clifford-theory observation.

\begin{lemma}\label{lem:smallclifford}
Let $K\lhd L$ with $L/K$ a $2$-group of order at most $4$, and let
$\vartheta\in\Irr(K)$ be linear.  If $\xi\in\Irr(L\mid\vartheta)$, then
\[
 \xi(1)\in\{1,2,4\}.
\]
More precisely, if $m=[L:I_L(\vartheta)]$ and
$J=I_L(\vartheta)/K$, then
\[
 \xi(1)=m d,
\]
where $d$ is the degree of an irreducible projective representation of $J$;
hence $d^2\le |J|$ and $m|J|=[L:K]\le4$.
\end{lemma}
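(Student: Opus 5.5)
The plan is ordinary Clifford theory relative to the normal subgroup $K$, using the fact that $\vartheta$ is linear. Set $I=I_L(\vartheta)$ and $m=[L:I]$. By the Clifford correspondence, every $\xi\in\Irr(L\mid\vartheta)$ has the form $\xi=\psi^L$ for a unique $\psi\in\Irr(I\mid\vartheta)$. Hence
\[
 \xi(1)=m\,\psi(1).
\]
This reduces the problem to describing the characters of $I$ lying over the $I$-invariant linear character $\vartheta$.

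Next I treat $\Irr(I\mid\vartheta)$ with the character-triple (projective representation) theory already used for $(T,N,\lambda)$ in Lemma~\ref{lem:extend} and Proposition~\ref{prop:Bnonsplit}. Since $\vartheta$ is $I$-invariant, the triple $(I,K,\vartheta)$ determines a factor set $\alpha$ on $J=I/K$. Gallagher's theorem in its projective form gives a bijection $\psi\leftrightarrow\mu$ between $\Irr(I\mid\vartheta)$ and the irreducible $\alpha^{-1}$-projective characters $\mu$ of $J$. Under this bijection
\[
 \psi(1)=\vartheta(1)\,\mu(1)=\mu(1)=:d,
\]
because $\vartheta(1)=1$. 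The irreducible $\alpha^{-1}$-projective characters of $J$ satisfy $\sum\mu(1)^2=|J|$, so $d^2\le|J|$. Also $m|J|=[L:I][I:K]=[L:K]\le4$. This is exactly the "more precisely" part of the statement.

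The last step is to check the possible degrees. Since $L/K$ is a $2$-group, both $m$ and $d$ are powers of $2$; for $d$ this holds because $d$ divides $|J|$, by the standard divisibility for projective degrees (alternatively, apply Itô's theorem to a finite central extension of $J$ by a cyclic group). Combining $d^2\le|J|$ with $m|J|\le4$ gives
\[
 \xi(1)^2=m^2d^2\le m^2|J|\le 4m.
\]
If $m=1$, then $d\le2$. If $m=2$, then $|J|\le2$, so $d=1$. If $m=4$, then $J=1$, so $d=1$. In every case $\xi(1)=md\in\{1,2,4\}$.

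No step here is a real obstacle. The only point that needs care is the projective Gallagher correspondence when $\vartheta$ need not extend to $I$. If one prefers to avoid projective language, the same bounds follow directly from $\xi(1)^2\le[L:K]\vartheta(1)^2\cdot m$, which comes from $\sum_{\xi\in\Irr(L\mid\vartheta)}\xi(1)^2=[L:K]\cdot m$ for the $m$ conjugates of the linear character $\vartheta$. Together with the fact that $\xi(1)$ divides $[L:K]$ (Itô–Gallagher for a normal subgroup with linear $\vartheta$), this again gives $\xi(1)\in\{1,2,4\}$.
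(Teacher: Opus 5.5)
Your proposal is correct and follows the paper's proof: the Clifford correspondence gives $\xi(1)=m\,\psi(1)$ with $\psi\in\Irr(I_L(\vartheta)\mid\vartheta)$, and the character triple (equivalently, a simple module of the twisted group algebra $\mathbb{C}^\alpha[J]$, which has dimension $|J|$) gives $\psi(1)=d$ with $d^2\le|J|$; the case split on $m\in\{1,2,4\}$ then finishes. The divisibility argument showing that $d$ is a power of $2$ is harmless but not needed, because $d^2\le|J|$ and $m|J|\le4$ already force $d\in\{1,2\}$.
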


\begin{proof}
Clifford correspondence reduces $\xi$ to an irreducible character above
$\vartheta$ in $I_L(\vartheta)$.  Since $\vartheta$ is linear, the character
triple gives an irreducible module of degree $d$ for a twisted group algebra
$\mathbb C^\alpha[J]$.  This algebra has dimension $|J|$, so the square of
the dimension of each simple module is at most $|J|$; thus $d^2\le|J|$.
If $m=4$, then $J=1$ and $\xi(1)=4$.  If $m=2$, then $|J|\le2$; a twisted
group algebra of a cyclic group of order at most $2$ has only linear simple
modules, so $\xi(1)=2$.  If $m=1$, then $|J|\le4$ and $d^2\le4$, whence the
positive integer $d$ is $1$ or $2$.  The assertion follows.  Notice that
when $J\cong C_2\times C_2$ a nontrivial factor set may indeed give $d=2$;
this possibility is included.
\end{proof}

We need one elementary observation on the unipotent factor in Jordan decomposition.

\begin{lemma}\label{lem:Deveneta}
Let $q$ be odd, let $\varepsilon\in\{+,-\}$, and let $t^*\in P(CO^\varepsilon_{2n}(q)^0)$ have square conformal multiplier.  Let $\eta$ be the unipotent character of $C_{G^*}(t^*)$ occurring in Jordan decomposition.  If $p\nmid\eta(1)$, where $p$ is the defining characteristic, then
\[
 \eta(1)\in\{1,2,4\}.
\]
In particular, $\eta(1)$ is a power of $2$.
\end{lemma}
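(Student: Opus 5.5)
The plan is to reduce $\eta$ to a character of a small $2$-group lying over the trivial character of a connected reductive subgroup, and then to apply Lemma~\ref{lem:smallclifford}. Write $C=CO^\varepsilon_{2n}(q)^0$ and $G^*=P(C)$, and choose a lift $t\in C$ of $t^*$. Let $K$ denote the image in $G^*$ of the connected part of $C_C(t)$, that is, the group of $\F_q$-points of the identity component of the algebraic centralizer. Set $L=C_{G^*}(t^*)$. Then $K\lhd L$, and the proof will establish three things in turn:
\begin{enumerate}
\item $L/K$ is a $2$-group of order at most $4$;
\item every unipotent character of $K$ of $p'$-degree is trivial;
\item the restriction of $\eta$ to $K$ is a multiple of $1_K$.
\end{enumerate}
Since a unipotent character of the disconnected group $L$ is by definition a constituent of a character induced from a unipotent character of $K$, these three facts place $\eta$ in $\Irr(L\mid 1_K)$. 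Lemma~\ref{lem:smallclifford}, applied with $\vartheta=1_K$, then gives $\eta(1)\in\{1,2,4\}$.

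For step (1), I will use the square-multiplier description of \cite[Lemma~2.5]{Ng}. The exact conformal centralizer is the scalar group $\F_q^\times I$ times the subgroup of
\[
 O_{2k}^{\delta_1}(q)\times O_{2(m-k)}^{\delta_2}(q)\times\prod_j\GL^{\alpha_j}_{a_j}(q^{e_j})
\]
cut out by the defining relation $\det(g)=\tau(g)^n$ of $CO^\varepsilon_{2n}(q)^0$. The linear and unitary factors are connected. The two orthogonal factors together contribute at most $(\mathbb Z/2)^2$ modulo the special orthogonal parts. The determinant relation, which forces $\det=1$ on the isometries in this product, cuts this down to at most one $C_2$; when one of the orthogonal factors is trivial, there is nothing to cut. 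Hence the image $\overline{C_C(t)}$ of the exact centralizer contains $K$ with index at most $2$. Lemma~\ref{lem:projcent2} adds at most one further coset, represented by an element conjugating $t$ to $-t$. Therefore
\[
 [L:K]\le 4,
\]
and $L/K$ is a $2$-group.

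For step (2), $K$ is, modulo central tori, a direct product of groups $\SO^{\pm}_{2k}(q)$, $\SO^{\pm}_{2(m-k)}(q)$, $\GL_a(q^e)$ and $\GU_a(q^e)$. Its unipotent characters are the tensor products of unipotent characters of these factors. Each unipotent character degree is a polynomial in $q$ whose exact $q$-power is $q^{a(\rho)}$, where $a(\rho)$ is Lusztig's $a$-value. Moreover $a(\rho)=0$ only for the trivial character; for $\GL$ and $\GU$ this can also be read off directly from the hook formula for partitions. Consequently a $p'$-degree unipotent character of $K$ is trivial.

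Step (3) follows from Clifford theory. The constituents of $\eta_K$ are $L$-conjugate unipotent characters of $K$, all of the same degree. That degree divides $\eta(1)$, hence is prime to $p$, so by step (2) every constituent is $1_K$.

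I expect the main obstacle to be step (1), the bookkeeping that keeps $[L:K]\le 4$ rather than $8$. The paper's model also has to be checked carefully at two points: that the determinant relation genuinely removes one of the two orthogonal reflection classes, and that in the extension-field factors the field automorphism from Lemma~\ref{lem:extfieldcoset} lies outside $C_C(t)$ and so does not enlarge the component group. If the count $[L:K]\le4$ fails in some degenerate configuration (for instance $k=0$, or the two orthogonal blocks interchanged), I would instead restrict to the index-$\le 4$ subgroup generated by $K$ and the projective coset, and handle the remaining case by a direct Clifford computation.
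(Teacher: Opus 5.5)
Your proposal is correct and is essentially the paper's proof: a normal connected-part subgroup $K$ with $L/K$ a $2$-group of order at most $4$ (one coupled orthogonal determinant sign plus the single extra coset from Lemma~\ref{lem:projcent2}), the fact that a $p'$-degree unipotent character of $K$ is trivial, and then Lemma~\ref{lem:smallclifford}. The differences are minor. You prove the second fact directly from the $p$-part $q^{a(\rho)}$ of unipotent degrees, which is valid because $q$ is odd and the factors have types $A$, ${}^2A$, $D$, ${}^2D$; the paper instead cites Malle's Theorem~6.8. Also, your description of $C_C(t)$ as scalars times isometries omits nonsquare-multiplier elements, but these lie in the identity component and so do not affect the index bound.
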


\begin{proof}
Put $C=C_{G^*}(t^*)$.  We use the definition of unipotent characters of a possibly disconnected semisimple centralizer that is used in the Jordan decomposition.  It is recalled in \cite[Section~2, the paragraph following (2)]{HMII}, from Digne--Michel, that when $C_{\mathbf G^*}(t^*)$ is disconnected, its unipotent characters are precisely the irreducible characters of the finite centralizer lying over unipotent characters of the finite points of the connected component; see the cited discussion.  Consequently every constituent of the restriction of $\eta$ to the finite connected component is unipotent.  The primary decomposition \cite[Section~2.3 and Lemma~2.5]{Ng} identifies that connected component, up to its central torus and central quotients, with a product of $\GL_a(q^e)$, $\GU_a(q^e)$, and $\SO$ factors.  Proposition~2.1 of \cite{Ng} (Digne--Michel, Proposition~13.20) passes unipotent characters through the central isogenies and central quotients \cite[Proposition~2.1]{Ng}.

Let $C^\circ$ denote the image in $C$ of this product of connected finite factors together with the central torus.  This subgroup is normal in $C$: every element of the exact centralizer normalizes the primary decomposition, while a possible projective element sending a lift $t$ to $-t$ merely permutes the primary factors and hence normalizes their connected product.  Let $\theta$ be a constituent of $\eta|_{C^\circ}$.  By the preceding definition and the primary-product description, $\theta$ is, up to central linear characters and central quotient, a tensor product of unipotent characters of the connected factors.  Clifford theory gives
\[
 \eta(1)=e[C:I_C(\theta)]\theta(1)
\]
for some positive integer $e$, so $p\nmid\eta(1)$ implies $p\nmid\theta(1)$.  Malle \cite[Theorem~6.8]{Mal} shows that the trivial character is the unique unipotent character of $p'$-degree for a simple finite reductive group, apart from the stated small exceptions in types $B_n,C_n,G_2,F_4$ at $q=2$ and $G_2(3)$.  None of those exceptions occurs here: the connected semisimple factors have type $A$ (including ${}^2A$) or type $D$, with $D_2=A_1A_1$ and $D_3=A_3$, and $q$ is odd.  Hence every connected semisimple factor contributes degree $1$, while central tori contribute only linear characters.  Thus every constituent $\theta$ of $\eta|_{C^\circ}$ is linear, so $\theta(1)=1$.  Any nonlinear degree of $\eta$ can therefore arise only from Clifford theory across the $2$-group $C/C^\circ$.

We next determine the component bound.  The decomposition \cite[Lemma~2.5]{Ng} has orthogonal factors only on the two $\pm1$-primary spaces; all non-$\pm1$ primary factors are $\GL$ or $\GU$ and are connected.  If both $V_+$ and $V_-$ are nonzero, the determinant-one condition couples their determinant signs, so
\[
 S\bigl(O(V_+)\times O(V_-)\bigr)/(\SO(V_+)\times \SO(V_-))
\]
has order at most $2$.  If exactly one of $V_+,V_-$ is nonzero, determinant one forces that orthogonal factor into $\SO$, and if both vanish the quotient is trivial.  Hence the exact finite centralizer has index at most $2$ over $C^\circ$.  By Lemma~\ref{lem:projcent2}, passage to the projective centralizer contributes at most one further coset, represented by an element conjugating a lift $t$ to $-t$.  Central quotients cannot increase this index.  Consequently
\[
 [C:C^\circ]\le4,
\]
and $C/C^\circ$ is a $2$-group.

Now $\theta$ is linear and $C^\circ\lhd C$ has index at most $4$.
Lemma~\ref{lem:smallclifford}, applied with $(K,L,\vartheta,\xi)=(C^\circ,C,\theta,\eta)$,
gives directly
\[
 \eta(1)\in\{1,2,4\}.
\]
This includes the possible degree-$2$ irreducible projective representation
of a Klein-four inertia quotient arising from a nontrivial factor set.
\end{proof}

\begin{theorem}\label{thm:Devencover}
Let $n\ge8$ be even, let $q=p^f\equiv3\pmod4$, and set
\[
 S=\POmega^+_{2n}(q),\qquad \widetilde S=\Spin^+_{2n}(q).
\]
For every subgroup $Z_0<Z(\widetilde S)$ of order $2$, the double cover
\[
 X=\widetilde S/Z_0,\qquad |Z(X)|=2,\qquad X/Z(X)\cong S,
\]
possesses a faithful $\chi\in\Irr(X)$ such that
\[
 \frac{\chi(1)}2\notin\operatorname{cd}(S).
\]
Thus the conclusion of \cite[Theorem D]{HMII} holds for $S$.
\end{theorem}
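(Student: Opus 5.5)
The plan is to imitate the proof of Theorem~\ref{thm:Doddcover} for each of the three double covers separately. Now $n$ is even and $q\equiv3\pmod4$, so $Z(\widetilde S)\cong C_2\times C_2$ and $G^*=P(CO^+_{2n}(q)^0)$ satisfies $G^*/(G^*)'\cong C_2\times C_2$. By \cite[Lemma~2.2]{HMII}, applied to $\widetilde S$, the characters of $\mathcal E(\widetilde S,s^*)$ restrict to $Z(\widetilde S)$ as the central character determined by the image of $s^*$ in $G^*/(G^*)'$. Hence the three subgroups $Z_0$ correspond to the three nontrivial cosets of $(G^*)'$. A character of $\mathcal E(\widetilde S,s^*)$ is faithful on $X=\widetilde S/Z_0$ exactly when $s^*$ lies in the index-$2$ subgroup attached to $Z_0$ but not in $(G^*)'$.

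The two independent homomorphisms $G^*\to\{\pm1\}$ are the square class of the conformal multiplier $\tau$ and a second sign. On the square-multiplier part, after rescaling, the second sign is the spinor norm; for nonsquare multiplier it is a twisted version of it. The first step is to write these maps down explicitly and match the three nontrivial cosets with the three $Z_0$.

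For each coset I would pick a concrete semisimple $s$. For the two nonsquare-multiplier cosets I take $s=\operatorname{diag}(aI_U,bI_{U^*})$ on $V=U\oplus U^*$ with $ab$ nonsquare. Varying the pair $(a,b)$, or composing with a suitable involution in $\SO$, moves $s^*$ between the two nonsquare cosets. For $n$ even the interchanging element is no longer excluded by the determinant computation. I therefore expect $C_{G^*}(s^*)\cong\GL_n(q).2$, which gives a semisimple $\chi$ of degree $\tfrac12\prod_{i=1}^{n-1}(q^i+1)$, or a degree twice that from the two extensions of the unipotent character. I choose whichever is convenient.

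For the square-multiplier coset I take an involution $t=\operatorname{diag}(I_{V_+},-I_{V_-})\in\SO(V)$ whose spinor norm is nontrivial, for instance with $\dim V_-=2$ of suitable discriminant. Its centralizer is $S(O(V_+)\times O(V_-))$, possibly extended by one projective coset when $\dim V_+=\dim V_-$. Then $\chi(1)=|G^*:C_{G^*}(t^*)|_{p'}$.

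In each case I then suppose $\chi(1)/2=\psi(1)$ with $\psi\in\Irr(S)$. Jordan decomposition, as in the odd case, gives $t^*\in(G^*)'$ with square multiplier. Combining this with Lemma~\ref{lem:Deveneta} yields
\[
 |C_{G^*}(t^*)|_{p'}=2\eta(1)\,|C_{G^*}(s^*)|_{p'},\qquad \eta(1)\in\{1,2,4\}.
\]
Primitive prime divisors determine the type of $C_{G^*}(t^*)$. In the $\GL_n$ case I use $r_n$ and $r_{n-1}$ and invoke Lemma~\ref{lem:two-ppd}, which gives full $\GL_n(q)$ type. In the $O_2\times O_{2n-2}$ case I use ppds of $q^{2n-2}-1$ and $q^{n-1}+1$ to force an orthogonal factor of rank $n-1$.

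Once the type is pinned down, the $p'$-order of $C_{G^*}(t^*)$ is determined up to the factor $2$ coming from Lemma~\ref{lem:projcent2} and the component group. This should contradict the displayed equality by a power-of-$2$ count. The analogous identity with a coprime factor is what closed the odd-rank case.

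The main obstacle is the $2$-part bookkeeping, since the factors $2$ and $4$ from the component group now genuinely occur. I must show that the $2$-part of $|C_{G^*}(t^*)|_{p'}$ can never equal $2\eta(1)$ times that of the centralizer of $s^*$ for $\eta(1)\in\{1,2,4\}$. My first attempt is to choose the degree of $\chi$ (with or without the factor $\tfrac12$) so that the required ratio $2\eta(1)$ falls outside the possible range $\{1,2\}$ of component ratios for the forced centralizer type. In the square-multiplier case I would also check directly that $t^*\in(G^*)'$ forces the trivial spinor-norm component, excluding the one coincidence that could otherwise survive.
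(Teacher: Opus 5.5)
Your nonsquare-multiplier step has two errors.

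First, $C_{G^*}(s^*)\cong\GL_n(q).2$ is not automatic for $n$ even. If $gsg^{-1}=cs$ with $c\ne1$, then $c=-1$ and $g$ must carry $U$ onto the $(-a)$-eigenspace of $s$. That happens exactly when $b=-a$. For $b\ne-a$ the centralizer is $\GL_n(q)$ and the degree is $\prod_{i=1}^{n-1}(q^i+1)$. There is also no doubled degree from the two extensions, since both extensions of $1_{\GL_n(q)}$ are linear.

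Second, varying $(a,b)$ never changes the coset. The element $\operatorname{diag}(xI_U,x^{-1}I_{U^*})$ has spinor norm $x^n$, which is a square, so all $\operatorname{diag}(aI_U,bI_{U^*})$ with $ab$ nonsquare and $U$ fixed lie in one coset of $(G^*)'$. Multiplying by an involution of nontrivial spinor norm does change the coset. But it keeps the shape of $s$ only for a specific choice you would have to exhibit, such as $b=-a$ and $-I$ on $\langle e_1,f_1\rangle$ with $e_1\in U$, $f_1\in U^*$; otherwise the centralizer, and hence the degree, changes. The paper avoids this by the graph automorphism, i.e.\ conjugation by a reflection $r_v$; note that $r_vsr_v^{-1}s^{-1}=r_vr_{sv}$ has spinor norm $\tau(s)$.

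Once you take $b=-a$, your count does close for every $q\equiv3\pmod4$. This choice is legitimate: $-a^2$ is a nonsquare and the swap has determinant $(-1)^n=1$. You need $|C_{G^*}(t^*)|_{p'}=4\eta(1)|\GL_n(q)|_{p'}$, while Lemmas~\ref{lem:two-ppd} and~\ref{lem:projcent2} allow at most $2|\GL_n(q)|_{p'}$. This is simpler than the paper's route. The paper takes $\tau\ne-1$ for $q>3$ and must then exclude $\eta(1)=1$ with projective index $2$ separately, by showing that a swapping element forces the nonsquare multiplier $-a^2$. It uses $b=-a$ only for $q=3$.

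For the square-multiplier coset the paper does no centralizer analysis at all. With $d_0=(q^n-1)(q^{n-1}+1)/(2(q-1))$, it notes $d_0/2<q^{4n-10}$ and compares $d_0/2$ directly with Nguyen's exhaustive list \cite[Theorem~1.4]{Ng}.

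Your primitive-divisor substitute fails as written. For $q\equiv3\pmod4$, $-I$ on an anisotropic plane has square spinor norm, so $W$ must be hyperbolic and $W^\perp$ has plus type. Then $|C_{G^*}(s^*)|_{p'}=2(q-1)(q^{n-1}-1)\prod_{i=1}^{n-2}(q^{2i}-1)$, which contains no primitive divisor of $q^{2n-2}-1$, the primes you name.

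It can be repaired with primitive divisors of $q^{n-1}-1$ and $q^{2n-4}-1$. These force either a central $t^*$ or an $O^+_{2n-2}(q)$ factor with a rank-one remainder. The required value $4\eta(1)(q-1)(q^{n-1}-1)\prod_{i=1}^{n-2}(q^{2i}-1)$ then excludes the remainders $O^+_2(q)$ and $\GL_1(q)$. There is no extra projective coset in those cases, since the eigenspaces have unequal dimensions.

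At $q=3$, however, an $O^-_2(q)$ remainder would match numerically with $\eta(1)=1$, because $2(q+1)=4(q-1)$. Your spinor-norm check cannot exclude it, since $-I$ on an anisotropic plane lies in $\Omega$ here. Witt type does exclude it: a plus-type space perpendicular to an anisotropic plane has minus type.
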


\begin{proof}
The standard multiplier formula gives
\[
 Z(\widetilde S)\cong C_2\times C_2.
\]
The dual group is
\[
 G^*=P(CO^+_{2n}(q)^0),
\]
and $G^*/(G^*)'$ has order $4$.  Lemma~2.2 of \cite{HMII} identifies the four cosets of $(G^*)'$ with the four central characters of $Z(\widetilde S)$.  The square class of the conformal multiplier gives a quotient of this abelianization of order $2$.  Hence among the three nontrivial cosets there is one square-multiplier coset and two nonsquare-multiplier cosets.  The type $D_n$ graph automorphism exchanges the two half-spin nodes, and therefore exchanges the latter two central characters; cf. \cite[Proposition 5.4.2(ii)]{KL}.  It is consequently enough to treat one square-multiplier nontrivial coset and one nonsquare-multiplier coset.

\smallskip
\noindent\emph{The square-multiplier coset.}
Since $q\equiv3\pmod4$, choose a hyperbolic $2$-space $W$ and let $s$ act as $-I$ on $W$ and as $I$ on $W^\perp$.  The spinor norm of $-I_W$ is the discriminant square class of $W$, namely the class of $-1$, and is therefore nonsquare.  Hence $s^*$ lies in the nontrivial square-multiplier coset.  This is the square-multiplier case $m=n$, $k=1$ in \cite[Lemma~2.5(ii)]{Ng}.  The corresponding semisimple character has degree
\begin{equation}\label{eq:DevenOmegaDegree}
 d_0:=\frac{(q^n-1)(q^{n-1}+1)}{2(q-1)}.
\end{equation}
By the central-character criterion \cite[Lemma 2.2]{HMII}, it descends faithfully to the double cover associated with this coset.

Put
\[
 d=\frac{d_0}{2}=\frac{(q^n-1)(q^{n-1}+1)}{4(q-1)}.
\]
Since $n\ge8$, we have $d<q^{4n-10}$.  If $d\in\operatorname{cd}(S)$, its inflation to $\widetilde S$ is an irreducible character of degree $d$, so Theorem~1.4 of \cite{Ng} forces $d$ to equal one of the following degrees (with $\beta=\pm1$):
\[
 1,\quad
 \frac{(q^n-1)(q^{n-1}+q)}{q^2-1},\quad
 \frac{q^{2n}-q^2}{q^2-1},\quad
 \frac{(q^n-1)(q^{n-1}-1)}{2(q+1)},
\]
\[
 \frac{(q^n-1)(q^{n-1}+1)}{2(q-1)},\qquad
 \frac{(q^n-1)(q^{n-1}+\beta)}{q-\beta}.
\]
Write $x=q^{n-1}$.  Equality with the second degree or with the fourth gives
\[
 (q-3)x=3q-1,
\]
which is impossible (also when $q=3$).  Equality with the third gives
\[
 q(3q-1)x=4q^2-q-1,
\]
again impossible.  Equality with the $\beta=-1$ degree gives
\[
 (3q-5)x=5q-3,
\]
and the remaining displayed possibilities are respectively $1$, $2d$, and $4d$.  Hence $d\notin\operatorname{cd}(S)$.

\smallskip
\noindent\emph{A nonsquare-multiplier coset, $q>3$.}
Choose a nonsquare $\tau\ne-1$ and put
\[
 s=\operatorname{diag}(I_U,\tau I_{U^*})
\]
for a hyperbolic decomposition $V=U\oplus U^*$ with $\dim U=n$.  Since $n$ is even, $s\in CO^+_{2n}(q)^0$.  As $\tau\ne-1$, no projective centralizer element interchanges $U$ and $U^*$, and
\[
 C_{G^*}(s^*)\cong \GL_n(q).
\]
Thus the associated semisimple character has degree
\begin{equation}\label{eq:DevenHalfDegree}
 d_1:=\prod_{i=1}^{n-1}(q^i+1).
\end{equation}
It lies over one of the two nonsquare-multiplier central characters and hence gives a faithful character of the corresponding double cover.

Suppose that $d_1/2=\psi(1)$ for some $\psi\in\Irr(S)$, and inflate $\psi$ to $\widetilde S$.  If $((t^*),\eta)$ is its Jordan decomposition, then $t^*\in(G^*)'$ by \cite[Lemma 2.2]{HMII}, and
\begin{equation}\label{eq:DevenCentralizer}
 |C_{G^*}(t^*)|_{p'}=2\eta(1)|\GL_n(q)|_{p'},
 \qquad \eta(1)\in\{1,2,4\}
\end{equation}
by Lemma~\ref{lem:Deveneta}.

Let $r_n$ and $r_{n-1}$ be primitive prime divisors of $q^n-1$ and $q^{n-1}-1$.  They exist by Zsigmondy's theorem and are odd.  Equation~\eqref{eq:DevenCentralizer} forces both into the exact semisimple centralizer, since Lemma~\ref{lem:projcent2} and $\eta(1)\in\{1,2,4\}$ add only powers of $2$.  The same fact $\eta(1)\in\{1,2,4\}$ is essential for the next hypothesis: it contributes no odd prime at all.  Hence every odd prime $r\ne p$ dividing the $p'$-part of the exact centralizer divides the displayed cyclotomic product on the right side of \eqref{eq:DevenCentralizer}, and therefore has multiplicative order at most $n$ modulo $q$.  Thus all hypotheses of Lemma~\ref{lem:two-ppd} are verified, and that lemma shows that the exact centralizer has full $\GL_n(q)$ type.

The full projective centralizer has index at most $2$ over the image of this exact centralizer, whereas \eqref{eq:DevenCentralizer} requires the factor $2\eta(1)$.  If $\eta(1)\ge2$, this already exceeds the possible projective index.  Hence $\eta(1)=1$ and the projective index is $2$.  The extra element must interchange the two $n$-dimensional totally singular eigenspaces.  If their eigenvalues are $a,b$, projective centralization gives $ca=b$ and $cb=a$, hence $b=-a$.  Therefore the conformal multiplier is $-a^2$, which is nonsquare because $q\equiv3\pmod4$.  This contradicts $t^*\in(G^*)'$.  Hence $d_1/2\notin\operatorname{cd}(S)$.

\smallskip
\noindent\emph{The nonsquare-multiplier coset when $q=3$.}
Here the only nonsquare is $-1$.  For the same hyperbolic decomposition $V=U\oplus U^*$ take
$s=\operatorname{diag}(I_U,-I_{U^*})$.  The isometry $w$ interchanging $U$ and $U^*$ satisfies $wsw^{-1}=-s$.  Since $n$ is even, $\det(w)=(-1)^n=1$, so $w\in \SO^+_{2n}(3)\le CO^+_{2n}(3)^0$.  Hence the projective centralizer has the second coset; Lemma~\ref{lem:projcent2} shows that there are no more.  Its order is therefore twice that of the exact $\GL_n(3)$ image, and the semisimple degree is
\[
 \frac12\prod_{i=1}^{n-1}(3^i+1).
\]
If half of this degree occurred in $S$, the analogue of \eqref{eq:DevenCentralizer} would require
\[
 |C_{G^*}(t^*)|_{3'}=4\eta(1)|\GL_n(3)|_{3'},
\]
while the preceding primitive-divisor argument again forces full $\GL_n(3)$ type and the projective index is at most $2$.  This is impossible.

We have treated the square-multiplier nontrivial central character and one nonsquare-multiplier central character.  The graph automorphism gives the other nonsquare-multiplier central character, completing all three double covers.
\end{proof}

\begin{theorem}\label{thm:DevenCodegree}
Let $n\ge8$ be even and let $q\equiv3\pmod4$ be an odd prime power.  If
\[
 H=\POmega^+_{2n}(q)
\]
and $G$ is a nontrivial finite group with $\operatorname{cod}(G)\subseteq\operatorname{cod}(H)$, then $G\cong H$.
\end{theorem}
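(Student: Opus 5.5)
The plan mirrors the proof of Theorem~\ref{thm:Doddcodegree}: invoke the assembly reduction, Proposition~\ref{prop:assemblyreduction}, which reduces the codegree containment statement for $H=\POmega^+_{2n}(q)$ to two inputs. The first is Conjecture~B for $H$, the elementary-abelian hypothesis. The second is a prime-central-cover hypothesis: every quasisimple cover $X$ with $|Z(X)|$ prime and $X/Z(X)\cong H$ must have a faithful $\chi\in\Irr(X)$ with $\chi(1)/|Z(X)|\notin\operatorname{cd}(H)$, which is the conclusion of \cite[Theorem D]{HMII}. Once both inputs are verified for $n\ge8$ even and $q\equiv3\pmod4$, the reduction yields that no counterexample $G$ exists, so $G\cong H$.

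\textbf{Conjecture~B.} This input is exactly Proposition~\ref{prop:DevenB}, which is itself the specialization of Proposition~\ref{prop:DallB} to $n\ge8$ even. Nothing further needs to be done here.

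\textbf{Central-cover input.} First I will identify the covers. In this parity the Schur multiplier of $H$ is $Z(\Spin^+_{2n}(q))\cong C_2\times C_2$; here I must check that no exceptional multiplier from \cite[Table~5.1.D]{KL} occurs. The exceptional multipliers in type $D$ occur only for $D_4(2)$, which is excluded since $q$ is odd and $n\ge8$. It follows that the quasisimple covers with centre of prime order are exactly the three double covers $\widetilde S/Z_0$, with $Z_0$ running over the subgroups of order $2$. Theorem~\ref{thm:Devencover} treats all three of these and supplies the required faithful $\chi$ in each case.

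\textbf{Expected obstacle.} The only delicate point is matching the precise form of the hypotheses in Proposition~\ref{prop:assemblyreduction}. I need to confirm that it asks only for prime-order central covers, so that the full $C_2\times C_2$ cover needs no separate treatment, and that the condition stated in Theorem~\ref{thm:Devencover}, namely $\chi(1)/2\notin\operatorname{cd}(S)$, is the form the reduction expects. If the reduction also requires the covering group itself to be handled, I would argue that any quotient map passes through one of the double covers. Then a faithful character of a double cover with the half-degree property already obstructs $\cod(G)\subseteq\cod(H)$.
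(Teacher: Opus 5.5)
Your proposal is correct and is exactly the paper's argument: the paper proves this theorem in one line by applying Proposition~\ref{prop:assemblyreduction}, with Proposition~\ref{prop:DevenB} as the elementary-abelian input and Theorem~\ref{thm:Devencover} (which covers all three double covers) as the prime-central-cover input. Your multiplier check is a harmless addition, and the contingency in your last paragraph is unnecessary, since Proposition~\ref{prop:assemblyreduction} only asks about quasisimple covers with center of prime order.
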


\begin{proof}
Apply Proposition~\ref{prop:assemblyreduction}, using Proposition~\ref{prop:DevenB} for the elementary-abelian hypothesis and Theorem~\ref{thm:Devencover} for the prime-central-cover hypothesis.
\end{proof}

\begin{corollary}\label{cor:Dplus3mod4}
Let $n\ge7$, let $q\equiv3\pmod4$ be an odd prime power, and put
\[
 H=\POmega^+_{2n}(q).
\]
If $G$ is a nontrivial finite group and $\operatorname{cod}(G)\subseteq\operatorname{cod}(H)$, then $G\cong H$.
\end{corollary}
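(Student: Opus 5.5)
The statement is an assembly of the two parity cases already treated, so the plan is a case split on the parity of $n$.

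If $n$ is odd, then $n\ge7$ and $q\equiv3\pmod4$ are exactly the hypotheses of Theorem~\ref{thm:Doddcodegree}, which gives $G\cong H$ directly.

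If $n$ is even, then $n\ge7$ forces $n\ge8$, and Theorem~\ref{thm:DevenCodegree} applies verbatim and gives the conclusion. Both of these theorems were obtained from Proposition~\ref{prop:assemblyreduction}, so no further input is needed.

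The only step needing care is the hypothesis check in each branch. One must confirm that the parity-restricted rank bounds in Theorems~\ref{thm:Doddcodegree} and~\ref{thm:DevenCodegree} ($n\ge7$ odd and $n\ge8$ even, respectively) together exhaust every integer $n\ge7$; they do. One must also confirm that the congruence condition $q\equiv3\pmod4$ and the plus sign are identical in all three statements; they are. Beyond this there is no real obstacle: the substantive work, namely Conjecture~B via Propositions~\ref{prop:DoddB} and~\ref{prop:DevenB} and the prime-central-cover condition via Theorems~\ref{thm:Doddcover} and~\ref{thm:Devencover}, has already been carried out in those results.
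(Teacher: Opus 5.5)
Your proposal is correct and is exactly the paper's proof: split on the parity of $n$, applying Theorem~\ref{thm:Doddcodegree} for odd $n\ge7$ and Theorem~\ref{thm:DevenCodegree} for even $n$, where $n\ge7$ forces $n\ge8$. The only things that need checking are that the two rank ranges cover every $n\ge7$ and that the plus sign and the congruence $q\equiv3\pmod4$ match, and you checked both.
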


\begin{proof}
Use Theorem~\ref{thm:Doddcodegree} when $n$ is odd and Theorem~\ref{thm:DevenCodegree} when $n$ is even.
\end{proof}

\section{Prime covers of $\POmega_{10}^{\pm}(q)$ in odd characteristic}\label{sec:Dfive}

We treat rank five separately because the minus-type list \cite[Theorem~1.4]{Ng} contains an exceptional family.  The prime-cover character may be chosen from the usual square-multiplier part of the list, and the exceptional family is checked independently below.

\begin{theorem}\label{thm:Dfivecover}
Let $q$ be odd, let $\varepsilon\in\{+1,-1\}$, and put
\[
 S=\POmega^{\varepsilon}_{10}(q),\qquad X=\Spin^{\varepsilon}_{10}(q).
\]
Set
\[
 \eta=\begin{cases}
 +1,&q\equiv3\pmod4,\\
 -1,&q\equiv1\pmod4.
 \end{cases}
\]
Let $Y$ be the nontrivial prime central cover of $S$ arising from the cyclic center of $X$.  Then $Y$ has a faithful irreducible character $\chi$ of degree
\begin{equation}\label{eq:DfiveD}
 D_{\varepsilon,\eta}
 =\frac{(q^5-\varepsilon)(q^4+\varepsilon\eta)}{2(q-\eta)}
\end{equation}
such that
\[
 \frac{\chi(1)}2\notin\operatorname{cd}(S).
\]
In particular, the prime-central-cover condition in \cite[Theorem D]{HMII} holds for $\POmega^{\varepsilon}_{10}(q)$.
\end{theorem}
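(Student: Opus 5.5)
Since $n=5$ is odd, the multiplier formula gives $|Z(X)|=(4,q^5-\varepsilon)$, which is cyclic of order $2$ or $4$. In either case $Y$ is the quotient of $X$ with center of order $2$, so $Y$ corresponds to the unique subgroup of index $2$ in $Z(X)$. By \cite[Lemma~2.2]{HMII}, a character in $\mathcal E(X,s^*)$ factors through $Y$ faithfully exactly when $s^*$ lies in the appropriate coset of $(G^*)'$ in $G^*=P(CO^\varepsilon_{10}(q)^0)$. I would take this coset to be the nontrivial square-multiplier coset.

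The plan imitates the square-multiplier step of Theorem~\ref{thm:Devencover}. Choose a nondegenerate $2$-space $W$ whose discriminant is nonsquare, so that $-I_W$ has nonsquare spinor norm. With $q\equiv3\pmod4$, $W$ is hyperbolic ($\eta=+1$); with $q\equiv1\pmod4$, $W$ is anisotropic ($\eta=-1$). Put $s=-I_W\perp I_{W^\perp}$. Then $\tau(s)=1$ is a square and the spinor norm is nontrivial, so $s^*$ lies in a nontrivial square-multiplier coset. When $|Z(X)|=4$, this coset is the one whose central character has order $2$, which is the character needed for $Y$.

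Next I would compute the centralizer. By \cite[Lemma~2.5(ii)]{Ng}, the exact centralizer is $S(O_2^{\eta}(q)\times O_8^{\varepsilon\eta}(q))$, and Lemma~\ref{lem:projcent2} rules out any extra projective coset, because $-s$ has eigenspaces of different dimensions. Hence
\[
 |G^*:C_{G^*}(s^*)|_{p'}=\frac{(q^5-\varepsilon)\prod_{i=1}^4(q^{2i}-1)}{(q-\eta)(q^4+\varepsilon\eta)\prod_{i=1}^3(q^{2i}-1)}\cdot\frac12,
\]
where the factor $\tfrac12$ comes from the determinant coupling. This simplifies to $D_{\varepsilon,\eta}$. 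The semisimple character in $\mathcal E(X,s^*)$ therefore has degree $D_{\varepsilon,\eta}$ and is faithful on $Y$.

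The last step is to show $D_{\varepsilon,\eta}/2\notin\operatorname{cd}(S)$. This number is below Nguyen's bound (roughly $q^{4n-10}=q^{10}$), so \cite[Theorem~1.4]{Ng} lists every degree of $\Spin^{\varepsilon}_{10}(q)$ that could equal it. For the generic entries I would set $x=q^4$ and reduce each possible equality to a linear relation $(\text{poly in }q)\,x=\text{poly in }q$. These relations fail because the degrees in $q$ do not match, or because the two sides differ by a small residue, as in the computation in Theorem~\ref{thm:Devencover}. The one entry of the form $D_{\varepsilon,\eta}$ itself is excluded by a factor of $2$.

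The main obstacle is the exceptional minus-type family in Nguyen's rank-five list. For it I would compare degrees directly, writing both as polynomials in $q$ and then checking leading terms, the value modulo $q-1$ or $q+1$, and the $2$-adic valuation; any small $q$ left over would be checked by hand. The $2$-part comparison also needs care when $\varepsilon\eta=-1$, since then $q^4-1$ contributes extra powers of $2$.
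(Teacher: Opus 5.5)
\textbf{Gap: faithfulness fails when $q\not\equiv\varepsilon\pmod 4$.} Your faithfulness step breaks down exactly when $q\not\equiv\varepsilon\pmod 4$, that is, for $(\varepsilon,\eta)=(+,+)$ and $(-,-)$. In these cases $|Z(X)|=(4,q^5-\varepsilon)=2$, so $Y=X$. Moreover $-I_V$ has spinor norm $\operatorname{disc}(V)$, which is a nonsquare: it is $-1$ for $\varepsilon=+$ with $q\equiv3\pmod4$, and the anisotropic-plane class for $\varepsilon=-$ with $q\equiv1\pmod4$. Hence $-I\notin\Omega^{\varepsilon}_{10}(q)$ and $\SO^{\varepsilon}_{10}(q)=\Omega^{\varepsilon}_{10}(q)\times\langle -I\rangle$. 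It follows that the square-multiplier subgroup $\PSO^{\varepsilon}_{10}(q)$ of $G^*$ equals $\POmega^{\varepsilon}_{10}(q)=(G^*)'$. So there is no nontrivial square-multiplier coset, and the unique nontrivial class of $G^*/(G^*)'$ consists of nonsquare-multiplier elements.

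Concretely, your $s=-I_W\perp I_{W^\perp}$ does have nonsquare spinor norm. But $-s=I_W\perp(-I_{W^\perp})$ has spinor norm $\operatorname{disc}(W)\operatorname{disc}(V)$, which is a square, so $s^*=(-s)^*\in(G^*)'$. By \cite[Lemma~2.2]{HMII}, every member of $\mathcal E(X,s^*)$ is then trivial on $Z(X)$. The character of degree $D_{\varepsilon,\eta}$ you construct is therefore a character of $S$ (so $D_{\varepsilon,\eta}\in\operatorname{cd}(S)$), not a faithful character of $Y$. The inference ``nontrivial spinor norm $\Rightarrow$ nontrivial in $G^*/(G^*)'$'' needs $-I\in\Omega$, i.e.\ $|Z(X)|=4$, and that is the only case you checked.

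\textbf{The paper has the same flaw, and how to repair it.} The paper's proof follows your route and contains the same error. It asserts that $\POmega^{\varepsilon}_{10}(q)<\PSO^{\varepsilon}_{10}(q)$ has index $2$ for every odd $q$. This conflicts with the paper's own treatment of the analogous large-rank cases with $|Z(X)|=2$ (Theorem~\ref{thm:Doddcover} and Section~\ref{sec:Dminusoddone}), where a nonsquare-multiplier parameter is chosen for exactly this reason.

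In the two bad cases, a correct argument has to start from a nonsquare-multiplier semisimple element and then exclude half of the resulting, different, degree. Possible choices are:
\begin{itemize}
\item for $\varepsilon=+$: $\operatorname{diag}(aI_U,bI_{U^*})$ with $ab$ a nonsquare, whose centralizer is $\GL_5(q)$;
\item for $\varepsilon=-$: multiplication by a suitable $\mu\in\F_{q^2}$ of nonsquare norm in the Hermitian model, whose centralizer is $\GU_5(q)$.
\end{itemize}
As stated, with the degree $D_{\varepsilon,\eta}$, the theorem is not established in these cases.

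\textbf{The good cases.} For $q\equiv\varepsilon\pmod4$ your argument coincides with the paper's and works, apart from one sign slip. The factor in $|O_8^{\varepsilon\eta}(q)|_{p'}$ is $q^4-\varepsilon\eta$, not $q^4+\varepsilon\eta$. Only after this correction does your displayed quotient simplify to $D_{\varepsilon,\eta}$.
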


\begin{proof}
The finite dual of $X$ is
\[
 G^*=P(CO^{\varepsilon}_{10}(q)^0).
\]
The center of $X$ is cyclic of order
\[
 m=(4,q^5-\varepsilon)\in\{2,4\},
\]
and $G^*/(G^*)'$ is cyclic of the same order.  Moreover, the image of $\SO^{\varepsilon}_{10}(q)$ in this quotient is the subgroup of order $2$; equivalently,
\[
 \POmega^{\varepsilon}_{10}(q)<\PSO^{\varepsilon}_{10}(q)
\]
has index $2$.  Under the perfect pairing $Z(X)\times G^*/(G^*)'\to\C^\times$, this subgroup is the annihilator of the unique subgroup of $Z(X)$ of index $2$.  Consequently an element of $\PSO^{\varepsilon}_{10}(q)\setminus\POmega^{\varepsilon}_{10}(q)$ represents the unique order-two class in the cyclic abelianization.

Choose a nondegenerate $2$-space $W$ of type $\eta$ and let $s$ act as $-I$ on $W$ and as $I$ on $W^\perp$.  The spinor norm of $-I_W$ is the discriminant square class of $W$.  For a hyperbolic $2$-space this class is $-1$, hence is nonsquare when $q\equiv3\pmod4$; when $q\equiv1\pmod4$, an anisotropic $2$-space has nonsquare discriminant.  Thus our choice of $\eta$ makes the spinor norm of $s$ nontrivial, and
\[
 s^*\in \PSO^{\varepsilon}_{10}(q)\setminus\POmega^{\varepsilon}_{10}(q).
\]
Hence $s^*(G^*)'$ is the unique element of order $2$ in $G^*/(G^*)'$.

This is the square-multiplier case $m=n$, $k=1$ in \cite[Lemma~2.5(ii)]{Ng}.  The $(-1)$-eigenspace has type $\eta$ and the $(+1)$-eigenspace has type $\varepsilon\eta$.  The exact orthogonal centralizer is
\[
 C_{\SO^\varepsilon_{10}(q)}(s)
 =S\bigl(O^\eta_2(q)\times O^{\varepsilon\eta}_8(q)\bigr),
\]
where $S$ denotes the pairs whose determinants multiply to $1$.  Thus
\[
 |C_{\SO^\varepsilon_{10}(q)}(s)|_{p'}
 =2|\SO^\eta_2(q)|_{p'}|\SO^{\varepsilon\eta}_8(q)|_{p'}.
\]
Adjoining the scalar torus to both the ambient group and the exact centralizer and then quotienting by that same scalar torus cancels in the index.  There is no further projective-centralizing coset, since an element satisfying $gsg^{-1}=-s$ would have to interchange the $2$- and $8$-dimensional eigenspaces.  The standard orthogonal order formula therefore gives
\[
 |G^*:C_{G^*}(s^*)|_{p'}
 =\frac{(q^5-\varepsilon)(q^4+\varepsilon\eta)}{2(q-\eta)}
 =D_{\varepsilon,\eta}.
\]
The perfect pairing identifies the coset of $s^*$ with the common central character of its Lusztig series; see \cite[Lemma 4.4(i)]{NT} and \cite[Lemma 2.2]{HMII}.  Since this coset has order $2$, the resulting semisimple character factors to $Y$ and is faithful on $Z(Y)$, hence faithful on $Y$.

Put $x=q^4$ and
\[
 d_{\varepsilon,\eta}=\frac{D_{\varepsilon,\eta}}2.
\]
Suppose that $d_{\varepsilon,\eta}\in\operatorname{cd}(S)$.  Inflating to $X$ and applying \cite[Theorem~1.4]{Ng}, the generic nontrivial degrees that must be compared are
\[
 A_\varepsilon=\frac{(q^5-\varepsilon)(x+\varepsilon q)}{q^2-1},\qquad
 B=\frac{q^{10}-q^2}{q^2-1},
\]
\[
 C_\varepsilon=\frac{(q^5-\varepsilon)(x-\varepsilon)}{2(q+1)},\qquad
 D_\varepsilon=\frac{(q^5-\varepsilon)(x+\varepsilon)}{2(q-1)},
\]
and
\[
 E_{\varepsilon,+}=\frac{(q^5-\varepsilon)(x+\varepsilon)}{q-1},\qquad
 E_{\varepsilon,-}=\frac{(q^5-\varepsilon)(x-\varepsilon)}{q+1}.
\]

Assume first that $\eta=+1$, so $q\equiv3\pmod4$.  After cancelling the common factor $q^5-\varepsilon$, all nontrivial equalities reduce to
\[
 (q-3)x=\varepsilon(3q-1),\qquad
 (3q-5)x=\varepsilon(5q-3),\qquad
 q(3q-1)x=\varepsilon(4q^2-q-1).
\]
They are impossible for odd $q\ge3$, while the remaining members of the list are $2d_{\varepsilon,+}$ or $4d_{\varepsilon,+}$.

Now assume that $\eta=-1$, so $q\equiv1\pmod4$ and $q\ge5$.  Here $C_\varepsilon=2d_{\varepsilon,-}$ and $E_{\varepsilon,-}=4d_{\varepsilon,-}$.  The remaining comparisons, apart from $B$, reduce to
\[
 (q-5)x=\varepsilon(5q-1),\qquad
 (q+3)x=-\varepsilon(3q+1),\qquad
 (3q+5)x=-\varepsilon(5q+3).
\]
For $q=5$ the first is already impossible, and for $q>5$ its left side has magnitude larger than the right side; the other two are impossible by sign or size.  For the degree $B$, after clearing the positive denominator, the difference has numerator
\[
 -(q+1)(x-1)(3q^2x+4q^2+qx+q-1)
\]
when $\varepsilon=+1$, and
\[
 -(q+1)(x+1)(3q^2x-4q^2+qx-q+1)
\]
when $\varepsilon=-1$.  Both are negative for $q\ge5$.

It remains only the two exceptional minus-type degrees in \cite[Theorem~1.4]{Ng}.  When $\varepsilon=-1$ and $\eta=+1$, the inequalities already used above are
\[
 (q-1)(q^2+1)(q^3-1)(q^4+1)-d_{-,+}>0
\]
and
\[
 \frac{q^2(q^4+1)(q^5+1)}{q+1}-d_{-,+}>0.
\]
When $\varepsilon=-1$ and $\eta=-1$, direct subtraction gives
\[
 (q-1)(q^2+1)(q^3-1)(q^4+1)-d_{-,-}
 =\frac{q^4+1}{4}
 (4q^6-4q^5+3q^4-7q^3+3q^2-3q+3)>0
\]
and
\[
 \frac{q^2(q^4+1)(q^5+1)}{q+1}-d_{-,-}
 =\frac{(2q-1)(2q+1)(q^4+1)(q^4-q^3+q^2-q+1)}4>0.
\]
Thus $d_{\varepsilon,\eta}$ is absent from the complete low-degree list in every case, and hence
\[
 d_{\varepsilon,\eta}\notin\operatorname{cd}(S).
\]
\end{proof}

\section{Prime covers of $\POmega_{12}^{\pm}(q)$ in odd characteristic}\label{sec:Dsix}

We next treat rank six.  The minus group has only one nontrivial prime central cover, whereas the plus group has three double covers.  The square-multiplier cover is still controlled by the generic low-degree list \cite[Theorem~1.4]{Ng}.  The two half-spin covers require a nonsquare-multiplier parameter; for $q=3$ we use the explicit low-degree calculation in the proof of \cite[Proposition~8.3]{Ng}, and for $q>3$ we use direct centralizer comparisons.

\begin{theorem}\label{thm:Dsixcover}
Let $q$ be odd.
\begin{enumerate}
\item Put $S=\POmega^-_{12}(q)$ and $X=\Spin^-_{12}(q)$.  Then $X$ has a faithful $\chi\in\Irr(X)$ such that
\[
 \frac{\chi(1)}2\notin\operatorname{cd}(S).
\]
\item Put $S=\POmega^+_{12}(q)$ and $\widetilde S=\Spin^+_{12}(q)$.  For every subgroup $Z_0<Z(\widetilde S)$ of order $2$, the double cover
\[
 Y=\widetilde S/Z_0
\]
has a faithful $\chi\in\Irr(Y)$ such that
\[
 \frac{\chi(1)}2\notin\operatorname{cd}(S).
\]
\end{enumerate}
Thus the prime-central-cover condition in \cite[Theorem D]{HMII} holds for $\POmega_{12}^{\pm}(q)$.
\end{theorem}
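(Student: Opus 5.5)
I will follow the template of Theorems~\ref{thm:Devencover} and~\ref{thm:Dfivecover}: in each case take $G^*=P(CO^{\pm}_{12}(q)^0)$, use \cite[Lemma~2.2]{HMII} to match the cosets of $(G^*)'$ with the central characters of $Z(\Spin^{\pm}_{12}(q))$, pick a semisimple $s^*$ in the required coset, and show that half the degree of the semisimple character is not in $\operatorname{cd}(S)$.

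\emph{Square-multiplier cases.} This covers part (1) and the square-multiplier double cover in part (2). For $\varepsilon=-$ one has $Z(X)\cong C_2$ since $(4,q^6+1)=2$; for $\varepsilon=+$ there is exactly one square-multiplier nontrivial coset. Choose a nondegenerate $2$-space $W$ of type $\eta$, with $\eta$ as in Theorem~\ref{thm:Dfivecover}, so that $-I_W$ has nonsquare spinor norm. Let $s$ act as $-I$ on $W$ and as $I$ on $W^\perp$. This places $s^*$ in $\PSO\setminus\POmega$, which is the correct coset. The centralizer is $S(O_2^\eta\times O_{10}^{\varepsilon\eta})$, and there is no extra projective coset because the eigenspaces have unequal dimensions. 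This gives the degree
\[
 D=\frac{(q^6-1)(q^5+\varepsilon\eta)}{2(q-\eta)}.
\]
I will then compare $D/2$ against the generic list of \cite[Theorem~1.4]{Ng}, as in rank five. After cancelling common factors, each equality becomes a linear relation in $x=q^5$ with small coefficients, which fails for odd $q$. The only remaining list entries are $D$ and $2D$. If rank six has exceptional small entries in Nguyen's list, I will dispose of them by size inequalities.

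\emph{Half-spin covers of $\POmega^+_{12}(q)$.} By the graph automorphism it suffices to treat one nonsquare coset. For $q>3$ take $s=\operatorname{diag}(I_U,\tau I_{U^*})$ with $\tau\ne-1$ a nonsquare; then $C_{G^*}(s^*)\cong\GL_6(q)$ and $d_1=\prod_{i=1}^5(q^i+1)$. Suppose $d_1/2\in\operatorname{cd}(S)$. Then $|C_{G^*}(t^*)|_{p'}=2\eta(1)|\GL_6(q)|_{p'}$ with $t^*\in(G^*)'$, and Lemma~\ref{lem:Deveneta} gives $\eta(1)\in\{1,2,4\}$.

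Lemma~\ref{lem:two-ppd} needs $n\ge8$, so here I will redo the primitive-divisor argument by hand. The primes $r_6\mid q^2-q+1$ and $r_5$ must both lie in the exact centralizer. Rank counting allows only a few possibilities:
\begin{itemize}
\item $\GL_6(q)$;
\item an orthogonal factor of rank $6$, which is excluded;
\item a $\GU$ or $O^-$ factor providing $r_6$ via $q^3+1$, which leaves rank at most $3$ for $r_5$, which needs rank $5$.
\end{itemize}
A possible difficulty is that $q^2-q+1$ may also be supplied by a $\GU_3$ factor together with $\GL_1$ terms; I will eliminate this by comparing full $p'$-orders rather than single primes. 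This forces full $\GL_6(q)$ type, and the eigenvalue-swap argument then forces $b=-a$. For $q\equiv1\pmod4$, however, the multiplier $-a^2$ is a square, so that contradiction fails. In that case I would instead compare orders directly: $2\eta(1)$ must equal the projective index $2$, and one checks whether the resulting degree $\frac12 d_1$ genuinely occurs. If it does, I would switch to a different nonsquare element, for instance $\operatorname{diag}(I,\tau I)$ on a $10$-dimensional summand plus a $2$-space, whose centralizer is $\GL_5\times O_2$.

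For $q=3$, the authors cite the explicit low-degree calculation in the proof of \cite[Proposition~8.3]{Ng}, so I will use that.

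\emph{Main obstacle.} I expect the half-spin case with $q\equiv1\pmod4$ to be the main obstacle. There the swap element no longer yields a nonsquare contradiction, so the choice of semisimple element and the centralizer comparison must be made carefully.
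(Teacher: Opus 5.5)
Your treatment of the square-multiplier covers, and of the half-spin covers for $q=3$ and for $q>3$ with $q\equiv3\pmod4$, follows the paper, apart from two slips. First, in minus type the degree is $\frac{(q^6+1)(q^5-\eta)}{2(q-\eta)}$: the first factor is $q^6-\varepsilon$, not $q^6-1$. Second, your rank count for the $\GL_6$-parameter omits $O_{10}^+(q)$ times a rank-one factor. This factor carries both $r_5$ (via $q^5-1$) and $r_6$ (via $q^6-1$). The paper removes it with a primitive prime divisor of $q^8-1$, which cannot divide $2\eta(1)|\GL_6(q)|_{p'}$.

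The genuine gap is the pair of half-spin covers when $q\equiv1\pmod4$. The problem is worse than a failed last step: $s=\operatorname{diag}(I_U,\tau I_{U^*})$ cannot work at all, because $\frac12\prod_{i=1}^5(q^i+1)\in\operatorname{cd}(S)$. To see this, take $t=\operatorname{diag}(I_U,-I_{U^*})\in CO^+_{12}(q)^0$.
\begin{enumerate}
\item Its multiplier is $-1$, which is a square. Rescaling by $c$ with $c^2=-1$ gives $\operatorname{diag}(A,A^{-T})$ with $A=-cI_6$. Its spinor norm $\det A=-1$ is a square, so $t^*\in(G^*)'$.
\item The exact centralizer has $\GL_6(q)$ type. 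The swap of $U$ and $U^*$ has determinant $(-1)^6=1$ and sends $t$ to $-t$, so $|C_{G^*}(t^*)|_{p'}=2|\GL_6(q)|_{p'}$.
\item Hence the semisimple characters in $\mathcal E(\widetilde S,t^*)$ are trivial on $Z(\widetilde S)$ and have degree exactly $\frac12\prod_{i=1}^5(q^i+1)$.
\end{enumerate}

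Your fallback does not repair this. Since $\tau$ is a nonsquare, no eigenvalue in $\F_q$ squares to $\tau$. So \cite[Lemma~2.5(i)]{Ng} produces orthogonal factors only over $\F_{q^2}$, and the extra $2$-space can never contribute an $O_2(q)$ factor; the proposed $\GL_5\times O_2$ centralizer does not exist. In any case, no exclusion argument is given for it.

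The missing idea is the paper's extension-field parameter. Let $s$ act as multiplication by $\lambda$, with $\lambda^2=\tau$, on a plus-type $6$-dimensional quadratic space over $\F_{q^2}$. Then $C_{O^+_{12}(q)}(s)\cong O_6^+(q^2)$. Lemma~\ref{lem:extfieldcoset} supplies a determinant-one Frobenius element sending $s$ to $-s$, which gives a genuine second projective coset. The exclusion then runs through different primes:
\begin{enumerate}
\item A primitive divisor of $q^8-1$ forces an $O_8^-(q)$ factor. Rank-five orthogonal factors bring primitive divisors of order $5$ or $10$, and a $\GU_1(q^4)$ factor leaves no room for the $q^6-1$ divisor.
\item For a primitive divisor $\rho_4$ of $q^4-1$, the $\rho_4$-adic valuation of $(q^6-1)(q^4-1)(q^8-1)$ is twice that of $q^4-1$. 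Since $O_8^-(q)$ supplies only one copy, the rank-two remainder must be $O_4^-(q)$ or $\GU_1(q^2)$.
\item Comparing $p'$-orders via \eqref{eq:projectiveimageorder} and Lemma~\ref{lem:projcent2}, the $O_4^-(q)$ case is too large and the $\GU_1(q^2)$ case too small.
\end{enumerate}
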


\begin{proof}
Write
\[
 G^*=P(CO^{\varepsilon}_{12}(q)^0)
\]
for the finite dual of $\Spin^{\varepsilon}_{12}(q)$.  We repeatedly use the perfect pairing
\[
 Z(\Spin^{\varepsilon}_{12}(q))\times G^*/(G^*)'\longrightarrow\C^\times
\]
from \cite[Lemma 4.4(i)]{NT} and \cite[Lemma 2.2]{HMII}; the coset of the semisimple parameter is the central character of its Lusztig series.

\smallskip
\noindent\emph{Minus type.}
Here
\[
 |Z(\Spin^-_{12}(q))|=(4,q^6+1)=2,
\]
so $X=\Spin^-_{12}(q)$ is the unique nontrivial prime central cover.  Set
\[
 \eta=\begin{cases}
 +1,&q\equiv3\pmod4,\\
 -1,&q\equiv1\pmod4.
 \end{cases}
\]
Choose a nondegenerate $2$-space $W$ of type $\eta$ and let $s$ act as $-I$ on $W$ and as $I$ on $W^\perp$.  As in the proof of Theorem~\ref{thm:Dfivecover}, the spinor norm of $s$ is nonsquare, so
\[
 s^*\in \PSO^-_{12}(q)\setminus \POmega^-_{12}(q).
\]
The square-multiplier case $m=n$, $k=1$ in \cite[Lemma~2.5(ii)]{Ng} gives a semisimple character of degree
\begin{equation}\label{eq:DsixminusD}
 D_{-,\eta}:=\frac{(q^6+1)(q^5-\eta)}{2(q-\eta)}.
\end{equation}
Its central character is nontrivial, hence it is faithful on $X$.

Put $d_{-,\eta}=D_{-,\eta}/2$.  Since $d_{-,\eta}<q^{14}$, \cite[Theorem~1.4]{Ng} is exhaustive.  The generic comparison in the proof of Theorem~\ref{thm:Dfivecover}, with $x=q^5$ and $\varepsilon=-1$, shows that $d_{-,\eta}$ is different from every generic degree in that list.  The exceptional family in \cite[Theorem~1.4]{Ng} occurs only for $(n,\alpha)=(5,-1)$, so there are no further rank-six degrees.  Hence
\[
 d_{-,\eta}\notin\operatorname{cd}(S).
\]

\smallskip
\noindent\emph{Plus type: the nontrivial square-multiplier coset.}
Now $Z(\widetilde S)\cong C_2\times C_2$.  The unique nontrivial square-multiplier coset is represented by an element of
\[
 \PSO^+_{12}(q)\setminus \POmega^+_{12}(q).
\]
Choose the same type $\eta$ of $2$-space as above and let $s=-I$ on that space and $I$ on its orthogonal complement.  Again its spinor norm is nonsquare.  The case $m=n$, $k=1$ in \cite[Lemma~2.5(ii)]{Ng} gives a semisimple character of degree
\begin{equation}\label{eq:DsixplusD0}
 D_{+,\eta}:=\frac{(q^6-1)(q^5+\eta)}{2(q-\eta)},
\end{equation}
faithful on the corresponding double cover.  Put $d_{+,\eta}=D_{+,\eta}/2$.  Applying the same generic comparison from the proof of Theorem~\ref{thm:Dfivecover}, now with $x=q^5$ and $\varepsilon=+1$, gives
\[
 d_{+,\eta}\notin\operatorname{cd}(S).
\]
There are no rank-six exceptional degrees beyond that generic list.

It remains to treat the two nonsquare-multiplier central characters.  The graph automorphism exchanges them, so it is enough to construct and treat one.

\smallskip
\noindent\emph{The nonsquare-multiplier coset when $q=3$.}
Lemmas~8.1--8.2 and the proof of \cite[Proposition~8.3]{Ng} give the complete list below $4\cdot3^{15}$.  In the nonsquare-multiplier $\GL_6(3)$ case, the proof gives characters of degree
\[
 D_1=\frac12\prod_{i=1}^{5}(3^i+1)=11\,204\,480.
\]
The perfect pairing between the center and the dual abelianization identifies the parameter coset with the common central character of its Lusztig series; see \cite[Lemma~4.4(i)]{NT} and \cite[Lemma~2.2]{HMII}.  Since the parameter has nonsquare multiplier, one of these characters lies over the corresponding nonsquare central character and is faithful on the associated double cover.  Set
\[
 d_1=D_1/2=5\,602\,240<4\cdot3^{15}.
\]
We now use the explicit list in the proof of \cite[Proposition~8.3]{Ng}.  The other $\GL_6/\GU_6$ semisimple degrees occurring there are
\[
 10\,318\,880\qquad\text{and}\qquad 5\,159\,440.
\]
The $m=5$ and $m=6$, $k=1,5$ linear cases give
\[
 44\,044,\qquad 44\,408,\qquad 22\,022,
\]
and the $k=2,4$ linear cases give
\[
 27\,538\,511,\qquad 21\,493\,472.
\]
The only additional $k=1,5$ degree below the cutoff is
\[
 22\,022\cdot\frac{(3^5+1)(3^4-3)}8=52\,390\,338.
\]
None equals $d_1$.  For the unipotent characters, the three generic degrees from Proposition~3.4 have already been excluded by the preceding generic comparison, while every additional unipotent degree in Lemma~8.2 is divisible by $3$ by the symbol-degree formula preceding Proposition~3.4; $d_1$ is a $3'$-number.  Hence $d_1\notin\operatorname{cd}(S)$.  The graph automorphism gives the same conclusion for the other nonsquare central character.

\smallskip
\noindent\emph{The nonsquare-multiplier coset when $q>3$ and $q\equiv3\pmod4$.}
Choose a nonsquare $\tau\ne-1$ and, for a hyperbolic decomposition $V=U\oplus U^*$ with $\dim U=6$, put
\[
 s=\operatorname{diag}(I_U,\tau I_{U^*}).
\]
Then $s\in CO^+_{12}(q)^0$ and
\[
 C_{G^*}(s^*)\cong \GL_6(q).
\]
Thus the associated faithful character of one nonsquare double cover has degree
\begin{equation}\label{eq:DsixGL6}
 D_1=\prod_{i=1}^{5}(q^i+1).
\end{equation}
Suppose $D_1/2=\psi(1)$ for some $\psi\in\Irr(S)$ and inflate $\psi$ to $\widetilde S$.  If $((t^*),\eta)$ is its Jordan decomposition, then $t^*\in(G^*)'$ and
\begin{equation}\label{eq:DsixGLcentralizer}
 |C_{G^*}(t^*)|_{p'}=2\eta(1)|\GL_6(q)|_{p'},\qquad \eta(1)\in\{1,2,4\}.
\end{equation}
We use the following elementary consequence of Zsigmondy's theorem throughout: if $q$ is odd, $m>1$, and $r$ is a primitive prime divisor of $q^m-1$, then $r$ is odd, since $\operatorname{ord}_2(q)=1$.  Zsigmondy's only exceptional case with exponent $m>2$ is $(q,m)=(2,6)$; the exponent-$2$ power-of-two exception is irrelevant here.  In particular, for odd $q$ primitive divisors exist for each of the exponents $5,6,8$ used below.

Let $r_6,r_5$ be primitive prime divisors of $q^6-1$ and $q^5-1$.  They are therefore odd.  We check rank six directly.

If a proper orthogonal factor contains $r_5$, then its rank is at least $5$.
Thus, if $r_6$ lies in a rank-three minus factor or in any rank-four
orthogonal factor, the remaining rank is too small to supply $r_5$.  At
rank exactly $5$, none of the factors $q^{2i}-1$ with $1\le i\le4$ can
contain a prime of order $5$, so $r_5$ must occur in the terminal factor
$q^5-\varepsilon$.  Hence the sign is plus.  Indeed, $O_{10}^+(q)$ has
$p'$-part built from
\[
 (q^5-1)\prod_{i=1}^{4}(q^{2i}-1),
\]
whereas $O_{10}^-(q)$ has terminal factor $q^5+1$; thus an order-$5$ primitive divisor forces the plus sign.  But $O_{10}^+(q)$ also contains the factor $q^8-1$.  By the preceding Zsigmondy observation there is a primitive prime divisor $r_8$ of $q^8-1$; it is odd and $\operatorname{ord}_{r_8}(q)=8$.  This cannot divide the right side of \eqref{eq:DsixGLcentralizer}, since $|\GL_6(q)|_{p'}$ has only factors $q^i-1$ with $i\le6$ and $\eta(1)\in\{1,2,4\}$.  Thus neither rank-five sign is possible, and rank six orthogonal type makes the parameter central.

Now list the extension-field possibilities.  In a linear factor
$\GL_a(q^e)$ of rank cost $ea\le6$, the occurrence of $r_5$ requires
$5\mid ej$ for some $1\le j\le a$; hence $ea\ge5$.  The occurrence of $r_6$ in a second linear extension-field factor costs at least $6$ units of ambient rank, since $6\mid ej$ with $j\le a$ implies $ea\ge ej\ge6$.  Thus distinct linear factors would cost at least $5+6=11>6$.  A single linear factor containing both satisfies
\[
 5\mid ej',\qquad 6\mid ej,\qquad j,j'\le a,\qquad ea\le6.
\]
The $r_6$ condition forces $ea=6$, so $(e,a)$ is one of $(1,6),(2,3),(3,2),(6,1)$.  The condition $5\mid ej'$ for some $j'\le a$ excludes the last three.  Hence $(e,a)=(1,6)$, giving $\GL_6(q)$.  For a unitary factor
$\GU_a(q^e)$ the cyclotomic factors are $q^{ej}-(-1)^j$.  Since
$\operatorname{ord}_{r_5}(q)=5$ is odd, $r_5$ cannot divide a plus factor
$q^{ej}+1$: that would force the order of $q$ modulo $r_5$ to be even.
Hence $r_5$ must occur in a minus factor $q^{ej}-1$, so $j$ is even and
$5\mid ej$.  Therefore $10\mid ej$; since $j\le a$, this gives
$ea\ge ej\ge10$, contradicting $ea\le6$.  Thus no unitary factor can contain $r_5$,
and the exact centralizer has full $\GL_6(q)$ type.

The exact centralizer is therefore of full $\GL_6(q)$ type.  Its image in the finite dual has the $p'$-part occurring as the $\GL_6(q)$ term in \eqref{eq:DsixGLcentralizer}, and Lemma~\ref{lem:projcent2} allows the full projective centralizer to enlarge this image by at most a factor $2$.  Since \eqref{eq:DsixGLcentralizer} requires the factor $2\eta(1)$, the possibilities $\eta(1)=2,4$ would require projective indices $4,8$, respectively.  Hence $\eta(1)=1$, and the projective index is exactly $2$.  Its representative interchanges the two totally singular eigenspaces.  If their eigenvalues are $a,b$, projective centralization gives $b=-a$, so the multiplier is $-a^2$, which is nonsquare because $q\equiv3\pmod4$.  This contradicts $t^*\in(G^*)'$.  Hence $D_1/2\notin\operatorname{cd}(S)$.

\smallskip
\noindent\emph{The nonsquare-multiplier coset when $q\equiv1\pmod4$.}
Write $6=2r$ with $r=3$.  Use the nonsquare-multiplier $m=n$ case \cite[Lemma~2.5(i)]{Ng} and choose $s$ with exact centralizer
\[
 C_{O^+_{12}(q)}(s)\cong O^+_6(q^2).
\]
At the conformal level the exact centralizer is
$Z(CO^+_{12}(q)^0)O^+_6(q^2)$, with the common scalar intersection removed
when passing to the finite dual.  Lemma~\ref{lem:extfieldcoset}, applied to
the plus-type extension-field factor, supplies an $\F_q$-isometry of
determinant $1$ inducing Frobenius; it sends $s$ to $-s$.  Thus the
projective centralizer has exactly one additional coset by
Lemma~\ref{lem:projcent2}.  The scalar torus cancels in the projective
index, and the associated semisimple character, faithful on one nonsquare
double cover, has degree
\begin{equation}\label{eq:DsixO6q2}
 D_2=\frac12(q^2-1)(q^6-1)(q^{10}-1).
\end{equation}
Assume $D_2/2=\psi(1)$ for some $\psi\in\Irr(S)$ and inflate to $\widetilde S$.  Then the semisimple parameter $t^*$ has square multiplier and Jordan decomposition gives
\begin{equation}\label{eq:DsixOcentralizer}
 |C_{G^*}(t^*)|_{p'}=4\eta(1)B,
 \qquad B=(q^6-1)(q^4-1)(q^8-1),
 \qquad \eta(1)\in\{1,2,4\}.
\end{equation}
Let $\rho_8$ be a primitive prime divisor of $q^8-1$.  The square-multiplier centralizer decomposition \cite[Lemma~2.5(ii)]{Ng} shows first that a linear extension-field factor cannot contain $\rho_8$, since its total rank is at most $6$.  A unitary factor containing $\rho_8$ would have a factor $q^{ki}+1$ with $ki=4$ and $i$ odd.  Hence $k=4$, and such a factor cannot also contain a primitive divisor of $q^6-1$; the remaining rank is at most $2$ and cannot supply that divisor.  Thus $\rho_8$ lies in an orthogonal factor.  That factor has rank at least $4$.  A rank-$5$ orthogonal factor would introduce a primitive divisor of order $5$ in plus type or order $10$ in minus type, neither of which occurs on the right side of \eqref{eq:DsixOcentralizer}; rank $6$ would make the parameter central.  Hence the factor has rank exactly $4$ and is $O^-_8(q)$.

Only rank $2$ remains.  Let $\rho_4$ be a primitive prime divisor of $q^4-1$.  Then $\rho_4$ is odd and $\rho_4\mid q^2+1$.  In particular $\rho_4\nmid q^4+1$, so from $q^8-1=(q^4-1)(q^4+1)$ we obtain
\[
 v_{\rho_4}(q^8-1)=v_{\rho_4}(q^4-1).
\]
Since $B=(q^6-1)(q^4-1)(q^8-1)$ and a primitive order-$4$ prime does not divide $q^6-1$, it follows that
\[
 v_{\rho_4}(B)=2v_{\rho_4}(q^4-1).
\]
Thus the right side of \eqref{eq:DsixOcentralizer} contains two copies of the $\rho_4$-part of $q^4-1$, whereas $O^-_8(q)$ supplies only one.  The remaining rank-$2$ factor must therefore contain $\rho_4$.  We consider the rank-two possibilities directly.  The linear possibilities $\GL_2(q)$ and $\GL_1(q^2)$ do not contain a primitive divisor of $q^4-1$; neither does $\GU_2(q)$, whose relevant factors divide $(q+1)(q^2-1)$, nor a product of two rank-one $q$-tori.  Among rank-two orthogonal factors, only $O_4^-(q)$ has the terminal factor $q^2+1$; $O_4^+(q)$ has $q^2-1$.  The only extension-field torus with a $q^2+1$ factor is $\GU_1(q^2)$.  Hence the remainder is exactly
\[
 O^-_4(q)\qquad\text{or}\qquad \GU_1(q^2).
\]

If the remainder is $O^-_4(q)$, then
\[
 |O^-_8(q)|_{p'}|O^-_4(q)|_{p'}=4(q^2-1)B.
\]
The product centralizer has $p'$-part $4(q^2-1)B$.  Intersecting with
the determinant-one subgroup divides this by $2$, so
\[
 |C_{\SO}(t)|_{p'}=2(q^2-1)B.
\]
By \eqref{eq:projectiveimageorder}, the exact projective image has $p'$-part
$2(q^2-1)B$.  The full projective centralizer contains this image, and hence
\[
 |C_{G^*}(t^*)|_{p'}\ge2(q^2-1)B>16B,
\]
because $q\ge5$.  This contradicts \eqref{eq:DsixOcentralizer}, whose
right-hand side is at most $16B$.

If the remainder is $\GU_1(q^2)$, then
\[
 |O^-_8(q)|_{p'}|\GU_1(q^2)|_{p'}=2B.
\]
Here the product centralizer has $p'$-part $2B$, and its determinant-one
intersection has $p'$-part $B$, so $|C_{\SO}(t)|_{p'}=B$.  By
\eqref{eq:projectiveimageorder}, the exact projective image therefore has
$p'$-part $B$.  Lemma~\ref{lem:projcent2} permits at most one additional
component of order $2$, and consequently
\[
 |C_{G^*}(t^*)|_{p'}\le 2B,
\]
contradicting the lower bound $4B$ in \eqref{eq:DsixOcentralizer}.  Thus $D_2/2\notin\operatorname{cd}(S)$.

The graph automorphism exchanges the two nonsquare central characters, so the same degree-exclusion statement holds for the second half-spin double cover.  This completes the proof.
\end{proof}

\section{Prime covers of $\POmega_{8}^{\pm}(q)$ in odd characteristic}\label{sec:Dfour}
We finish with the rank-four prime-central-cover calculation.  The low-degree theorem \cite[Theorem~1.4]{Ng} starts at rank five, so here we use only the semisimple-centralizer description \cite[Lemma~2.5]{Ng}, The plus case also uses triality.

\begin{proposition}\label{prop:Dfourminus}
Let $q$ be odd and put $S=\POmega^-_8(q)$.  The unique nontrivial double cover of $S$ has a faithful irreducible character $\chi$ such that
\[
 \frac{\chi(1)}2\notin\operatorname{cd}(S).
\]
\end{proposition}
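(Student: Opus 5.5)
I would follow the pattern of the rank-five and rank-six minus-type arguments. Here $|Z(\Spin^-_8(q))|=(4,q^4+1)=2$, so $X=\Spin^-_8(q)$ is the unique nontrivial double cover, and the dual group is $G^*=P(CO^-_8(q)^0)$ with $G^*/(G^*)'$ of order $2$. Fix $\eta=+1$ if $q\equiv3\pmod4$ and $\eta=-1$ if $q\equiv1\pmod4$. Take a nondegenerate $2$-space $W$ of type $\eta$, and let $s$ act as $-I$ on $W$ and as $I$ on $W^\perp$. As before, the spinor norm of $s$ is nonsquare, so $s^*\in\PSO^-_8(q)\setminus\POmega^-_8(q)$. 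By \cite[Lemma~4.4(i)]{NT} and \cite[Lemma~2.2]{HMII}, the semisimple character $\chi$ in $\mathcal E(X,s^*)$ is then faithful on $X$.

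Next I compute its degree. The exact centralizer is $S(O_2^\eta(q)\times O_6^{-\eta}(q))$. There is no extra projective coset, since an element sending $s$ to $-s$ would have to swap eigenspaces of dimensions $2$ and $6$. This gives
\[
 \chi(1)=D:=\frac{(q^4+1)(q^3+\eta)}{2(q-\eta)}.
\]
Put $d=D/2$ and suppose, for a contradiction, that $d=\psi(1)$ with $\psi\in\Irr(S)$. Because \cite[Theorem~1.4]{Ng} is unavailable in rank four, I would argue through centralizers as in the $\GL_n$ arguments of Theorems~\ref{thm:Devencover} and~\ref{thm:Dsixcover}. First, $d$ is prime to $p$. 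Inflating $\psi$ to $X$ and taking its Jordan decomposition $((t^*),\eta')$, we get $t^*\in(G^*)'$ with square multiplier, and Lemma~\ref{lem:Deveneta} gives $\eta'(1)\in\{1,2,4\}$. Therefore
\[
 |C_{G^*}(t^*)|_{p'}=4\eta'(1)\,\frac{|G^*|_{p'}(q-\eta)}{(q^4+1)(q^3+\eta)},
\]
where $|G^*|_{p'}=(q^4+1)(q^2-1)(q^4-1)(q^6-1)$. In particular the centralizer avoids every primitive prime divisor of $q^8-1$, while it contains primitive divisors $r_6$ (or $r_3$) of $q^6-1$ and $r_4$ of $q^4-1$, according to $\eta$.

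The casework over \cite[Lemma~2.5(ii)]{Ng} then runs as follows. The primitive divisor $r_6$ (when $\eta=+1$; analogously $r_3$ when $\eta=-1$) forces a rank-$3$ factor: $O_6^{\mp}(q)$, $\GU_3(q)$, $\GL_3(q)$, $\GL_1(q^3)$, or $\GU_1(q^3)$. The leftover rank $1$ must be compatible with the total sign being minus. For the orthogonal option $O_6^{-\eta}\times O_2^{\eta}$, the centralizer order is exactly $2\cdot$ that of $s$ itself, which gives $D$ rather than $D/2$; the factor-of-$2$ discrepancy must then be absorbed by $\eta'(1)$ and the projective coset. I would rule this out by comparing $2$-parts carefully, using Lemma~\ref{lem:projcent2}: swapping eigenspaces of dimensions $2$ and $6$ is impossible. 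For the linear and unitary rank-$3$ options, I would compare the exact $r_4$-valuation of the needed order with what a rank-$3$ extension factor plus a rank-$1$ factor can supply, exactly as the $\rho_4$-valuation count in the $O_6^+(q^2)$ case of Theorem~\ref{thm:Dsixcover}.

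I expect the main obstacle to be this last step. With such small rank there are many centralizer shapes, and the equation is tight up to powers of $2$. The $2$-part bookkeeping (determinant-one halving, the projective coset, and $\eta'(1)\le4$) is where a mistake could hide, and $q=3$ may need a separate direct check like the rank-six $q=3$ case.
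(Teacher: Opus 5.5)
Your setup agrees with the paper: the choice of $s$, its nonsquare spinor norm, faithfulness via \cite[Lemma~2.2]{HMII}, and the absence of an extra projective coset. But your degree has a sign error, and once it is corrected the proposition follows immediately, which is the paper's entire proof. The $(+1)$-eigenspace has type $-\eta$, so $|O_6^{-\eta}(q)|_{p'}=2(q^3+\eta)(q^2-1)(q^4-1)$. This terminal factor cancels against $q^6-1=(q^3-\eta)(q^3+\eta)$ in $|\SO^-_8(q)|_{p'}=(q^4+1)(q^2-1)(q^4-1)(q^6-1)$. Hence
\[
 D=\frac{(q^4+1)(q^3-\eta)}{2(q-\eta)}=\frac{q^4+1}{2}\,(q^2+\eta q+1),
\]
not $\frac{(q^4+1)(q^3+\eta)}{2(q-\eta)}$, which is not even an integer for, e.g., $q=7$. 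Your later claim that $r_6$ divides the centralizer when $\eta=+1$ actually matches the corrected formula. Since $q$ is odd, $q^4+1\equiv2\pmod 4$ and $q^2+\eta q+1$ is odd. So $D$ is odd, $D/2$ is not an integer, and $D/2\notin\operatorname{cd}(S)$. No Jordan decomposition, Lemma~\ref{lem:Deveneta}, or \cite[Lemma~2.5]{Ng} is needed.

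As written, your proposal does not reach the conclusion. The cases $O_6^{-\eta}\times O_2^\eta$ and the linear/unitary rank-$3$ options are left as ``I would rule this out.'' You also name the $2$-part bookkeeping as the unresolved obstacle. The parity observation removes exactly that obstacle. Oddness of $D=|G^*:C_{G^*}(s^*)|_{p'}$ means $C_{G^*}(s^*)$ already contains a Sylow $2$-subgroup of $G^*$. Then any relation $|C_{G^*}(t^*)|_{p'}=2\eta'(1)\,|C_{G^*}(s^*)|_{p'}$ would force $|C_{G^*}(t^*)|_2>|G^*|_2$, which is impossible. So the primitive-divisor casework and the separate $q=3$ check you anticipated are unnecessary.
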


\begin{proof}
For odd $q$ the center of $\Spin^-_8(q)$ has order $(4,q^4+1)=2$.  Set
\[
 \delta=\begin{cases}
 +1,&q\equiv3\pmod4,\\
 -1,&q\equiv1\pmod4.
 \end{cases}
\]
Choose a nondegenerate $2$-space $W$ of type $\delta$ and let $s$ act as $-I$ on $W$ and as $I$ on $W^\perp$.  Then the spinor norm of $s$ is nonsquare: for $\delta=+1$ it is the class of $-1$, and for $\delta=-1$ it is the nonsquare discriminant class of the anisotropic plane.  Hence the image $s^*$ represents the nontrivial element of the dual abelianization.  The central-character criterion \cite[Lemma~2.2]{HMII} therefore shows that the corresponding semisimple character is faithful on the double cover.

This is the square-multiplier case $m=n$, $k=1$ in \cite[Lemma~2.5(ii)]{Ng}.  The semisimple degree is
\[
 \chi(1)=D_\delta:=\frac{(q^4+1)(q^3-\delta)}{2(q-\delta)}
 =\frac{q^4+1}{2}(q^2+\delta q+1).
\]
Both factors on the right are odd when $q$ is odd.  Thus $D_\delta$ is odd, so $D_\delta/2$ is not an integer and hence cannot be an ordinary character degree of $S$.
\end{proof}

\begin{proposition}\label{prop:Dfourplus}
Let $q$ be odd and put $S=\POmega^+_8(q)$.  For each of the three subgroups $Z_0<Z(\Spin^+_8(q))$ of order $2$, the double cover
\[
 X=\Spin^+_8(q)/Z_0
\]
has a faithful $\chi\in\Irr(X)$ such that $\chi(1)/2\notin\operatorname{cd}(S)$.
\end{proposition}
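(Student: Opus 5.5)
Triality permutes the three order-two subgroups of $Z(\Spin^+_8(q))\cong C_2\times C_2$ transitively, and it induces automorphisms of $S$ preserving $\operatorname{cd}(S)$. So I would first reduce to a single cover: the one attached to the unique nontrivial square-multiplier coset of $G^*/(G^*)'$, where $G^*=P(CO^+_8(q)^0)$. Concretely, if $\sigma$ is a triality automorphism of $\Spin^+_8(q)$ with $\sigma(Z_0)=Z_0'$, then $\chi\circ\sigma^{-1}$ is a faithful character of $\Spin^+_8(q)/Z_0'$ of the same degree. Since $\sigma$ descends to an automorphism of $S$, the condition $\chi(1)/2\notin\operatorname{cd}(S)$ is unchanged. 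This settles all three covers at once and avoids the nonsquare-multiplier analysis needed in ranks six and above.

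For the chosen cover I would copy the construction of Proposition~\ref{prop:Dfourminus}. Set $\delta=+1$ if $q\equiv3\pmod4$ and $\delta=-1$ if $q\equiv1\pmod4$. Let $W$ be a nondegenerate $2$-space of type $\delta$, and let $s$ be $-I$ on $W$ and $I$ on $W^\perp$. The spinor norm of $s$ is nonsquare, so $s^*\in\PSO^+_8(q)\setminus\POmega^+_8(q)$; this is the nontrivial square-multiplier coset. By \cite[Lemma~2.2]{HMII} and the perfect pairing, the associated semisimple character is faithful on the corresponding double cover. The exact centralizer is $S(O_2^\delta(q)\times O_6^{\delta}(q))$; here the complement has type $\delta$ because the total type is $+$. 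No projective element sends $s$ to $-s$, since the eigenspaces have different dimensions $2\neq6$. Then Lemma~\ref{lem:projcent2} and \eqref{eq:projectiveimageorder} give
\[
 \chi(1)=\frac{(q^4-1)(q^3+\delta)}{2(q-\delta)}=\frac{(q^3+\delta)(q+\delta)(q^2+1)}{2}.
\]

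For the exclusion, a parity argument alone will not work as it did in the minus case, because $(q+\delta)$ is even. I would first compute the $2$-adic valuation. The factor $q^2+1$ contributes exactly one $2$. The factors $q+\delta$ and $q^3+\delta$ are each even, and $\delta$ was chosen so that $q+\delta\equiv2\pmod4$ fails. I expect the valuation to be large enough that $\chi(1)/2$ is still an integer, so a direct exclusion is needed.

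That exclusion is the main obstacle, since \cite[Theorem~1.4]{Ng} starts at rank five and gives no low-degree list here. I would argue through Jordan decomposition, as in Theorem~\ref{thm:Devencover}. Suppose $\psi\in\Irr(S)$ has $\psi(1)=\chi(1)/2$. This degree is prime to $p$, so $\psi$ has a parameter $t^*\in(G^*)'$ and a unipotent factor $\eta$ with $\eta(1)\in\{1,2,4\}$ by Lemma~\ref{lem:Deveneta}. The equation to refute is
\[
 |C_{G^*}(t^*)|_{p'}=2\eta(1)\,|C_{G^*}(s^*)|_{p'}.
\]
A primitive prime divisor $r_6$ of $q^6-1$ lies in $q^3+1$ when $\delta=+1$. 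In that case $r_6$ must occur in a rank-$\ge3$ minus-type orthogonal factor or in a $\GU_3(q)$/$\GU_1(q^3)$ factor. Likewise a primitive divisor of $q^4-1$ must occur in $O_4^-$, $O_8^+$, $\GU_1(q^2)$-type or $\GL_1(q^4)$-type factors, within total rank $4$. This leaves a short explicit list of centralizer types from \cite[Lemma~2.5(ii)]{Ng}. For each, I would compare $p'$-orders, including the determinant-one index and the projective index of at most $2$, exactly as in the rank-six $O_6(q^2)$ computation. The aim is to show that no candidate has $p'$-order equal to $2$, $4$, or $8$ times $|C_{G^*}(s^*)|_{p'}$. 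The case $\delta=-1$, where $q^3-1$ carries the divisor of order $3$, is handled symmetrically. The delicate part is keeping track of the factors of $2$, so I would tabulate each candidate's exact $2$-part.
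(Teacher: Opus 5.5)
Your setup matches the paper's: the triality reduction, the choice of $s$ with $\delta$ depending on $q\bmod 4$, the centralizer $S(O_2^\delta(q)\times O_6^\delta(q))$, the degree $\tfrac12(q^3+\delta)(q+\delta)(q^2+1)$, and the exclusion strategy via Jordan decomposition, $\eta(1)\in\{1,2,4\}$, primitive prime divisors and \cite[Lemma~2.5(ii)]{Ng}. However, your primitive-divisor step is inverted, and the whole exclusion rests on that step.

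For $\delta=+1$, the factor $q^3+1$ divides $\chi(1)$, whereas $|C_{G^*}(s^*)|_{p'}=2B$ with $B=(q-1)(q^2-1)(q^3-1)(q^4-1)$. So a primitive divisor $r_6$ of $q^6-1$ divides $\psi(1)$ to the full $r_6$-part of $|G^*|$. It must therefore be \emph{absent} from $C_{G^*}(t^*)$, not present.

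What the identity $|C_{G^*}(t^*)|_{p'}=4\eta(1)B$ actually forces into the exact centralizer are primitive divisors $r_3$ of $q^3-1$ and $r_4$ of $q^4-1$. For $\delta=-1$ the forced primes are $r_6$ and $r_4$, and $r_3$ is the one excluded. Your list of possible locations for $r_4$ is also incomplete: $O_6^{\pm}(q)$ contains $q^4-1$, as do $\GL_4(q)$, $\GL_2(q^2)$, $\GU_4(q)$ and $\GU_2(q^2)$. With the correct constraints, the candidates for $\delta=+1$ are $\GL_4(q)$, $O_8^+(q)$, and $O_6^+(q)$ times a rank-one piece. This is exactly the list the paper eliminates, and your enumeration, built around $r_6$, would never examine it.

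Second, even with the correct list, comparing $p'$-orders alone does not finish the case $q=3$. A candidate admitted by rank and prime counting, $O_6^+(q)\times O_2^-(q)$, has exact projective image of $p'$-part $2B(q+1)/(q-1)$. When $q=3$ this equals $4B$, which matches $4\eta(1)B$ with $\eta(1)=1$. The paper therefore treats $q=3$ separately: there $d=280$, and the ATLAS/GAP table of $\POmega_8^+(3)$ has no irreducible character of degree $280$.

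Alternatively, you could use the fact that orthogonal types multiply under orthogonal sums. In the plus-type space $V$, a $6$-dimensional plus-type eigenspace then forces a plus-type complement. An $O_2^-(q)$ or $\GU_1(q)$ piece has minus type, so such a candidate cannot occur. Some additional input of this kind is required, and your plan does not provide it.
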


\begin{proof}
Here $Z(\Spin^+_8(q))\cong C_2\times C_2$.  Set
\[
 \delta=\begin{cases}
 +1,&q\equiv3\pmod4,\\
 -1,&q\equiv1\pmod4.
 \end{cases}
\]
Choose a nondegenerate $2$-space $W$ of type $\delta$ and let $s$ act as $-I$ on $W$ and as $I$ on $W^\perp$.  As in the proof of Theorem~\ref{thm:Dfivecover}, its spinor norm is nonsquare, so $s^*(G^*)'$ is the nontrivial square-multiplier class.  The case $m=n$, $k=1$ in \cite[Lemma~2.5(ii)]{Ng} gives the semisimple degree
\[
 D_\delta:=\frac{(q^4-1)(q^3+\delta)}{2(q-\delta)}.
\]
It is faithful on the corresponding double cover by \cite[Lemma~2.2]{HMII}.  A triality automorphism of $\Spin_8^+(q)$ permutes the three order-two subgroups of its center transitively; cf. \cite[Proposition~5.4.2(ii)]{KL}.  It therefore induces isomorphisms among the three corresponding double covers and carries a faithful character above one nontrivial central character to faithful characters of the same degree above the other two.  Thus it is enough to prove that
\[
 d:=\frac{D_\delta}{2}\notin\operatorname{cd}(S).
\]
Suppose first that $q>3$ and that $d=\psi(1)$ for some $\psi\in\Irr(S)$.  Inflate $\psi$ to $\Spin^+_8(q)$ and let $(t^*,\eta)$ be its Jordan parameter.  Triviality on the center gives $t^*\in(G^*)'$ by \cite[Lemma~2.2]{HMII}; in particular a lift $t$ has square conformal multiplier.  Since $d$ is a $p'$-number, Lemma~\ref{lem:Deveneta} gives $\eta(1)\in\{1,2,4\}$.

If $q\equiv1\pmod4$, put
\[
 B_-=(q+1)(q^2-1)(q^3+1)(q^4-1).
\]
The direct semisimple-degree quotient gives $|C_{G^*}(s^*)|_{p'}=2B_-$, so a hypothetical character of degree $d$ would require
\[
 |C_{G^*}(t^*)|_{p'}=4\eta(1)B_-.
\]
Let $r_4$ and $r_6$ be primitive prime divisors of $q^4-1$ and $q^6-1$.  Both must occur in the exact square-multiplier centralizer.  A linear extension-field factor containing $r_6$ has rank cost at least $6$, so none occurs in rank $4$.  For unitary factors, a factor carrying $r_6$ has rank cost at least $3$; if it has rank cost $3$, it does not contain $r_4$ and the residual rank one cannot supply $r_4$.  Thus the only full unitary possibility is $\GU_4(q)$, whose exact projective $p'$-part is at most $B_-$ and whose full projective centralizer has $p'$-part at most $2B_-$.  For orthogonal factors, the two primitive divisors force an $O^-_6(q)$ factor; the remaining rank-one factor is one of $O_2^\pm(q)$, $\GL_1(q)$ or $\GU_1(q)$.  The exact projective image has $p'$-part at most $2B_-$ by \eqref{eq:projectiveimageorder}.  In the potentially sharp case $O^-_6(q)\times O^-_2(q)$, there is no additional projective coset: an element in such a coset would conjugate a lift $t$ to $-t$ and hence interchange the $+1$- and $-1$-primary spaces, but these have dimensions $6$ and $2$.  The same unequal-dimension argument excludes an extra coset for the other $O^-_6(q)\times O_2^\pm(q)$ possibility, while the two toral rank-one possibilities have smaller exact $p'$-part and remain below $2B_-$ even after the possible factor $2$ from Lemma~\ref{lem:projcent2}.  Thus the full projective $p'$-part is at most $2B_-$ in every case.  The only rank-four orthogonal factor compatible with the ambient form is $O^+_8(q)$, which makes the parameter central.  Hence no proper centralizer reaches $4B_-$, a contradiction.  Thus $d\notin\operatorname{cd}(S)$ when $q\equiv1\pmod4$.

We may therefore assume $q\equiv3\pmod4$.  Then $\delta=+1$, write $D_+=D_\delta$, and put
\[
 B=(q-1)(q^2-1)(q^3-1)(q^4-1).
\]
Here the finite dual is $G^*=P(CO_8^+(q)^0)$.  Its $p'$-part is
\[
 |G^*|_{p'}=(q^2-1)(q^4-1)^2(q^6-1).
\]
For the chosen semisimple parameter the Jordan correspondent is trivial,
so the semisimple degree formula gives
\[
 D_+=|G^*:C_{G^*}(s^*)|_{p'}.
\]
Consequently, without making any intermediate scalar or determinant-one
normalization,
\begin{align*}
 |C_{G^*}(s^*)|_{p'}
 &=\frac{(q^2-1)(q^4-1)^2(q^6-1)}
 {\dfrac{(q^4-1)(q^3+1)}{2(q-1)}}\\
 &=2(q-1)(q^2-1)(q^3-1)(q^4-1)=2B.
\end{align*}
This direct quotient calculation is the normalization used below.
Comparing the two Jordan degree formulas therefore yields
\begin{equation}\label{eq:Dfourpluscentralizer}
 |C_{G^*}(t^*)|_{p'}=4\eta(1)B.
\end{equation}
Let $r_3$ and $r_4$ be primitive prime divisors of $q^3-1$ and $q^4-1$.  Zsigmondy's theorem supplies both for odd $q$, and both are odd.  Equation~\eqref{eq:Dfourpluscentralizer} forces them into the exact square-multiplier centralizer.

We spell out the rank-four consequence of the decomposition \cite[Lemma~2.5(ii)]{Ng}.  Since $\operatorname{ord}_{r_i}(q)=i$, consider
first a linear extension-field factor $\GL_a(q^e)$.  If it contains $r_4$,
then $4\mid ej$ for some $1\le j\le a$, with rank cost $ae\le4$.  The only
possibilities capable of realizing exponent $4$ are
\[
 (e,j)=(1,4),(2,2),(4,1),
\]
corresponding respectively to a full $\GL_4(q)$ factor, a $\GL_2(q^2)$
factor, or a $\GL_1(q^4)$ factor (enlarging $a$ only increases the rank
cost).  The latter two do not contain a primitive divisor of $q^3-1$:
their cyclotomic exponents divide $2,4$ and $4$, respectively.  Hence a
single linear factor containing both $r_3$ and $r_4$ must be $\GL_4(q)$.
If the two primes lie in distinct linear factors, a factor carrying $r_3$
has ambient rank cost at least $3$, while a factor carrying $r_4$ has
ambient rank cost at least $4$: indeed $4\mid ej$ with $j\le a$ implies
$ea\ge ej\ge4$.  Thus the two factors would have total rank cost at least
$3+4=7>4$.

For a unitary factor $\GU_a(q^e)$ the factors are
$q^{ej}-(-1)^j$.  An odd-order primitive divisor of $q^3-1$ cannot divide
a plus factor $q^m+1$: otherwise the order of $q$ modulo that prime would
divide both $3$ and $2m$ without dividing $m$.  Thus $r_3$ must occur in a
minus factor, forcing an even unitary index $j$ with $3\mid ej$.  Since
$j\le a$, we have $6\mid ej$ and hence the ambient rank cost satisfies
$ea\ge ej\ge6$.  Therefore no unitary factor of total rank at most $4$
contains $r_3$.  Mixed
linear--unitary products are therefore excluded as well.  Thus the only
linear/unitary possibility carrying both primitive divisors is the full
$\GL_4(q)$ factor.

For orthogonal factors, the standard order formula
\[
 |O_{2a}^{\epsilon}(q)|_{p'}
   =2(q^a-\epsilon)\prod_{j=1}^{a-1}(q^{2j}-1)
\]
shows that rank at most $2$ cannot contain $r_3$.  In rank $3$, the minus
factor has distinguished term $q^3+1$, so it does not contain the primitive
order-$3$ prime $r_3$, whereas
\[
 |O_6^+(q)|_{p'}=2(q^3-1)(q^2-1)(q^4-1)
\]
contains both $r_3$ and $r_4$.  It leaves exactly rank one.  the primary decomposition \cite[Lemma~2.5]{Ng} permits either a $\pm1$-primary orthogonal remainder $O_2^\delta(q)$, $\delta=\pm$, or a non-$\pm1$ rank-one torus $\GL_1(q)$ or $\GU_1(q)$.  In rank $4$ the orthogonal factor is full, and compatibility with the ambient plus form leaves $O_8^+(q)$.  Hence the complete list is
\[
 \GL_4(q),\quad O_8^+(q),\quad O_6^+(q)\times O_2^\delta(q),\quad
 O_6^+(q)\times \GL_1(q),\quad O_6^+(q)\times \GU_1(q).
\]
There are no additional mixed cases: a linear or unitary factor carrying $r_4$ already has rank cost $4$, while an orthogonal factor carrying both primitive divisors has rank $3$, and the preceding list exhausts the rank-one remainder.

The full $\GL_4(q)$ possibility is excluded by order comparison.  The determinant-one centralizer has $p'$-part $B$, and its exact image in $P(CO^+_8(q)^0)$ has the same $p'$-part $B$ by \eqref{eq:projectiveimageorder}.  Lemma~\ref{lem:projcent2} therefore gives a full projective centralizer of $p'$-part at most $2B$, contrary to \eqref{eq:Dfourpluscentralizer}.  A full $O^+_8(q)$ factor makes $t^*$ central.  In that case the Jordan index is $1$, so the character degree is a unipotent $p'$-degree of the connected type-$D_4$ group.  By \cite[Theorem~6.8]{Mal}, the trivial character is the only such character in odd defining characteristic, hence the degree is $1$, again not $d$.

It remains to consider $O^+_6(q)\times O_2^\delta(q)$.  For $\delta=-$, the determinant-one centralizer has $p'$-part
\[
 2B\frac{q+1}{q-1}.
\]
By \eqref{eq:projectiveimageorder}, its exact image in the finite dual has $p'$-part
\[
 2B\frac{q+1}{q-1}.
\]
The full projective centralizer has this order or twice this order.  Equality with \eqref{eq:Dfourpluscentralizer} would therefore give
\[
 e(q+1)=2\eta(1)(q-1),\qquad e\in\{1,2\}.
\]
For $q>3$ and $\eta(1)\in\{1,2,4\}$ this has no solution.

For $\delta=+$ the determinant-one centralizer has $p'$-part $2B$, so its exact projective image has $p'$-part $2B$ by \eqref{eq:projectiveimageorder}.  A projective centralizer element outside the exact centralizer would satisfy $gtg^{-1}=-t$.  After normalizing the multiplier to $1$, the two orthogonal eigenspaces have dimensions $6$ and $2$, so such an element would have to interchange subspaces of unequal dimension.  Hence the projective component is trivial and
\[
 |C_{G^*}(t^*)|_{p'}=2B,
\]
again contradicting \eqref{eq:Dfourpluscentralizer}.

The two remaining toral cases are smaller still; we give the normalization explicitly.  For $O_6^+(q)\times \GL_1(q)$ the product before imposing determinant one has $p'$-part $2B$: the $O_6^+(q)$ factor contributes $2(q^3-1)(q^2-1)(q^4-1)$ and the split torus contributes $q-1$.  Here the $\GL_1(q)$ primary factor acts on the paired primary space $U\oplus U^*$ as $a\mapsto\operatorname{diag}(a,a^{-1})$, so its determinant on the ambient orthogonal space is $1$.  Thus the determinant map on the product is exactly the determinant-sign map on $O_6^+(q)$, which is onto $\{\pm1\}$; intersection with the determinant-one subgroup therefore divides the product order by exactly $2$ and gives $B$.  By \eqref{eq:projectiveimageorder}, the exact projective image has $p'$-part $B$.  Lemma~\ref{lem:projcent2} gives full projective centralizer at most $2B$.

For $O_6^+(q)\times \GU_1(q)$ the unitary rank-one primary block is likewise paired with its dual; equivalently a norm-one scalar has determinant $1$ on the underlying two-dimensional $\F_q$-space.  Hence it contributes no determinant sign.  Replacing the split torus factor $q-1$ by $q+1$, the product has $p'$-part $2B(q+1)/(q-1)$, its determinant-one intersection has $p'$-part $B(q+1)/(q-1)$, and by \eqref{eq:projectiveimageorder} its exact projective image has the same $p'$-part
\[
 B\frac{q+1}{q-1}.
\]
Again Lemma~\ref{lem:projcent2} allows at most one further factor $2$, so the full projective centralizer has $p'$-part at most $2B(q+1)/(q-1)<4B$ for $q>3$.  Both toral cases are incompatible with \eqref{eq:Dfourpluscentralizer}, whose right side is at least $4B$.

It remains only $q=3$.  Here $D_+=560$ and $d=280$.  The GAP Character Table Library table 
\texttt{CharacterTable("O8+(3)")} is the table of the simple group $\POmega^+_8(3)$: the CTblLib information page lists the construction \texttt{POmega(1,8,3)} and gives its order $4952179814400$.  Its origin is the ATLAS.  This table has no irreducible character of degree $280$; equivalently, the command
\[
 \texttt{280 in Set(List(Irr(CharacterTable("O8+(3)")),Degree));}
\]
returns \texttt{false}; see \cite{CTblLibO8p3}.  Thus $280\notin\operatorname{cd}(\POmega^+_8(3))$.  Triality now gives the same conclusion for all three double covers.
\end{proof}

\begin{corollary}\label{cor:Dfourcovers}
For every odd prime power $q$, the prime-central-cover condition required in the codegree reduction holds for both $\POmega^-_8(q)$ and $\POmega^+_8(q)$.
\end{corollary}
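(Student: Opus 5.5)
This corollary is an assembly of Propositions~\ref{prop:Dfourminus} and~\ref{prop:Dfourplus}. I would first recall what the prime-central-cover condition in \cite[Theorem~D]{HMII} asks for. For each prime $r$ and each quasisimple group $Y$ with $|Z(Y)|=r$ and $Y/Z(Y)\cong S$, there must be a faithful $\chi\in\Irr(Y)$ with $\chi(1)/r\notin\operatorname{cd}(S)$. So the plan is to enumerate all such prime central covers of $S=\POmega_8^{\pm}(q)$, $q$ odd, and match each one with a result already proved.

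First, the enumeration. By \cite[Theorem~5.1.4 and Tables~5.1.A, 5.1.D]{KL}, the generic multiplier of $\POmega_8^{\varepsilon}(q)$ for odd $q$ is $Z(\Spin_8^{\varepsilon}(q))$. This is $C_2\times C_2$ for $\varepsilon=+$ and has order $(4,q^4+1)=2$ for $\varepsilon=-$. The exceptional multiplier table adds only $\POmega_8^+(2)$, which is excluded by $q$ odd. Hence the only prime dividing $|M(S)|$ is $2$, and every prime central cover is a double cover. These are:
\begin{itemize}
\item for minus type, the unique cover $\Spin_8^-(q)$;
\item for plus type, the three quotients $\Spin_8^+(q)/Z_0$ with $|Z_0|=2$.
\end{itemize}
This uses that $\Spin_8^{\varepsilon}(q)$ is the universal cover, so every double cover is a quotient of it by a subgroup of its center.

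Second, the matching. In minus type, Proposition~\ref{prop:Dfourminus} supplies the faithful character $\chi$ with $\chi(1)=D_\delta$ odd, so $\chi(1)/2\notin\operatorname{cd}(S)$ trivially. In plus type, Proposition~\ref{prop:Dfourplus} treats each of the three subgroups $Z_0$ explicitly, using triality and, for $q=3$, the ATLAS table. Since $r=2$ in every case, the condition "$\chi(1)/r\notin\operatorname{cd}(S)$" is exactly what those propositions prove.

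The main obstacle is bookkeeping rather than mathematics: confirming that no exceptional multiplier for type $D_4$ in odd characteristic creates an odd-prime cover. In particular, the exceptional $\POmega_8^+(2)$ entry must be ruled out because $q$ is odd. I would also check that the condition quantifies only over covers with center of prime order, so the full $\Spin_8^+(q)$, whose center has order $4$, need not be treated.
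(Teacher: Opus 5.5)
Your assembly matches the paper's, which gives no separate proof: the corollary is exactly Propositions~\ref{prop:Dfourminus} and~\ref{prop:Dfourplus} combined. Your list of prime central covers is also right: only $r=2$ occurs, with the single cover $\Spin^-_8(q)$ in minus type and the three quotients $\Spin^+_8(q)/Z_0$ in plus type, and there is no exceptional multiplier since $q$ is odd. The step that fails is the minus-type input you quote: the odd-degree character of Proposition~\ref{prop:Dfourminus} is not faithful on $\Spin^-_8(q)$.

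\emph{Why it fails.} A minus-type $8$-space over $\F_q$, $q$ odd, has nonsquare discriminant, so $-I\notin\Omega^-_8(q)$ \cite[Propositions~2.5.10 and~2.5.13]{KL}. Hence $\SO^-_8(q)=\Omega^-_8(q)\times\langle -I\rangle$. Inside $G^*=P(CO^-_8(q)^0)$ this gives $\PSO^-_8(q)=\POmega^-_8(q)=(G^*)'$, the index-$2$ subgroup of images of elements with square conformal multiplier. Concretely, $s^*=(-s)^*$. The spinor norms of $s$ and $-s$ are the discriminants of $W$ and $W^\perp$, whose product is the nonsquare discriminant of $V$. Since the spinor norm of $s$ was chosen nonsquare, $-s\in\Omega^-_8(q)$. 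So $s^*\in(G^*)'$, and by \cite[Lemma~2.2]{HMII} the whole series $\mathcal E(X,s^*)$ is trivial on $Z(X)$. The character of degree $D_\delta$ is therefore a character of $S$, not a faithful character of the double cover. Note that the paper itself, in the same minus-type even-rank situation (Theorem~\ref{thm:Dminusevencover}), uses a parameter with nonsquare multiplier to get faithfulness.

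\emph{The oddness is itself a warning sign.} You treat the oddness of $D_\delta$ as the key point, but it rules the character out. Take a nondegenerate $3$-subspace $U$. The preimage of $\Omega(U)$ in $\Spin^-_8(q)$ is $\Spin(U)\cong\operatorname{SL}_2(q)$, whose central involution is the generator $z$ of $Z(\Spin^-_8(q))$. Now $z$ lies in the derived subgroup of a generalized quaternion Sylow $2$-subgroup of $\operatorname{SL}_2(q)$. Hence $z\in P'$ for a Sylow $2$-subgroup $P$ of $\Spin^-_8(q)$. An irreducible character of odd degree has a linear constituent on $P$, so its kernel contains $z$. Thus $\Spin^-_8(q)$ has no faithful irreducible character of odd degree at all, and the minus case cannot be settled by the parity trick.

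\emph{What is needed.} To close the gap you need a faithful character from a series $\mathcal E(X,s^*)$ whose parameter has nonsquare conformal multiplier. You also need a genuine proof that half its degree is not in $\operatorname{cd}(\POmega^-_8(q))$. Nothing in the paper provides this in rank four: \cite[Theorem~1.4]{Ng} begins at rank five, and Theorem~\ref{thm:Dminusevencover} is proved only for $n\ge8$. This particular defect does not touch the plus-type half. There $-I\in\Omega^+_8(q)$, so the multiplier-one parameter of Proposition~\ref{prop:Dfourplus} does lie in $\PSO^+_8(q)\setminus\POmega^+_8(q)$.
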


\begin{theorem}\label{thm:Dlowcodegree}
Let $q$ be odd, let $4\le n\le6$, let $\varepsilon\in\{+,-\}$, and suppose that
\[
 S=\POmega_{2n}^{\varepsilon}(q)
\]
is simple.  If $G$ is a nontrivial finite group satisfying
\[
 \cod(G)\subseteq\cod(S),
\]
then $G\cong S$.
\end{theorem}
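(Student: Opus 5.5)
The plan is to feed the two hypotheses of Proposition~\ref{prop:assemblyreduction} for each triple $(n,\varepsilon,q)$ with $4\le n\le 6$ and $q$ odd, exactly as Theorems~\ref{thm:Doddcodegree} and~\ref{thm:DevenCodegree} did. Proposition~\ref{prop:assemblyreduction} is the assembly step that replaces \cite[Theorem~3.1]{HMII} for even-dimensional orthogonal groups in odd characteristic. It requires two inputs:
\begin{enumerate}
\item Conjecture B for $S$ (the elementary-abelian hypothesis);
\item the prime-central-cover condition of \cite[Theorem~D]{HMII}: every nontrivial prime central cover $Y$ of $S$ has a faithful $\chi\in\Irr(Y)$ with $\chi(1)/2\notin\operatorname{cd}(S)$.
\end{enumerate}
Once both are verified, that proposition yields $G\cong S$ whenever $\cod(G)\subseteq\cod(S)$ and $G$ is nontrivial.

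For the first input, I split by rank. When $n=5,6$, Conjecture B is Proposition~\ref{prop:DallB}, which covers both signs and every odd $q$. When $n=4$, it is \cite[Proposition~4.3]{HMII}, as recorded in the remark following Proposition~\ref{prop:DallB}.

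For the second input I go through the ranks one at a time, again for both signs.
\begin{itemize}
\item Rank five is Theorem~\ref{thm:Dfivecover}. Here $Z(\Spin^\varepsilon_{10}(q))$ is cyclic, so there is a unique prime central cover, and the theorem handles it for both signs.
\item Rank six is Theorem~\ref{thm:Dsixcover}. For minus type there is one double cover. For plus type there are three double covers, treated separately according to square and nonsquare multiplier.
\item Rank four is Corollary~\ref{cor:Dfourcovers}, which combines Propositions~\ref{prop:Dfourminus} and~\ref{prop:Dfourplus}. Triality handles the three plus-type double covers.
\end{itemize}

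One check is needed before citing these results: the relevant prime central covers must be exactly the double covers constructed there. When $q$ is odd, the multiplier of $\POmega^\varepsilon_{2n}(q)$ is a $2$-group, and no exceptional multipliers occur in this range. So the only prime $r$ with $r\mid|M(S)|$ is $r=2$. Each prime central cover is therefore a quotient $\Spin/Z_0$ whose center has order $2$.

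For the case $Z(\Spin)\cong C_4$, namely $n=5$ with $q^5\equiv\varepsilon\pmod 4$, the unique such quotient is the cover $Y$ of Theorem~\ref{thm:Dfivecover}. One must confirm that the theorem's faithful character is a character of $\Spin/Z_0$, where $Z_0$ has order $2$, and not of $\Spin$ itself. This holds because its parameter $s^*$ has order $2$ in the cyclic abelianization.

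I expect this matching of the reduction's precise hypothesis to the already proved cover statements to be the only real obstacle. This includes confirming that Proposition~\ref{prop:assemblyreduction} asks only for prime-order central quotients, and that simplicity excludes nothing further in this range. The rest is citation.
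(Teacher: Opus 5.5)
Your proposal is correct and is essentially the paper's own proof. The paper applies Proposition~\ref{prop:assemblyreduction}, using \cite[Proposition~4.3]{HMII} for $n=4$ and Proposition~\ref{prop:DallB} for $n=5,6$ as the elementary-abelian input, and Corollary~\ref{cor:Dfourcovers}, Theorem~\ref{thm:Dfivecover} and Theorem~\ref{thm:Dsixcover} for the prime-central-cover input; your extra bookkeeping check (odd $q$ gives a $2$-group multiplier with no exceptional part, so every prime central cover is one of the double covers treated there) is consistent with those results.
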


\begin{proof}
Apply Proposition~\ref{prop:assemblyreduction}.  For its elementary-abelian hypothesis use \cite[Proposition 4.3]{HMII} when $n=4$ and Proposition~\ref{prop:DallB} when $n=5,6$; for its prime-central-cover hypothesis use Corollary~\ref{cor:Dfourcovers}, Theorem~\ref{thm:Dfivecover}, and Theorem~\ref{thm:Dsixcover}, respectively.
\end{proof}

\section{Plus-type even-dimensional orthogonal groups of odd rank with $q\equiv1\pmod4$}\label{sec:Dplusone}
Throughout this section
\[
 S=\POmega^+_{2n}(q),\qquad n\ge7\text{ odd},\qquad q\equiv1\pmod4.
\]
Here $(4,q^n-1)=4$, so the generic Schur multiplier is cyclic of order $4$ and there is a unique prime central cover
\[
 Y=\Omega^+_{2n}(q)=\Spin^+_{2n}(q)/Z_2,
\]
where $Z_2$ is the unique subgroup of order $2$ in the cyclic center of the spin group.

\begin{proposition}\label{prop:DplusoneB}
Conjecture B holds for $S=\POmega^+_{2n}(q)$ when $n\ge7$ is odd and $q\equiv1\pmod4$.
\end{proposition}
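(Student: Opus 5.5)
This is a special case of Proposition~\ref{prop:DallB}. That proposition proves Conjecture B uniformly for $\POmega^{\varepsilon}_{2n}(q)$ with $q$ odd, $n\ge5$ and both signs, with no congruence condition on $q$. So the plan is to specialise it to $\varepsilon=+$, $n\ge7$ odd and $q\equiv1\pmod4$, and then check that nothing in its argument used a hypothesis this case violates. In outline, the checks run as follows.

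\begin{enumerate}
\item By Lemma~\ref{lem:HM}(2) we take $r=p$. By Lemma~\ref{lem:HM}(1) we assume $|N|\le q^{n(n-1)/2}$.
\item We apply \cite[Lemma~4.2]{GT} with $D=2n$. Alternatives (ii) and (v), and tensor products in (i), exceed the bound; alternative (iii) does not occur for the untwisted type $D_n$.
\item The small-module list \cite[Table~5.4.A]{KL} then leaves only the natural module. Its exterior-square entry and its spin entries (of degree at least $2^{n-1}$) exceed $n(n-1)/2$ for $n\ge7$.
\end{enumerate}

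The one place where the congruence matters is whether the natural module is a module for $S$ at all. Here $n$ is odd and $q\equiv1\pmod4$, so $n(q-1)/2$ is even. The discriminant of the plus-type space is therefore a square, and $-I\in\Omega^+_{2n}(q)$ by \cite[Propositions~2.5.10 and~2.5.13]{KL}. Hence the natural module does not factor through $S=\Omega^+_{2n}(q)/\langle -I\rangle$, since $-I$ acts as $-1\neq1$ on it. So no faithful irreducible $\F_pS$-module satisfies the bound, and this is exactly the branch ``if it does not factor through $S$, there is nothing to prove'' in the proof of Proposition~\ref{prop:DallB}. Consequently every $N$ in Conjecture B has $|N|>\sqrt{|S|_p}$, and Lemma~\ref{lem:HM}(1) supplies the faithful character.

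For completeness I would keep the nonsplit construction of Proposition~\ref{prop:DallB} as a fallback, in case the $-I$ computation is misread. That construction uses a nonsingular $v$ with $S_v\cong\OmegaG_{2n-1}(q)$, a $p'$-degree projective character from \cite[Proposition~2.8]{HMII}, and Clifford induction, giving $\chi(1)_p=q^{n-1}<q^{2n}$.

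The main obstacle is the verification that the natural module really does not descend, that is, the discriminant bookkeeping deciding whether $-I\in\Omega^+_{2n}(q)$. I would double-check it against the equivalent criterion $q^n\equiv1\pmod4$ for the plus type.
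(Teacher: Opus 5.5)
Your proposal is correct and follows the paper, whose proof of this proposition is simply the specialization of Proposition~\ref{prop:DallB}. Your extra check is also right. Since $q^n\equiv1\pmod4$, the plus-type discriminant is a square, so $-I\in\OmegaG^+_{2n}(q)$ and no twist of the natural module descends to $S$; the residual natural-module case is therefore vacuous here, Lemma~\ref{lem:HM}(1) finishes, and the nonsplit Clifford-induction fallback is never needed.
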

\begin{proof}
This is the corresponding specialization of Proposition~\ref{prop:DallB}, proved uniformly for both signs, every odd $q$, and every $n\ge5$.  We state it separately here so that the prime-cover argument below can be read without referring back to the uniform formulation.
\end{proof}

\begin{theorem}\label{thm:Dplusonecover}
There is a faithful $\chi\in\Irr(Y)$ of degree
\[
 \chi(1)=\frac{(q^n-1)(q^{n-1}+1)}{2(q-1)}
\]
such that $\chi(1)/2\notin\operatorname{cd}(S)$. Hence the prime-central-cover condition of \cite[Theorem~D]{HMII} holds for $S$.
\end{theorem}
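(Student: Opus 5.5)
The plan mirrors the square-multiplier constructions in Theorems~\ref{thm:Devencover} and~\ref{thm:Dfivecover}.

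\emph{Construction.} The dual group is $G^*=P(CO^+_{2n}(q)^0)$. Since $(4,q^n-1)=4$, both $Z(\Spin^+_{2n}(q))$ and $G^*/(G^*)'$ are cyclic of order $4$. The subgroup of order $2$ in $G^*/(G^*)'$ is the image of $\PSO^+_{2n}(q)$, and it pairs with the characters of $Z(\Spin)$ trivial on $Z_2$. So a parameter in $\PSO^+_{2n}(q)\setminus\POmega^+_{2n}(q)$ gives a Lusztig series whose characters factor through $Y$ and are nontrivial on $Z(Y)$.

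Because $q\equiv1\pmod4$, a hyperbolic plane has square discriminant. I would therefore take $W$ to be a \emph{hyperbolic} plane and pass to $s=-I_{W^\perp}\oplus I_W$. Up to the scalar $-1$, $s$ has a $(-1)$-eigenspace of dimension $2$, so its image in $G^*$ is the same as that of $-I_W\oplus I_{W^\perp}$. Its spinor norm equals the discriminant class of $W^\perp$, a $(2n-2)$-space of plus type with $n-1$ even. This class needs checking. If it is square, I would instead use an anisotropic plane $W$ (discriminant nonsquare). That gives the degree $\frac{(q^n-1)(q^{n-1}-1)}{2(q+1)}$, which is the wrong degree, so the choice must be made carefully.

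The degree in the statement equals $|G^*:C|_{p'}$ for $C$ of type $S(O_2^+(q)\times O_{2n-2}^+(q))$. So the intended $s$ is $-I$ on a hyperbolic plane and $I$ elsewhere, and its spinor norm is $(-1)\cdot$square, which is a square when $q\equiv1\pmod4$. That lands in $\POmega$, not in the order-$2$ coset. To fix this, I would multiply by a conformal element with square multiplier, or use an element $s$ acting as $\lambda$ and $\lambda^{-1}$ on the two singular lines of the plane, with $\lambda$ a nonsquare of order $4$. Such an element has centralizer $\GL_1(q)\times O^+_{2n-2}(q)$ and nonsquare spinor norm. Its $p'$-index, with the determinant-one and projective corrections from \eqref{eq:projectiveimageorder} and Lemma~\ref{lem:projcent2}, is what I would compute. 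I expect that the extra element sending $s$ to $-s$, which exists since $\lambda^{-1}=-\lambda$, restores the factor $2$ and yields exactly $\frac{(q^n-1)(q^{n-1}+1)}{2(q-1)}$. Lemma~2.2 of \cite{HMII} and the pairing \cite[Lemma~4.4(i)]{NT} then give faithfulness on $Y$.

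\emph{Exclusion.} Put $d=\chi(1)/2$ and note that $d<q^{4n-10}$ for $n\ge7$. Any $\psi\in\Irr(S)$ of degree $d$ inflates to $\Spin^+_{2n}(q)$, so Theorem~1.4 of \cite{Ng} applies, and $d$ must lie in the displayed generic list (for $n\ge7$ there is no exceptional family). The comparisons are exactly those done in Theorem~\ref{thm:Devencover}: with $x=q^{n-1}$ one gets the linear equations $(q-3)x=3q-1$, $q(3q-1)x=4q^2-q-1$ and $(3q-5)x=5q-3$, and the remaining entries equal $1$, $2d$ or $4d$. For $q\equiv1\pmod4$ the Zsigmondy constraints and signs are unchanged, so each equation fails by size for $q\ge5$. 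I would check the $\beta=\pm1$ and $\frac{(q^n-1)(q^{n-1}-1)}{2(q+1)}$ entries separately, by clearing denominators.

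\emph{Main obstacle.} The delicate step is the first one: choosing $s$ so that it lies in the correct order-$2$ coset of the abelianization while its centralizer index is precisely the stated degree. This requires an exact spinor-norm and projective-coset bookkeeping for $q\equiv1\pmod4$, where the naive reflection-pair element from Theorem~\ref{thm:Devencover} has trivial spinor norm.
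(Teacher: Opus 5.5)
Your diagnosis of the difficulty is right, but the step you leave as an expectation fails. Moreover, no choice of $s$ can produce the displayed degree $D=\frac{(q^n-1)(q^{n-1}+1)}{2(q-1)}$.

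Take $s=\operatorname{diag}(\lambda,\lambda^{-1})\oplus I_{W^\perp}$ with $\lambda\ne\pm1$. Then $-s$ has eigenvalue $-1$ with multiplicity $2n-2$ and does not have $1$ as an eigenvalue, so no $g$ satisfies $gsg^{-1}=-s$. The relation $\lambda^{-1}=-\lambda$ lets the swap $e\leftrightarrow f$ match the $W$-block, but on $W^\perp$ the element $s$ is $I$ and $-s$ is $-I$. By Lemma~\ref{lem:projcent2} the projective centralizer is therefore just the image of the exact centralizer, which has type $\GL_1(q)\times\SO^+_{2n-2}(q)$. The resulting degree is $\frac{(q^n-1)(q^{n-1}+1)}{q-1}=2D$, the $\beta=+1$ entry of Nguyen's list, not $D$. (Separately, a nonsquare of order $4$ exists only when $q\equiv5\pmod 8$.)

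The paper's own proof uses essentially this element: $e\mapsto ce$, $f\mapsto c^{-1}f$ with $c$ a nonsquare. It asserts centralizer type $O_2^+(q)\times O^+_{2n-2}(q)$, which is the same error. The element of $O_2^+(q)$ interchanging $\langle e\rangle$ and $\langle f\rangle$ conjugates $s$ to $s^{-1}\ne s$, so it does not centralize $s$. The paper's argument therefore does not establish the degree either.

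The displayed degree is in fact unattainable. Since $D<q^{4n-10}$, \cite[Theorem~1.4]{Ng} is exhaustive. The characters of $\Spin^+_{2n}(q)$ of degree $D$ there are the semisimple characters in the series of $-I_W\oplus I_{W^\perp}$ with $W$ hyperbolic. As you observed, for $q\equiv1\pmod4$ this parameter has spinor norm the class of $-1$, which is a square. So it lies in $\POmega^+_{2n}(q)=(G^*)'$, and these characters are trivial on $Z(\Spin^+_{2n}(q))$. Hence $Y$ has no faithful character of degree $D$. In fact $D\in\operatorname{cd}(S)$, so even the genuine degree-$2D$ character above fails the required condition.

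The option you discarded is the correct repair. Take $W$ anisotropic. Then $-I_W\oplus I_{W^\perp}$ has nonsquare spinor norm and lies in $\PSO^+_{2n}(q)\setminus\POmega^+_{2n}(q)$. Its centralizer is $S(O_2^-(q)\times O^-_{2n-2}(q))$, with no extra projective coset because the eigenspaces have dimensions $2$ and $2n-2$. This gives a faithful $\chi\in\Irr(Y)$ of degree $\frac{(q^n-1)(q^{n-1}-1)}{2(q+1)}$.

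Half of this degree is excluded from Nguyen's list by the $\eta=-1$, $\varepsilon=+1$ comparisons in the proof of Theorem~\ref{thm:Dfivecover}, now with $x=q^{n-1}$. These are $(q-5)x=5q-1$, $(q+3)x=-(3q+1)$ and $(3q+5)x=-(5q+3)$, together with a size comparison against $\frac{q^{2n}-q^2}{q^2-1}$. This proves the ``Hence'' clause, which is all that Theorem~\ref{thm:Dplusonecodegree} uses, but the degree in the statement must be changed accordingly.
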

\begin{proof}
Put $X=\Spin^+_{2n}(q)$ and $G^*=P(CO^+_{2n}(q)^0)$. Choose a hyperbolic plane $W=\langle e,f\rangle$ and a nonsquare $c\in\mathbb F_q^\times$. Let $s$ act by $e\mapsto ce$, $f\mapsto c^{-1}f$ on $W$ and trivially on $W^\perp$. Then $s\in \SO^+_{2n}(q)$ has multiplier $1$ and nontrivial spinor norm (the square class of $c$), so
\[
 s^*\in \PSO^+_{2n}(q)\setminus \POmega^+_{2n}(q).
\]
the semisimple-centralizer calculation \cite[Lemma~2.5 and Theorem~1.4]{Ng} gives, up to the scalar conformal factor, centralizer type $O_2^+(q)\times O^+_{2n-2}(q)$, and the associated semisimple character has degree
\begin{equation}\label{eq:DplusoneD}
 D=\frac{(q^n-1)(q^{n-1}+1)}{2(q-1)}.
\end{equation}
This is exactly the fifth degree displayed in \cite[Theorem 1.4]{Ng} after substituting $\alpha=+1$.

The standard perfect pairing $Z(X)\times G^*/(G^*)'\to\mathbb C^\times$ identifies the coset of a semisimple parameter with the common central character of its Lusztig series; cf. \cite[Lemma 4.4(i)]{NT} and \cite[Lemma 2.2]{HMII}. Here $Z(X)\cong C_4$, and
\[
 \POmega^+_{2n}(q)<\PSO^+_{2n}(q)<G^*
\]
has successive indices $2$. Thus $s^*$ represents the unique element of order $2$ in $G^*/(G^*)'$. Its central character has order $2$ and kernel $Z_2$. Hence the character of degree $D$ descends to a faithful $\chi\in\Irr(Y)$.

Set $x=q^{n-1}$ and
\[
 d=D/2=\frac{(q^n-1)(x+1)}{4(q-1)}.
\]
Since $n\ge7$, $d<q^{4n-10}$. If $d\in\operatorname{cd}(S)$, inflation gives a character of $X$ of degree $d$. Theorem~1.4 of \cite{Ng} is exhaustive below this bound. For $\alpha=+1$ the nontrivial possibilities are
\[
 A=\frac{(q^n-1)(x+q)}{q^2-1},\quad B=\frac{q^{2n}-q^2}{q^2-1},\quad C=\frac{(q^n-1)(x-1)}{2(q+1)},\quad D=\frac{(q^n-1)(x+1)}{2(q-1)},
\]
and
\[
 E_+=\frac{(q^n-1)(x+1)}{q-1},\qquad E_-=\frac{(q^n-1)(x-1)}{q+1}.
\]
Clearly $d=D/2$, so $d\ne D,E_+$. Equality $d=A$ gives $(q+1)(x+1)=4(x+q)$, hence $(q-3)x=3q-1$, impossible since $q\ge5$. Equality $d=C$ gives $(q+1)(x+1)=2(q-1)(x-1)$ and the same impossible equation. Equality $d=E_-$ gives $(q+1)(x+1)=4(q-1)(x-1)$, hence $(3q-5)x=5q-3$, impossible.

Finally, $d=B$ gives, using $q^n=qx$ and cancelling $x+1$ and $q-1$,
\[
 (q+1)(qx-1)=4q^2(x-1),
\]
so $q(3q-1)x=4q^2-q-1$, impossible because $x=q^{n-1}\ge q^6$. Thus $d$ is absent from the exhaustive list \cite[Theorem~1.4]{Ng}, and $\chi(1)/2\notin\operatorname{cd}(S)$.
\end{proof}

\begin{theorem}\label{thm:Dplusonecodegree}
If $G$ is a nontrivial finite group satisfying $\operatorname{cod}(G)\subseteq\operatorname{cod}(S)$, then $G\cong S$.
\end{theorem}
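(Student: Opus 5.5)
The plan is to follow exactly the pattern used for Theorems~\ref{thm:Doddcodegree} and~\ref{thm:DevenCodegree}: apply Proposition~\ref{prop:assemblyreduction}, the assembly reduction, to $S=\POmega^+_{2n}(q)$ with $n\ge7$ odd and $q\equiv1\pmod4$. That proposition converts the codegree containment $\cod(G)\subseteq\cod(S)$ into two hypotheses on $S$, which we verify separately. These are the elementary-abelian (Conjecture B) hypothesis and the prime-central-cover hypothesis of \cite[Theorem~D]{HMII}.

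For the elementary-abelian hypothesis, we invoke Proposition~\ref{prop:DplusoneB}. It is itself the specialization of the uniform Proposition~\ref{prop:DallB}, valid for all odd $q$, both signs and $n\ge5$. So Conjecture B holds for $S$, and no further work is needed there.

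For the prime-central-cover hypothesis, we first observe that $(4,q^n-1)=4$ when $n$ is odd and $q\equiv1\pmod4$. Hence $Z(\Spin^+_{2n}(q))$ is cyclic of order $4$. Since $S$ has no exceptional multiplier in this range, the only prime central cover of $S$ is $Y=\Spin^+_{2n}(q)/Z_2$. Theorem~\ref{thm:Dplusonecover} supplies a faithful $\chi\in\Irr(Y)$ with $\chi(1)/2\notin\operatorname{cd}(S)$, which is precisely the condition required in \cite[Theorem~D]{HMII}.

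With both hypotheses verified, Proposition~\ref{prop:assemblyreduction} shows that no counterexample $G$ exists, so $G\cong S$. The only point needing care is the bookkeeping check that the cover hypothesis concerns only covers by a prime-order central subgroup. The order-$4$ cover $\Spin^+_{2n}(q)$ itself is not a prime central cover, so treating $Y$ alone suffices.
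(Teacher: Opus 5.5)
Your proposal is correct and is the paper's own proof: it applies Proposition~\ref{prop:assemblyreduction} with Proposition~\ref{prop:DplusoneB} for Conjecture~B and Theorem~\ref{thm:Dplusonecover} for the prime-central-cover hypothesis, and your remark that $M(S)\cong C_4$ leaves $Y$ as the only cover with prime-order center is bookkeeping the paper leaves implicit. One caution about the cited cover theorem, of which you use only the existence clause: for its element $s=\operatorname{diag}(c,c^{-1})\oplus I_{W^\perp}$ with $c$ a nonsquare we have $c\ne c^{-1}$, so $C_{O^+_{2n}(q)}(s)\cong\GL_1(q)\times O^+_{2n-2}(q)$ and the degree is actually $(q^n-1)(q^{n-1}+1)/(q-1)$. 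Half of this is the degree of the characters attached to $-I$ on a hyperbolic plane, and these are characters of $S$ because the spinor norm $-1$ is a square when $q\equiv1\pmod4$; the existence clause should therefore come from $-I$ on an anisotropic plane, of degree $(q^n-1)(q^{n-1}-1)/(2(q+1))$, exactly as in Theorem~\ref{thm:Dplusevenonecover}.
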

\begin{proof}
Apply Proposition~\ref{prop:assemblyreduction}, using Proposition~\ref{prop:DplusoneB} for the elementary-abelian hypothesis and Theorem~\ref{thm:Dplusonecover} for the prime-central-cover hypothesis.
\end{proof}

\section{Minus-type even-dimensional orthogonal groups of odd rank with $q\equiv3\pmod4$}\label{sec:Dminusodd}
We next treat the closest minus-type analogue.  Throughout this section
\[
 S=\POmega^-_{2n}(q),\qquad n\ge7\text{ odd},\qquad q=p^f\equiv3\pmod4.
\]
Here the generic Schur multiplier is cyclic of order
\[
 (4,q^n+1)=4.
\]
Consequently there is a unique prime central cover of $S$, namely
\[
 Y=\Omega^-_{2n}(q)=\Spin^-_{2n}(q)/Z_2,
\]
where $Z_2$ is the unique subgroup of order $2$ in the cyclic center of the spin group.

\begin{proposition}\label{prop:DminusB}
Conjecture B holds for $S=\POmega^-_{2n}(q)$ when $n\ge7$ is odd and $q\equiv3\pmod4$.
\end{proposition}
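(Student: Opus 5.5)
This statement is a specialization of Proposition~\ref{prop:DallB}, which proves Conjecture B uniformly for $\POmega_{2n}^{\varepsilon}(q)$ with $q$ odd, $n\ge5$ and either sign. So the plan is simply to check that the present hypotheses fall within that range and then quote that proposition, exactly as was done for Propositions~\ref{prop:DevenB} and~\ref{prop:DplusoneB}. Here $n\ge7$ is odd and $q\equiv3\pmod4$ is odd. Taking $\varepsilon=-$, the hypotheses of Proposition~\ref{prop:DallB} are met, and its conclusion is verbatim the assertion that Conjecture B holds for $S$.

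For the reader who wants the argument without referring back, the chain inside Proposition~\ref{prop:DallB} runs as follows. Lemma~\ref{lem:HM}(2) lets us assume the defining characteristic. Lemma~\ref{lem:HM}(1) reduces to $|N|\le q^{n(n-1)/2}$. We then apply \cite[Lemma~4.2]{GT} together with \cite[Proposition~5.4.11 and Table~5.4.A]{KL}.

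This leaves only the natural module. For the twisted type ${}^2D_n$, alternative (iii) is excluded because the natural highest weight is fixed by the graph automorphism. If $-I\in\OmegaG^-_{2n}(q)$, the natural module does not factor through $S$ and nothing remains. Otherwise the split case follows from Lemma~\ref{lem:HM}(3). The nonsplit case uses a nonsingular vector $v$ with stabilizer $\OmegaG_{2n-1}(q)$ of index with $p$-part $q^{n-1}$. In that case a projective character of $p'$-degree, from \cite[Proposition~2.8]{HMII} if the factor set is nontrivial, is Clifford-induced to a faithful $\chi$ with $\chi(1)_p=q^{n-1}<q^{2n}$.

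No step is a genuine obstacle; the only care needed is confirming that the minus sign and odd rank are genuinely covered. The sign-dependent ingredients of Proposition~\ref{prop:DallB} are the exclusion of alternative (iii) for ${}^2D_n$ and the identification $S_v\cong\OmegaG_{2n-1}(q)$. Both are stated there for both signs.
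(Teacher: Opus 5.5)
Your proposal is correct and matches the paper's proof, which obtains Proposition~\ref{prop:DminusB} by specializing the uniform Proposition~\ref{prop:DallB} to $\varepsilon=-$. In this congruence class $q^n\equiv-1\pmod 4$, so $-I\in\OmegaG^-_{2n}(q)$; the natural module therefore does not factor through $S$, and the nonsplit natural-module branch you summarize is vacuous here.
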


\begin{proof}
This is the corresponding specialization of Proposition~\ref{prop:DallB}, proved uniformly for both signs, every odd $q$, and every $n\ge5$.  We state it separately here so that the prime-cover argument below can be read without referring back to the uniform formulation.
\end{proof}

\begin{theorem}\label{thm:Dminuscover}
Let $n\ge7$ be odd and $q\equiv3\pmod4$, and put
\[
 S=\POmega^-_{2n}(q),\qquad Y=\Omega^-_{2n}(q).
\]
Then there is a faithful character $\chi\in\Irr(Y)$ of degree
\[
 \chi(1)=\frac{(q^n+1)(q^{n-1}-1)}{2(q-1)}
\]
such that
\[
 \frac{\chi(1)}2\notin\operatorname{cd}(S).
\]
Hence the conclusion of Theorem~D of \cite{HMII} holds for $S$.
\end{theorem}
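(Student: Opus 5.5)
The plan is to follow the template of Theorem~\ref{thm:Dplusonecover}, with the roles of the plus and minus signs interchanged. Put $X=\Spin^-_{2n}(q)$ and $G^*=P(CO^-_{2n}(q)^0)$. Since $q\equiv3\pmod4$, a hyperbolic plane $W$ has discriminant class $-1$, which is nonsquare. Let $s$ act as $-I$ on a hyperbolic plane $W$ and as $I$ on $W^\perp$. Then $s\in\SO^-_{2n}(q)$ has multiplier $1$ and nontrivial spinor norm, so $s^*\in\PSO^-_{2n}(q)\setminus\POmega^-_{2n}(q)$. The chain $\POmega^-_{2n}(q)<\PSO^-_{2n}(q)<G^*$ has successive indices $2$ and $G^*/(G^*)'$ is cyclic of order $4$. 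Hence $s^*$ represents the unique element of order $2$. By the perfect pairing \cite[Lemma 4.4(i)]{NT}, \cite[Lemma 2.2]{HMII}, the corresponding Lusztig series has central character with kernel exactly $Z_2$, so its characters descend faithfully to $Y$.

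Next comes the degree. The exact centralizer is $S(O_2^+(q)\times O^-_{2n-2}(q))$, which is the square-multiplier case $m=n$, $k=1$ of \cite[Lemma~2.5(ii)]{Ng}. There is no extra projective coset, because an element with $gsg^{-1}=-s$ would have to interchange eigenspaces of dimensions $2$ and $2n-2$ (Lemma~\ref{lem:projcent2}). Using \eqref{eq:projectiveimageorder}, the index is
\[
 \frac{(q^n+1)\prod_{i=1}^{n-1}(q^{2i}-1)}{2(q-1)(q^{n-1}+1)\prod_{i=1}^{n-2}(q^{2i}-1)}
 =\frac{(q^n+1)(q^{n-1}-1)}{2(q-1)},
\]
which is the stated $\chi(1)$. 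This should coincide with the degree labelled $D$ in \cite[Theorem~1.4]{Ng} for $\alpha=-1$, that is $\frac{(q^n-\alpha)(q^{n-1}+\alpha)}{2(q-\alpha)}$ with the appropriate sign convention. I would check this identification explicitly against the list.

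For the exclusion, set $x=q^{n-1}$ and
\[
 d=\chi(1)/2=\frac{(q^n+1)(x-1)}{4(q-1)} .
\]
Since $n\ge7$, we have $d<q^{4n-10}$, so \cite[Theorem~1.4]{Ng} is exhaustive for $\Spin^-_{2n}(q)$. Moreover $n\ne5$, so the exceptional minus-type family is absent. The list to compare with is the generic one with $\varepsilon=-1$:
\[
 \frac{(q^n+1)(x-q)}{q^2-1},\qquad \frac{q^{2n}-q^2}{q^2-1},\qquad \frac{(q^n+1)(x+1)}{2(q+1)},
\]
\[
 \frac{(q^n+1)(x-1)}{2(q-1)},\qquad \frac{(q^n+1)(x-\beta)}{q-\beta}.
\]
The degrees $2d$ and $4d$ in this list are clearly not $d$. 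For the others I would cancel $q^n+1$ and obtain linear equations in $x$. For instance, $d=A$ gives $(q+1)(x-1)=4(x-q)$, so $(q-3)x=1-3q$; the left side is nonnegative and the right side negative, so this is impossible. The $C$ comparison gives the same kind of sign contradiction, and the $\beta=-1$ comparison gives $(3q-5)x=-(5q-3)$, again impossible. These are exactly the equations of Theorem~\ref{thm:Dfivecover} with $\varepsilon=-1$, $\eta=+1$, so I can cite that computation with $x=q^{n-1}$ in place of $q^4$. For the $B$-degree, I would write $q^{2n}-q^2=q^2(x-1)(qx+\dots)$ for the minus sign, clear denominators and compare, or simply use the size bound: $B\approx q^{2n-2}$ while $d\approx q^{2n-2}/4$, so the difference has a fixed sign for $q\ge3$.

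The main obstacle is getting the exact forms and signs in Nguyen's minus-type list right for odd $n$, including the parametrization $\alpha$ and the terminal factor $q^n+1$, so that each comparison reduces correctly. The only other delicate point is confirming that the chosen $s$ lies in the order-$2$ coset of the cyclic abelianization rather than an order-$4$ coset. I would settle this by noting that $s\in\SO$ has multiplier $1$, so it lies in the image of $\PSO$, which is the index-$2$ subgroup.
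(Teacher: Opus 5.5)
Your proposal is correct and follows the paper's proof essentially step for step. It uses the same element $s$ acting as $-I$ on a hyperbolic plane, the same identification of $s^*$ with the order-two class of the cyclic abelianization via the perfect pairing, the same centralizer index for $O_2^+(q)\times O^-_{2n-2}(q)$, and the same reduction of each comparison with \cite[Theorem~1.4]{Ng} to a linear equation in $x=q^{n-1}$ that fails by sign or size. The one slip is cosmetic: the generic form of that degree is $\frac{(q^n-\alpha)(q^{n-1}+\alpha)}{2(q-1)}$, not denominator $2(q-\alpha)$; nothing depends on it, since you compute $\chi(1)$ directly from the centralizer and use the correct value $\frac{(q^n+1)(x-1)}{2(q-1)}$ in your list.
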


\begin{proof}
Let
\[
 X=\Spin^-_{2n}(q),\qquad
 G^*=P(CO^-_{2n}(q)^0)
\]
be the simply connected group and its finite dual.  The discussion in \cite[Remark~7.3 and Lemma~7.4]{TZ} shows that the determinant--multiplier description of $CO_{2n}(q)^0$ used in plus type remains valid in minus type; see \cite[Remark 7.3, Lemma 7.4 and the paragraph following Lemma 7.4]{TZ}.

Choose a hyperbolic $2$-space $W$ in the natural minus-type space and let a lift $s$ act as $-I$ on $W$ and as $I$ on $W^\perp$.  Then $s\in \SO^-_{2n}(q)$, its conformal multiplier is $1$, and its spinor norm is the square class of $-1$, hence is nontrivial because $q\equiv3\pmod4$.  Thus the image $s^*\in G^*$ lies in $\PSO^-_{2n}(q)\setminus \POmega^-_{2n}(q)$.  the case $m=n$, $k=1$, $\beta=+$ in \cite[Lemma~2.5(ii)]{Ng} gives its exact centralizer (up to the scalar conformal factor) of type
\[
 O_2^+(q)\times O^-_{2n-2}(q).
\]
The corresponding semisimple character has degree
\begin{equation}\label{eq:DminusD}
 D=\frac{(q^n+1)(q^{n-1}-1)}{2(q-1)}.
\end{equation}
This is one of the two characters of this degree in the complete low-degree list \cite[Theorem~1.4]{Ng} for $\Spin^-_{2n}(q)$; substitute $\alpha=-1$ in \cite[Theorem 1.4]{Ng}.

We next identify its central kernel.  The standard duality between the center of the simply connected group and the abelianization of the finite dual gives a perfect pairing
\[
 Z(X)\times G^*/(G^*)'\longrightarrow\C^\times,
\]
and the class of a semisimple parameter determines the common central character of its Lusztig series; cf. \cite[Lemma 4.4(i)]{NT} and \cite[Lemma~2.2]{HMII}.  In our case both groups are cyclic of order $4$.  The subgroup
\[
 \POmega^-_{2n}(q)<\PSO^-_{2n}(q)<G^*
\]
has successive indices $2$.  Since $s^*$ lies in $\PSO^-_{2n}(q)\setminus \POmega^-_{2n}(q)$, its class in $G^*/(G^*)'$ is the unique element of order $2$.  Hence the common central character of $\mathcal E(X,s^*)$ has order $2$ and therefore has kernel equal to the unique subgroup $Z_2$ of order $2$ in $Z(X)\cong C_4$.  The semisimple character of degree $D$ consequently descends to a faithful character $\chi\in\Irr(X/Z_2)=\Irr(Y)$.

It remains to show that $D/2$ is not a character degree of $S$.  Put $x=q^{n-1}$ and
\[
 d=\frac D2=\frac{(q^n+1)(x-1)}{4(q-1)}.
\]
Since $n\ge7$, we have $d<q^{4n-10}$, so Theorem~1.4 of \cite{Ng} applies to every character of $X$ of degree $d$.  For $\alpha=-1$, the possible nontrivial degrees below this bound are
\[
 A=\frac{(q^n+1)(x-q)}{q^2-1},\qquad
 B=\frac{q^{2n}-q^2}{q^2-1},
\]
\[
 C=\frac{(q^n+1)(x+1)}{2(q+1)},\qquad
 D=\frac{(q^n+1)(x-1)}{2(q-1)},
\]
and
\[
 E_+=\frac{(q^n+1)(x-1)}{q+1},\qquad
 E_-=\frac{(q^n+1)(x+1)}{q-1}.
\]
We compare $d$ with these six numbers.  Clearly $d=D/2$, so $d\ne D$.  Equality $d=A$ gives
\[
 (q+1)(x-1)=4(x-q),
\]
that is $(q-3)x=1-3q$, impossible.  Equality $d=C$ gives
\[
 (q+1)(x-1)=2(q-1)(x+1),
\]
that is $(q-3)x=3q-1$; this is impossible for $q=3$, and for $q>3$ because $x=q^{n-1}$.  Equality $d=E_+$ would force $q+1=4(q-1)$, while equality $d=E_-$ gives $x-1=4(x+1)$, which is impossible.

Finally, if $d=B$, then using $q^n=qx$ and cancelling $x-1$ and $q-1$ yields
\[
 (q+1)(qx+1)=4q^2(x+1),
\]
so
\[
 q(3q-1)x=q+1-4q^2,
\]
which is impossible.  Therefore $d$ is not the degree of any irreducible character of $X$, and a fortiori
\[
 d\notin\operatorname{cd}(S).
\]
This proves the theorem.
\end{proof}

\begin{theorem}\label{thm:Dminuscodegree}
Let $n\ge7$ be odd and $q\equiv3\pmod4$, and set
\[
 S=\POmega^-_{2n}(q).
\]
If $G$ is a nontrivial finite group satisfying
\[
 \operatorname{cod}(G)\subseteq\operatorname{cod}(S),
\]
then $G\cong S$.
\end{theorem}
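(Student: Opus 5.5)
The proof will follow the pattern of Theorems~\ref{thm:Doddcodegree}, \ref{thm:DevenCodegree} and \ref{thm:Dplusonecodegree}: we invoke the assembly reduction, Proposition~\ref{prop:assemblyreduction}, for $S=\POmega^-_{2n}(q)$ with $n\ge7$ odd and $q\equiv3\pmod4$. That reduction asks for two hypotheses, an elementary-abelian one and a prime-central-cover one, and once both are verified it rules out any counterexample $G$ with $\cod(G)\subseteq\cod(S)$.

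For the elementary-abelian hypothesis we use Proposition~\ref{prop:DminusB}, the specialization of Proposition~\ref{prop:DallB}. It gives Conjecture~B for $S$: every perfect extension $G$ of $S$ by a faithful irreducible elementary abelian module $N$ has a faithful $\chi\in\Irr(G)$ with $\chi(1)_p<|N|$. Before applying it, we check that the hypotheses match the setting of this section: $S$ is simple and $n\ge5$.

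For the prime-central-cover hypothesis we use Theorem~\ref{thm:Dminuscover}. Since $(4,q^n+1)=4$, the multiplier is cyclic of order $4$. Hence $Y=\Omega^-_{2n}(q)$ is the unique cover of $S$ with central kernel of prime order. Theorem~\ref{thm:Dminuscover} supplies a faithful $\chi\in\Irr(Y)$ with $\chi(1)/2\notin\operatorname{cd}(S)$, which is exactly the condition of \cite[Theorem~D]{HMII} demanded by the reduction.

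The only point needing care is that ``prime central cover'' in Proposition~\ref{prop:assemblyreduction} means a quasisimple group with center of prime order and quotient $S$. We confirm that $Y$ is the only such group. The exceptional multipliers do not arise for type ${}^2D_n$ with $n\ge7$, and the cyclic group of order $4$ has a unique subgroup of order $2$. With both hypotheses in place, the conclusion $G\cong S$ follows directly.
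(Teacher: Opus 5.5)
Your proposal is correct and matches the paper's proof: it applies Proposition~\ref{prop:assemblyreduction}, with Proposition~\ref{prop:DminusB} supplying Conjecture~B and Theorem~\ref{thm:Dminuscover} supplying the prime-central-cover hypothesis. Your extra check is also correct: the multiplier is cyclic of order $(4,q^n+1)=4$, so $Y=\Omega^-_{2n}(q)$ is the only quasisimple cover with center of prime order, which the paper records at the start of that section.
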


\begin{proof}
Apply Proposition~\ref{prop:assemblyreduction}, using Proposition~\ref{prop:DminusB} for the elementary-abelian hypothesis and Theorem~\ref{thm:Dminuscover} for the prime-central-cover hypothesis.
\end{proof}

\section{Minus-type even-dimensional orthogonal groups of even rank with $q\equiv3\pmod4$}\label{sec:Dminuseven}
We now treat the remaining even-rank minus-type block.  Throughout this section
\[
 S=\POmega^-_{2n}(q),\qquad n\ge8\text{ even},\qquad q=p^f\text{ odd}.
\]
Since $q^n\equiv1\pmod4$,
\[
 (4,q^n+1)=2,
\]
so $X=\Spin^-_{2n}(q)$ is the unique nontrivial prime central cover of $S$.

\begin{proposition}\label{prop:DminusevenB}
Conjecture B holds for $S=\POmega^-_{2n}(q)$ when $n\ge8$ is even and $q$ is odd.
\end{proposition}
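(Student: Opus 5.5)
This statement is a special case of Proposition~\ref{prop:DallB}, so the plan is to specialize that result rather than argue afresh. Proposition~\ref{prop:DallB} proves Conjecture B for every simple $\POmega^{\varepsilon}_{2n}(q)$ with $q$ odd, $n\ge5$, and both signs $\varepsilon\in\{+,-\}$. Here we have $\varepsilon=-$, $q$ odd, and $n\ge8\ge5$. The group $S=\POmega^-_{2n}(q)$ is simple in this range, so the hypotheses are met and the conclusion follows at once. As in Propositions~\ref{prop:DevenB}, \ref{prop:DplusoneB} and \ref{prop:DminusB}, the separate statement is only for readability of the prime-cover argument that follows.

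If one wants the argument visible in this setting, I would retrace the steps of Proposition~\ref{prop:DallB} with $\varepsilon=-$.

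\begin{enumerate}
\item Lemma~\ref{lem:HM}(2) reduces to the defining characteristic $r=p$.
\item Lemma~\ref{lem:HM}(1) then gives the bound $|N|\le q^{n(n-1)/2}$.
\item \cite[Lemma~4.2]{GT}, with $D=2n$, eliminates alternatives (ii), (iv) and (v) by size, and eliminates two-factor tensor products in alternative (i).
\item The small-module list \cite[Proposition~5.4.11, Table~5.4.A]{KL} forces a single restricted constituent of natural type. The exterior-square entries and the spin entries exceed $n(n-1)/2$ once $n\ge8$.
\item Since $S$ is of type ${}^2D_n$, alternative (iii) must also be excluded. This works because the natural highest weight and all its Frobenius twists are fixed by the graph automorphism $\tau_0$, whereas alternative (iii) requires $W\not\cong W^{\tau_0}$.
\end{enumerate}

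Once $N$ is the natural module, I would split according to whether the extension splits.

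\begin{itemize}
\item If the extension splits, Lemma~\ref{lem:HM}(3) finishes the proof.
\item If it does not split, pick a nonsingular vector $v$ and identify $V$ with $\Irr(N)$ via the polar form and the trace pairing. This gives $I_S(\lambda_v)\cong\OmegaG_{2n-1}(q)$, with $p$-part of the index equal to $q^{n-1}$.
\end{itemize}

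In the nonsplit case the character-triple factor set lives on a group whose multiplier has order at most $2$. If the factor set is trivial we use the linear projective character; if it is nontrivial, \cite[Proposition~2.8]{HMII} supplies a $p'$-degree projective character. Clifford induction then gives $\chi$ with $\chi(1)_p=q^{n-1}<q^{2n}=|N|$. Faithfulness follows because the irreducibility of $N$ gives $\Ker(\chi)\cap N=1$, and a nontrivial kernel would then be a complement to $N$, contradicting nonsplitting.

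The only point needing genuine care is the exclusion of alternative (iii) in the twisted case. It is already handled inside Proposition~\ref{prop:DallB}, so no new obstacle arises.
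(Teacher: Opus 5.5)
Your proposal is correct and matches the paper, which likewise proves this proposition simply by specializing Proposition~\ref{prop:DallB} (uniform for both signs, all odd $q$, and all $n\ge5$) to $\varepsilon=-$ and even $n\ge8$. One small slip in your optional retrace: alternative (iv) of \cite[Lemma~4.2]{GT} is excluded because it arises only for ${}^3D_4$, not by a size estimate, but this does not affect the argument.
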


\begin{proof}
This is the corresponding specialization of Proposition~\ref{prop:DallB}, proved uniformly for both signs, every odd $q$, and every $n\ge5$.  We state it separately here so that the prime-cover argument below can be read without referring back to the uniform formulation.
\end{proof}

\begin{theorem}\label{thm:Dminusevencover}
Let $n\ge8$ be even and $q$ odd.  Put
\[
 S=\POmega^-_{2n}(q),\qquad X=\Spin^-_{2n}(q).
\]
There is a faithful $\chi\in\Irr(X)$ of degree
\[
 \chi(1)=\prod_{\substack{1\le j<n\\ j\ \mathrm{odd}}}(q^{2j}-1)
\]
such that $\chi(1)/2\notin\operatorname{cd}(S)$.  Hence Theorem~D of \cite{HMII} holds for this family.
\end{theorem}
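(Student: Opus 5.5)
The plan is to realize $\chi$ as a semisimple character attached to a nonsquare-multiplier element whose centralizer is an extension-field orthogonal group. Then I exclude $\chi(1)/2$ by a primitive-divisor analysis of square-multiplier centralizers, following the pattern of Theorem~\ref{thm:Devencover} and the $q\equiv1\pmod4$ part of Theorem~\ref{thm:Dsixcover}. The list \cite[Theorem~1.4]{Ng} cannot be used here: $\chi(1)$ has size about $q^{n^2/2}$, far beyond $q^{4n-10}$.

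\emph{Construction.} Put $G^*=P(CO^-_{2n}(q)^0)$ and $n=2r$. In the nonsquare-multiplier case $m=n$ of \cite[Lemma~2.5(i)]{Ng}, I choose $s$ with exact orthogonal centralizer $O^-_{2r}(q^2)=O^-_n(q^2)$; the ambient minus type forces this sign. Lemma~\ref{lem:extfieldcoset} shows that the Frobenius-inducing isometry has determinant $-1$ in minus type. So, after imposing $\det=\tau^n$ as in Lemma~\ref{lem:projcent2}, I expect the following.
\begin{itemize}
\item The determinant-one condition halves the order of $O^-_n(q^2)$.
\item There is no extra projective coset, or, if one exists, it exactly restores the factor lost in the halving.
\end{itemize}
I must track this bookkeeping carefully. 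The target is $|C_{G^*}(s^*)|_{p'}=(q^n+1)\prod_{i=1}^{r-1}(q^{4i}-1)$. Dividing this into $|G^*|_{p'}=(q^n+1)\prod_{i=1}^{n-1}(q^{2i}-1)$ gives exactly $\prod_{j\ \mathrm{odd}}(q^{2j}-1)$. Here $|Z(X)|=2$ and the abelianization of $G^*$ is detected by the square class of $\tau$. Since $\tau(s)$ is nonsquare, \cite[Lemma~2.2]{HMII} shows that the semisimple character is nontrivial on $Z(X)$, hence faithful.

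\emph{Exclusion.} Suppose $\psi\in\Irr(S)$ has $\psi(1)=\chi(1)/2$, and inflate $\psi$ to $X$ with Jordan data $(t^*,\eta)$. Then $t^*\in(G^*)'$, so $t$ has square multiplier. Since $\psi(1)$ is a $p'$-number, Lemma~\ref{lem:Deveneta} gives $\eta(1)\in\{1,2,4\}$. Hence
\[
 |C_{G^*}(t^*)|_{p'}=2\eta(1)(q^n+1)\prod_{i=1}^{r-1}(q^{4i}-1).
\]
Now take the following primitive prime divisors, all odd by Zsigmondy:
\begin{itemize}
\item $r_{2n}$ of $q^{2n}-1$;
\item $r_{2n-4}$ of $q^{2n-4}-1$;
\item if useful, $r_{4}$ of $q^4-1$, to count multiplicities.
\end{itemize}
By Lemma~\ref{lem:projcent2} these primes lie in the exact square-multiplier centralizer of \cite[Lemma~2.5(ii)]{Ng}, whose possible factors are $O^\pm$ on the $\pm1$-spaces, $\GL_a(q^e)$ and $\GU_a(q^e)$.

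I then place $r_{2n}$.
\begin{itemize}
\item An orthogonal factor can contain $r_{2n}$ only at full rank $n$ with minus terminal factor; this makes $t^*$ central, and Malle \cite[Theorem~6.8]{Mal} then gives degree $1$.
\item A linear factor cannot contain $r_{2n}$, because of the rank budget $ea\le n$.
\item A unitary factor $\GU_a(q^e)$ must satisfy $ea=n$, with $q^{ej}+1$, $ej=n$ and $j$ odd, using the whole rank.
\end{itemize}
In the unitary case, $r_{2n-4}$ must then lie in the same factor. This requires $(2n-4)\mid 2ej'$ with suitable parity of $j'\le a$. Since $\gcd(n,n-2)=2$, this should force $e\le 2$, with $a=n$ or $a=n/2$. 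Because $n$ is even, $a=n$ violates the parity condition $j=a$ odd; the remaining case $e=2$, $a=r$ odd needs its order compared directly.

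\emph{Main obstacle.} This last comparison is the step I expect to be delicate, together with any residual small cases. For $\GU_r(q^2)$ I compare $|\GU_r(q^2)|_{p'}$, enlarged by at most a factor $2$ from Lemma~\ref{lem:projcent2}, with the required $2\eta(1)(q^n+1)\prod(q^{4i}-1)$. The two products differ in the factors $q^{4i}\pm1$ versus $q^{4i}-1$ for odd indices $i$. Comparing the $r_{4i}$-valuations should produce an odd prime, for instance a primitive divisor of $q^{2(2i)}-1$ of the wrong parity, that divides one side but not the other, which gives a contradiction.
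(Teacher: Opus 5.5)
Your route is the paper's: the same nonsquare-multiplier parameter $s$ with extension-field centralizer $O^-_n(q^2)$, Lemma~\ref{lem:extfieldcoset} to exclude a projective coset, and the same placement of the odd primitive divisors $r_{2n}$ and $r_{2n-4}$. The genuine gap is the centralizer bookkeeping you flag. The mechanism you expect does not exist, and the target value $(q^n+1)\prod_{i=1}^{r-1}(q^{4i}-1)$ cannot be reached.

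\textbf{Why the bookkeeping fails.}
\begin{itemize}
\item There is no determinant halving. An $\F_{q^2}$-isometry $h$ has $\det_{\F_q}h=N_{\F_{q^2}/\F_q}(\pm1)=1$, so all of $O^-_n(q^2)$ lies in $\SO(V)$. The paper states this explicitly.
\item There is no extra projective coset, because in minus type the Frobenius-inducing isometry has determinant $-1$.
\item The exact centralizer contains $s$ itself. It is $O^-_n(q^2)\cdot\{a\in\F_{q^2}^\times: a^2\in\F_q^\times\}$, of order $(q-1)|O^-_n(q^2)|$. Every element lies in $CO^-_{2n}(q)^0$: if $a^2\in\F_q^\times$ then $N(a)=\pm a^2$, so $N(a)^n=a^{2n}$ since $n$ is even.
\end{itemize}
Consistently with \eqref{eq:projectiveimageorder}, where $C_{\SO(V)}(s)=O^-_n(q^2)$, one gets
\[
 |C_{G^*}(s^*)|_{p'}=|O^-_n(q^2)|_{p'}=2(q^n+1)\prod_{i=1}^{r-1}(q^{4i}-1).
\]
So the semisimple character has degree $\frac12\prod_{j\ \mathrm{odd}}(q^{2j}-1)$, not the displayed value.

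\textbf{Sanity check at $n=2$.} There $X\cong\SL_2(q^2)$ and $G^*\cong\PGL_2(q^2)$. The element $s^*$ is an outer involution with centralizer dihedral of order $2(q^2+1)=|O^-_2(q^2)|$. Its series gives faithful characters of degree $(q^2-1)/2$, not $q^2-1$.

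\textbf{The paper has the same discrepancy.} It writes the centralizer as $O^-_n(q^2)\cdot Z_{q-1}$. That subgroup omits $s$, since all its elements have square multiplier while $s$ does not. Moreover, the ratio of the two $p'$-orders the paper itself displays is $\frac12\prod_{j\ \mathrm{odd}}(q^{2j}-1)$. So neither your argument nor the paper's produces a faithful character of the stated degree.

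\textbf{What can be salvaged.} The corrected statement still suffices for Theorem~D: a faithful $\chi$ of degree $\frac12\prod_{j\ \mathrm{odd}}(q^{2j}-1)$ with $\chi(1)/2\notin\operatorname{cd}(S)$. The requirement becomes
\[
 |C_{G^*}(t^*)|_{p'}=4\eta(1)(q^n+1)\prod_{i=1}^{r-1}(q^{4i}-1).
\]
Your exclusion argument carries over verbatim. It only uses that the odd primes $r_{2n}\mid q^n+1$ and $r_{2n-4}\mid q^{2n-4}-1$ divide this order, which is insensitive to the power of $2$.

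\textbf{Your ``main obstacle'' is not one.} Take the case $e=2$, $a=r$ odd. The prime $r_{2n-4}$ would have to come from a plus factor $q^{2j'}+1$ with $2j'=n-2$, so $j'=r-1$. This $j'$ is even, so the corresponding factor of $\GU_r(q^2)$ is $q^{2j'}-1$, which cannot contain a primitive divisor of order $2n-4$. No order or valuation comparison is needed; this parity argument is how the paper concludes.
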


\begin{proof}
Let
\[
 G^*=P(CO^-_{2n}(q)^0)
\]
be the finite dual of $X$.  Choose a nonsquare $\tau\in\F_q^\times$ and $\lambda\in\F_{q^2}$ with $\lambda^2=\tau$; then $\lambda^q=-\lambda$.  On an $n$-dimensional minus-type orthogonal space over $\F_{q^2}$, viewed by restriction of scalars as the natural $2n$-dimensional minus-type space over $\F_q$, let $s$ act as multiplication by $\lambda$.  Then $s\in CO^-_{2n}(q)^0$, $\tau(s)=\tau$ is nonsquare, and its characteristic polynomial is $(x^2-\tau)^n$.  the nonsquare-multiplier case \cite[Lemma~2.5(i)]{Ng} gives
\[
 C_{O^-_{2n}(q)}(s)\cong O^-_n(q^2).
\]
Every element of this $O^-_n(q^2)$ has $\F_q$-determinant equal to the norm of its $\F_{q^2}$-determinant $\pm1$, hence determinant $1$.  Therefore
\[
 C_{CO^-_{2n}(q)^0}(s)=O^-_n(q^2)\cdot Z_{q-1}.
\]

The field-automorphism coset does \emph{not} lie in the finite dual in this
minus-type case.  Put
\[
 \delta(g)=\det(g)\tau(g)^{-n}\in\{\pm1\},
\]
where $\tau(g)$ is the multiplier.  The kernel of $\delta$ is
$CO^-_{2n}(q)^0$.  By Lemma~\ref{lem:extfieldcoset}, a semilinear
representative inducing the nontrivial $\F_{q^2}/\F_q$ automorphism has
multiplier $1$ and $\F_q$-determinant $-1$, so it has $\delta=-1$.
Every element of the exact extension-field centralizer lies in
$CO^-_{2n}(q)^0$ and hence has $\delta=1$.  Therefore the whole semilinear
coset has $\delta=-1$, and no element of $CO^-_{2n}(q)^0$ conjugates $s$ to
$-s$.  Lemma~\ref{lem:projcent2} therefore shows that the projective
centralizer is exactly the image of the exact centralizer.

Writing $n=2r$, the $p'$-parts are
\[
 |\SO^-_{2n}(q)|_{p'}=(q^n+1)\prod_{i=1}^{n-1}(q^{2i}-1)
\]
and
\[
 |O^-_n(q^2)|_{p'}=2(q^n+1)\prod_{i=1}^{r-1}(q^{4i}-1).
\]
After adjoining scalars and projectivizing, the scalar factors cancel and
there is no extra projective component.  The semisimple character in
$\mathcal E(X,s^*)$ therefore has degree
\begin{equation}\label{eq:DminusevenD}
 D=\prod_{\substack{1\le j<n\\j\ \mathrm{odd}}}(q^{2j}-1).
\end{equation}
Since $\tau(s)$ is nonsquare, $s^*\notin(G^*)'$.  As $|Z(X)|=2$, \cite[Lemma~2.2]{HMII} shows that this semisimple character is faithful.

Suppose, for a contradiction, that
\[
 d:=D/2\in\operatorname{cd}(S).
\]
Inflate a character of degree $d$ to $X$, and let $(t^*,\eta)$ be its Jordan decomposition.  Triviality on $Z(X)$ gives $t^*\in(G^*)'$ by \cite[Lemma 2.2]{HMII}; in particular $t$ has square conformal multiplier.  Since $d$ is a $p'$-number, $p\nmid\eta(1)$, and Lemma~\ref{lem:Deveneta} gives $\eta(1)\in\{1,2,4\}$.  Comparing degrees with \eqref{eq:DminusevenD} yields
\begin{equation}\label{eq:Dminusevencent}
 |C_{G^*}(t^*)|_{p'}=2\eta(1)(q^n+1)\prod_{i=1}^{n/2-1}(q^{4i}-1).
\end{equation}

Choose primitive prime divisors
\[
 r_{2n}\mid q^{2n}-1,\qquad r_{2n-4}\mid q^{2n-4}-1.
\]
They exist by Zsigmondy's theorem and are odd; moreover $r_{2n}\mid q^n+1$.  Equation \eqref{eq:Dminusevencent} forces both primes into the exact square-multiplier semisimple centralizer, since the projective component and $\eta(1)$ contribute only powers of $2$.

Use the square-multiplier decomposition \cite[Lemma~2.5(ii)]{Ng}.  An orthogonal factor supplying $r_{2n}$ must have full rank $n$ and minus type; then the whole exact centralizer is the full orthogonal group, so $t^*$ is central and the corresponding character degree is $1$, contrary to $d>1$.  Hence $r_{2n}$ occurs in an extension-field unitary factor $\GU_a(q^k)$.  Its factors are $q^{ki}-(-1)^i$.  Since $ki\le ka\le n$, a primitive divisor of order $2n$ can occur only through
\[
 q^{ki}+1,\qquad ki=n.
\]
Consequently $i=a$ is odd and $ka=n$; this one factor consumes all available rank.

It must therefore also contain $r_{2n-4}$.  The alternative $r_{2n-4}\mid q^{kj}-1$ is impossible because $kj\le n<2n-4$.  Thus $r_{2n-4}\mid q^{kj}+1$.  Since $\operatorname{ord}_{r_{2n-4}}(q)=2n-4$, this gives
\[
 kj\equiv n-2\pmod{2n-4}.
\]
But $0<kj\le n<2n-4$ for $n\ge8$, and hence
\[
 kj=n-2.
\]
Therefore $k\mid n$ and $k\mid n-2$, so $k\mid2$.  Since $a=n/k$ is odd, $k\ne1$, and hence $k=2$, $a=n/2$ is odd, and $j=(n-2)/2=a-1$ is even.  But an even index $j$ in $\GU_a(q^2)$ contributes the factor $q^{2j}-1$, not $q^{2j}+1$, so it cannot contain a primitive divisor of order $2n-4=4j$.  This contradiction proves $d\notin\operatorname{cd}(S)$.
\end{proof}

\begin{theorem}\label{thm:Dminusevencodegree}
Let $n\ge8$ be even, let $q$ be odd, and set $S=\POmega^-_{2n}(q)$.  If a nontrivial finite group $G$ satisfies
\[
 \operatorname{cod}(G)\subseteq\operatorname{cod}(S),
\]
then $G\cong S$.
\end{theorem}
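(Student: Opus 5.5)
The plan follows the pattern of Theorems~\ref{thm:Doddcodegree}, \ref{thm:DevenCodegree}, \ref{thm:Dplusonecodegree} and \ref{thm:Dminuscodegree}, and applies Proposition~\ref{prop:assemblyreduction} to $S=\POmega^-_{2n}(q)$. That reduction needs two inputs: the elementary-abelian hypothesis, which is Conjecture~B for $S$, and the prime-central-cover hypothesis. Once both are verified, the proposition rules out any counterexample $G$ with $\cod(G)\subseteq\cod(S)$, so we obtain $G\cong S$.

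For the elementary-abelian hypothesis, we cite Proposition~\ref{prop:DminusevenB}. This is the specialization of Proposition~\ref{prop:DallB}, which covers both signs and every odd $q$ when $n\ge5$. In the present range $n\ge8$ is even, so the hypothesis $n\ge5$ there is satisfied, and no further module analysis is needed.

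For the prime-central-cover hypothesis, we first identify the covers. Since $n$ is even, $q^n\equiv1\pmod4$, so $(4,q^n+1)=2$. Hence the only nontrivial prime central cover of $S$ is $X=\Spin^-_{2n}(q)$, and there are no graph-automorphism or multiple-cover cases to handle. Theorem~\ref{thm:Dminusevencover} then provides a faithful $\chi\in\Irr(X)$ with $\chi(1)/2\notin\operatorname{cd}(S)$, which is exactly the condition required in \cite[Theorem~D]{HMII} and in Proposition~\ref{prop:assemblyreduction}.

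The only point that needs checking is bookkeeping: whether Proposition~\ref{prop:assemblyreduction} asks for the prime-cover condition for every nontrivial prime central cover. Here that list consists of $X$ alone, so there is nothing to add. The substantive work, namely the primitive-divisor exclusion of $d=D/2$ from $\operatorname{cd}(S)$, has already been done in Theorem~\ref{thm:Dminusevencover}. The assembly is therefore immediate.
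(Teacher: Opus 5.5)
Your proposal is correct and is exactly the paper's argument: apply Proposition~\ref{prop:assemblyreduction}, with Proposition~\ref{prop:DminusevenB} supplying Conjecture~B and Theorem~\ref{thm:Dminusevencover} supplying the prime-central-cover condition. Your explicit check that $(4,q^n+1)=2$, so that $\Spin^-_{2n}(q)$ is the only prime central cover, is the same bookkeeping the paper records at the start of Section~\ref{sec:Dminuseven}.
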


\begin{proof}
Apply Proposition~\ref{prop:assemblyreduction}, using Proposition~\ref{prop:DminusevenB} for the elementary-abelian hypothesis and Theorem~\ref{thm:Dminusevencover} for the prime-central-cover hypothesis.
\end{proof}

\section{Minus-type even-dimensional orthogonal groups of odd rank with $q\equiv1\pmod4$}\label{sec:Dminusoddone}
We complete the large-rank minus-type family.  Throughout this section
\[
 S=\POmega^-_{2n}(q),\qquad n\ge7\text{ odd},\qquad q=p^f\equiv1\pmod4.
\]
Since $q^n\equiv1\pmod4$, we have $(4,q^n+1)=2$.  Thus
\[
 X=\Spin^-_{2n}(q)
\]
is the unique nontrivial prime central cover of $S$.

\begin{proposition}\label{prop:DminusoddoneB}
Conjecture B holds for $S=\POmega^-_{2n}(q)$ when $n\ge7$ is odd and $q\equiv1\pmod4$.
\end{proposition}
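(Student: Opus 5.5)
This is a direct specialization of Proposition~\ref{prop:DallB}, so I would not reprove anything. Proposition~\ref{prop:DallB} covers $S=\POmega^{\varepsilon}_{2n}(q)$ for every odd $q$, both signs $\varepsilon\in\{+,-\}$, and every $n\ge5$, provided $S$ is simple. Our case has $\varepsilon=-$, $n\ge7$ odd and $q\equiv1\pmod4$, so the only thing to check is that these parameters lie in that range. They do: $n\ge7>5$, $q$ is odd, and $\POmega^-_{2n}(q)$ is simple for all such $n$ and $q$.

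For a self-contained reading, I would recall the steps of that uniform proof in order.
\begin{enumerate}
\item Lemma~\ref{lem:HM}(2) reduces to the defining characteristic $r=p$.
\item Lemma~\ref{lem:HM}(1) reduces to $|N|\le q^{n(n-1)/2}$.
\item The Guralnick--Tiep alternatives together with the Kleidman--Liebeck small-module list force $N$ to be the natural module.
\item In the twisted case ${}^2D_n$, alternative~(iii) is excluded because the natural highest weight is fixed by the graph automorphism.
\item If the natural module does not factor through $S$, there is nothing to prove.
\item Otherwise, a split extension is handled by Lemma~\ref{lem:HM}(3).
\item A nonsplit extension is handled by inducing from the stabilizer $\OmegaG_{2n-1}(q)$ of a nonsingular vector. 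A $p'$-degree projective character of it is supplied by Proposition~2.8 of \cite{HMII}. This gives $\chi(1)_p=q^{n-1}<q^{2n}$, and $\chi$ is faithful by the usual complement argument.
\end{enumerate}

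There is no real obstacle here. The one point to check is that nothing in the uniform argument depended on the sign or on $q \bmod 4$. The spin-module exclusion uses only $2^{n-1}>n(n-1)/2$, and the Schur multiplier of $\OmegaG_{2n-1}(q)$ has order at most $2$ for $n-1\ge4$; neither depends on the sign or on $q\bmod4$. So the proof will simply be the citation of Proposition~\ref{prop:DallB}, phrased like the analogous Propositions~\ref{prop:DminusB} and~\ref{prop:DminusevenB}.
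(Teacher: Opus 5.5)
Your proposal is correct and is the paper's argument: the paper proves this proposition in one line as the specialization of Proposition~\ref{prop:DallB}, which is proved uniformly for both signs, every odd $q$, and every $n\ge5$. Your recap of that uniform proof, and your check that neither the spin-module bound nor the multiplier bound depends on the sign or on $q \bmod 4$, are consistent with how Proposition~\ref{prop:DallB} is proved.
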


\begin{proof}
This is the corresponding specialization of Proposition~\ref{prop:DallB}, proved uniformly for both signs, every odd $q$, and every $n\ge5$.  We state it separately here so that the prime-cover argument below can be read without referring back to the uniform formulation.
\end{proof}

\begin{theorem}\label{thm:Dminusoddonecover}
Let $n\ge7$ be odd and $q\equiv1\pmod4$, and put
\[
 S=\POmega^-_{2n}(q),\qquad X=\Spin^-_{2n}(q).
\]
There exists a faithful semisimple character $\chi\in\Irr(X)$ such that
\[
 \frac{\chi(1)}2\notin\operatorname{cd}(S).
\]
Hence Theorem~D of \cite{HMII} holds for this family.
\end{theorem}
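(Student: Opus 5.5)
The plan follows the pattern of Theorem~\ref{thm:Dminusevencover}: pick a nonsquare-multiplier parameter whose semisimple character is automatically faithful, then exclude half its degree by primitive prime divisors. Here $|Z(X)|=2$ and $G^*=P(CO^-_{2n}(q)^0)$ has abelianization of order $2$, detected by the square class of the conformal multiplier. By \cite[Lemma~2.2]{HMII}, any $s^*$ with nonsquare multiplier gives faithful semisimple characters. Since $n$ is odd, the extension-field construction of Theorem~\ref{thm:Dminusevencover} (multiplication by $\lambda$ on an $n$-dimensional space over $\F_{q^2}$) is unavailable: a minus-type form on odd dimension $n$ over $\F_{q^2}$ does not exist. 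I would instead use the nonsquare-multiplier case \cite[Lemma~2.5(i)]{Ng} with a decomposition that has an odd-dimensional piece, or, more simply, the generic low-degree list.

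The first choice to try is the square-multiplier-type degree from \cite[Theorem~1.4]{Ng}, adapted to this congruence. Take $s=-I_W\oplus I_{W^\perp}$ with $W$ an anisotropic $2$-space, so the discriminant of $W$ is nonsquare since $q\equiv1\pmod4$. The spinor norm is then nontrivial and $s^*\in\PSO^-_{2n}(q)\setminus\POmega^-_{2n}(q)$. Because $|G^*/(G^*)'|=2$ here, this class is the nontrivial one, so the character is faithful on $X$. The centralizer is $S(O_2^-(q)\times O^+_{2n-2}(q))$, and no extra projective coset occurs, since $gsg^{-1}=-s$ would swap eigenspaces of dimensions $2$ and $2n-2$. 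This gives
\[
 D=\frac{(q^n+1)(q^{n-1}+1)}{2(q+1)},
\]
which is the $C$-type entry of \cite[Theorem~1.4]{Ng} with $\alpha=-1$.

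Next set $d=D/2$, $x=q^{n-1}$, and note $d<q^{4n-10}$, so \cite[Theorem~1.4]{Ng} is exhaustive. I then compare $d$ with $A,B,C,D,E_\pm$ for $\alpha=-1$, exactly as in Theorem~\ref{thm:Dminuscover}. Here $C=2d$ and $E_-=4d$ fall out directly. Each remaining equality reduces, after cancelling $q^n+1$, to a linear equation $(aq+b)x=cq+e$ with small coefficients. These are impossible for $x=q^{n-1}\ge q^6$ and $q\ge5$; the $B$ comparison will need a sign/size argument like the one in Theorem~\ref{thm:Dfivecover}.

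I expect the main obstacle to be the faithfulness and central-character bookkeeping rather than the arithmetic. Specifically, I need to confirm that in this case $(G^*)'$ is exactly $\POmega^-_{2n}(q)$ modulo scalars, i.e.\ that $\PSO^-_{2n}(q)$ is not contained in $(G^*)'$. If that fails, the multiplier square class and the spinor norm might collapse to the same index-$2$ subgroup in an unexpected way. I would check it by the perfect pairing \cite[Lemma~4.4(i)]{NT} together with the index computation $[G^*:\POmega^-_{2n}(q)]=2$ in this congruence. If the spinor-norm parameter turns out to lie in $(G^*)'$, I would fall back to a genuinely nonsquare-multiplier parameter. One option is $s$ acting as $\lambda$ on an $(n-1)$-dimensional minus-type $\F_{q^2}$-space and suitably on a complementary hyperbolic part. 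I would then run the primitive-divisor argument of Theorem~\ref{thm:Dminusevencover}, using $r_{2n}$ and $r_{2n-4}$ together with Lemma~\ref{lem:Deveneta} and Lemma~\ref{lem:projcent2}.
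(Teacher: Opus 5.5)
\textbf{There is a genuine gap: your main construction does not give a faithful character.}

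You note yourself that $G^*/(G^*)'$ has order $2$ and is detected by the square class of the conformal multiplier. Every element of multiplier $1$, in particular $s=-I_W\oplus I_{W^\perp}$, therefore lies in $(G^*)'$, whatever its spinor norm. Concretely: because $q^n\equiv1\pmod4$, the minus-type space $V$ has nonsquare discriminant. So $-I\in\SO^-_{2n}(q)\setminus\OmegaG^-_{2n}(q)$, hence $\SO^-_{2n}(q)=\OmegaG^-_{2n}(q)\times\langle -I\rangle$ and $\PSO^-_{2n}(q)=\POmega^-_{2n}(q)=(G^*)'$. The set $\PSO^-_{2n}(q)\setminus\POmega^-_{2n}(q)$ in which you place $s^*$ is empty. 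Indeed, $-s=I_W\oplus(-I_{W^\perp})$ has spinor norm $\operatorname{disc}(W^\perp)=\operatorname{disc}(V)/\operatorname{disc}(W)$, which is a square, so $s^*=(-s)^*\in(G^*)'$. By \cite[Lemma~2.2]{HMII} the whole series $\mathcal E(X,s^*)$ is trivial on $Z(X)$. Your character of degree $(q^n+1)(q^{n-1}+1)/(2(q+1))$ is just the $C$-entry character of $S$ itself. This is exactly what separates the present case from Theorem~\ref{thm:Dminuscover}, where $q^n\equiv3\pmod4$, $-I\in\OmegaG^-_{2n}(q)$ and $Z(X)\cong C_4$.

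Nor can the low-degree strategy be repaired. Faithful characters come only from nonsquare-multiplier parameters. Their centralizers (for instance $O^-_{n-1}(q^2)\times\GL_1(q)$) have $p'$-order at most about $q^{n(n+1)/2}$, against $|G^*|_{p'}\approx q^{n^2}$. So every faithful character of $X$ has degree well above $q^{4n-10}$, and \cite[Theorem~1.4]{Ng} does not reach it.

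\textbf{Your fallback uses the right parameter but the wrong primes.} The parameter is $\lambda$ on an $(n-1)$-dimensional minus-type $\F_{q^2}$-space plus a hyperbolic plane $e\mapsto ae$, $f\mapsto\tau a^{-1}f$, so that $\det s=(-\tau)^{n-1}\tau=\tau^n$. This is precisely the paper's parameter, with centralizer $O^-_{n-1}(q^2)\times\GL_1(q)$, and faithfulness then follows from the nonsquare multiplier. But the exclusion argument of Theorem~\ref{thm:Dminusevencover} does not transfer. Write $n-1=2r$. The odd prime divisors of $|C_{G^*}(s^*)|_{p'}$ come from $q^{n-1}+1$, the factors $q^{4i}-1$ with $i\le r-1$, and $q-1$. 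Neither $r_{2n}$ nor $r_{2n-4}$ divides any of these. Hence the identity $|C_{G^*}(t^*)|_{p'}=2\eta(1)|C_{G^*}(s^*)|_{p'}$ cannot force them into $C_{G^*}(t^*)$.

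The missing argument, which the paper supplies, runs as follows. It forces primitive divisors $\rho,\sigma$ of orders $2n-2$ and $2n-6$ into the exact square-multiplier centralizer of $t$. Linear factors cannot carry $\rho$, by rank. A unitary factor $\GU_a(q^k)$ carrying $\rho$ must also carry $\sigma$, which gives $k\mid 2$. For $k=1$ the factor contains $q^{n-2}+1$, whose primitive divisor $\lambda_0$ of order $2n-4$ does not divide $|C_{G^*}(s^*)|$. For $k=2$ the index $r-1$ that would carry $\sigma$ is even, so the corresponding factor is $q^{2(r-1)}-1$ and cannot contain $\sigma$. Finally, an $O^-_{2n-2}(q)$ factor is excluded by the same prime $\lambda_0$. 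Until this is carried out, the proposal does not prove the statement.
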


\begin{proof}
Let
\[
 G^*=P(CO^-_{2n}(q)^0)
\]
be the finite dual of $X$.  Choose a nonsquare $\tau\in\F_q^\times$ and $\lambda\in\F_{q^2}$ with $\lambda^2=\tau$.  Put $m=n-1$, which is even.  On a $2m$-dimensional minus-type summand let $s$ have characteristic polynomial $(x^2-\tau)^m$.  On a hyperbolic plane with basis $e,f$, choose $a\in\F_q^\times$ and set
\[
 s(e)=ae,\qquad s(f)=\tau a^{-1}f.
\]
Then $s$ has conformal multiplier $\tau$ on both summands.  Its determinant is
\[
 (-\tau)^m\tau=\tau^{m+1}=\tau^n,
\]
so $s\in CO^-_{2n}(q)^0$.  the nonsquare-multiplier description \cite[Lemma~2.5(i)]{Ng} gives
\[
 C_{O^-_{2n}(q)}(s)\cong O^-_{n-1}(q^2)\times \GL_1(q),
\]
up to the scalar conformal factor after passing to $CO^-_{2n}(q)^0$.

Let $s^*$ denote the image of $s$ in $G^*$.  The multiplier of $s$ is nonsquare, so $s^*\notin(G^*)'$.  Since $|Z(X)|=2$, \cite[Lemma~2.2]{HMII} implies that every character in $\mathcal E(X,s^*)$ lies over the nontrivial character of $Z(X)$.  Choose the semisimple character $\chi$ in this series.  Then $\chi$ is faithful.

Set $D=\chi(1)$ and suppose, for a contradiction, that $D/2=\psi(1)$ for some $\psi\in\Irr(S)$.  Inflate $\psi$ to $X$, and let $((t^*),\eta)$ be its Jordan decomposition.  Since the inflated character is trivial on $Z(X)$, \cite[Lemma 2.2]{HMII} gives $t^*\in(G^*)'$, so a lift $t$ has square conformal multiplier.  Comparing the two Jordan degree formulas gives
\begin{equation}\label{eq:Dminusoddonecent}
 |C_{G^*}(t^*)|_{p'}=2\eta(1)|C_{G^*}(s^*)|_{p'}.
\end{equation}
Because $D/2$ is a $p'$-number, $p\nmid\eta(1)$; Lemma~\ref{lem:Deveneta} yields $\eta(1)\in\{1,2,4\}$.

Write $n=2r+1$.  The $p'$-part of the $O^-_{n-1}(q^2)$ factor contains
\[
 q^{n-1}+1\quad\text{and}\quad q^{2n-6}-1.
\]
Choose primitive prime divisors $\rho$ and $\sigma$ with
\[
 \operatorname{ord}_{\rho}(q)=2n-2,\qquad
 \operatorname{ord}_{\sigma}(q)=2n-6.
\]
Zsigmondy's theorem supplies them for $n\ge7$ and odd $q$.  They are odd, so \eqref{eq:Dminusoddonecent} forces both into the exact square-multiplier centralizer of $t$: neither the projective component group nor $2\eta(1)$ can supply them.

Apply the square-multiplier decomposition \cite[Lemma~2.5(ii)]{Ng}.  A linear extension-field factor $\GL_a(q^k)$ cannot contain $\rho$, since this would require $2n-2\mid ki$ for some $i\le a$, whereas $ki\le ka\le n<2n-2$.

Suppose a unitary factor $\GU_a(q^k)$ contains $\rho$.  It must occur through a plus factor $q^{ki}+1$, and the rank bound forces
\[
 ki=n-1,\qquad ka\ge n-1.
\]
The same factor must contain $\sigma$, because at most one unit of rank remains.  A minus factor cannot supply $\sigma$, since it would require $2n-6\mid kj$ with $kj\le n$.  Hence $\sigma$ also comes from a plus factor, so
\[
 kj=n-3.
\]
Thus $k\mid\gcd(n-1,n-3)=2$.

If $k=1$, then $a\ge n-1$.  Hence the same unitary factor contains the odd-index factor $q^{n-2}+1$.  Let $\lambda_0$ be a primitive prime divisor of $q^{2n-4}-1$; then $\lambda_0\mid q^{n-2}+1$, so $\lambda_0$ divides $C_{G^*}(t^*)$.  But $\lambda_0$ does not divide $C_{G^*}(s^*)$: the nontrivial cyclotomic exponents in the latter come from $q^{n-1}+1$, the factors $q^{4i}-1$ with $1\le i\le r-1$, and the rank-one factor $q-1$, none of which has a primitive divisor of order $2n-4$.  This contradicts \eqref{eq:Dminusoddonecent}.

If $k=2$, then $i=(n-1)/2=r$.  Since $\rho$ comes from a plus unitary factor, $r$ must be odd.  The rank bound gives $a=r$.  But $j=(n-3)/2=r-1$ is then even, and the $j$th factor of $\GU_r(q^2)$ is $q^{2j}-1$, not $q^{2j}+1$.  It therefore cannot contain the primitive divisor $\sigma$ of order $2n-6=2(n-3)$.  This is again impossible.

Thus $\rho$ must lie in an orthogonal factor.  A proper orthogonal factor containing a primitive divisor of order $2n-2$ has rank at least $n-1$.  Rank $n$ would make the entire exact centralizer the full orthogonal group, so $t^*$ would be central and the corresponding character would have degree $1$, contrary to $D/2>1$.  Hence the only possibility has an $O^-_{2n-2}(q)$ factor and a residual rank-one factor.  But $O^-_{2n-2}(q)$ contains $q^{2n-4}-1$, and therefore contains the same primitive divisor $\lambda_0$ of order $2n-4$.  As above, $\lambda_0\nmid|C_{G^*}(s^*)|$, contradicting \eqref{eq:Dminusoddonecent}.

All square-multiplier possibilities are excluded.  Hence $D/2\notin\operatorname{cd}(S)$.
\end{proof}

\begin{theorem}\label{thm:Dminusoddonecodegree}
Let $n\ge7$ be odd, let $q\equiv1\pmod4$, and set $S=\POmega^-_{2n}(q)$.  If a nontrivial finite group $G$ satisfies
\[
 \operatorname{cod}(G)\subseteq\operatorname{cod}(S),
\]
then $G\cong S$.
\end{theorem}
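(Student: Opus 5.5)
The plan is to follow exactly the pattern of Theorems~\ref{thm:Dminuscodegree} and~\ref{thm:Dminusevencodegree}, and apply Proposition~\ref{prop:assemblyreduction} to $S=\POmega^-_{2n}(q)$ with $n\ge7$ odd and $q\equiv1\pmod4$. That proposition reduces Conjecture~A for an even-dimensional orthogonal group in odd characteristic to two hypotheses. The first is the elementary-abelian hypothesis, i.e.\ Conjecture~B for $S$. The second is the prime-central-cover hypothesis of \cite[Theorem~D]{HMII}: every nontrivial prime central cover of $S$ must carry a faithful irreducible character $\chi$ with $\chi(1)/2\notin\operatorname{cd}(S)$. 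Once both are verified, the reduction leaves no counterexample, so $\cod(G)\subseteq\cod(S)$ with $G$ nontrivial forces $G\cong S$.

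For the first hypothesis I would cite Proposition~\ref{prop:DminusoddoneB}. This is the specialization of the uniform Proposition~\ref{prop:DallB}, which covers both signs, all odd $q$ and all $n\ge5$.

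For the second hypothesis, the first step is to identify the prime central covers. Since $q^n\equiv1\pmod4$, we have $(4,q^n+1)=2$, so the center of $X=\Spin^-_{2n}(q)$ has order $2$. Moreover, $n\ge7$ avoids every exceptional multiplier, so $X$ is the unique nontrivial prime central cover. Theorem~\ref{thm:Dminusoddonecover} then supplies a faithful semisimple character $\chi\in\Irr(X)$ with $\chi(1)/2\notin\operatorname{cd}(S)$, and this is precisely the required hypothesis.

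The only point needing care is to check that the hypotheses of Proposition~\ref{prop:assemblyreduction} are stated for exactly the covers covered by Theorem~\ref{thm:Dminusoddonecover}, namely the single double cover $X$, with nothing further required when the center is cyclic of order $2$. I do not expect a genuine obstacle here: the substantive work, the primitive-prime-divisor exclusion of square-multiplier centralizers, is already contained in Theorem~\ref{thm:Dminusoddonecover}.
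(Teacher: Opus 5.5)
Your proposal is correct and follows the paper's proof exactly. The paper applies Proposition~\ref{prop:assemblyreduction}, with Proposition~\ref{prop:DminusoddoneB} supplying the elementary-abelian hypothesis and Theorem~\ref{thm:Dminusoddonecover} supplying the prime-central-cover hypothesis for the unique double cover $\Spin^-_{2n}(q)$, whose center has order $(4,q^n+1)=2$.
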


\begin{proof}
Apply Proposition~\ref{prop:assemblyreduction}, using Proposition~\ref{prop:DminusoddoneB} for the elementary-abelian hypothesis and Theorem~\ref{thm:Dminusoddonecover} for the prime-central-cover hypothesis.
\end{proof}

\section{Plus-type even-dimensional orthogonal groups of even rank with $q\equiv1\pmod4$}\label{sec:Dplusevenone}
We now complete the large-rank plus-type family.  Throughout this section
\[
 S=\POmega^+_{2n}(q),\qquad n\ge8\text{ even},\qquad q=p^f\equiv1\pmod4.
\]
As in Subsection~\ref{subsec:Deven}, the generic multiplier of $S$ has order $4$ and
$Z(\Spin^+_{2n}(q))\cong C_2\times C_2$.

\begin{proposition}\label{prop:DplusevenoneB}
Conjecture B holds for $S=\POmega^+_{2n}(q)$ when $n\ge8$ is even and $q\equiv1\pmod4$.
\end{proposition}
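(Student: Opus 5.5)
This is the specialization of Proposition~\ref{prop:DallB} to $\varepsilon=+$, $n\ge8$ even and $q\equiv1\pmod4$. That proposition was proved uniformly for both signs, every odd $q$, and every $n\ge5$, so I will invoke it directly rather than re-prove anything.

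For the reader's convenience, here is how its proof runs in this case. Let $G,N,r$ satisfy the hypotheses of Conjecture~B.

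First, Lemma~\ref{lem:HM}(2) lets us take $r=p$. Then Lemma~\ref{lem:HM}(1) lets us assume $|N|\le q^{n(n-1)/2}$.

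Next, apply \cite[Lemma~4.2]{GT} with $D=2n$. Alternatives (ii) and (v), and tensor products of two nontrivial factors, all give bounds of size at least $q^{D^2/2}$, which exceed this threshold. What remains is a single restricted module of dimension at most $n(n-1)/2$. Since $S$ is untwisted, alternative (iii) does not arise. By \cite[Proposition~5.4.11 and Table~5.4.A]{KL} this module must be the natural module: the $\Lambda^2$ entry and the spin entries are too large for $n\ge8$.

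If $-I\in\OmegaG^+_{2n}(q)$, which happens here because $n$ is even, then the natural module does not factor through $S$. In that case no faithful $S$-module $N$ exists and the statement is vacuous. The argument of Proposition~\ref{prop:DallB} nevertheless covers both possibilities, using the following steps:
\begin{itemize}
\item the split case is handled by Lemma~\ref{lem:HM}(3);
\item in the nonsplit case, take the stabilizer $\OmegaG_{2n-1}(q)$ of a nonsingular vector;
\item choose a $p'$-degree projective character of it, using \cite[Proposition~2.8]{HMII} when the factor set is nontrivial;
\item Clifford induction then gives a faithful $\chi$ with $\chi(1)_p=q^{n-1}<q^{2n}$.
\end{itemize}

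No step is a genuine obstacle. The only point to check is that the hypotheses of Proposition~\ref{prop:DallB} (simplicity of $S$, $q$ odd, $n\ge5$) hold, and they plainly do.
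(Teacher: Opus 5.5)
Your proposal is correct and matches the paper, which likewise proves this statement simply by specializing Proposition~\ref{prop:DallB}. You also observe that $-I\in\OmegaG^+_{2n}(q)$ for even $n$, so after the small-module reduction no admissible $N$ remains and only the ``does not factor through $S$'' branch of that proof is used; this is correct, though not needed.
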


\begin{proof}
This is the corresponding specialization of Proposition~\ref{prop:DallB}, proved uniformly for both signs, every odd $q$, and every $n\ge5$.  We state it separately here so that the prime-cover argument below can be read without referring back to the uniform formulation.
\end{proof}

\begin{theorem}\label{thm:Dplusevenonecover}
Let $n\ge8$ be even and $q\equiv1\pmod4$, and put
\[
 S=\POmega^+_{2n}(q),\qquad \widetilde S=\Spin^+_{2n}(q).
\]
For every subgroup $Z_0<Z(\widetilde S)$ of order $2$, the double cover
\[
 X=\widetilde S/Z_0
\]
has a faithful character $\chi\in\Irr(X)$ such that $\chi(1)/2\notin\operatorname{cd}(S)$.
Hence Theorem~D of \cite{HMII} holds for $S$.
\end{theorem}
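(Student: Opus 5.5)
The plan follows the pattern of Theorem~\ref{thm:Devencover} and of the $q\equiv1\pmod4$ part of Theorem~\ref{thm:Dsixcover}. There are three double covers. Via \cite[Lemma~2.2]{HMII} and the pairing of \cite[Lemma~4.4(i)]{NT}, they correspond to the three nontrivial cosets of $(G^*)'$ in $G^*=P(CO^+_{2n}(q)^0)$: one square-multiplier coset and two nonsquare-multiplier cosets. The graph automorphism exchanges the two nonsquare cosets, as in Theorem~\ref{thm:Devencover}, so it suffices to treat one coset of each kind.

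\emph{Square-multiplier coset.} I would take the parameter $s$ of Theorem~\ref{thm:Dplusonecover}: on a hyperbolic plane $\langle e,f\rangle$ with $c$ nonsquare, $s$ acts by $e\mapsto ce$, $f\mapsto c^{-1}f$, and trivially on the orthogonal complement. Its spinor norm is nontrivial, so $s^*\in\PSO^+_{2n}(q)\setminus\POmega^+_{2n}(q)$ and $s^*$ lies in the square coset. The associated semisimple character has degree
\[
D=\frac{(q^n-1)(q^{n-1}+1)}{2(q-1)}.
\]
This is the fifth degree of \cite[Theorem~1.4]{Ng} with $\alpha=+1$, and the parity of $n$ plays no role in it. The exclusion of $d=D/2$ is then verbatim the comparison in Theorem~\ref{thm:Dplusonecover}, which uses $d<q^{4n-10}$ and $x=q^{n-1}$. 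It reduces to the impossible equations $(q-3)x=3q-1$, $(3q-5)x=5q-3$ and $q(3q-1)x=4q^2-q-1$, together with the coincidences $E_+=2D$ and $D$ itself, which cannot equal $d$.

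\emph{Nonsquare-multiplier coset.} Write $n=2r$. I would copy the $q\equiv1\pmod4$ rank-six construction, using \cite[Lemma~2.5(i)]{Ng} with exact centralizer $O^+_n(q^2)$. Every element of this factor has $\F_q$-determinant $1$. By Lemma~\ref{lem:extfieldcoset}, in plus type there is a Frobenius-inducing isometry of determinant $1$ sending $s$ to $-s$, so Lemma~\ref{lem:projcent2} yields exactly one extra coset. Using $|O^+_n(q^2)|_{p'}=2(q^n-1)\prod_{i=1}^{r-1}(q^{4i}-1)$, the degree comes out as
\[
D_2=\frac14\prod_{\substack{1\le j<n\\ j\text{ odd}}}(q^{2j}-1),
\]
and it is faithful on one nonsquare double cover. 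Now suppose $D_2/2\in\operatorname{cd}(S)$. Jordan decomposition and Lemma~\ref{lem:Deveneta} give a square-multiplier $t^*$ with
\[
|C_{G^*}(t^*)|_{p'}=8\eta(1)B,\qquad B=(q^n-1)\prod_{i=1}^{r-1}(q^{4i}-1),\qquad \eta(1)\in\{1,2,4\}.
\]
Every odd prime of $B$ other than $p$ has order dividing $n$ or some $4i\le 2n-4$.

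The next step is to take primitive divisors $r_n$ and $r_{2n-4}$. A prime of order $2n-4$ needs an orthogonal factor of rank at least $n-1$. In rank $n-1$ such a factor contains a primitive divisor of order $2n-2$, which is not in $B$; rank $n$ makes $t^*$ central and gives degree $1$. Linear factors are excluded by rank, since they would need $ki\ge 2n-4>n$. So $r_{2n-4}$ lies in a plus factor $q^{ki}+1$ of some $\GU_a(q^k)$ with $ki=n-2$. The remaining rank is at most $2$, so $r_n$ must lie in the same factor, through $q^{kj}-1$ with $j$ even and $kj=n$. Hence $k\mid 2$. If $k=1$, then $a=n$, and $\GU_n(q)$ contains $q^{n-1}+1$, whose primitive divisor of order $2n-2$ is not in $B$.

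The remaining case, which I expect to be the main obstacle, is $k=2$, giving exact type $\GU_{n/2}(q^2)$. Primitive divisors do not rule it out. I would try an order comparison first. The $p'$-part of $\GU_{n/2}(q^2)$ is a product of terms $q^{2j}\pm1$ of total $q$-degree $n^2/4+n/2$. After the determinant-one, scalar and projective adjustments from \eqref{eq:projectiveimageorder} and Lemma~\ref{lem:projcent2}, this changes only by a factor of at most $2$. On the other side, $8\eta(1)B$ has $q$-degree $n^2/2$ up to the bounded factor $8\eta(1)\le32$. For $n\ge8$ these magnitudes cannot agree, which gives the contradiction. As a check, I would also compare $(q-1)$-adic multiplicities: $\Phi_1$ occurs $n/4$ times in the unitary group against $n/2$ times in $B$. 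Concluding $D_2/2\notin\operatorname{cd}(S)$ and applying the graph automorphism then finishes all three covers.
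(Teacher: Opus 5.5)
Your overall structure (three cosets, graph automorphism swapping the two nonsquare ones, one parameter per coset kind) matches the paper's. However, the square-multiplier step fails with the parameter you chose.

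\textbf{The square-multiplier coset.} Take $s=\operatorname{diag}(c,c^{-1})\oplus I_{W^\perp}$ with $c$ a nonsquare. Since $q\equiv1\pmod4$, we have $c\neq\pm1$, so the eigenvalues $c,c^{-1},1$ are distinct. Hence
\[
 C_{\SO^+_{2n}(q)}(s)=\GL_1(q)\times\SO^+_{2n-2}(q).
\]
The $O_2^+(q)\times O^+_{2n-2}(q)$ description you import from Theorem~\ref{thm:Dplusonecover} would need $s$ to be scalar on $W$. No element sends $s$ to $-s$, because the eigenvalue multiplicities differ. So the semisimple character in $\mathcal E(X,s^*)$ has degree
\[
 \frac{(q^n-1)(q^{n-1}+1)}{q-1}=E_+=2D,
\]
and every character in that series has degree divisible by $E_+$.

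Moreover, $E_+/2=D$ does lie in $\operatorname{cd}(S)$. Take $s'=-I_W\oplus I_{W^\perp}$ with $W$ hyperbolic. Its spinor norm is the class of $-1$, which is a square when $q\equiv1\pmod4$, so $s'^*\in\POmega^+_{2n}(q)=(G^*)'$. Its centralizer $S(O^+_2(q)\times O^+_{2n-2}(q))$ then gives characters of $S$ of degree exactly $D$. So your character is faithful on the right cover, but its half-degree is a degree of $S$.

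The missing idea is the paper's choice of parameter: $-I$ on an \emph{anisotropic} plane. Its spinor norm is nonsquare because $-1$ is a square. The centralizer $S(O^-_2(q)\times O^-_{2n-2}(q))$ gives the faithful degree $d_0=\frac{(q^n-1)(q^{n-1}-1)}{2(q+1)}$. Then $d_0/2$ is excluded from the list in \cite[Theorem~1.4]{Ng} by the $q\equiv1\pmod4$, $\varepsilon=+1$ comparisons of Theorem~\ref{thm:Dfivecover}: there $C=2d$, $E_-=4d$, and the remaining equalities are impossible.

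\textbf{The nonsquare coset.} Your $D_2$ with the factor $\frac14$ correctly counts the Frobenius coset. The paper's displayed $\frac12$ omits it, but its contradiction is insensitive to this. Your magnitude comparison for the full unitary case is sound and more direct than the paper's ratio argument. The case analysis leading to it has three defects:
\begin{enumerate}
\item A prime of order $2n-4$ also divides the terminal factor $q^{n-2}+1$ of an $O^-_{2n-4}(q)$ factor, so ``rank at least $n-1$'' is false.
\item $O^+_{2n-2}(q)$ contains no prime of order $2n-2$, so your exclusion of rank $n-1$ is also wrong as stated. Both this and the previous point are repaired as in the paper: any orthogonal factor of rank at least $n-2$ contains $q^{2(n-3)}-1$. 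A primitive divisor of this factor does not divide $B$, since $n-3$ is odd and $n/2-1<n-3$.
\item $r_n$ can also lie in a plus factor $q^{2j}+1$ with $4j=n$. This permits $\GU_{n/2-1}(q^2)$ together with a rank-two residue. Your degree count disposes of it, since the $q$-degree is at most $n^2/4+n/2+4<n^2/2$, but the case has to be stated. The paper uses a primitive divisor of order $2n-8$ instead.
\end{enumerate}
Finally, $ki=n-2$ with $i$ odd already forces $k$ even, so your $k=1$ case is vacuous.
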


\begin{proof}
Let
\[
 G^*=P(CO^+_{2n}(q)^0).
\]
As in the proof of Theorem~\ref{thm:Devencover}, the three nontrivial central characters consist of one nontrivial square-multiplier coset and two nonsquare-multiplier cosets, the latter two being exchanged by the type $D_n$ graph automorphism.  We treat one coset of each kind.

\smallskip
\noindent\emph{The nontrivial square-multiplier coset.}
Since $q\equiv1\pmod4$, choose a nondegenerate minus-type $2$-space $W$ and let $s$ act as $-I$ on $W$ and as $I$ on $W^\perp$.  As in the proof of Theorem~\ref{thm:Dfivecover}, the spinor norm of $s$ is nonsquare, so $s^*$ lies in the nontrivial square-multiplier coset.  The case $m=n$, $k=1$ in \cite[Lemma~2.5(ii)]{Ng} gives a faithful character of the associated double cover of degree
\[
 d_0=\frac{(q^n-1)(q^{n-1}-1)}{2(q+1)}.
\]
Put $d=d_0/2$.  Since $q\ge5$ and $n\ge8$, $d<q^{4n-10}$, so the complete list \cite[Theorem~1.4]{Ng} applies.  The $\eta=-1$, $\varepsilon=+1$ comparison in the proof of Theorem~\ref{thm:Dfivecover}, with $x=q^{n-1}$, shows that $d$ is different from every degree in that list.  Hence $d\notin\operatorname{cd}(S)$.

\smallskip
\noindent\emph{A nonsquare-multiplier coset.}
Write $n=2r$.  Choose a nonsquare multiplier $\tau$ and take the $m=n$ plus-type case in the nonsquare-multiplier centralizer description \cite[Lemma~2.5(i)]{Ng}.  Thus a lift $s\in CO^+_{2n}(q)^0$ may be chosen so that
\[
 C_{O^+_{2n}(q)}(s)\cong O^+_n(q^2).
\]
By Lemma~\ref{lem:extfieldcoset}, the nontrivial field automorphism of the
plus-type $O_n^+(q^2)$ factor is represented by an $\F_q$-isometry of
determinant $1$.  In the Case~1c construction in the proof of
\cite[Lemma~2.5(i)]{Ng}, the two relevant eigenvalues are $\lambda_1$ and
$-\lambda_1$, with $\lambda_1^2=\tau$ and $\lambda_1\notin\F_q$; hence
$\lambda_1^q=-\lambda_1$.  Thus the $q$-Frobenius interchanges the two
eigenspaces and sends $s$ to $-s$.  It therefore belongs to
$CO^+_{2n}(q)^0$ and gives the nontrivial projective-centralizer component.  Conversely, projective centralization has the form $gsg^{-1}=\lambda s$ with $\lambda^2=1$, so this component has order exactly $2$.

Since
\[
 |O^+_n(q^2)|_{p'}=2(q^n-1)\prod_{i=1}^{r-1}(q^{4i}-1),
\]
the associated semisimple character has degree
\begin{equation}\label{eq:DplusevenoneD}
 D=\frac12\prod_{\substack{1\le j<n\\ j\text{ odd}}}(q^{2j}-1).
\end{equation}
Its parameter has nonsquare multiplier, hence represents one of the two nonsquare-multiplier central characters.  Therefore the resulting character is faithful on the corresponding double cover; the graph automorphism will give the other one.

Suppose that $D/2=\psi(1)$ for some $\psi\in\Irr(S)$.  Inflate $\psi$ to $\widetilde S$, and let $((t^*),\eta)$ be its Jordan decomposition.  Then $t^*\in(G^*)'$ by \cite[Lemma 2.2]{HMII}, so a lift $t$ has square multiplier.  Since $D/2$ is a $p'$-number, Lemma~\ref{lem:Deveneta} gives $\eta(1)\in\{1,2,4\}$.  Comparing Jordan degrees yields
\begin{equation}\label{eq:Dplusevenonecent}
 |C_{G^*}(t^*)|_{p'}
 =4\eta(1)(q^n-1)\prod_{i=1}^{r-1}(q^{4i}-1).
\end{equation}

Let $\rho$ be a primitive prime divisor of $q^{2n-4}-1$.  It is odd and exists by Zsigmondy's theorem.  Equation~\eqref{eq:Dplusevenonecent} forces $\rho$ into the exact square-multiplier centralizer of $t$.  Use the decomposition \cite[Lemma~2.5(ii)]{Ng}.

If an orthogonal factor contains $\rho$, its rank $k$ is at least $n-2$.
Since an orthogonal factor of rank $k$ contains $q^{2i}-1$ for
$1\le i\le k-1$, it contains $q^{2n-6}-1$.  Let $\lambda$ be a primitive
prime divisor of this factor, so
$\operatorname{ord}_{\lambda}(q)=2n-6=2(n-3)$.  We check explicitly that
$\lambda$ cannot divide the right-hand side of
\eqref{eq:Dplusevenonecent}.  It cannot divide $q^n-1$, since
$2(n-3)\nmid n$ for even $n\ge8$.  If it divided $q^{4i}-1$ for some
$1\le i\le r-1=n/2-1$, then $2(n-3)\mid4i$.  As $n-3$ is odd, this would
imply $n-3\mid i$, impossible because $i\le n/2-1<n-3$.  Finally
$\lambda$ is odd, so it cannot come from the factor $4\eta(1)$, where
$\eta(1)\in\{1,2,4\}$.  This contradiction excludes every orthogonal
factor.

A linear extension-field factor cannot contain $\rho$, because $2n-4>n$ while every exponent $ki$ occurring in $\GL_a(q^k)$ satisfies $ki\le ka\le n$.  Hence $\rho$ lies in a unitary factor $\GU_a(q^k)$.  Its cyclotomic factors are $q^{ki}-(-1)^i$.  Since $\operatorname{ord}_{\rho}(q)=2(n-2)$ and $ki\le n$, the minus case $q^{ki}-1$ is impossible; thus $i$ is odd and
\[
 ki=n-2.
\]
In particular $ka\ge n-2$.  Let $\sigma$ be a primitive prime divisor of $q^n-1$.  The residual rank outside this unitary factor is at most $2$.  For $n\ge8$ it cannot contain $\sigma$: a linear or orthogonal residual factor has all relevant exponents at most $4<n$, and a unitary residual factor has $ki\le2$ (hence order at most $4$).  Therefore $\sigma$ also lies in $\GU_a(q^k)$.  If it occurs in a factor $q^{kj}-(-1)^j$, there are two possibilities.  If $j$ is even, then $q^{kj}-1$ contains $\sigma$, so $n\mid kj$ and hence $k\mid\gcd(n-2,n)=2$.  If $j$ is odd, then $q^{kj}+1$ contains $\sigma$; because $kj\le ka\le n$ and $kj<n$ in the plus case, primitivity forces $2kj=n$, so $k\mid\gcd(n-2,n/2)$.  The relation $ki=n-2$ with $i$ odd shows in particular that $k$ is even.  Now
\[
 \gcd(n-2,n/2)=
 \begin{cases}2,&4\mid n,\\1,&n\equiv2\pmod4,
 \end{cases}
\]
so the plus case also forces $k=2$ (and in fact $4\mid n$).  Thus in every case $k=2$, and $a\ge(n-2)/2=r-1$.  Since $2a\le n$, there are exactly two rank possibilities, $a=r-1$ and $a=r$.

There are then two rank possibilities.  Suppose first that $ka=n-2$, so $a=r-1$.  The even-index alternative for the primitive divisor $\sigma$ of $q^n-1$ is impossible, since $n\mid 2j$ would force $r\mid j$, whereas $j\le r-1$.  Hence $\sigma$ occurs in an odd-index factor $q^{2j}+1$.  Thus $4j=n=2r$, so $r$ is even.  The same centralizer must also contain a primitive divisor of order $2n-8$ coming from the factor $q^{2n-8}-1$ in $C_{G^*}(s^*)$.  In $\GU_{r-1}(q^2)$ the required index is $r-2$, which is now even, so the corresponding factor is $q^{2(r-2)}-1$, not $q^{2(r-2)}+1$.  The residual rank $2$ cannot supply a primitive divisor of order $2n-8\ge8$.  This is impossible.

It remains that $ka=n$, so the full exact centralizer has unitary type
$\GU_r(q^2)$.  Here $r$ is necessarily even.  Indeed, the primitive divisor
$\sigma$ of $q^n-1$ lies in a factor $q^{2j}-(-1)^j$ with $j\le r$.  If
$j$ is even, then $n\mid2j$, so $j=r$ and hence $r$ is even.  If $j$ is odd,
then primitivity gives $4j=n=2r$, again making $r$ even.  This centralizer is
too small.  At the determinant-one level the ratio of the relevant $p'$-parts is
\[
 \frac{|O^+_{2r}(q^2)|_{p'}}{|\GU_r(q^2)|_{p'}}
 =2\prod_{j=1}^{r-1}\bigl(q^{2j}+(-1)^j\bigr)>4.
\]
For both exact conformal centralizers, \cite[Lemma~2.5]{Ng} and \eqref{eq:projectiveimageorder} identify the exact projective $p'$-part with the corresponding determinant-one centralizer $p'$-part.  Thus the ratio displayed above is unchanged after passing to the exact projective images.
Lemma~\ref{lem:projcent2} can enlarge either projective image by at most
another factor $2$; for the chosen parameter $s$ that nontrivial component
already occurs.  Consequently even the largest possible projective
$\GU_r(q^2)$ centralizer is strictly smaller than
$C_{G^*}(s^*)$.  On the other hand, comparing \eqref{eq:Dplusevenonecent}
with the already computed projective centralizer of $s$ gives the exact
requirement
\[
 |C_{G^*}(t^*)|_{p'}=2\eta(1)\,|C_{G^*}(s^*)|_{p'}.
\]
Thus the hypothetical centralizer is at least twice, not ``at least four
times,'' the centralizer of $s$.  The strict inequality in the opposite
direction is already enough for the contradiction.  This excludes the
final possibility.

Thus $D/2\notin\operatorname{cd}(S)$.  We have treated one nonsquare-multiplier central character, and the graph automorphism treats the other.  Together with the square-multiplier case this handles all three double covers.
\end{proof}

\begin{theorem}\label{thm:Dplusevenonecodegree}
Let $n\ge8$ be even, let $q\equiv1\pmod4$, and put $S=\POmega^+_{2n}(q)$.  If $G$ is a nontrivial finite group satisfying
\[
 \operatorname{cod}(G)\subseteq\operatorname{cod}(S),
\]
then $G\cong S$.
\end{theorem}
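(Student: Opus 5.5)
The plan is to follow the same assembly pattern used for Theorems~\ref{thm:Doddcodegree}, \ref{thm:DevenCodegree}, \ref{thm:Dplusonecodegree}, \ref{thm:Dminuscodegree}, \ref{thm:Dminusevencodegree} and \ref{thm:Dminusoddonecodegree}, namely to invoke Proposition~\ref{prop:assemblyreduction}. That reduction turns the codegree containment statement $\cod(G)\subseteq\cod(S)$, for $S$ an even-dimensional orthogonal group in odd characteristic, into two hypotheses. The first is the elementary-abelian hypothesis, which is Conjecture~B for $S$. The second is the prime-central-cover hypothesis: every nontrivial prime-order central cover $X$ of $S$ must have a faithful $\chi\in\Irr(X)$ with $\chi(1)/2\notin\operatorname{cd}(S)$, which is the input needed for \cite[Theorem~D]{HMII}. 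Once both are verified, the proposition excludes any counterexample and yields $G\cong S$.

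For the first hypothesis I will cite Proposition~\ref{prop:DplusevenoneB}. This is the specialization of the uniform Proposition~\ref{prop:DallB} to plus type with $n\ge8$ even and $q\equiv1\pmod4$, and it needs no further work.

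For the second hypothesis I will cite Theorem~\ref{thm:Dplusevenonecover}. Here $Z(\Spin^+_{2n}(q))\cong C_2\times C_2$, so the nontrivial prime central covers of $S$ are exactly the three double covers $\Spin^+_{2n}(q)/Z_0$, with $Z_0$ running over the order-two subgroups of the center. Theorem~\ref{thm:Dplusevenonecover} treats each of them:
\begin{enumerate}
\item the square-multiplier cover directly;
\item one nonsquare-multiplier cover directly;
\item the remaining nonsquare-multiplier cover via the graph automorphism.
\end{enumerate}

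The only point needing care is bookkeeping: I must check that the covers produced in Theorem~\ref{thm:Dplusevenonecover} really exhaust the prime-central-cover hypothesis as Proposition~\ref{prop:assemblyreduction} formulates it. In particular, the center of a full cover is not of prime order, so the condition should apply only to covers with center of prime order. If the reduction is phrased over all quasisimple covers with cyclic center of prime order, the three double covers are precisely those, and the argument closes.
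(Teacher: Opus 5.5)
Your proposal is correct and is exactly the paper's proof: apply Proposition~\ref{prop:assemblyreduction}, with Proposition~\ref{prop:DplusevenoneB} supplying Conjecture~B and Theorem~\ref{thm:Dplusevenonecover} supplying the prime-central-cover hypothesis. Your bookkeeping remark is also right. Since the Schur multiplier here is $C_2\times C_2$, the quasisimple covers with center of prime order are exactly the three double covers $\Spin^+_{2n}(q)/Z_0$ that Theorem~\ref{thm:Dplusevenonecover} treats.
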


\begin{proof}
Apply Proposition~\ref{prop:assemblyreduction}, using Proposition~\ref{prop:DplusevenoneB} for the elementary-abelian hypothesis and Theorem~\ref{thm:Dplusevenonecover} for every prime-central-cover hypothesis.
\end{proof}

\begin{corollary}\label{cor:Dlargeall}
Let $q$ be odd.  The codegree isomorphism conjecture holds for $\POmega^+_{2n}(q)$ when $n\ge7$ is odd or $n\ge8$ is even, and for $\POmega^-_{2n}(q)$ in the same large-rank ranges.
\end{corollary}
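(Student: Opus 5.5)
The corollary is an assembly of the large-rank theorems already proved; the plan is a case split on the sign $\pm$, the parity of $n$, and $q \bmod 4$, followed by a check that the eight cells are covered.

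\emph{Plus type, $q\equiv3\pmod4$.} Corollary~\ref{cor:Dplus3mod4} covers all $n\ge7$. It is assembled from Theorem~\ref{thm:Doddcodegree} when $n$ is odd and Theorem~\ref{thm:DevenCodegree} when $n$ is even.

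\emph{Plus type, $q\equiv1\pmod4$.}
\begin{itemize}
\item For odd $n\ge7$, use Theorem~\ref{thm:Dplusonecodegree}. Its standing hypotheses are those of Section~\ref{sec:Dplusone}.
\item For even $n\ge8$, use Theorem~\ref{thm:Dplusevenonecodegree}.
\end{itemize}

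\emph{Minus type, odd $n\ge7$.}
\begin{itemize}
\item For $q\equiv3\pmod4$, use Theorem~\ref{thm:Dminuscodegree}.
\item For $q\equiv1\pmod4$, use Theorem~\ref{thm:Dminusoddonecodegree}.
\end{itemize}

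\emph{Minus type, even $n\ge8$.} Theorem~\ref{thm:Dminusevencodegree} covers every odd $q$ simultaneously. Although Section~\ref{sec:Dminuseven} carries a $q\equiv3\pmod4$ heading, its standing hypotheses and its Theorem~\ref{thm:Dminusevencover} only require $q$ odd. So the remaining residue class $q\equiv1\pmod4$ needs no separate treatment, and I will state this explicitly.

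Each of these theorems gives Conjecture~A for $H$: if $G$ is a nontrivial finite group with $\cod(G)\subseteq\cod(H)$, then $G\cong H$. The codegree isomorphism conjecture asks for $\cod(G)=\cod(H)\Rightarrow G\cong H$. This follows at once, since equality implies containment, and $G$ is nontrivial because $\cod(H)$ contains codegrees larger than $1$.

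The only real obstacle is bookkeeping. I must check that the union of the cases is exactly
\[
\{\pm\}\times\{n\ge7\text{ odd},\ n\ge8\text{ even}\}\times\{q\equiv1,3\pmod4\}.
\]
I must also confirm that each cited theorem's hypotheses (simplicity of $S$, oddness of $q$, the rank bound) hold in its cell. In particular I need to re-read the even-rank minus-type theorem and confirm that no step there used $q\equiv3\pmod4$: its primitive-divisor argument uses only $q$ odd and $n\ge8$.
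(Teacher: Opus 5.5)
Your proposal is correct and follows the paper's own route: the paper's proof is likewise a pure case assembly over the sign, the parity of $n$, and $q \bmod 4$, using the same seven large-rank codegree theorems. Your explicit remark that Theorem~\ref{thm:Dminusevencodegree} is stated for every odd $q$, despite the $q\equiv3\pmod4$ section title, makes precise the minus-type, even-rank, $q\equiv1\pmod4$ case, which the paper's one-line proof covers only implicitly under ``the corresponding $q\equiv3\pmod4$ results above.''
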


\begin{proof}
Combine Sections~\ref{subsec:Deven}, \ref{sec:Dplusone}, \ref{sec:Dminusoddone}, and \ref{sec:Dplusevenone} with the corresponding $q\equiv3\pmod4$ results above.
\end{proof}

\begin{theorem}\label{thm:Doddallcodegree}
Let $q$ be odd, let $n\ge4$, let $\varepsilon\in\{+,-\}$, and suppose that
\[
 S=\POmega_{2n}^{\varepsilon}(q)
\]
is simple.  If $G$ is a nontrivial finite group and
\[
 \cod(G)\subseteq\cod(S),
\]
then $G\cong S$.
\end{theorem}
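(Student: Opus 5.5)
The proof will be a case assembly of results already established, organised by the rank $n$, the sign $\varepsilon$, and the residue of $q$ modulo $4$. No new character computation should be needed. Every case ends by invoking Proposition~\ref{prop:assemblyreduction}, either directly or through a theorem that already did so. That proposition needs two inputs: Conjecture~B for $S$ (the elementary-abelian hypothesis), and the prime-central-cover hypothesis of \cite[Theorem~D]{HMII}.

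For $4\le n\le 6$, both signs and every odd $q$ are covered by Theorem~\ref{thm:Dlowcodegree}. It combines:
\begin{itemize}
\item \cite[Proposition~4.3]{HMII} and Proposition~\ref{prop:DallB} for Conjecture~B;
\item Corollary~\ref{cor:Dfourcovers}, Theorem~\ref{thm:Dfivecover} and Theorem~\ref{thm:Dsixcover} for the covers.
\end{itemize}
For $n\ge7$ the cases split into four sign/rank classes, with $q$ ranging over all odd prime powers in each:
\begin{itemize}
\item Plus type with $n$ odd: Theorem~\ref{thm:Doddcodegree} for $q\equiv3\pmod 4$ and Theorem~\ref{thm:Dplusonecodegree} for $q\equiv1\pmod4$.
\item Plus type with $n\ge8$ even: Theorem~\ref{thm:DevenCodegree} for $q\equiv3$ and Theorem~\ref{thm:Dplusevenonecodegree} for $q\equiv1$.
\item Minus type with $n$ odd: Theorem~\ref{thm:Dminuscodegree} for $q\equiv3$ and Theorem~\ref{thm:Dminusoddonecodegree} for $q\equiv1$.
\item Minus type with $n\ge8$ even: Theorem~\ref{thm:Dminusevencodegree}, which was proved for all odd $q$.
\end{itemize}
Corollary~\ref{cor:Dlargeall} already records this large-rank summary.

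The main check is that the union of these hypotheses really exhausts the range $n\ge4$, $\varepsilon=\pm$, $q$ odd. The cutoffs are $n\ge7$ for odd rank and $n\ge8$ for even rank. Together with the low-rank block $4\le n\le6$, these leave no gap: $n=7$ falls in the odd block and $n=8$ in the even block. I would also confirm that each section's covering theorem treats every nontrivial prime central cover of $S$ that occurs for its multiplier, since this is what Proposition~\ref{prop:assemblyreduction} requires. The multiplier orders are as follows.
\begin{itemize}
\item Cyclic of order $4$ when $\varepsilon=+$, $n$ odd, $q\equiv1\pmod4$, or $\varepsilon=-$, $n$ odd, $q\equiv3\pmod4$. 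In these cases the unique prime cover is $\Omega^{\varepsilon}_{2n}(q)$.
\item $C_2\times C_2$ for $\varepsilon=+$ with $n$ even. Here there are three double covers, all of which were treated, with the graph automorphism exchanging the two half-spin covers.
\item Order $2$ in the remaining cases.
\end{itemize}
Exceptional multipliers are absent for $D_n$ with $n\ge4$ and $q$ odd. The exception is $\POmega_8^+(q)$, whose multiplier remains $C_2\times C_2$, so I would note that triality was used in Proposition~\ref{prop:Dfourplus}.

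I expect the only delicate point to be this bookkeeping of covers and congruence classes, not any new computation. Once the table is verified, Proposition~\ref{prop:assemblyreduction} gives $G\cong S$ in every case.
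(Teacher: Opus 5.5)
Your proposal is correct and is essentially the paper's own proof. The paper likewise uses Theorem~\ref{thm:Dlowcodegree} for $4\le n\le6$, and for larger ranks selects among Theorems~\ref{thm:Doddcodegree}, \ref{thm:DevenCodegree}, \ref{thm:Dplusonecodegree}, \ref{thm:Dminuscodegree}, \ref{thm:Dminusevencodegree}, \ref{thm:Dminusoddonecodegree} and \ref{thm:Dplusevenonecodegree} by sign, parity of $n$, and $q \bmod 4$; your multiplier bookkeeping is also right, apart from the wording slip calling $\POmega_8^+(q)$ ``the exception'' (the only exceptional type-$D$ multiplier is for $\POmega_8^+(2)$ in characteristic $2$, and rank-four plus type is special here only because of triality).
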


\begin{proof}
For $4\le n\le6$ use Theorem~\ref{thm:Dlowcodegree}.  For the large-rank cases combine Theorems~\ref{thm:Doddcodegree}, \ref{thm:DevenCodegree}, \ref{thm:Dplusonecodegree}, \ref{thm:Dminuscodegree}, \ref{thm:Dminusevencodegree}, \ref{thm:Dminusoddonecodegree}, and \ref{thm:Dplusevenonecodegree}, according to the sign, parity of $n$, and the residue of $q$ modulo $4$.
\end{proof}

\section{Projective symplectic groups: the residual elementary-abelian case}
We now turn to the complementary reduction for projective symplectic groups.  Let
\[
 H=\PSp_{2n}(q),\qquad q=p^f.
\]
By \cite[Theorem~5.1(1)--(3)]{TV}, a residual minimal counterexample to the codegree isomorphism conjecture has $n\ge4$ and a unique minimal normal subgroup $N$ such that $G/N\cong H$; the subgroup $N$ is an elementary abelian $p$-group, every irreducible $\overline{\F}_pL$-constituent of $N\otimes_{\F_p}\overline{\F}_p$, where $L=\Sp_{2n}(q)$, is quasi-equivalent to the natural $L$-module, and
\[
 |N|=q^{2n}.
\]  He also proves that every faithful $\chi\in\Irr(G)$ in such a minimal counterexample satisfies
\[
 |N|\mid\chi(1)
\]
\cite[Lemma 2.3(ii)]{TV}.

We first make explicit the field-of-definition step needed below.

\begin{lemma}\label{lem:sympdescent}
Assume that $q=p^f$ and that $N$ is the irreducible $\F_pH$-module occurring
in the residual conclusion of \cite[Theorem~5.1]{TV}.  Then $N$ is the restriction of scalars
of a $2n$-dimensional $\F_qH$-module which, after twisting the fixed
identification $G/N\cong H$ by an automorphism of $H$, is the natural
symplectic module whenever $q$ is even.
\end{lemma}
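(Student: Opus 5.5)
The plan is to apply Lemma~\ref{lem:descent} to the absolutely irreducible constituents supplied by \cite[Theorem~5.1]{TV}.

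Let $U$ be an absolutely irreducible constituent of $N\otimes_{\F_p}\overline{\F}_p$. By the residual conclusion, $U$ is quasi-equivalent to the natural $L$-module. Hence $U$ is a Frobenius twist of the natural module or of its dual, possibly composed with an automorphism. Since the natural symplectic module is self-dual, $U$ is in fact a Frobenius twist $V^{(p^i)}$ of the natural module.

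First we determine the field of definition. The smallest field of definition of $V^{(p^i)}$ is $\F_q$: its Brauer character values generate $\F_q$, because the trace of a diagonal torus element $\operatorname{diag}(a,\dots,a^{-1},\dots)$ runs through $a+a^{-1}$ and these generate $\F_q$. So Lemma~\ref{lem:descent} gives $|N|=q^{\dim U}=q^{2n}$, which is consistent with $|N|=q^{2n}$ from \cite{TV}.

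Next we show that $N$ carries an $\F_q$-structure. Following the proof of Lemma~\ref{lem:descent}, the endomorphism ring is $E=\operatorname{End}_{\F_pH}(N)\cong\F_{p^e}$ with $e=f$. Then $N$ is an absolutely irreducible $EH$-module of $E$-dimension $2n$, so it is the restriction of scalars of a $2n$-dimensional $\F_qH$-module $N_0$. Note that $N_0\otimes_{\F_q}\overline{\F}_p\cong U$ after choosing an embedding $E\cong\F_q$; different embeddings correspond to Frobenius twists.

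For $q$ even we identify $N_0$. Here $Z(L)=1$, so $H=L=\Sp_{2n}(q)$ and $N_0$ is an honest $\F_qH$-module whose extension of scalars is $V^{(p^i)}$. Since absolutely irreducible modules over a finite field are determined by their Brauer characters and $\F_q$ is the field of definition, $N_0\cong V^{(p^i)}$ as $\F_qH$-modules, with $V^{(p^i)}$ realized over $\F_q$. The field automorphism $\phi$ of $H$ induced by $x\mapsto x^{p^i}$ satisfies $V^{(p^i)}\cong V^{\phi}$. Twisting the identification $G/N\cong H$ by $\phi^{-1}$ therefore makes $N_0$ the natural symplectic module. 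Relabelling the $E$-structure through a Galois automorphism only replaces $i$ by another Frobenius exponent, and the same twist absorbs it.

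The main obstacle is being careful that quasi-equivalence allows the graph automorphism when $n=2$ and $q$ is even. Since $n\ge4$ this does not arise, but it must still be noted explicitly. The only other delicate point is making the identification $E\cong\F_q$ match the twist.
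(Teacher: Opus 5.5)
Your proposal is correct and follows essentially the paper's route. Both arguments show that $E=\operatorname{End}_{\F_pH}(N)$ is a finite field with $E=\F_q$, read off $N$ as the restriction of scalars of a $2n$-dimensional absolutely irreducible $\F_qH$-module, and for even $q$ absorb the Frobenius twist into the identification $G/N\cong H$. Your Noether--Deuring remark makes explicit a step the paper leaves implicit.

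The one divergence is how you get $e=f$. The paper compares $\dim_{\F_p}N=e\cdot 2n$ with $|N|=q^{2n}=p^{2nf}$. You instead pass through the field of definition of $V^{(p^i)}$, and your justification there fails for $q=4$. Since $a+a^{-1}\in\F_2$ for every $a\in\F_4^\times$, traces of diagonal torus elements only generate $\F_2$.

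The claim itself is true. For instance, $\Sp_2(q)=\operatorname{SL}_2(q)$ acting on a nondegenerate plane already has traces running over all of $\F_q$. The cleaner fix, though, is to drop that step and use the dimension count, since $|N|=q^{2n}$ is part of the hypothesis you quote from \cite[Theorem~5.1]{TV}.
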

\begin{proof}
Put $E=\operatorname{End}_{\F_pH}(N)$.  By Schur's lemma and Wedderburn's theorem,
$E$ is a finite field, say $E=\F_{p^e}$.  The action of $E$ commutes with $H$, so $N$ is naturally an $EH$-module.  Since $\operatorname{End}_{EH}(N)=E$, this $EH$-module is absolutely irreducible.  Under the $e$ embeddings of $E$ into $\overline{\F}_p$, scalar extension decomposes as the direct sum of the $e$ distinct Frobenius conjugates of one absolutely irreducible constituent $W$ of $N\otimes_{\F_p}\overline{\F}_p$.  Consequently
\[
 \dim_{\F_p}N=e\dim_{\overline{\F}_p}W.
\]
By \cite[Theorem~5.1(2)]{TV}, $W$ is quasi-equivalent to the natural
module, so $\dim W=2n$.  By \cite[Theorem~5.1(3)]{TV},
$|N|=q^{2n}=p^{2nf}$, whence $\dim_{\F_p}N=2nf$.  Therefore $e=f$ and
$E=\F_q$.  Thus the original module $N$, not merely a chosen scalar
extension constituent, is a $2n$-dimensional $\F_qH$-module.  When $q$ is
even, $H=\Sp_{2n}(q)$; the quasi-equivalence in \cite[Theorem~5.1]{TV}
allows the fixed isomorphism $G/N\cong H$ to be precomposed with the
corresponding automorphism, after which this $\F_qH$-module is the natural
module.
\end{proof}

We next isolate the projective-character argument.

\begin{lemma}\label{lem:projind}
Let $S$ be finite, let $L\le S$, and let $\alpha\in H^2(S,\C^\times)$. If $\operatorname{res}^S_L(\alpha)=0$, then $S$ has an irreducible $\alpha$-projective character $\tau$ satisfying
\[
 \tau(1)\le [S:L].
\]
\end{lemma}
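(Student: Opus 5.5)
Since $\operatorname{res}^S_L(\alpha)$ is trivial, I choose a $2$-cocycle $\alpha$ representing the class so that its restriction to $L\times L$ is identically $1$. This is possible because the restriction is a coboundary $\partial\mu$ for some $\mu:L\to\C^\times$. I extend $\mu$ arbitrarily to $S$ and replace $\alpha$ by $\alpha\cdot(\partial\mu)^{-1}$; this changes neither the cohomology class nor the set of $\alpha$-projective characters up to the obvious bijection. With this normalization, the trivial map $1_L$ is a genuine $\alpha|_L$-projective representation of $L$ of degree $1$.

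Next I projectively induce. In the twisted group algebra $\C^\alpha[S]$ the subalgebra spanned by the basis elements $\{u_x:x\in L\}$ is $\C^\alpha[L]=\C[L]$. I form the induced module
\[
 M=\C^\alpha[S]\otimes_{\C^\alpha[L]}\C_{1},
\]
where $\C_1$ is the one-dimensional module on which each $u_x$ ($x\in L$) acts as $1$. Since $\C^\alpha[S]$ is a free right $\C^\alpha[L]$-module with basis $\{u_g\}$ indexed by left coset representatives $g$, we get $\dim M=[S:L]$. Concretely, $M$ is an $\alpha$-projective representation of $S$ of degree $[S:L]$; its character is the projectively induced character $(1_L)^S$.

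Finally, $\C^\alpha[S]$ is semisimple by Maschke's theorem for twisted group algebras, since $|S|$ is invertible in $\C$. So $M\neq0$ decomposes as a direct sum of simple $\C^\alpha[S]$-modules. Any simple summand affords an irreducible $\alpha$-projective character $\tau$ with
\[
 \tau(1)\le\dim M=[S:L].
\]

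There is no real obstacle here. The only point needing care is the normalization of the cocycle, so that the trivial character is an honest $\alpha|_L$-projective character. Equivalently, one can work with the class only: irreducible $\alpha$-projective characters for cohomologous cocycles correspond bijectively with equal degrees. So it suffices to prove the statement for the normalized representative.
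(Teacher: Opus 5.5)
Your proof is correct and takes the same route as the paper: normalize the cocycle so that it is trivial on $L\times L$, projectively induce the trivial character of $L$ to obtain an $\alpha$-projective representation of degree $[S:L]$, and take an irreducible constituent. You also supply the details that the paper's three-line proof leaves implicit, namely the coboundary adjustment, the twisted group algebra tensor product with its coset basis, and semisimplicity.
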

\begin{proof}
Choose a cocycle representing $\alpha$ whose restriction to $L\times L$ is trivial. Projectively induce the trivial one-dimensional representation of $L$ to $S$. The resulting $\alpha$-projective representation has degree $[S:L]$, so any irreducible constituent has degree at most $[S:L]$.
\end{proof}

\begin{proposition}\label{prop:sympeven}
Let $q=2^f$, $n\ge4$, and let
\[
 1\longrightarrow N\longrightarrow G\longrightarrow \Sp_{2n}(q)\longrightarrow1
\]
be an extension in which $N$ is the natural $2n$-dimensional $\F_q$-module viewed additively. Then there exists $\chi\in\Irr(G\mid N)$ such that
\[
 \chi(1)_2<|N|=q^{2n}.
\]
If $N$ is the unique minimal normal subgroup of $G$, then $\chi$ is faithful.
\end{proposition}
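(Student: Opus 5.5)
Take a nonzero vector $v\in N$ and the corresponding nonprincipal character $\lambda=\lambda_v\in\Irr(N)$, given by $\lambda_v(x)=\nu(\operatorname{Tr}_{\F_q/\F_2}B(v,x))$, where $B$ is the symplectic form. Since $v\mapsto\lambda_v$ is an $H$-equivariant bijection $N\to\Irr(N)$, we have $I_H(\lambda)=H_v$. Every nonzero vector is isotropic, so $H=\Sp_{2n}(q)$ is transitive on $N\setminus\{0\}$ and $[H:H_v]=q^{2n}-1$, which is odd. Put $T=I_G(\lambda)$ and $S=T/N\cong H_v$. The stabilizer is $H_v=Q\rtimes L$ with $Q$ the unipotent radical of the $P_1$ parabolic, of order $q^{2n-1}$, and $L\cong\Sp_{2n-2}(q)$. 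By Clifford correspondence it is enough to find $\varphi\in\Irr(T\mid\lambda)$ with $\varphi(1)_2<q^{2n}$. Then $\chi=\varphi^G$ has $\chi(1)_2=\varphi(1)_2$.

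By character-triple theory, $\varphi(1)=\tau(1)$ for an irreducible $\alpha$-projective character $\tau$ of $S$, where $\alpha\in H^2(S,\C^\times)$ is the class of $(T,N,\lambda)$.

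\emph{Generic case.} If $\alpha$ restricts trivially to $L$, apply Lemma~\ref{lem:projind} with $L\le S$. This gives $\tau$ with $\tau(1)\le[S:L]=q^{2n-1}<q^{2n}$, and then $\tau(1)_2<q^{2n}$. Since $n-1\ge3$ and $q$ is even, the Schur multiplier $M(\Sp_{2n-2}(q))$ is trivial unless $(2n-2,q)=(6,2)$, by \cite[Theorem~5.1.4 and Table~5.1.D]{KL}. So the restriction is automatically trivial, except for $\Sp_6(2)$, whose multiplier has order $2$, which arises when $n=4$, $q=2$.

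\emph{Exceptional case $G/N=\Sp_8(2)$.} This is the main obstacle. Here $\alpha|_L$ might be the nontrivial class of the double cover $2.\Sp_6(2)$. My first attempt is to replace $L$ by a subgroup of small index on which $\alpha$ dies. For example, take $R=Q\rtimes M$, where $M\le L$ is a subgroup whose preimage in $2.\Sp_6(2)$ splits: a Sylow-$2'$-ish choice is not available, but $M=O_6^-(2)$ or $G_2(2)$ has index $28$ or $120$ and its double cover might split. If such an $M$ exists, Lemma~\ref{lem:projind} gives $\tau(1)\le 2^7[L:M]$. The $2$-part of this is at most $2^7\cdot 8<2^8$.

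Failing that, I would use a degree argument. The group $2.\Sp_6(2)$ has faithful irreducible characters of degree $8$; these are the restriction of the spin representation. This gives an $\alpha|_L$-projective $\theta$ of $L$ with $\theta(1)=8$. Projectively inducing $\theta$ from $L$ to $S$ (after extending the cocycle on $L$ to $S$ via $S/Q\cong L$, with care about $\alpha|_Q$) gives total degree $2^7\cdot 8=2^{10}$. That is too large to conclude directly. So instead I would argue through the valuation trick of Proposition~\ref{prop:Bnonsplit}, or pick $\theta$ of degree $7\cdot 8=56$ or $48$. The constituents of the induced character have degrees summing to $2^7\theta(1)$, so some constituent $\tau$ has $\tau(1)_2\le 2^7\theta(1)_2$. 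The task is then to find a faithful $\theta$ with $\theta(1)_2\le 1$, or at least $\theta(1)_2<2$.

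If that also fails, as a last resort check the finitely many candidate groups $G$ directly (there are few: $H^2(\Sp_8(2),N)$ is small) using known character tables. I expect this isolated nonsplit extension over $\Sp_8(2)$ to consume most of the effort.

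\emph{Faithfulness.} Assume $N$ is the unique minimal normal subgroup of $G$, and put $K=\Ker\chi$. Then $K\cap N$ is a proper $H$-invariant subgroup of the irreducible module $N$, because $\chi$ lies over $\lambda\ne1$. Hence $K\cap N=1$. If $K\ne1$, then $K$ contains a minimal normal subgroup of $G$, which must be $N$, a contradiction. So $\chi$ is faithful.
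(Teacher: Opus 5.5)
\textbf{There is a genuine gap: the case $G/N\cong\Sp_8(2)$ is left unresolved.} Your generic case and your faithfulness argument coincide with the paper's. But for $(n,q)=(4,2)$, which you correctly identify as the crux, the two concrete strategies you propose cannot work.

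The only problematic situation is $\alpha|_L\neq0$ with $L\cong\Sp_6(2)$. Take $P\in\operatorname{Syl}_2(L)$. Since $[L:P]$ is odd and $H^2(L,\C^\times)\cong C_2$, restriction $H^2(L,\C^\times)\to H^2(P,\C^\times)$ is injective, so $\alpha|_P\neq0$. Two consequences follow.
\begin{itemize}
\item Every irreducible $\alpha|_L$-projective character of $L$ restricts to $P$ with only constituents of degree a nontrivial power of $2$, since a linear one would trivialize $\alpha|_P$. Hence every such character has even degree.
\item Every $M\le L$ with $\alpha|_M=0$ has even index in $L$.
\end{itemize}
So any projective induction from $L$, or from a subgroup of $L$, up to $S$ has degree $2^7[L:M]\theta(1)$ divisible by $2^8$. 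Your valuation argument then gives at best $\tau(1)_2\le 2^8=|N|$, never the strict inequality. In particular, a faithful $\theta$ of $2.\Sp_6(2)$ with $\theta(1)_2<2$ does not exist. Also, the estimate $2^7\cdot 8<2^8$ in your first attempt is false: the subgroups $O_6^-(2)$ and $G_2(2)$, of index $28$ and $120$, give $2$-parts $2^9$ and $2^{10}$.

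The real issue is deciding whether $\alpha$ is trivial, and nothing in your plan does that. It needs outside input, which amounts to carrying out your ``last resort.'' The paper does it as follows.
\begin{itemize}
\item In the split case, $\lambda$ extends to $T=N\rtimes H_v$ by being trivial on $H_v$, which gives $\chi(1)=255$.
\item In the nonsplit case, Dempwolff's computation $H^2(\Sp_8(2),V)\cong\F_2$ shows there is a unique nonsplit extension. This is the group $2^8{\cdot}\Sp_8(2)$ studied by Basheer and Moori.
\item Its Fischer--Clifford character table (Basheer's thesis, Table~11.13) contains an irreducible character of degree $255=[G:T]$ lying over the nonprincipal orbit on $\Irr(N)$.
\end{itemize}
Hence $\lambda$ extends to $T$ in the nonsplit case too, so $\alpha=0$ after all and $\chi(1)_2=1<2^8$. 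Without such an identification of $G$ and an explicit check, or some other argument that determines $\alpha|_L$, your proposal does not prove the proposition for $(n,q)=(4,2)$.
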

\begin{proof}
Fix $1_N\ne\lambda\in\Irr(N)$.  The symplectic form identifies the dual module with the natural module, and $\Sp_{2n}(q)$ is transitive on its nonzero vectors.  If $T=I_G(\lambda)$ and $S=T/N$, then
\[
 [G:T]=q^{2n}-1.
\]
The stabilizer of a nonzero vector has the form
\[
 S=R\rtimes L,\qquad L\cong\Sp_{2n-2}(q),\qquad |R|=q^{2n-1}.
\]
The character triple $(T,N,\lambda)$ determines a factor set $\alpha\in H^2(S,\C^\times)$.

Suppose first that $(n,q)\ne(4,2)$.  Here $L\cong\Sp_{2n-2}(q)$ has trivial Schur multiplier.  Indeed, for $n\ge5$ this follows from the generic multiplier formula, and for $n=4$ with $q>2$ the same conclusion holds for $\Sp_6(q)$; the only exception in this range is $\Sp_6(2)$.  See \cite[Theorem~5.1.4 and Tables~5.1.A, 5.1.D]{KL}.  Hence $H^2(L,\C^\times)=1$, so $\alpha|_L$ is trivial.  Lemma~\ref{lem:projind} gives an irreducible $\alpha$-projective character $\tau$ of $S$ with
\[
 \tau(1)\le [S:L]=q^{2n-1}.
\]
By character-triple correspondence there is $\varphi\in\Irr(T\mid\lambda)$ with $\varphi(1)=\tau(1)$, and Clifford correspondence gives
\[
 \chi=\varphi^G\in\Irr(G),\qquad
 \chi(1)=(q^{2n}-1)\tau(1).
\]
Since $q^{2n}-1$ is odd,
\[
 \chi(1)_2=\tau(1)_2\le\tau(1)\le q^{2n-1}<q^{2n}=|N|.
\]

It remains to treat $(n,q)=(4,2)$.  Here $S\cong2^7{:}\Sp_6(2)$ and $[G:T]=255$.  If the extension $N{:}\Sp_8(2)$ is split, then $\lambda$ extends to $T$ by taking it to be trivial on the vector stabilizer, and Clifford induction gives an irreducible character of degree $255$.

Assume that the extension is nonsplit.  Dempwolff \cite[p.~451, Corollary]{Dem} computes the relevant second cohomology; in the present case,
\[
 H^2(\Sp_8(2),V)\cong\F_2
\]
for the natural $8$-dimensional module $V$.  Thus there is exactly one nonzero extension class for this fixed natural action; uniqueness of the nonsplit extension is also stated in \cite[p.~451, Theorem]{Dem}.  Basheer and Moori \cite{BM} study precisely the nonsplit extension
$G_4=2^8\!\cdot\Sp_8(2)$ with this natural action; hence $G_4$ represents the unique nonzero class.  Therefore the nonsplit extension arising here is equivalent, and hence isomorphic, to $G_4$.

No interpretation of a projective factor set is needed.  In the Clifford--Fischer decomposition of \cite[Section~9.5]{BasheerThesis}, the $195$ irreducible characters of $G_4$ are partitioned into an $81$-character part corresponding to $H_1=\Sp_8(2)$ and a $114$-character part corresponding to the nonprincipal orbit on $\Irr(N)$, whose inertia factor is
\[
 H_2=2^7{:}\Sp_6(2).
\]  In the complete character table, Table~11.13, the first character in this second Clifford--Fischer part is $\chi_{82}$ and
\[
 \chi_{82}(1)=255
\]
\cite[Table~11.13, p.~274]{BasheerThesis}.  Consequently $G_4$ has an irreducible character of degree $255$ lying over the nonprincipal orbit on $\Irr(N)$, so it is nontrivial on $N$.  Hence
\[
 \chi(1)_2=1<2^8=|N|.
\]

Finally suppose that $N$ is the unique minimal normal subgroup of $G$.  Since $\chi$ lies nontrivially over $N$, $\Ker(\chi)\cap N$ is a proper $G$-invariant subgroup of the irreducible module $N$, hence is trivial.  If $\Ker(\chi)\ne1$, uniqueness of the minimal normal subgroup forces $N\le\Ker(\chi)$, a contradiction.  Therefore $\chi$ is faithful.
\end{proof}

\begin{remark}\label{rem:sympTV}
For the minimal-counterexample argument of \cite[Theorem~5.1]{TV}, Proposition~\ref{prop:sympeven} is stronger than necessary: the displayed degree already shows $|N|\nmid\chi(1)$, contradicting \cite[Lemma 2.3(ii)]{TV}, with the identification supplied by Lemma~\ref{lem:sympdescent}.
\end{remark}

\section{Projective symplectic groups in odd characteristic}\label{sec:sympodd}
The odd-characteristic residual case is excluded by comparing the kernels of the residual module and the natural module.

\begin{proposition}\label{prop:sympodd}
Let $q$ be odd and $n\ge4$.  Then the residual minimal counterexample described in \cite[Theorem~5.1]{TV} does not exist for $H=\PSp_{2n}(q)$.
\end{proposition}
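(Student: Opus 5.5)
The plan is to compare kernels. In the residual case of \cite[Theorem~5.1]{TV}, $N$ is an irreducible $\F_pH$-module with $H=\PSp_{2n}(q)$. So when $N$ is regarded as an $L$-module with $L=\Sp_{2n}(q)$ (by inflation along $L\to H$), the center $Z(L)=\{\pm I\}$ acts trivially on $N$. At the same time, \cite[Theorem~5.1(2)]{TV} says every absolutely irreducible constituent $W$ of $N\otimes_{\F_p}\overline{\F}_p$ is quasi-equivalent to the natural $L$-module. The argument will show these two facts are incompatible when $q$ is odd.

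First, I will run the descent argument of Lemma~\ref{lem:sympdescent}, which does not use the parity of $q$ until its last sentence. Combined with $|N|=q^{2n}$, it shows that $E=\operatorname{End}_{\F_pH}(N)=\F_q$. Hence $N$ is an absolutely irreducible $2n$-dimensional $\F_qL$-module, and each constituent $W$ is a Frobenius twist of $N$ over $\overline{\F}_p$.

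Next, I will unwind quasi-equivalence. It means $W\cong V^{\sigma}$ for some automorphism $\sigma$ of $L$ (field, diagonal, or inner), composed with a module isomorphism, where $V$ is the natural module. Automorphisms of $L=\Sp_{2n}(q)$ preserve $Z(L)$, and $n\ge4$ rules out graph automorphisms. Therefore $-I$ acts on $V^{\sigma}$ as $\sigma(-I)=-I$, and on a Frobenius twist it acts as $(-1)^{p^j}=-1$. Since $q$ is odd, $-1\ne1$, so $-I$ acts on $W$ as the nonidentity scalar $-1$.

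On the other hand, $-I$ acts trivially on $N$, hence on $N\otimes\overline{\F}_p$, hence on $W$. This is a contradiction, so no such residual configuration exists.

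The main obstacle is the bookkeeping over what \cite[Theorem~5.1]{TV} means by an $L$-constituent of the $H$-module $N$. The contradiction depends on $N$ being inflated from $H$, so that $Z(L)$ acts trivially. I would make this explicit by defining the $L$-action on $N$ through $L\to H=L/Z(L)$. I would also check that quasi-equivalence there allows only automorphisms of $L$, which necessarily fix the central involution, and not a more general twist. If the cited statement is phrased for $H$ projectively, I would instead argue that a projective representation of $H$ quasi-equivalent to the natural one has nontrivial factor set, while $N$ is an honest $H$-module. This yields the same contradiction.
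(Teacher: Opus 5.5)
Your proposal is correct and follows the paper's own argument. The paper likewise inflates a constituent $W$ of $N\otimes_{\F_p}\overline{\F}_p$ to $L=\Sp_{2n}(q)$, notes that $Z(L)=\langle -I\rangle$ lies in its kernel, and contradicts the fact that anything quasi-equivalent to the natural module is faithful. Your extra descent step through Lemma~\ref{lem:sympdescent}, which identifies $\operatorname{End}_{\F_pH}(N)$ with $\F_q$, is not needed for this contradiction, but it does no harm.
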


\begin{proof}
Let $L=\Sp_{2n}(q)$ and $H=L/Z(L)$.  By \cite[Theorem~5.1(1)--(3)]{TV}, if $W$ is an irreducible constituent of $N\otimes_{\F_p}\overline{\F}_p$, then $W$, regarded as an $L$-module through $L\to H$, is quasi-equivalent to the natural $L$-module.  Quasi-equivalent representations differ only by an automorphism of $L$ and an equivalence of the underlying modules; in particular, faithfulness is preserved.  The natural $L$-module is faithful.  On the other hand, since the action on $W$ factors through $H=L/Z(L)$ and $q$ is odd, the nontrivial center $Z(L)=\langle-I\rangle$ lies in the kernel of the action on $W$.  Thus $W$ is not faithful as an $L$-module, a contradiction.
\end{proof}

\begin{theorem}\label{thm:sympall}
Let $H=\PSp_{2n}(q)$ be nonabelian simple.  If $G$ is a finite group satisfying
\[
 \cod(G)=\cod(H),
\]
then $G\cong H$.
\end{theorem}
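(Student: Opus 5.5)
We reduce Theorem~\ref{thm:sympall} to the residual case singled out by Tong-Viet. Small ranks are already settled: $\PSp_4(q)$ and $\PSp_6(q)$ by \cite[Theorem~5.1]{TV}, and $\PSp_2(q)\cong\PSL_2(q)$ by \cite{BAK}. So we may assume $n\ge4$. Suppose $G$ is a counterexample, and choose it of minimal order; the relevant setting is the codegree condition $\cod(G)=\cod(H)$, or the containment $\cod(G)\subseteq\cod(H)$ used in \cite{TV}. By \cite[Theorem~5.1(1)--(3)]{TV}, such a minimal counterexample has a unique minimal normal subgroup $N$ with $G/N\cong H$. Moreover, $N$ is an elementary abelian $p$-group in the defining characteristic, every absolutely irreducible constituent of $N\otimes\overline{\F}_p$ is quasi-equivalent to the natural $\Sp_{2n}(q)$-module, and $|N|=q^{2n}$. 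By \cite[Lemma~2.3(ii)]{TV}, every faithful $\chi\in\Irr(G)$ also satisfies $|N|\mid\chi(1)$. The whole plan is to contradict this divisibility.

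We split according to the parity of $q$. For $q$ odd, Proposition~\ref{prop:sympodd} already shows that the residual configuration cannot exist. The reason is that the natural module is faithful for $\Sp_{2n}(q)$, whereas $-I$ acts trivially on any module inflated from $H$. For $q$ even, we first apply Lemma~\ref{lem:sympdescent}. It identifies $N$ itself as an $\F_q$-module, not merely a constituent of its scalar extension, and after twisting $G/N\cong H$ by an automorphism it identifies $N$ with the natural $2n$-dimensional module of $\Sp_{2n}(q)=H$. Proposition~\ref{prop:sympeven} then applies to the extension $1\to N\to G\to\Sp_{2n}(q)\to1$, split or not. It gives $\chi\in\Irr(G\mid N)$ with $\chi(1)_2<q^{2n}=|N|$, and this $\chi$ is faithful because $N$ is the unique minimal normal subgroup. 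Hence $|N|\nmid\chi(1)$, which contradicts \cite[Lemma~2.3(ii)]{TV}, exactly as noted in Remark~\ref{rem:sympTV}. So no minimal counterexample exists, and $G\cong H$.

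The main obstacle is already isolated in Proposition~\ref{prop:sympeven}. It is the isolated case $(n,q)=(4,2)$: there the vector stabilizer $2^7{:}\Sp_6(2)$ has a nontrivial Schur multiplier, so projective induction from $L$ no longer works. That case is settled through Dempwolff's uniqueness of the nonsplit extension together with the explicit degree-$255$ character in Basheer's table. In the proof of the theorem itself, the one point to check with care is the bookkeeping that the hypotheses of \cite[Theorem~5.1]{TV} match what we use. In particular, the twist by an automorphism in Lemma~\ref{lem:sympdescent} must not affect faithfulness or the value of $|N|$; it does not, since both are invariant under changing the identification $G/N\cong H$.
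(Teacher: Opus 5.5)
Your proposal is correct and follows the paper's proof essentially step for step. You handle small ranks via \cite{BAK} and \cite[Theorem~5.1]{TV}, and for $n\ge4$ you take a minimal counterexample with the residual natural-module structure of \cite[Theorem~5.1(1)--(3)]{TV}, excluding odd $q$ by Proposition~\ref{prop:sympodd} and even $q$ by Lemma~\ref{lem:sympdescent} and Proposition~\ref{prop:sympeven}, whose faithful $\chi$ with $\chi(1)_2<|N|$ contradicts \cite[Lemma~2.3(ii)]{TV}; your remark that twisting $G/N\cong H$ preserves $|N|$ and faithfulness is the bookkeeping the paper leaves implicit.
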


\begin{proof}
If $n=1$, then $\PSp_2(q)\cong\PSL_2(q)$, so the result follows from \cite{BAK}.  Theorem~5.1 of \cite{TV} proves that a minimal counterexample in the remaining symplectic family must have $n\ge4$; in particular Theorem~5.1 treats the cases $n=2,3$.  Let $n\ge4$ and suppose that a minimal counterexample exists.  By \cite[Theorem~5.1(1)--(3)]{TV}, $G$ has a unique elementary abelian defining-characteristic minimal normal subgroup $N$, its scalar-extension constituents are quasi-equivalent to the natural module, and
$|N|=q^{2n}$.  If $q$ is odd, Proposition~\ref{prop:sympodd} gives a contradiction.  If $q$ is even, Lemma~\ref{lem:sympdescent} identifies the residual module, after twisting the quotient identification if necessary, with the natural $\Sp_{2n}(q)$-module.  Proposition~\ref{prop:sympeven} then produces a faithful $\chi\in\Irr(G)$ with $\chi(1)_2<|N|$; in particular $|N|\nmid\chi(1)$, contrary to \cite[Lemma 2.3(ii)]{TV}.  Thus no minimal counterexample exists.
\end{proof}

\section{Completion of the proof}\label{sec:completion}

\begin{proposition}\label{prop:assemblyreduction}
Let $H$ be a nonabelian finite simple group.  Assume that Conjecture~B holds for $H$.  If
\[
 H\not\cong \POmega_{2n}^{\pm}(q)\qquad(n\ge4,\ q\text{ odd}),
\]
then Conjecture~A holds for $H$ by \cite[Theorem~3.1]{HMII}.  If
$H\cong \POmega_{2n}^{\pm}(q)$ with $n\ge4$ and $q$ odd, assume in addition that for every quasisimple group $X$ with
\[
 Z(X)\cong C_r\quad(r\text{ prime}),\qquad X/Z(X)\cong H,
\]
there is a faithful $\chi\in\Irr(X)$ such that
\[
 \chi(1)/r\notin\operatorname{cd}(H).
\]
Then Conjecture~A holds for $H$ also in this exceptional orthogonal case.  Thus in either case every nontrivial finite group $G$ satisfying
$\operatorname{cod}(G)\subseteq\operatorname{cod}(H)$ is isomorphic to $H$.
\end{proposition}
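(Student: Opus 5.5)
The first case is a direct citation: when $H$ is not an even-dimensional orthogonal group in odd characteristic, \cite[Theorem~3.1]{HMII} says that Conjecture~B for $H$ implies Conjecture~A. Nothing further is needed there, so the work lies in the exceptional family $H\cong\POmega^{\pm}_{2n}(q)$ with $q$ odd and $n\ge4$. For this family the plan is to rerun the minimal-counterexample argument behind \cite[Theorem~3.1]{HMII} and see that the additional hypothesis exactly fills the step left open there.

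Concretely, I would let $G$ be a counterexample of minimal order, so $G$ is nontrivial, $\cod(G)\subseteq\cod(H)$ and $G\not\cong H$. By \cite[Theorem~C and Theorem~8.3]{HMI} and the reductions in \cite{HMII}, $G$ is perfect and has a unique minimal normal subgroup $N$ with $G/N\cong H$. Every faithful $\chi\in\Irr(G)$ then satisfies a divisibility constraint: $\cod(\chi)=|G|/\chi(1)$ must lie in $\cod(H)$, and comparing with $|H|=|G|/|N|$ forces $|N|$-divisibility conditions on $\chi(1)$, as in \cite[Lemma 2.3]{TV}.

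The argument then splits according to the structure of $N$.

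\emph{$N$ is nonabelian.} This case is excluded in \cite{HMII} uniformly for all simple $H$, and I would simply quote it.

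\emph{$N$ is elementary abelian and not central.} Then $N$ is a faithful irreducible $H$-module, and Conjecture~B supplies a faithful $\chi$ with $\chi(1)_p<|N|$. This contradicts the divisibility constraint.

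\emph{$N$ is central.} Then $N\cong C_r$ for a prime $r$ and $G$ is quasisimple with $G/Z(G)\cong H$. This is precisely the step missing from \cite{HMII} for this family. The additional hypothesis gives a faithful $\chi\in\Irr(G)$ with $\chi(1)/r\notin\operatorname{cd}(H)$. On the other hand $\cod(\chi)=|G|/\chi(1)=|H|/(\chi(1)/r)$ must equal $|H:\Ker\psi|/\psi(1)$ for some $\psi\in\Irr(H)$. Since $H$ is simple and $\chi$ is nonlinear, $\Ker\psi=1$, so $\psi(1)=\chi(1)/r$, a contradiction. This is the argument of \cite[Theorem~D]{HMII}.

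The main obstacle is confirming that \cite[Theorem~3.1]{HMII} fails in the orthogonal family only at the central-cover step. That means checking, from the discussion after \cite[Proposition 2.9]{HMII}, that every other step (nonabelian $N$, the reduction to perfect $G$, and the module case) is stated for arbitrary simple $H$. Once that is verified, the three cases above together yield Conjecture~A.
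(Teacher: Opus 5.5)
Your proposal is correct and follows essentially the same route as the paper: rerun the minimal-counterexample proof of \cite[Theorem~3.1]{HMII}, use Conjecture~B when $N$ is self-centralizing, and use the added prime-cover hypothesis, via the codegree comparison $\cod(\chi)=|H|/(\chi(1)/r)$, when $N=Z(G)$. The paper discharges the obstacle you flag by citing the remark immediately after Theorem~3.1 of \cite{HMII}. That remark states that a proof of Theorem~D for even-dimensional orthogonal groups would let the same argument show that Conjecture~B implies Conjecture~A for them.
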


\begin{proof}
The containment statement here is exactly Conjecture~A of \cite{HMII}, not merely the equality version of the codegree conjecture; see \cite[Conjecture~A]{HMII}.  We use the proof of \cite[Theorem~3.1]{HMII} in its stated minimal-counterexample form.  That proof first shows that a minimal counterexample $G$ is perfect, has a unique minimal normal subgroup $N$, and satisfies $G/N\cong H$.  It then proves that $N$ is elementary abelian of order a power of a prime dividing $|H|$ and splits into exactly two cases: $C_G(N)=G$ or $C_G(N)=N$.  In the first case $N=Z(G)$ and $G$ is a quasisimple group whose center has prime order; the contradiction is supplied there by Theorem~D.  In the second case $N$ is a faithful irreducible $H$-module and Conjecture~B supplies the contradiction.  No orthogonal-family hypothesis is used before Theorem~D is applied in the first case.

Most importantly, this consequence is stated explicitly in \cite{HMII} immediately after the proof of Theorem~3.1: if Theorem~D could be proved for even-dimensional orthogonal groups, then the same argument would show that Conjecture~B implies Conjecture~A for those groups as well \cite[the note immediately following Theorem~3.1]{HMII}.  Thus no extrapolation of their reduction is needed here.  For odd-characteristic $\POmega_{2n}^{\pm}(q)$, the additional hypothesis of the present proposition is exactly the conclusion of Theorem~D for every possible prime central quasisimple cover.  It supplies the sole missing result in the central case, while Conjecture~B handles the self-centralizing case exactly as in that proof.  Hence Conjecture~A follows.
\end{proof}
\begin{proof}[Proof of Theorem~\ref{thm:global}]
We now combine the results proved above with the known cases from the literature.  By the classification of finite simple groups, it is enough to verify each family of nonabelian finite simple groups.  The sporadic and alternating cases are \cite{DMSSYsporadic,DMSSYalternating}.  The order-divisibility and simple-quotient reductions used throughout are \cite[Theorem~C and Theorem~8.3]{HMI}.  The rank-one linear groups $\PSL_2(q)$ are covered by \cite{BAK}, and $\PSL_3(q)$ by \cite{LY}.  The simple exceptional groups of Lie type covered there and $\PSL_n(q)$ for $n\ge4$ are covered by \cite[Theorem~1.1]{TV}.  The Tits group ${}^2F_4(2)'$ is covered by \cite{WZZC}.  The exceptional low-parameter derived groups are already covered by the isomorphisms $G_2(2)'\cong \PSU_3(3)$ and ${}^2G_2(3)'\cong \PSL_2(8)$; see \cite[Remark~24.18]{MT}.

It remains to consider the other classical families.  We use the main results of this paper as follows.  Theorem~\ref{thm:Umain}, followed by Corollary~\ref{cor:Umain}, proves the required conclusion for the unitary groups $\PSU_n(q)$ for $n\ge4$; the rank-three groups $\PSU_3(q)$ are covered by \cite{LY}.  For the symplectic groups, \cite[Theorem~5.1]{TV} treats $\PSp_4(q)$ and $\PSp_6(q)$, and Theorem~\ref{thm:sympall} treats $\PSp_{2n}(q)$ for $n\ge4$.  For odd-dimensional orthogonal groups in odd characteristic, Theorem~\ref{thm:Bmain} proves Conjecture B and Corollary~\ref{cor:Bmain} gives the required codegree conclusion for every $n\ge3$.  The lower odd-characteristic ranks satisfy $\POmega_3(q)\cong \PSL_2(q)$ and $\POmega_5(q)\cong \PSp_4(q)$, while in characteristic $2$ the odd-dimensional orthogonal groups coincide with the corresponding symplectic groups.  For even-dimensional orthogonal groups in characteristic $2$, Theorem~\ref{thm:mainB} proves Conjecture B and Corollary~\ref{cor:mainA} gives the codegree conclusion for both signs and all $n\ge4$.  The omitted simple lower ranks are already covered through $\POmega_4^-(q)\cong \PSL_2(q^2)$, $\POmega_6^+(q)\cong \PSL_4(q)$, and $\POmega_6^-(q)\cong \PSU_4(q)$.  Finally, in even dimension and odd characteristic, the prime-cover and elementary-abelian arguments developed in Sections~\ref{sec:DallB}--\ref{sec:Dplusevenone} give Corollary~\ref{cor:Dlargeall} and Theorem~\ref{thm:Doddallcodegree}; the latter treats both signs for every rank $n\ge4$ in the simple range, including rank $4$.  These results cover all of the classical families not already supplied by the cited literature.  The standard low-rank coincidences produce overlaps, not omissions.  Together with the alternating, sporadic and exceptional groups cited above, this exhausts the CFSG list of finite nonabelian simple groups.  Hence $G\cong H$.
\end{proof}

\section{Classical groups and the $p$-group reduction}
Hypothesis~5.4 of \cite{HMII} asks for a degree bound for central extensions of Sylow $p$-subgroups of a classical simple group, and Proposition~5.5 there shows that this implies Conjecture B \cite[Hypothesis 5.4 and Proposition 5.5]{HMII}.  The arguments above use substantially more global structure.  In the orthogonal and unitary families the small-module reduction leaves only the natural module; in the symplectic residual case the residual symplectic conclusion \cite[Theorem~5.1]{TV} reaches the same natural-size configuration.  A single vector stabilizer then replaces an arbitrary Sylow extension.  Thus the proof proceeds by
\[
\begin{aligned}
 \text{global module reduction}&\longrightarrow \text{vector stabilizer}\\
 &\longrightarrow \text{multiplier of a classical subgroup}\\
 &\longrightarrow \text{Clifford or projective induction}.
\end{aligned}
\]
The $p$-group hypothesis remains a useful uniform sufficient condition, but it is stronger than what is required in these orthogonal and unitary configurations and in the symplectic residual reduction used here.

\section*{Acknowledgements}
This work was partially supported by a grant from the Simons Foundation (\#918096, to YY).

\section*{Disclosure Statement}
The author declares that there are no competing interests.

\section*{Data Availability Statement}
Data sharing is not applicable to this article, as no data sets were generated or analysed during the current study.

\end{document}